\theoremstyle{plain}
\newtheorem{theorem}{Theorem}[section]
\newtheorem*{theorem*}{Theorem}
\newtheorem{proposition}[theorem]{Proposition}
\newtheorem{lemma}[theorem]{Lemma}
\newtheorem{corollary}[theorem]{Corollary}
\newtheorem{fact}[theorem]{Fact}
\theoremstyle{definition}
\newtheorem{definition}[theorem]{Definition}
\newtheorem{remark}[theorem]{Remark}
\numberwithin{equation}{subsection}
\newcommand{\define}[1]{\textit{#1}}
\newcommand{\set}[1]{\left\{#1 \right\}}
\newcommand{\Coker}{\mathrm{Coker}}
\newcommand{\Ker}{\mathrm{Ker}}
\newcommand{\Hom}{\mathrm{Hom}}
\DeclareMathOperator{\End}{End}
\DeclareMathOperator{\Aut}{Aut}
\DeclareMathOperator{\id}{id}
\newcommand{\cxdot}{\bullet}
\newcommand{\cpx}[1]{#1^{\cxdot}}
\newcommand{\BasicRing}[1]{\mathbb{#1}}
\newcommand{\ZZ}{\BasicRing{Z}}
\newcommand{\NN}{\BasicRing{N}}
\newcommand{\QQ}{\BasicRing{Q}}
\newcommand{\CC}{\BasicRing{C}}
\newcommand{\Mod}{\mathrm{Mod}}
\newcommand{\Ann}{\mathrm{Ann}}
\newcommand{\Tor}{\mathrm{Tor}}
\newcommand{\opalg}{\mathrm{op}}
\newcommand{\alg}[1]{\mathcal{#1}}
\DeclareMathOperator{\gr}{gr}
\newcommand{\Ass}{\mathrm{Ass}}
\newcommand{\classicalG}[1]{\mathrm{#1}}
\newcommand{\lieC}[1]{\mathfrak{#1}}
\newcommand{\lie}[1]{\mathfrak{#1}}
\newcommand{\univ}[1]{\mathcal{U}(\lieC{#1})}
\newcommand{\univcent}[1]{\mathcal{Z}(\lieC{#1})}
\newcommand{\LieSL}{\classicalG{SL}}
\newcommand{\Dsheaf}[2]{\mathscr{#1}_{#2}}
\newcommand{\ntDsheaf}[1]{\Dsheaf{D}{#1}}
\newcommand{\Dalg}[2]{\mathcal{#1}_{#2}}
\newcommand{\ntDalg}[1]{\Dalg{D}{#1}}
\newcommand{\sect}{\Gamma}
\DeclareMathOperator{\Ch}{Ch}
\newcommand{\locZuck}[2]{\mathrm{L}^{#1}_{#2}}
\newcommand{\DlocZuck}[3]{\mathbb{D}^{#3}\locZuck{#1}{#2}}
\newcommand{\zuck}[2]{\Gamma^{#1}_{#2}}
\newcommand{\Dzuck}[3]{\mathbb{D}^{#3}\zuck{#1}{#2}}
\newcommand{\Dmod}[1]{\mathcal{#1}}
\DeclareMathOperator{\loc}{L}
\newcommand{\regular}{\mathcal{O}}
\newcommand{\rring}[1]{\regular(#1)}
\newcommand{\rsheaf}[1]{\regular_{#1}}
\newcommand{\invshf}{\vee}
\DeclareMathOperator{\Variety}{\mathcal{V}}
\newcommand{\roots}{\Delta}
\newcommand{\proots}{\Delta^+}
\newcommand*{\longw}{w_l}
\DeclareMathOperator{\GKdim}{GKdim}
\DeclareMathOperator{\Rrankmax}{loc\text{.}\overline{rank}}
\newcommand{\rfilt}[2]{\mathcal{R}_{#1}(#2)}
\begin{document}

\title{$\mathscr{D}$-modules on the basic affine space and large $\mathfrak{g}$-modules}

\author{Masatoshi Kitagawa}

\date{}

\maketitle

\begin{abstract}
	In this paper, we treat $\mathscr{D}$-modules on the basic affine space $G/U$ and their global sections for a semisimple complex algebraic group $G$.
	Our aim is to prepare basic results about large non-irreducible modules for the branching problem and harmonic analysis of reductive Lie groups.
	A main tool is a formula given by Bezrukavnikov--Braverman--Positselskii.
	The formula is about a product of functions and their Fourier transforms on $G/U$ like Capelli's identity.
	Using the formula, we give a generalization of the Beilinson--Bernstein correspondence.

	We show that the global sections of holonomic $\mathscr{D}$-modules are also holonomic using the formula.
	As a consequence, we give a large algebra action on the $\mathfrak{u}$-cohomologies $H^i(\mathfrak{u}; V)$ of a $\mathfrak{g}$-module $V$
	when $V$ is realized as a holonomic $\mathscr{D}$-module.
	We consider affinity of the supports of the $\mathfrak{t}$-modules $H^i(\mathfrak{u}; V)$.
\end{abstract}

\section{Introduction}

In the branching problem and harmonic analysis of reductive Lie groups, we need to treat large non-irreducible modules of a reductive Lie algebra $\lie{g}$.
Such $\lie{g}$-modules often have some additional structure, e.g.\ action of a larger (Lie) algebra and geometric realization.
In this paper, we deal with the special $G$-variety $G/U$ called the basic affine space,
and $\ntDsheaf{}$-modules on $G/U$, which we consider as `good' large $\lie{g}$-modules.
Our main tool is a formula proved by Bezrukavnikov--Braverman--Positselskii in \cite[Proposition 3.11]{BBP02}.
Using the formula, we study global sections of $\ntDsheaf{}$-modules on $G/U$.

Let $G$ be a simply-connected connected semisimple algebraic group over $\CC$.
Fix a Borel subgroup $B = TU$ of $G$ with a maximal torus $T$ and unipotent radical $U$.
Then $G/U$ is a quasi-affine $G$-variety and the natural projection $p\colon G/U\rightarrow G/B$ gives a principal $T$-bundle on the flag variety $G/B$.
The Beilinson--Bernstein correspondence \cite{BeBe81} says that the global section functor gives an equivalence of categories from the category $\Mod_{qc}(\ntDsheaf{G/B,\lambda})$ to that of $\lie{g}$-modules with infinitesimal character $\lambda \in \lie{t}^*$ if $\lambda$ is regular anti-dominant.
Here $\ntDsheaf{G/B, \lambda}$ is an algebra of twisted differential operators (TDO) on $G/B$.
See Subsection \ref{subsection:BB} for our parametrization.

As a global counterpart of the correspondence, a $\ntDsheaf{G/U}$-module is regarded as a good family of $\lie{g}$-modules.
In fact, for a quasi-coherent $\ntDsheaf{G/U}$-module $\Dmod{M}$, a family $\set{Dp_{+,\lambda}(\Dmod{M})}_{\lambda \in \lie{t}^*}$ of (complexes of) twisted $\ntDsheaf{}$-modules on $G/B$ is defined by using the direct image functors $Dp_{+,\lambda}$.
Our main purpose is to show fundamental results about $\ntDsheaf{G/U}$-modules and their global sections.
Using $\ntDsheaf{G/U}$-modules, we study the space $V/\lie{u}V$ of $\lie{u}$-coinvariants.
Set $\ntDalg{G/U}\coloneq \sect(\ntDsheaf{G/U})$.

Although $G/U$ is not affine, it is known that $\ntDalg{G/U}$ is a simple noetherian finitely generated $\CC$-algebra.
Moreover, the associated graded algebra $\gr \ntDalg{G/U}$ with respect to the order filtration
is isomorphic to $\rring{T^*(G/U)} = \sect(\rsheaf{T^*(G/U)})$, where $\rsheaf{T^*(G/U)}$ is the structure sheaf on $T^*(G/U)$.
These properties are proved in \cite[Theorem 1.1]{BBP02}, \cite[Proposition 1.1]{LeSt06} and \cite[Corollary 3.6.1]{GR15}.
This means that the algebra $\ntDalg{G/U}$ is similar to algebras of differential operators on affine spaces.
For example, holonomicity of $\ntDalg{G/U}$-modules can be defined as follows.

The definition of holonomic $\ntDalg{G/U}$-modules (Definition \ref{def:holonomic}) in this paper is standard.
We say that a $\ntDalg{G/U}$-module $M$ is holonomic if $M$ is finitely generated and the Gelfand--Kirillov dimension of $M$ is less than or equal to $\dim(G/U)$.
We show in Theorem \ref{thm:HolonomicLocalGlobal} that the global section functor $\sect$ and the localization functor $\loc$ preserve the holonomicity.

\begin{theorem}\label{intro:thm:holonomic}
	For any holonomic $\ntDsheaf{G/U}$-module $\Dmod{M}$, the global section $\sect(\Dmod{M})$ is also holonomic.
	Conversely, for any holonomic $\ntDalg{G/U}$-module $M$, the localization $\loc(M) = \ntDsheaf{G/U}\otimes_{\ntDalg{G/U}}M$ is holonomic.
\end{theorem}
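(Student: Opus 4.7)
The plan is to treat the two directions separately, both exploiting good filtrations together with the identification $\gr \ntDalg{G/U} \cong \rring{T^*(G/U)}$ (and its sheaf analogue $\gr \ntDsheaf{G/U} \cong \rsheaf{T^*(G/U)}$) to translate holonomicity into a statement about coherent sheaves on $T^*(G/U)$ whose support has dimension at most $\dim(G/U)$.

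For the localization direction, I would start with a holonomic $\ntDalg{G/U}$-module $M$ and choose a good filtration $\{F_n M\}$ compatible with the order filtration. Then $\gr M$ is a finitely generated $\rring{T^*(G/U)}$-module with $\Supp(\gr M) \subseteq \Spec\rring{T^*(G/U)}$ of dimension at most $\dim(G/U)$. The filtration lifts to $\loc(M)$, and the associated graded is $\rsheaf{T^*(G/U)}\otimes_{\rring{T^*(G/U)}} \gr M$, which is coherent on $T^*(G/U)$. Its support equals the restriction of $\Supp(\gr M)$ to the open subvariety $T^*(G/U) \subseteq \Spec\rring{T^*(G/U)}$, and this restriction can only decrease dimension. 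Hence the characteristic variety of $\loc(M)$ has dimension at most $\dim(G/U)$, so $\loc(M)$ is holonomic.

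For the global section direction, let $\Dmod{M}$ be holonomic with good filtration $\{F_n \Dmod{M}\}$, so $\gr \Dmod{M}$ is a coherent $\rsheaf{T^*(G/U)}$-module with support of dimension at most $\dim(G/U)$. Since $G/U$ is quasi-affine, so is $T^*(G/U)$, and higher cohomology of quasi-coherent sheaves vanishes; in particular $\sect$ is exact. Applying $\sect$ to the filtration gives a filtration of $\sect(\Dmod{M})$ with $\gr\sect(\Dmod{M}) \cong \sect(\gr \Dmod{M})$ as $\rring{T^*(G/U)}$-modules. The support of $\sect(\gr\Dmod{M})$ in $\Spec\rring{T^*(G/U)}$ is the Zariski closure of $\Supp(\gr\Dmod{M})$, and closure under an open immersion with dense image preserves dimension, giving the required bound $\leq \dim(G/U)$. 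Once we know $\sect(\gr\Dmod{M})$ is finitely generated, a standard filtration argument shows $\sect(\Dmod{M})$ is finitely generated over $\ntDalg{G/U}$, and the GK-dimension inequality $\GKdim \sect(\Dmod{M}) \leq \dim\Supp(\gr\sect(\Dmod{M})) \leq \dim(G/U)$ concludes holonomicity.

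The main obstacle is the finite generation of $\sect(\gr\Dmod{M})$ as a $\rring{T^*(G/U)}$-module; this is not automatic for coherent sheaves on a quasi-affine variety. This is where the Bezrukavnikov--Braverman--Positselskii formula must enter: I would use it, in combination with the already-established identification $\gr\ntDalg{G/U}\cong\rring{T^*(G/U)}$ and its finite generation, to control how the global sections behave across the open immersion $T^*(G/U)\hookrightarrow\Spec\rring{T^*(G/U)}$ for coherent sheaves supported on subvarieties of bounded dimension. In effect, the Fourier-type symmetry provided by the formula should give the coherence of the pushforward of $\gr\Dmod{M}$ to the affinization, which is exactly what is needed to close the argument.
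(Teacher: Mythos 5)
Your localization direction is essentially correct and close in spirit to what the paper does (there it is delegated to Lemma~\ref{lem:globalGKdim}, i.e.\ essentially to \cite[Lemma 3.2]{LeSt06}): a good filtration of $M$ induces one on $\loc(M)$, its associated graded is the localization of $\gr M$, and the characteristic variety sits inside the intersection of the support of $\gr M$ with $T^*(G/U)$, so $\dim\Ch(\loc(M))\leq \GKdim(M)\leq \dim(G/U)$.

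The global-section direction, however, has a genuine gap, and it occurs exactly at the point the theorem is about. You claim that since $G/U$ (hence $T^*(G/U)$) is quasi-affine, higher cohomology of quasi-coherent sheaves vanishes and $\sect$ is exact. This is false: quasi-affine does not imply cohomologically trivial (only affine does, by Serre's criterion). Already for $G=\LieSL(2,\CC)$ one has $G/U\simeq \CC^2-\set{0}$ and $H^1(G/U,\rsheaf{G/U})\neq 0$, and the paper states explicitly (just before Corollary~\ref{cor:HolonomicGlobalDerived}) that $\sect$ is \emph{not} exact on $\Mod_{qc}(\ntDsheaf{G/U})$. Consequently your identification $\gr\sect(\Dmod{M})\simeq\sect(\gr\Dmod{M})$ is unjustified and the dimension count built on it collapses. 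Moreover, the remaining issue you yourself flag---finite generation---is precisely the hard point, and deriving it from ``coherence of the pushforward of $\gr\Dmod{M}$ to the affinization'' via the Bezrukavnikov--Braverman--Positselskii formula is a hope rather than an argument: pushforward of a coherent sheaf along the open immersion $T^*(G/U)\hookrightarrow \mathrm{Spec}\,\rring{T^*(G/U)}$ is in general not coherent, and the formula does not operate on $\gr$-level data in this way. The paper's actual mechanism is different: Lemma~\ref{lem:globalGKdim} (which rests on the Fourier transforms and Fact~\ref{fact:nonzeroLocalization}, i.e.\ on the BBP circle of results) shows every finitely generated submodule $M\subset\sect(\Dmod{M})$ is holonomic; then, comparing $M$ with $\bigoplus_i\sect(V_i,\Dmod{M})$ over the affine cover $V_i=\set{f_i\neq 0}$ of $G/U$ (Lemma~\ref{lem:SmoothAffine}, Proposition~\ref{prop:BoundMultiplicity}), the multiplicity $m(M)$ is bounded by a constant independent of $M$; since length is bounded by multiplicity (Corollary~\ref{cor:HolonomicFiniteLength}), $\sect(\Dmod{M})$ has finite length, hence is finitely generated and holonomic. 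Some uniform bound of this kind on all finitely generated submodules (or an equivalent noetherian-exhaustion argument) is what your sketch is missing.
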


The result for the localization is essentially proved in \cite[Lemma 3.2]{LeSt06}.
For the global section, we need to show that $\sect(\Dmod{M})$ is finitely generated.
To show this, we use the theory of multiplicities of filtered modules over filtered algebras.
See Subsection \ref{subsection:holonomicity}.

We shall state a generalization of the Beilinson--Bernstein correspondence.
Let $S$ be a multiplicative subset of $\univ{t}$ generated by
\begin{align*}
	\set{H_\alpha + \rho(H_\alpha) + i: \alpha \in \Delta^+, i \text{ is a non-negative integer}},
\end{align*}
where $\Delta^+$ is the set of all roots in $\lie{u}$ and $H_\alpha$ is the unique element of $\lie{t}$ satisfying $\mu(H_\alpha) = 2(\mu, \alpha)/(\alpha, \alpha)$ for any $\mu \in \lie{t}^*$.
Set $\ntDsheaf{G/B,S}\coloneq S^{-1}(p_* \ntDsheaf{G/U})^T$.
It is well-known (e.g.\ \cite[Proposition 8]{BoBr89}) that $\ntDalg{G/U}^T$ is isomorphic to $\univ{g}\otimes_{\univcent{g}}\univ{t}$,
where $\univcent{g}$ is the center of the universal enveloping algebra $\univ{g}$.
Hence we have $\sect(\ntDsheaf{G/B,S})\simeq \univ{g}\otimes_{\univcent{g}}S^{-1}\univ{t}$.
The following result is proved in Theorem \ref{thm:Daffine}.

\begin{theorem}\label{intro:thm:GenBB}
	The global section functor $\sect$ gives an equivalence of categories from the category $\Mod_{qc}(\ntDsheaf{G/B,S})$ to $\Mod(\univ{g}\otimes_{\univcent{g}} S^{-1}\univ{t})$.
\end{theorem}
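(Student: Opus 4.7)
The plan is to verify the two standard criteria for $\ntDsheaf{}$-affinity of $\ntDsheaf{G/B,S}$ on $G/B$: that $\sect$ is (i) exact on $\Mod_{qc}(\ntDsheaf{G/B,S})$ and (ii) satisfies $\sect(\Dmod{M}) = 0 \Rightarrow \Dmod{M} = 0$. Granted these, $\loc(M) = \ntDsheaf{G/B,S} \otimes_{\sect(\ntDsheaf{G/B,S})} M$ becomes a quasi-inverse of $\sect$, and the equivalence follows.

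For exactness, the key inputs are that $G/U$ is quasi-affine and that $p\colon G/U \to G/B$ is an affine, $T$-equivariant principal $T$-bundle. For any $\Dmod{M} \in \Mod_{qc}(\ntDsheaf{G/B,S})$, I would relate $H^i(G/B, \Dmod{M})$ to the $T$-weight-zero part of $H^i(G/U, p^+\Dmod{M})$, where $p^+\Dmod{M}$ is the natural $T$-equivariant lift to a $\ntDsheaf{G/U}$-module. Quasi-affineness of $G/U$ yields $H^{>0}(G/U, p^+\Dmod{M}) = 0$, while exactness of $p_*$ (from affineness of $p$) and of the $T$-invariants functor (from reductivity of $T$) deliver $H^{>0}(G/B, \Dmod{M}) = 0$.

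For (ii), assume $\Dmod{M} \neq 0$. Since $\sect$ commutes with filtered colimits on the noetherian scheme $G/B$, I may reduce to $\Dmod{M}$ finitely generated over $\ntDsheaf{G/B,S}$. The subalgebra $S^{-1}\univ{t}$ sits centrally in $\sect(\ntDsheaf{G/B,S}) \simeq \univ{g} \otimes_{\univcent{g}} S^{-1}\univ{t}$, and the definition of $S$ ensures that every maximal ideal $\mathfrak{m}$ of $S^{-1}\univ{t}$ corresponds to a weight $\lambda \in \lie{t}^*$ with $(\lambda+\rho)(H_\alpha) \notin \ZZ_{\le 0}$ for every $\alpha \in \Delta^+$, precisely the regular anti-dominance condition under which classical Beilinson--Bernstein for $\ntDsheaf{G/B,\lambda}$ applies. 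Choosing $\mathfrak{m}$ in the support of $\Dmod{M}$, the quotient $\Dmod{M}/\mathfrak{m}\Dmod{M}$ is a nonzero quasi-coherent $\ntDsheaf{G/B,\lambda}$-module, so classical Beilinson--Bernstein gives $\sect(\Dmod{M}/\mathfrak{m}\Dmod{M}) \neq 0$. Exactness of $\sect$ then produces a surjection $\sect(\Dmod{M}) \twoheadrightarrow \sect(\Dmod{M}/\mathfrak{m}\Dmod{M})$, forcing $\sect(\Dmod{M}) \neq 0$.

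The main obstacle is the Nakayama-type step: a finitely generated $\ntDsheaf{G/B,S}$-module need not be finitely generated over the central subalgebra $S^{-1}\univ{t}$, so the existence of a maximal ideal $\mathfrak{m}$ with $\Dmod{M}/\mathfrak{m}\Dmod{M} \neq 0$ is not automatic. One must exploit the structure of $\ntDsheaf{G/B,S}$ as a family of TDOs over $\mathrm{Spec}\,S^{-1}\univ{t}$---in particular some local freeness or flatness of the family---to push the argument through. This is precisely where the explicit form of $S$, chosen to cut out the regular anti-dominance locus, enters the proof in an essential way.
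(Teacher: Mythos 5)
Your step (ii) follows the paper's strategy in outline, but your step (i) — exactness of $\sect$ on $\Mod_{qc}(\ntDsheaf{G/B,S})$ — rests on a false vanishing claim. Quasi-affineness of $G/U$ does \emph{not} give $H^{>0}(G/U,\cdot)=0$ for quasi-coherent sheaves: already for $G=\mathrm{SL}_2$ one has $G/U\simeq \CC^2\setminus\{0\}$ and $H^1(\CC^2\setminus\{0\},\rsheaf{})\neq 0$ (this nonvanishing cohomology of the basic affine space is precisely the subject of Levasseur--Stafford). Concretely, since $p$ is affine, $H^i(G/U,p^*\Dmod{M})\simeq \bigoplus_{\mu} H^i(G/B,\Dmod{L}_{\mu}\otimes_{\rsheaf{G/B}}\Dmod{M})$ over the $T$-weights $\mu$, and the twists $\Dmod{L}_{\mu}\otimes\Dmod{M}$ are modules over \emph{shifted} localizations to which no anti-dominance applies, so their higher cohomology survives; extracting the weight-zero summand just returns $H^i(G/B,\Dmod{M})$, so the reduction is circular. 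Note also that your exactness argument nowhere uses $S$, whereas exactness genuinely fails for quasi-coherent $(p_*\ntDsheaf{G/U})^T$-modules without the localization (the paper points this out before Theorem \ref{thm:vanishing}); any correct proof must use $S$ essentially. The paper's proof of exactness (Theorem \ref{thm:vanishing}) instead adapts the Beilinson--Bernstein tensoring argument: the splitting of $i_\mu$ in Lemma \ref{lemma:splitting} is built from the partial Fourier transform $F_{w_l}$, and by the Bezrukavnikov--Braverman--Positselskii formula (Fact \ref{fact:LikeGammaFactor}) the composite $s\circ i_\mu$ acts by $c\prod_{\alpha\in\proots}\prod_{i}(H_\alpha+\rho(H_\alpha)+i)$, which is invertible in $\ntDsheaf{G/B,S}$ precisely because of how $S$ is defined.

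For the non-vanishing step your plan (pick a maximal ideal $\mathfrak{m}$ of the central $S^{-1}\univ{t}$ with nonzero fiber, observe that such $\mathfrak{m}$ corresponds to a regular anti-dominant $\lambda$, and quote classical Beilinson--Bernstein plus exactness) is exactly the paper's Lemma \ref{lem:nonVanishing}, but the point you flag as ``the main obstacle'' is the actual content and you do not supply it; moreover the mechanism is not flatness or local freeness of the family over $\mathrm{Spec}\,S^{-1}\univ{t}$. The paper first passes to the submodule $\Dmod{M}^J$ annihilated by a maximal such ideal $J$ (which is then prime), so that sections over a trivializing affine open $V$ become a torsion-free $S^{-1}\univ{t}/J$-module that is finitely generated over $\ntDalg{V}\otimes S^{-1}\univ{t}$, and then invokes a countability argument (Lemma \ref{lem:nonVanishingQuot}: a nonzero torsion-free module of at most countable dimension over a localization of a finitely generated domain at a countable multiplicative set has a nonzero fiber at some maximal ideal). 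Without this reduction and the countable-dimension input, the existence of your $\mathfrak{m}$ with $\Dmod{M}/\mathfrak{m}\Dmod{M}\neq 0$ does not follow, so as written both halves of the proposal have genuine gaps, the first being irreparable along the route you chose.
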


$\ntDsheaf{G/B,\lambda}$-modules with regular anti-dominant $\lambda$ are also $\ntDsheaf{G/B,S}$-modules.
Hence Theorem \ref{intro:thm:GenBB} is a generalization of the Beilinson--Bernstein correspondence.
Remark that we use the Beilinson--Bernstein correspondence in our proof to show non-vanishing of $\sect(\Dmod{M})$ (Lemma \ref{lem:nonVanishing}).

A different generalization of the Beilinson--Bernstein correspondence is given by Ben-Zvi--Nadler \cite{BeNa19}.
They used a categorical approach.
Their result can be applied to all $\lie{g}$-modules without our localization $S^{-1}$.
It is not certain that Theorem \ref{intro:thm:GenBB} follows from their result.

Let $(\alg{A}, G)$ be a generalized pair
and $V$ an irreducible $\alg{A}$-module.
Another main object in this paper is the space $V/\lie{u}V$ of $\lie{u}$-coinvariants.
The $\lie{t}$-module $V/\lie{u}V$ is written as $V\otimes_{\univ{g}} (\univ{g}\otimes_{\univ{u}} \CC)$.
It is important that $\univ{g}\otimes_{\univ{u}}\CC$ (modulo some $\lie{t}$-character twist) has a natural $\ntDsheaf{G/U}$-module structure.
It is the unique irreducible holonomic $\ntDsheaf{G/U}$-module supported on $\set{eU}$.
Using this action, let $(\alg{A}\otimes \ntDalg{G/U})^G$ act on $V/\lie{u}V$.
We show that the $(\alg{A}\otimes \ntDalg{G/U})^G$-module has finite length under a strong assumption on $\alg{A}$.
A key point is that $V/\lie{u}V$ is represented by using the Zuckerman derived functor:
\begin{align*}
	V/\lie{u}V \simeq H^n(\lie{g}; V\otimes (\univ{g}\otimes_{\univ{u}} \CC))
	\simeq \Dzuck{G}{}{n}(V\otimes_{\univ{g}} (\univ{g}\otimes_{\univ{u}} \CC))^G,
\end{align*}
where $n = \dim(\lie{g})$.
We omit tensoring a $\lie{t}$-character.

Let $X$ be a smooth $G$-variety and $\Dsheaf{A}{X}$ a $G$-equivariant TDO on $X$.
Suppose that the global section functor $\sect$ is exact on $\Mod_{qc}(\Dsheaf{A}{X})$.
Set $\Dalg{A}{X} \coloneq \sect(\Dsheaf{A}{X})$.
It is known that $\sect(\Dmod{M})$ is irreducible or zero for irreducible $\Dmod{M} \in \Mod_{qc}(\Dsheaf{A}{X})$ (Fact \ref{fact:EquivalenceCategory}).
Using a localization of the Zuckerman derived functor (Proposition \ref{prop:LocalizationZuckerman}) and Theorem \ref{intro:thm:holonomic}, we obtain

\begin{theorem}\label{intro:thm:FiniteLength}
	Let $\Dmod{M}$ be an irreducible holonomic $\Dsheaf{A}{X}$-module and $i \in \NN$.
	Then the $(\Dalg{A}{X}\otimes \ntDalg{G/U})^G$-module $H^i(\lie{u}; \sect(\Dmod{M}))$ has finite length.
\end{theorem}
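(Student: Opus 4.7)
The plan is to realize $H^i(\lie{u}; \sect(\Dmod{M}))$ as the $G$-invariants of the output of a Zuckerman derived functor applied to a $\lie{g}$-module built from a holonomic $\ntDsheaf{G/U}$-module, then localize the Zuckerman functor geometrically and invoke Theorem \ref{intro:thm:holonomic} to conclude finite length.

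First, I would take $\Dmod{L}$ to be the unique irreducible holonomic $\ntDsheaf{G/U}$-module supported at $\set{eU}$, whose global sections realize $\univ{g}\otimes_{\univ{u}}\CC$ (up to a $\lie{t}$-character twist) as a $\lie{g}$-module. Starting from the identification
\[
V/\lie{u}V \simeq \Dzuck{G}{}{n}(V\otimes_{\univ{g}}(\univ{g}\otimes_{\univ{u}}\CC))^G
\]
recorded in the introduction (with $V = \sect(\Dmod{M})$), the analogous description for higher $\lie{u}$-cohomology is obtained, after Poincar\'e duality for $\lie{u}$ and an overall character twist, by a single shift of the Zuckerman derived functor applied to an object built out of $\sect(\Dmod{M})$ and $\sect(\Dmod{L})$.

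Second, I would invoke Proposition \ref{prop:LocalizationZuckerman} to localize the Zuckerman derived functor. The natural geometric input is the external tensor product $\Dmod{M}\boxtimes\Dmod{L}$ on $X\times G/U$, which is a holonomic $\Dsheaf{A}{X}\boxtimes\ntDsheaf{G/U}$-module because $\Dmod{M}$ and $\Dmod{L}$ are each irreducible holonomic. Localization reduces the abstract Zuckerman computation to the application of a geometric derived functor to this product.

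Third, I would combine Theorem \ref{intro:thm:holonomic} (preservation of holonomicity under $\sect$ on $G/U$) with the analogous statement for $\Dsheaf{A}{X}$, which is available under the standing exactness hypothesis on $\sect$ on $\Mod_{qc}(\Dsheaf{A}{X})$. Each cohomology of the localized output is a holonomic $\Dsheaf{A}{X}\boxtimes\ntDsheaf{G/U}$-module on $X\times G/U$; taking global sections yields a holonomic, hence finite-length, module over $\Dalg{A}{X}\otimes\ntDalg{G/U}$, and the $G$-invariants form a finite-length module over $(\Dalg{A}{X}\otimes\ntDalg{G/U})^G$.

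The main obstacle I anticipate is the second step: one must pin down the precise form of the localization in this combined TDO setting on $X\times G/U$, track the $G$-equivariance together with the $\lie{t}$-character twist relating $\univ{g}\otimes_{\univ{u}}\CC$ to $\sect(\Dmod{L})$, and verify that the output in every cohomological degree (not just the top one appearing in the coinvariants formula) fits the framework of Theorem \ref{intro:thm:holonomic} uniformly.
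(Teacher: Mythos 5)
Your proposal is correct and takes essentially the same route as the paper: there one tensors $\Dmod{M}$ externally with the irreducible holonomic $\ntDsheaf{G/U}$-module supported at $eU$ (which is acyclic with global sections $\univ{g}\otimes_{\univ{u}}\CC\otimes\CC_{-2\rho}$), applies Poincar\'e duality and the localized Zuckerman functor to the holonomic product module on $X\times G/U$, and concludes by the product version of Theorem \ref{intro:thm:holonomic} (Theorem \ref{thm:HolonomicGlobalAX}). The uniformity issue you flag about all cohomological degrees is exactly what Corollary \ref{cor:FiniteZuckerman} settles via a truncation and long-exact-sequence argument.
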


In \cite{Ki24-2}, we introduced a notion of small Cartan subalgebras for $\lie{g}$-modules.
For a non-zero $\lie{t}$-module $V$, we say that $V$ has a small Cartan subalgebra $\lie{a} = \lie{t}/\lie{m}$
if any $\Variety(P)$ ($P \in \Ass_{\univ{t}}(V)$) is a translation of $\lie{a}^*$,
where $\Variety(P)$ is the closed subvariety of $\lie{t}^*$ determined by the prime ideal $P$.
We have shown in \cite[Theorem 3.25]{Ki24-2} that any irreducible $\alg{A}$-module has a small Cartan subalgebra if $\alg{A}$ has at most countable dimension.
We will give another proof of this result in Theorem \ref{thm:Affinity}.
The following theorem is an application of Theorem \ref{intro:thm:FiniteLength}.

\begin{theorem}\label{intro:thm:Tsupport}
	Let $\Dmod{M}$ be an irreducible holonomic $\Dsheaf{A}{X}$-module and $i \in \NN$.
	Then $H^i(\lie{u}; \sect(\Dmod{M}))$ has a $\lie{t}$-module finite exhaustive filtration $0 = V_0\subset V_1 \subset \cdots \subset V_r$ such that each $V_j / V_{j-1}$ has a small Cartan subalgebra.
\end{theorem}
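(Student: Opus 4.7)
The plan is to obtain Theorem \ref{intro:thm:Tsupport} as a direct combination of the finite-length statement of Theorem \ref{intro:thm:FiniteLength} with the small-Cartan-subalgebra result Theorem \ref{thm:Affinity}. Set $\alg{B}\coloneq (\Dalg{A}{X}\otimes \ntDalg{G/U})^G$ and write $W\coloneq H^i(\lie{u}; \sect(\Dmod{M}))$. By Theorem \ref{intro:thm:FiniteLength}, $W$ has finite length as a $\alg{B}$-module, so it admits a composition series $0 = V_0\subset V_1\subset \cdots \subset V_r = W$ of $\alg{B}$-submodules whose successive quotients are irreducible $\alg{B}$-modules.

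To promote this $\alg{B}$-filtration to a $\lie{t}$-filtration of the sort claimed, I would use the fact that the right action of $T$ on $G/U$ commutes with the left $G$-action and hence yields an algebra embedding $\univ{t}\hookrightarrow \ntDalg{G/U}^G\hookrightarrow \alg{B}$. Consequently, each $V_j$ is automatically a $\lie{t}$-submodule of $W$, and the $\lie{t}$-module structure on $V_j/V_{j-1}$ is the one induced by restriction from its irreducible $\alg{B}$-module structure. Here one should also verify, using the description of $V/\lie{u}V$ via the irreducible holonomic $\ntDsheaf{G/U}$-module supported on $\set{eU}$ discussed after Theorem \ref{intro:thm:GenBB}, that the $\lie{t}$-action coming from $\ntDalg{G/U}^G$ agrees with the natural $\lie{t}$-action on $\lie{u}$-cohomology, so that the notion of small Cartan subalgebra applies directly.

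Finally, I would note that $\alg{B}$ has at most countable $\CC$-dimension, since $\ntDalg{G/U}$ is finitely generated as a $\CC$-algebra and $\Dalg{A}{X}$ is countable-dimensional as global sections of a TDO on a variety of finite type over $\CC$, so the tensor product and its $G$-invariant subalgebra inherit countable dimension. Theorem \ref{thm:Affinity} then equips each irreducible $\alg{B}$-module $V_j/V_{j-1}$ with a small Cartan subalgebra as a $\lie{t}$-module, producing the desired filtration. The only genuine point of care will be the compatibility of $\lie{t}$-actions alluded to above; once that bookkeeping is in place, the argument reduces to taking a $\alg{B}$-composition series and invoking Theorem \ref{thm:Affinity} termwise.
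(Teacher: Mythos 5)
Your proposal is correct and follows essentially the same route as the paper: the paper's proof of Theorem \ref{thm:AffinityUcohomology} likewise takes a composition series of $H^i(\lie{u};\sect(\Dmod{M}))$ as an $(\Dalg{A}{X}\otimes\ntDalg{G/U})^G$-module (finite length by Corollary \ref{cor:FiniteLenUcoinv}) and applies Theorem \ref{thm:Affinity} to each irreducible subquotient, with the countable-dimension observation supplying $\End = \CC$ via Remark \ref{remark:Schur}. The compatibility of $\lie{t}$-actions you flag is exactly what Corollary \ref{cor:FiniteLenUcoinv} packages (the extension of the $\lie{t}$-module structure, up to a $\CC_{-2\rho}$ twist coming from $\uVerm$, which does not affect the small Cartan subalgebra condition).
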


We prove Theorems \ref{intro:thm:FiniteLength} and \ref{intro:thm:Tsupport} in Subsection \ref{subsection:LargeAction}.
Roughly speaking, Theorem \ref{intro:thm:Tsupport} says that the support of a vector $v \in V/\lie{u}V$ is a finite union of affine subspaces in $\lie{t}^*$.
We rephrase the results for the branching problem of reductive Lie groups in Subsection \ref{subsection:Branching}.

This paper is organized as follows.
In Section 2, we recall fundamental results about the Zuckerman derived functor and twisted $\ntDsheaf{}$-modules.
In Section 3, we deal with the basic affine space.
We recall the formula by Bezrukavnikov--Braverman--Positselskii, and show Theorem \ref{intro:thm:holonomic}.
Section 4 is devoted to the generalization of the Beilinson--Bernstein correspondence.
We prove Theorem \ref{intro:thm:GenBB} here.
Theorems \ref{intro:thm:FiniteLength} and \ref{intro:thm:Tsupport} are proved in Section 5.

\subsection*{Notation and convention}

In this paper, any algebra except Lie algebras is unital, associative and over $\CC$,
and any variety is quasi-projective and defined over $\CC$.
Let $X$ be a smooth variety.
We denote by $\rsheaf{X}$ and $\rring{X}$ the structure sheaf on $X$ and its global section,
and denote by $\ntDsheaf{X}$ and $\ntDalg{X}$ the sheaf of differential operators on $X$ and its global section.
We express TDOs and the spaces of their global sections by script letters and corresponding calligraphic letters, respectively.
For example, the spaces of global sections of TDOs $\Dsheaf{A}{X}$ and $\Dsheaf{A}{X,\lambda}$ are denoted as $\Dalg{A}{X}$ and $\Dalg{A}{X,\lambda}$, respectively.
For an ideal $I$ of $\rring{X}$, we denote by $\Variety(I)$ the subvariety in $X$ determined by $I$.

We express algebraic groups and their Lie algebras by Roman alphabets and corresponding German letters, respectively.
For example, the Lie algebras of algebraic groups $G$, $K$ and $H$ are denoted as $\lie{g}$, $\lie{k}$ and $\lie{h}$, respectively.
For an algebraic group $G$, let $G_0$ denote the identity component of $G$.

For a $G$-set $X$ of a group $G$, we write $X^G$ for the subset of all $G$-invariant elements in $X$.
We use similar notations for the set of all vectors annihilated by a Lie algebra $\lie{g}$ (resp.\ an ideal $I$) as $V^{\lie{g}}$ (resp.\ $V^I$).
We denote by $\CC_{\lambda}$ the representation space of a character $\lambda$ of a Lie algebra $\lie{t}$.
When $\lie{t}$ is commutative, there are two ways to regard $\CC_{\lambda}$ as a right $\univ{t}$-module.
In the case, let $\univ{t}$ act on $\CC_{\lambda}$ via $v\cdot X = \lambda({}^tX)v$ ($X \in \univ{t}$, $v \in \CC_{\lambda}$) to satisfy $\CC_{\lambda}\otimes_{\univ{t}}V \simeq (\CC_{\lambda}\otimes V)/\lie{t}(\CC_{\lambda}\otimes V)$
for any $\lie{t}$-module $V$.

We denote by $\Mod(\cdot)$ (resp.\ $\Mod_{qc}(\cdot)$, $\Mod_{h}(\cdot)$) the category of left modules (resp.\ quasi-coherent modules, holonomic modules) of an algebra (or a sheaf of algebras).
We use the standard notation $D^b(\cdot)$, $D^b_{qc}(\cdot)$ and $D^b_h(\cdot)$ for the corresponding derived categories.
For example, $D^b_h(\alg{A})$ is the full subcategory of $D^b(\alg{A})$ with holonomic cohomologies.
We express the exterior tensor product of sheaves by $\boxtimes$.
See \cite[p.38]{HTT08} for the exterior tensor product.

\subsection*{acknowledgement}

This work was supported by JSPS KAKENHI Grant Number JP23K12963.

\section{Preliminary}

In this section, we review basic results about $\ntDsheaf{}$-modules and (a localization of) the Zuckerman derived functor.

\subsection{Zuckerman derived functor}

We review the Zuckerman derived functor.
We refer the reader to \cite[Chapter 6]{Wa88_real_reductive_I} and \cite{KnVo95_cohomological_induction}.

Let $G$ be a connected reductive algebraic group over $\CC$.
In this paper, a $G$-module means a locally finite representation of the group $G$ whose finite-dimensional subrepresentations are representations as an algebraic group.
We say that the $G$-action on a $G$-module is rational.
Since $G$ is reductive, a $G$-module is completely reducible.
We shall recall the definition of generalized pairs.
See \cite[p.96]{KnVo95_cohomological_induction}.

\begin{definition}
	Let $\alg{A}$ be a (unital associative) $\CC$-algebra equipped with a $G$-module structure and a $G$-equivariant homomorphism $\iota\colon \univ{g}\rightarrow \alg{A}$.
	We say that $(\alg{A}, G)$ is a \define{generalized pair} if the $G$-action on $\alg{A}$ is given by automorphisms
	and the adjoint $\lie{g}$-action on $\alg{A}$ coincides with the differential action of $G$.
\end{definition}

\begin{definition}
	Let $(\alg{A}, G)$ be a generalized pair and $V$ an $\alg{A}$-module.
	We say that $V$ is an \define{$(\alg{A}, G)$-module} if the $\lie{g}$-action on $V$ lifts to a rational $G$-action.
	The category $\Mod(\alg{A}, G)$ of $(\alg{A}, G)$-modules is defined in a natural way.
\end{definition}

Since $G$ is assumed to be connected, our definition is easier than the general one in \cite{KnVo95_cohomological_induction}.
It is easy to see that any $\alg{A}$-homomorphism between $(\alg{A}, G)$-modules is $G$-equivariant.
In other words, the category $\Mod(\alg{A}, G)$ is a full subcategory of $\Mod(\alg{A})$.

Let $(\alg{A}, G)$ be a generalized pair and $V$ an $\alg{A}$-module.
Let $i \in \NN$ and set
\begin{align*}
	\Dzuck{G}{}{i}(V) \coloneq H^i(\lie{g}; \rring{G}\otimes V),
\end{align*}
where the $\lie{g}$-cohomology is taken via the tensor product of the right regular action on $\rring{G}$ and the action on $V$.
Then the left regular action on $\rring{G}$ induces a rational $G$-action on $\Dzuck{G}{}{i}(V)$.
Moreover, it is known that $\Dzuck{G}{}{i}(V)$ admits an $(\alg{A}, G)$-module structure and satisfies the following property.
See \cite[Proposition I.8.2]{BoWa00_continuous_cohomology}.
The functor $\Dzuck{G}{}{i}$ is called the $i$-th Zuckerman derived functor.

\begin{fact}\label{fact:GInvZuckerman}
	There exists a natural $\alg{A}^G$-module isomorphism $\Dzuck{G}{}{i}(V)^G \simeq H^i(\lie{g}; V)$.
	In particular, the length of $H^i(\lie{g}; V)$ is bounded by that of $\Dzuck{G}{}{i}(V)$ from above.
\end{fact}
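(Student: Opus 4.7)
The plan is to interchange the functors of $G$-invariants and $\lie{g}$-cohomology, and then identify $(\rring{G}\otimes V)^G$ with $V$. To set up, equip $\rring{G}\otimes V$ with the $G$-action $L\otimes 1$ (where $L$ denotes the left regular action) and with the diagonal $\lie{g}$-action $R\otimes 1 + 1\otimes \pi_V$ used to define the cohomology (where $R$ denotes the right regular action). These two actions commute because $L$ and $R$ commute on $\rring{G}$.

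Since $L\otimes 1$ makes the Chevalley--Eilenberg complex $C^\bullet(\lie{g};\rring{G}\otimes V)$ a complex of rational $G$-modules with $G$-equivariant differential, and since $G$ is reductive (so that $(-)^G$ is exact on rational $G$-modules), I can commute $G$-invariants past cohomology to obtain
\begin{align*}
	\Dzuck{G}{}{i}(V)^G \simeq H^i\bigl(\lie{g};(\rring{G}\otimes V)^G\bigr).
\end{align*}
Because $G$ is connected, $\rring{G}^{L}=\CC$, so $(\rring{G}\otimes V)^G\simeq V$ as a vector space. On this copy of $V$, the $R\otimes 1$ summand of the $\lie{g}$-action vanishes (right-invariant derivations annihilate constants), so only the original action $\pi_V$ survives; thus the right-hand side is exactly $H^i(\lie{g};V)$.

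To upgrade this to an $\alg{A}^G$-module isomorphism, I would use that $\alg{A}^G$ is annihilated by the adjoint $\lie{g}$-action on $\alg{A}$ (a direct consequence of the generalized pair axioms), hence commutes with the image of $\iota\colon \univ{g}\to \alg{A}$. Thus $\alg{A}^G$ acts on both sides through its action on $V$, and the isomorphism above is $\alg{A}^G$-equivariant by construction. The length statement then follows since $(-)^G\colon \Mod(\alg{A},G)\to\Mod(\alg{A}^G)$ is exact and sends each simple $(\alg{A},G)$-module to either $0$ or a simple $\alg{A}^G$-module (a Schur-type argument using complete reducibility of rational $G$-modules), so it cannot increase length. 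The main delicate point is verifying, rather than just asserting, that the $(\alg{A},G)$-module structure on $\Dzuck{G}{}{i}(V)$ restricts on $G$-invariants to the naive $\alg{A}^G$-action on $V$; this requires unwinding the Knapp--Vogan construction of the $(\alg{A},G)$-structure on the Zuckerman functor, but once the actions are written out explicitly it becomes routine.
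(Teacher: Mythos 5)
Your proposal is correct, but it takes a different route from the paper, which does not prove the first assertion at all: the isomorphism is simply quoted from Borel--Wallach (Proposition I.8.2 there), and the paper only argues the length bound, exactly as you do, from exactness of $(\cdot)^G$ on rational $G$-modules and the fact that it sends irreducibles to zero or irreducibles. Your direct argument for the isomorphism is sound: the Chevalley--Eilenberg complex $\Hom(\wedge^{\bullet}\lie{g},\rring{G}\otimes V)$ for the action $R\otimes 1+1\otimes\pi_V$ carries the rational $G$-action $L\otimes 1$ commuting with the differential (since left and right translations commute), exactness of $(\cdot)^G$ lets invariants pass through cohomology, $\rring{G}^{L(G)}=\CC$ identifies $(\rring{G}\otimes V)^G$ with $V$, and $R(X)$ annihilates constants, so the invariant subcomplex computes $H^i(\lie{g};V)$; likewise your observation that $\alg{A}^G$ commutes with $\iota(\univ{g})$ gives the $\alg{A}^G$-action on $H^i(\lie{g};V)$. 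What your approach buys is a self-contained proof instead of a citation; what it leaves thin is the one genuinely nontrivial point of the Fact as stated, namely that the isomorphism respects the Borel--Wallach/Knapp--Vogan $(\alg{A},G)$-module structure on $\Dzuck{G}{}{i}(V)$ --- precisely the step you defer as ``routine''. It does work out: in that construction the $\alg{A}$-action on $\rring{G}\otimes V$, viewed as $V$-valued regular functions on $G$, is twisted by the rational $G$-action on $\alg{A}$, and for $a\in\alg{A}^G$ the twist is trivial, so on the invariants $\CC\otimes V\simeq V$ one recovers the naive action of $\alg{A}^G$ on $V$; since the paper's later results (e.g.\ the finite-length statements) rely on exactly this module structure, a complete write-up should either carry out that short computation explicitly or, as the paper does, cite Borel--Wallach for it.
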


Note that the functor $(\cdot)^G \colon \Mod(\alg{A}, G)\rightarrow \Mod(\alg{A}^G)$ is exact and sends irreducible objects to zero or irreducible objects since $G$ is reductive.
The second assertion in Fact \ref{fact:GInvZuckerman} follows from this.

\subsection{Twisted \texorpdfstring{$\ntDsheaf{}$}{D}-module}

We recall the notion of twisted $\ntDsheaf{}$-modules.
We refer the reader to \cite{HTT08}.
See also \cite[1.1]{KaTa96} and \cite[A.1]{HMSW87}.
In this paper, any algebraic variety is quasi-projective and defined over $\CC$.

Let $X$ be a smooth variety.
Let $\rsheaf{X}$ denote the structure sheaf on $X$ and $\ntDsheaf{X}$ denote the sheaf of differential operators on $X$.
Then there is a canonical monomorphism $\iota_X \colon \rsheaf{X}\hookrightarrow \ntDsheaf{X}$, and $\ntDsheaf{X}$ is a quasi-coherent left/right $\rsheaf{X}$-module.

\begin{definition}
	Let $\Dsheaf{A}{X}$ be a sheaf of algebras on $X$ equipped with a monomorphism $\iota\colon \rsheaf{X} \hookrightarrow \Dsheaf{A}{}$.
	We say that $\Dsheaf{A}{X}$ is an algebra of twisted differential operators (or \define{TDO}) on $X$ if
	there exists an open covering $X = \bigcup_i U_i$ such that $(\Dsheaf{A}{X}|_{U_i}, \iota) \simeq (\ntDsheaf{U_i}, \iota_{U_i})$ for any $i$.
\end{definition}

Let $\Dsheaf{A}{X}$ be a TDO on $X$.
An $\Dsheaf{A}{X}$-module $\Dmod{M}$ is said to be \define{quasi-coherent} if $\Dmod{M}$ is quasi-coherent as an $\rsheaf{X}$-module.
Let $\Mod_{qc}(\Dsheaf{A}{X})$ denote the category of quasi-coherent $\Dsheaf{A}{X}$-modules, $D^b(\Dsheaf{A}{X})$ the bounded derived category of $\Dsheaf{A}{X}$-modules,
and $D^b_{qc}(\Dsheaf{A}{X})$ the full subcategory of $D^b(\Dsheaf{A}{X})$ consisting of complexes with quasi-coherent cohomologies.

By the existence of local trivializations, $\Dsheaf{A}{X}$ has a canonical filtration called the order filtration induced from that of $\ntDsheaf{U_i}$,
and the associated graded algebra $\gr(\Dsheaf{A}{X})$ is isomorphic to $p_*(\rsheaf{T^*X})$.
Here $p\colon T^* X\rightarrow X$ is the natural projection from the cotangent bundle of $X$ to $X$.
A quasi-coherent $\Dsheaf{A}{X}$-module $\Dmod{M}$ is said to be \define{coherent} if $\Dmod{M}$ is locally finitely generated over $\Dsheaf{A}{X}$.
Let $\Dmod{M}$ be a coherent $\Dsheaf{A}{X}$-module.
Then $\Dmod{M}$ has a good filtration.
The \define{characteristic variety} $\Ch(\Dmod{M}) \subset T^*X$ is defined by the support of the $\rsheaf{T^*X}$-module $\rsheaf{T^*X}\otimes_{p^{-1}p^*(\rsheaf{T^*X})}p^{-1}(\gr(\Dmod{M}))$.

It is well-known that $\dim(\Ch(\Dmod{M}))\geq \dim(X)$ if $\Dmod{M}\neq 0$ (see \cite[Corollary 2.3.2]{HTT08}).
We say that a coherent $\Dsheaf{A}{X}$-module $\Dmod{M}$ is \define{holonomic} if $\dim(\Ch(\Dmod{M}))\leq \dim(X)$.
It is easy to see that any holonomic $\Dsheaf{A}{X}$-module has finite length (see \cite[Proposition 3.1.2]{HTT08}).
Let $\Mod_{h}(\Dsheaf{A}{X})$ denote the category of holonomic $\Dsheaf{A}{X}$-modules
and $D^b_{h}(\Dsheaf{A}{X})$ the full subcategory of $D^b(\Dsheaf{A}{X})$ consisting of complexes with holonomic cohomologies.

Recall the direct image functor and the inverse image functor.
Let $f\colon X\rightarrow Y$ be a morphism between smooth varieties
and $\Dsheaf{A}{Y}$ a TDO on $Y$.
We set
\begin{align*}
\Omega_{f}&\coloneq f^{-1}\Omega_{Y}^{\invshf}\otimes_{f^{-1}\rsheaf{Y}}\Omega_{X},\\
\Dsheaf{A}{Y\leftarrow X} &\coloneq f^{-1}\Dsheaf{A}{Y}\otimes_{f^{-1}\rsheaf{Y}}\Omega_{f},\\
\Dsheaf{A}{X\rightarrow Y} &\coloneq \rsheaf{X} \otimes_{f^{-1}\rsheaf{Y}}f^{-1}\Dsheaf{A}{Y},
\end{align*}
where $\Omega_{X}$ (resp.\ $\Omega_{Y}$) denotes the canonical sheaf of $X$ (resp.\ $Y$).
\begin{definition}
	Let $f^{\#}\Dsheaf{A}{Y}$ denote the sheaf of all differential endomorphisms
	of the $\rsheaf{X}$-module $\Dsheaf{A}{X\rightarrow Y}$ that commute with the right $f^{-1}\Dsheaf{A}{Y}$-action.
\end{definition}
Then $f^{\#}\Dsheaf{A}{Y}$ is a TDO on $X$
and $\Dsheaf{A}{Y\leftarrow X}$ is an $(f^{-1}\Dsheaf{A}{Y}, f^{\#}\Dsheaf{A}{Y})$-bimodule.
Note that $f^{\#}\ntDsheaf{Y}$ is canonically isomorphic to $\ntDsheaf{X}$.
See \cite[Definition 1.3.1]{HTT08}.
The direct image of $\cpx{\Dmod{M}} \in D^b_{qc}(f^{\#}\Dsheaf{A}{Y})$ is defined by
\begin{align*}
D f_{+}(\cpx{\Dmod{M}}) = Rf_*(\Dsheaf{A}{Y\leftarrow X}\otimes^L_{f^{\#}\Dsheaf{A}{Y}}\cpx{\Dmod{M}}) \in D^b_{qc}(\Dsheaf{A}{Y}),
\end{align*}
and the inverse image of $\cpx{\Dmod{N}} \in D^b_{qc}(\Dsheaf{A}{Y})$ is defined by
\begin{align*}
	Lf^*(\cpx{\Dmod{N}}) = \Dsheaf{A}{X\rightarrow Y}\otimes^L_{f^{-1}\Dsheaf{A}{Y}}f^{-1}(\cpx{\Dmod{N}})
	\in D^b_{qc}(f^\#\Dsheaf{A}{Y}).
\end{align*}
It is well-known that the functors $D f_{+}$ and $Lf^*$ are local for $Y$.
Since the functors preserve the holonomicity, we have the two functors
\begin{align*}
	D f_{+}\colon D^b_{h}(\Dsheaf{A}{Y}) \rightarrow D^b_{h}(f^{\#}\Dsheaf{A}{Y}), \quad 
	L f^* \colon D^b_{h}(f^{\#}\Dsheaf{A}{Y}) \rightarrow  D^b_{h}(\Dsheaf{A}{Y}).
\end{align*}
See \cite[Theorem 3.2.3]{HTT08}.

We shall review the notions of $G$-equivariant TODs and modules.
Let $G$ be a connected reductive algebraic group and $X$ a smooth $G$-variety.
Write $p, m\colon G\times X\rightarrow X$ for the projection and the multiplication map, respectively.
An $\rsheaf{X}$-module $\Dmod{M}$ is said to be \define{$G$-equivariant} if an $\rsheaf{G\times X}$-module isomorphism $m^* \Dmod{M} \xrightarrow{\simeq} p^* \Dmod{M}$ is specified and satisfies the associative law \cite[(3.1.2)]{Ka08_dmodule}.
We call the $G$-equivariant structure an (algebraic) $G$-action on $\Dmod{M}$.
In fact, $G$ acts on the set of local sections of $\Dmod{M}$.
By the algebraicity, the action is differentiable, that is, it induces a Lie algebra homomorphism $\lie{g}\rightarrow \End_{\CC_X}(\Dmod{M})$.

\begin{definition}
	We say that a TDO $\Dsheaf{A}{X}$ on $X$ is \define{$G$-equivariant} if a $G$-action on $\Dsheaf{A}{X}$ and a $G$-equivariant algebra homomorphism $i_{\lie{g}}\colon \univ{g}\rightarrow \Dsheaf{A}{X}$ are specified and satisfy the following conditions.
	\begin{enumerate}
		\item The $G$-action is given by algebra isomorphisms.
		\item The differential of the $G$-action on $\Dsheaf{A}{X}$ coincides with the adjoint action of $\lie{g}$ on $\Dsheaf{A}{X}$ given by $i_{\lie{g}}$.
	\end{enumerate}
	An $\Dsheaf{A}{X}$-module $\Dmod{M}$ is said to be \define{$G$-equivariant} or an \define{$(\Dsheaf{A}{X}, G)$-module} if $\Dmod{M}$ is $G$-equivariant as an $\rsheaf{X}$-module and the isomorphism $m^* \Dmod{M} \xrightarrow{\simeq} p^* \Dmod{M}$ is a $(\ntDsheaf{G}\boxtimes \Dsheaf{A}{X})$-isomorphism.
\end{definition}

It is easy to see that $(\Dalg{A}{X}, G)$ forms a generalized pair and $\sect(\Dmod{M})$ is an $(\Dalg{A}{X}, G)$-module.
Note that the $G$-equivariant structure on $\Dsheaf{A}{X}$ induces an isomorphism $m^\# \Dsheaf{A}{X} \simeq p^\# \Dsheaf{A}{X}(\simeq \ntDsheaf{G}\boxtimes \Dsheaf{A}{X})$ satisfying the associative law.
See \cite[Lemma 1.8.7]{BeBe93}.

In this paper, we use the functors only for easy $f$, projections of fiber bundles and principal bundles.
Let $X$ and $Y$ be smooth varieties and $f \colon X\times Y\rightarrow Y$ the projection onto the second factor.
Let $\Dsheaf{A}{Y}$ be a TDO on $Y$.
Then we have $f^\# \Dsheaf{A}{Y} \simeq \ntDsheaf{X}\boxtimes \Dsheaf{A}{Y}$.
The following proposition is easy from the definition.

\begin{proposition}\label{prop:InverseImageDirectProduct}
	The inverse image functor $f^*$ is exact and
	$f^*(\Dmod{M}) \simeq \rsheaf{X}\boxtimes \Dmod{M}$ holds for any $\Dmod{M} \in \Mod_{qc}(\Dsheaf{A}{Y})$.
\end{proposition}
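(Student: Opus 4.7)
The plan is to unwind the definition of $f^{\ast}$, apply tensor cancellation to reduce the $\Dsheaf{A}{Y}$-module inverse image to the ordinary $\rsheaf{Y}$-module pullback, and then invoke flatness of the projection to obtain exactness. By the definition of $\Dsheaf{A}{X\times Y\rightarrow Y}$ and associativity of tensor products,
\begin{align*}
f^{\ast}(\Dmod{M})
&=\Dsheaf{A}{X\times Y\rightarrow Y}\otimes_{f^{-1}\Dsheaf{A}{Y}}f^{-1}\Dmod{M}\\
&=\bigl(\rsheaf{X\times Y}\otimes_{f^{-1}\rsheaf{Y}}f^{-1}\Dsheaf{A}{Y}\bigr)\otimes_{f^{-1}\Dsheaf{A}{Y}}f^{-1}\Dmod{M}\\
&\simeq\rsheaf{X\times Y}\otimes_{f^{-1}\rsheaf{Y}}f^{-1}\Dmod{M},
\end{align*}
so the $\Dsheaf{A}{Y}$-module inverse image of $\Dmod{M}$ coincides with its ordinary $\rsheaf{Y}$-module pullback.

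Next I would identify this last expression with the external tensor product $\rsheaf{X}\boxtimes\Dmod{M}$. Writing $p_X\colon X\times Y\rightarrow X$ for the other projection, the external product of quasi-coherent $\rsheaf{}$-modules is defined as $p_X^{\ast}\rsheaf{X}\otimes_{\rsheaf{X\times Y}}p_Y^{\ast}\Dmod{M}$ with $p_Y=f$; since $p_X^{\ast}\rsheaf{X}=\rsheaf{X\times Y}$ canonically, this simplifies to the $\rsheaf{Y}$-module pullback $\rsheaf{X\times Y}\otimes_{f^{-1}\rsheaf{Y}}f^{-1}\Dmod{M}$, matching the expression obtained above. The natural $(\ntDsheaf{X}\boxtimes\Dsheaf{A}{Y})$-actions on the two sides also agree, yielding the desired isomorphism.

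Exactness of $f^{\ast}$ on $\Mod_{qc}(\Dsheaf{A}{Y})$ then reduces, via this identification, to flatness of $\rsheaf{X\times Y}$ over $f^{-1}\rsheaf{Y}$. This is immediate from flatness of the projection: on affine opens $U\subset X$ and $V\subset Y$, the ring map $\rring{V}\rightarrow \rring{U\times V}\simeq \rring{U}\otimes_{\CC}\rring{V}$ is flat because $\CC$ is a field. Quasi-coherence of $\Dmod{M}$ then lets this local flatness be assembled into global exactness of $f^{\ast}$.

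I do not anticipate a serious obstacle here; the only care needed is to keep track of which variety each sheaf lives on, so that the tensor cancellation and the identification with $\boxtimes$ are performed over the correct structure sheaves.
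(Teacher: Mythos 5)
Your argument is correct and matches what the paper intends: the paper offers no written proof, remarking only that the proposition is ``easy from the definition,'' and your unwinding of $\Dsheaf{A}{X\times Y\rightarrow Y}$ via tensor cancellation, the identification of the resulting $\rsheaf{Y}$-module pullback with $\rsheaf{X}\boxtimes\Dmod{M}$, and exactness from flatness of the projection is precisely that routine verification. No gaps beyond the (genuinely routine) check that the $f^{\#}\Dsheaf{A}{Y}\simeq\ntDsheaf{X}\boxtimes\Dsheaf{A}{Y}$ actions agree, which you correctly flag.
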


Recall a result about the direct image functor with respect to the projections of principal bundles.
Let $G$ be a connected reductive algebraic group and $f \colon X\rightarrow Y$ a principal $G$-bundle on a smooth variety $Y$.
In this paper, any principal bundle is locally trivial in the Zariski topology.
Let $\Dsheaf{A}{X}$ be a $G$-equivariant TDO on $X$.
For each $\lambda \in \Hom_{G}(\lie{g}^*, \CC)$, we set
\begin{align*}
	\Dsheaf{A}{Y,\lambda} \coloneq (f_*(\Dsheaf{A}{X})\otimes_{\univ{g}} \End_{\CC}(\CC_{\lambda}))^G.
\end{align*}
Then $\Dsheaf{A}{Y,\lambda}$ is a TDO on $Y$, and if $\lambda$ lifts to a character of $G$, $\Dsheaf{A}{Y,\lambda}$ acts on $(f_*(\rsheaf{X})\otimes \CC_{\lambda})^G$, which is the sheaf of local sections of the line bundle $X\times_G \CC_{\lambda}$.
Moreover, $f^\# \Dsheaf{A}{Y, \lambda}$ is naturally isomorphic to $\Dsheaf{A}{X}$ (see \cite[Lemma 3.11]{Ki23}).
In this case, the direct image functor $Df_{+,\lambda}$ can be represented by a simple way
as we have seen in \cite[Proposition 3.14]{Ki23}.

\begin{fact}\label{fact:DirectImagePrincipal}
	Let $\lambda \in \Hom_{G}(\lie{g}^*, \CC)$.
	Then the functor $Df_{+,\lambda}$ is isomorphic to the left derived functor of the functor
	$\Mod_{qc}(\Dsheaf{A}{X}) \ni \Dmod{M} \mapsto \CC_{\lambda} \otimes_{\univ{g}} f_*(\Dmod{M}) \in \Mod_{qc}(\Dsheaf{A}{Y,\lambda})$.
\end{fact}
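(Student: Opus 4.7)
The plan is to match both functors through the Chevalley-Eilenberg complex associated to the free action of $\lie{g}$ along the fibers of $f$, after reducing $Rf_{*}$ to $f_{*}$. First I would observe that a principal $G$-bundle with $G$ an affine algebraic group is itself an affine morphism, being Zariski-locally of the form $G\times U\to U$ with $G$ affine. Hence $f_{*}$ is exact on $\Mod_{qc}(\rsheaf{X})$ and $Rf_{*}=f_{*}$ on $D^{b}_{qc}$. This reduces $Df_{+,\lambda}$ to the total left derived functor of $\Dmod{M}\mapsto f_{*}(\Dsheaf{A}{Y,\lambda\leftarrow X}\otimes_{\Dsheaf{A}{X}}\Dmod{M})$; the functor in the claim is, similarly, the total left derived functor of $\Dmod{M}\mapsto \CC_{\lambda}\otimes_{\univ{g}} f_{*}(\Dmod{M})$, since $f_{*}$ is already exact. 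It suffices to produce a natural quasi-isomorphism between these two derived functors.

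The key input is that the $G$-equivariance of $\Dsheaf{A}{X}$ gives an embedding $i_{\lie{g}}\colon \univ{g}\to \Dsheaf{A}{X}$ whose image realizes $\lie{g}$ as a frame for the relative tangent sheaf of $f$ via fundamental vector fields. This yields an equivariant identification $\Omega_{f}\simeq \rsheaf{X}\otimes \wedge^{n}\lie{g}^{*}$, where $G$ acts on $\wedge^{n}\lie{g}^{*}$ by $\det\Ad$, which is trivial for connected reductive $G$. Using this together with $f^{\#}\Dsheaf{A}{Y,\lambda}\simeq \Dsheaf{A}{X}$, I would build a locally free Koszul-Chevalley-Eilenberg resolution
\begin{equation*}
	(\Dsheaf{A}{X}\otimes \wedge^{\bullet}\lie{g},\ d_{CE})\xrightarrow{\sim} \Dsheaf{A}{Y,\lambda\leftarrow X}
\end{equation*}
of the transfer bimodule as a right $\Dsheaf{A}{X}$-module, whose differential is attached to $i_{\lie{g}}$ and twisted by $\lambda$ in accordance with the definition of $\Dsheaf{A}{Y,\lambda}$.

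Tensoring this resolution against $\Dmod{M}$ and applying $f_{*}$ (exact by the first step) gives the Chevalley-Eilenberg complex of $f_{*}(\Dmod{M})$ with coefficients in $\CC_{\lambda}$, which is a standard model for $L(\CC_{\lambda}\otimes_{\univ{g}}-)(f_{*}(\Dmod{M}))$, and the identification of the two derived functors follows. The main obstacle is the bookkeeping of twists and equivariance: one must check that the equivariant trivialization of $\Omega_{f}$ combined with the Koszul differential from $i_{\lie{g}}$ produces precisely the character $\lambda$ encoded in $\Dsheaf{A}{Y,\lambda}\coloneq (f_{*}\Dsheaf{A}{X}\otimes_{\univ{g}}\End_{\CC}(\CC_{\lambda}))^{G}$, so that taking $G$-invariants at the end lands the output in $\Mod_{qc}(\Dsheaf{A}{Y,\lambda})$ rather than in a differently twisted category; once this is in place, naturality in $\Dmod{M}$ is routine.
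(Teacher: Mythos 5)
A preliminary remark: the paper does not prove this statement at all; it is quoted as a fact from [Ki23, Proposition 3.14], and the only hints given here are the surrounding remarks (that $f_*$ is exact because $f$ is affine, that one computes $Df_{+,\lambda}$ with locally projective resolutions of $\Dmod{M}$, and that $\sect(V,f_*\Dmod{P})$ is a projective $\univ{g}$-module for locally projective $\Dmod{P}$ and affine $V\subset Y$). Measured against that intended argument, your route is the natural one and is sound in outline: $f$ is affine, so $Rf_*=f_*$; the relative tangent sheaf of a principal bundle is trivialized by the symbols of $i_{\lie{g}}(\lie{g})$ and $\Omega_{f}\simeq\rsheaf{X}\otimes\wedge^{n}\lie{g}^*$ with $\det\Ad$ trivial for connected reductive $G$; and a Koszul--Chevalley--Eilenberg resolution of the transfer bimodule turns $\Dsheaf{A}{Y,\lambda\leftarrow X}\otimes^{L}_{\Dsheaf{A}{X}}\Dmod{M}$ into the Lie algebra homology complex of $f_*(\Dmod{M})$. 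The paper's variant derives in the variable $\Dmod{M}$ (locally projective resolutions) instead of resolving the bimodule; the two are equivalent by the usual balancing argument.

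Two substantive points are missing, and the first is the actual content of the statement. (1) Since $i_{\lie{g}}$ does not depend on $\lambda$, the naive CE differential on $\Dsheaf{A}{X}\otimes\wedge^{\bullet}\lie{g}$ cannot by itself know $\lambda$; the parameter enters only through the right $\Dsheaf{A}{X}$-structure of $\Dsheaf{A}{Y,\lambda\leftarrow X}$, i.e.\ through the isomorphism $f^{\#}\Dsheaf{A}{Y,\lambda}\simeq\Dsheaf{A}{X}$ of [Ki23, Lemma 3.11] and the relation identifying $i_{\lie{g}}(Z)$ with a scalar in $\Dsheaf{A}{Y,\lambda}=(f_*\Dsheaf{A}{X}\otimes_{\univ{g}}\End_{\CC}(\CC_{\lambda}))^{G}$, together with the sign convention $v\cdot X=\lambda({}^tX)v$ defining the right module $\CC_{\lambda}$. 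You flag this as ``bookkeeping'' and defer it, but until it is pinned down your argument does not distinguish $Df_{+,\lambda}$ from $Df_{+,\mu}$ on the same category $\Mod_{qc}(\Dsheaf{A}{X})$, which is precisely what the Fact asserts. (2) You resolve $\Dsheaf{A}{Y,\lambda\leftarrow X}$ only as a right $\Dsheaf{A}{X}$-module; that identifies the underlying complexes of sheaves, but the Fact is an isomorphism of functors into $\Mod_{qc}(\Dsheaf{A}{Y,\lambda})$, so the quasi-isomorphism must be checked to be compatible with the left $f^{-1}\Dsheaf{A}{Y,\lambda}$-action (only $G$-invariant left multiplications descend to the CE complex, which is exactly where the invariants in the definition of $\Dsheaf{A}{Y,\lambda}$ are used). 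Relatedly, the final step ``the identification of the two derived functors follows'' silently uses that $f_*$ sends locally projective $\Dsheaf{A}{X}$-modules to sheaves whose sections over affines are $\univ{g}$-projective ([Ki23, Lemma 3.15], quoted right after the Fact): this acyclicity is what lets the CE complex of $f_*(\Dmod{M})$ compute the left derived functor of the composite $\Dmod{M}\mapsto\CC_{\lambda}\otimes_{\univ{g}}f_*(\Dmod{M})$, and not merely $\Tor^{\univ{g}}_{\bullet}(\CC_{\lambda},f_*(\Dmod{M}))$.
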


Note that $f_*$ is an exact functor on $\Mod_{qc}(\Dsheaf{A}{X})$ since $f$ is affine.
To compute $Df_{+,\lambda}(\Dmod{M})$, we need to take a locally projective resolution of $\Dmod{M}$.
It is easy to see that $\sect(V, f_*\Dmod{P})$ is a projective $\univ{g}$-module for any locally projective $\Dmod{P} \in \Mod_{qc}(\Dsheaf{A}{X})$ and affine open subset $V\subset Y$.
See \cite[Lemma 3.15]{Ki23}.

\subsection{Localization of Zuckerman derived functor}

We shall consider a localization of the Zuckerman derived functor.
We refer the reader to \cite{Bi90}, \cite{Ki12} and \cite[Section 6]{Ki23}.

Let $X$ be a smooth $G$-variety and $\Dsheaf{A}{X}$ a $G$-equivariant TDO on $X$.
Write $p, m\colon G\times X\rightarrow X$ for the projection and the multiplication map.
For $\Dmod{M} \in \Mod_{qc}(\Dsheaf{A}{X})$ and $i \in \NN$, we set
\begin{align*}
	\DlocZuck{G}{}{i}(\Dmod{M})\coloneq H^{-n+i}\circ Dm_{+}\circ p^*(\Dmod{M}) \in \Mod(\Dalg{A}{X}),
\end{align*}
where $n = \dim(\lie{g})$.
By Facts \ref{prop:InverseImageDirectProduct} and \ref{fact:DirectImagePrincipal}, we have $\DlocZuck{G}{}{i}(\Dmod{M}) \simeq H^i(\lie{g}; m_*(\rsheaf{G}\boxtimes \Dmod{M}))$.
Note that $H^i(\lie{g};\cdot )\simeq H_{n-i}(\lie{g}; \cdot)$ by the Poincar\'e duality (see \cite[Corollary 3.6]{HTT08}).
The functor $\DlocZuck{G}{}{i}$ is a localization of the Zuckerman derived functor $\Dzuck{G}{}{i}$ in the following sense.

\begin{proposition}\label{prop:LocalizationZuckerman}
	Let $\Dmod{M} \in \Mod_{qc}(\Dsheaf{A}{X})$ be an acyclic sheaf.
	Take a free resolution $\cpx{F}$ of the $\lie{g}$-module $\CC$.
	Then there exists an isomorphism 
	\begin{align*}
		R\sect(Dm_{+}\circ p^*(\Dmod{M})) \simeq \cpx{F} \otimes_{\univ{g}} (\rring{G}\otimes \sect(\Dmod{M}))
	\end{align*}
	in $D^b(\Dalg{A}{X})$.
\end{proposition}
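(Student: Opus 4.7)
The plan is to unwind the direct image $Dm_{+}$ via Fact \ref{fact:DirectImagePrincipal} and then push $R\sect$ inside the resulting derived tensor product, using the acyclicity hypothesis on $\Dmod{M}$. First I would observe that $m\colon G\times X\to X$ is a principal $G$-bundle with respect to the action $h\cdot(g,x)=(gh^{-1},hx)$, so that $m^{\#}\Dsheaf{A}{X,0}\simeq p^{\#}\Dsheaf{A}{X}\simeq \ntDsheaf{G}\boxtimes\Dsheaf{A}{X}$ and Fact \ref{fact:DirectImagePrincipal} applies with $\lambda=0$. Since $m$ is affine, $m_{*}$ is exact on quasi-coherent sheaves, and since $p^{*}$ is exact by Proposition \ref{prop:InverseImageDirectProduct}, the composite $Dm_{+}\circ p^{*}$ coincides with $L(\CC\otimes_{\univ{g}}(\cdot))\circ m_{*}\circ p^{*}$; by Proposition \ref{prop:InverseImageDirectProduct} the inner part sends $\Dmod{M}$ to $m_{*}(\rsheaf{G}\boxtimes\Dmod{M})$.

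Next I would compute the derived tensor product with the free resolution $\cpx{F}\to\CC$. Each $F^{i}$ is $\univ{g}$-flat, so
\begin{align*}
	Dm_{+}\circ p^{*}(\Dmod{M})\simeq \cpx{F}\otimes_{\univ{g}}m_{*}(\rsheaf{G}\boxtimes\Dmod{M})
\end{align*}
in $D^{b}_{qc}(\Dsheaf{A}{X})$. The remaining task is to apply $R\sect$ and move it past the tensor product. Affineness of $m$ gives $R\sect\simeq R\sect\circ m_{*}$, and affineness of $G$ combined with the acyclicity of $\Dmod{M}$ makes $\rsheaf{G}\boxtimes\Dmod{M}$ itself acyclic with $\sect(\rsheaf{G}\boxtimes\Dmod{M})=\rring{G}\otimes\sect(\Dmod{M})$. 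Writing each $F^{i}$ as $\univ{g}^{(I_{i})}$, the term $F^{i}\otimes_{\univ{g}}m_{*}(\rsheaf{G}\boxtimes\Dmod{M})$ is a direct sum of copies of an acyclic quasi-coherent sheaf on the Noetherian variety $X$, so it remains acyclic and its global sections are the corresponding direct sum; assembling these identifications yields the claimed isomorphism in $D^{b}(\Dalg{A}{X})$.

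The step I expect to be most delicate is the bookkeeping of $G$-equivariant and $\univ{g}$-module structures: one has to verify that the $\lie{g}$-action on $m_{*}(\rsheaf{G}\boxtimes\Dmod{M})$ used in the tensor product over $\univ{g}$ matches, after the change of variables $(g,x)\mapsto(g,gx)$ that trivialises the principal bundle, the diagonal of the right regular action on $\rring{G}$ and the given $\lie{g}$-action on $\sect(\Dmod{M})$. A secondary point is the interchange of $R\sect$ with the possibly infinite direct sums appearing in the free resolution; this I would handle using the quasi-projectivity, hence Noetherianity, of $X$.
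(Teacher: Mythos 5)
Your argument is correct and takes essentially the same route as the paper's proof: identify $Dm_{+}\circ p^{*}(\Dmod{M})$ with $\cpx{F}\otimes_{\univ{g}} m_{*}(\rsheaf{G}\boxtimes \Dmod{M})$ via Proposition \ref{prop:InverseImageDirectProduct} and Fact \ref{fact:DirectImagePrincipal}, then use acyclicity of $\Dmod{M}$ (hence of every term of the complex) to compute $R\sect$ termwise and obtain $\cpx{F}\otimes_{\univ{g}}(\rring{G}\otimes \sect(\Dmod{M}))$. The additional points you spell out (triviality of the principal bundle $m$, compatibility of the $\lie{g}$-actions after untwisting, and commuting cohomology with the direct sums coming from the free resolution) are precisely the details the paper leaves implicit, so no essential difference.
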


\begin{proof}
	Since $\Dmod{M}$ is acyclic, $\cpx{F}\otimes_{\univ{g}} m_*(\rsheaf{G}\boxtimes \Dmod{M})$ is a complex of acyclic sheaves,
	and hence
	\begin{align*}
		R\sect(Dm_{+}\circ p^*(\Dmod{M})) &=  R\sect(\cpx{F}\otimes_{\univ{g}} m_*(\rsheaf{G}\boxtimes \Dmod{M})) \\
		&\simeq \cpx{F} \otimes_{\univ{g}} \sect(m_*(\rsheaf{G}\boxtimes \Dmod{M})) \\
		&\simeq \cpx{F} \otimes_{\univ{g}} (\rring{G}\otimes \sect(\Dmod{M}))
	\end{align*}
	by Fact \ref{fact:DirectImagePrincipal}.
	We have shown the assertion.
\end{proof}

Taking the cohomology of $\cpx{F} \otimes_{\univ{g}} (\rring{G}\otimes \sect(\Dmod{M}))$, we obtain the $G$-module $\Dzuck{G}{}{i}(\sect(\Dmod{M}))$.
The $\Dalg{A}{X}$-action on $\Dzuck{G}{}{i}(\sect(\Dmod{M}))$ to define the Zuckerman derived functor coincides with the one given by the $\Dsheaf{A}{X}$-action on $R\sect(Dm_{+}\circ p^*(\Dmod{M}))$.
See \cite[I.3.1]{BoWa00_continuous_cohomology} for the $\Dalg{A}{X}$-action on $\rring{G}\otimes \sect(\Dmod{M})$.
We omit the description of the action.

\begin{corollary}\label{cor:FiniteZuckerman}
	Let $\Dmod{M} \in \Mod_{qc}(\Dsheaf{A}{X})$ be an acyclic sheaf.
	Assume that all $H^p(X, \DlocZuck{G}{}{q}(\Dmod{M}))$ have finite length.
	Then all $\Dzuck{G}{}{i}(\sect(\Dmod{M}))$ have finite length.
\end{corollary}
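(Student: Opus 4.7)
The plan is to combine Proposition \ref{prop:LocalizationZuckerman} with the hypercohomology spectral sequence for $R\sect$. First, I would apply the standard spectral sequence
\begin{align*}
E_2^{p,q} = H^p(X, H^q(Dm_{+}\circ p^*(\Dmod{M}))) \Rightarrow H^{p+q}(R\sect(Dm_{+}\circ p^*(\Dmod{M}))),
\end{align*}
which lives in the category $\Mod(\Dalg{A}{X})$. By the very definition of $\DlocZuck{G}{}{\cdot}$, after the shift $q \mapsto q - n$, the $E_2$-page is $H^p(X, \DlocZuck{G}{}{q}(\Dmod{M}))$, which is finite length by hypothesis.

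Next, I would identify the abutment. By Proposition \ref{prop:LocalizationZuckerman},
\begin{align*}
H^{\cxdot}(R\sect(Dm_{+}\circ p^*(\Dmod{M}))) \simeq H^{\cxdot}(\cpx{F}\otimes_{\univ{g}}(\rring{G}\otimes \sect(\Dmod{M}))),
\end{align*}
and the right-hand side computes $\Tor^{\univ{g}}_{\cxdot}(\CC, \rring{G}\otimes \sect(\Dmod{M})) = H_{\cxdot}(\lie{g}; \rring{G}\otimes \sect(\Dmod{M}))$. Applying the Poincaré duality $H_i(\lie{g};\cdot)\simeq H^{n-i}(\lie{g};\cdot)$ and matching with the definition of $\Dzuck{G}{}{\cdot}$, one checks that, after the same shift $q \mapsto q - n$, the abutment becomes $\Dzuck{G}{}{p+q}(\sect(\Dmod{M}))$. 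So the spectral sequence reads
\begin{align*}
E_2^{p,q} = H^p(X, \DlocZuck{G}{}{q}(\Dmod{M})) \Rightarrow \Dzuck{G}{}{p+q}(\sect(\Dmod{M})).
\end{align*}

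Finally, I would note that the spectral sequence is bounded: $p$ is bounded by the cohomological dimension of $X$ (which is finite since $X$ is a quasi-projective smooth variety), and $q$ is bounded because one can choose $\cpx{F}$ to be the Chevalley--Eilenberg resolution of $\CC$ over $\univ{g}$, which has length $n = \dim(\lie{g})$. Therefore each $\Dzuck{G}{}{i}(\sect(\Dmod{M}))$ carries a finite filtration whose associated graded pieces are subquotients of finitely many $E_2^{p,q}$, and hence has finite length as a $\Dalg{A}{X}$-module.

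The only mild obstacle is the bookkeeping: verifying that the indexing shift between $\DlocZuck{G}{}{q} = H^{-n+q}(Dm_{+}\circ p^*(\cdot))$ and the Poincaré-dual identification of $\Tor^{\univ{g}}$ with $\Dzuck{G}{}{\cdot}$ cancels correctly so that the abutment lands in degree $p+q$, and ensuring that the spectral sequence is one of $\Dalg{A}{X}$-modules so that ``finite length'' is preserved from $E_2$ to $E_\infty$. No deeper idea beyond the two ingredients already supplied is needed.
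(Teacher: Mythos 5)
Your proposal is correct and follows essentially the same route as the paper: the paper likewise invokes Proposition \ref{prop:LocalizationZuckerman} to identify $\Dzuck{G}{}{i}(\sect(\Dmod{M}))$ with $H^{i-n}\circ R\sect(Dm_{+}\circ p^*(\Dmod{M}))$ and then deduces finite length by the truncation-functor/long-exact-sequence argument, which is exactly the content of the hypercohomology spectral sequence you write down. The indexing shift and the compatibility of the $\Dalg{A}{X}$-actions that you flag as bookkeeping are precisely the points the paper handles via the remark following Proposition \ref{prop:LocalizationZuckerman}.
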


\begin{proof}
	Take a free resolution $\cpx{F}$ of the $\lie{g}$-module $\CC$.
	Then we have isomorphisms
	\begin{align*}
		\Dzuck{G}{}{i}(\sect(\Dmod{M})) \simeq H^{i-n}(\cpx{F}\otimes_{\univ{g}}(\rring{G}\otimes \sect(\Dmod{M}))) \\
		\simeq H^{i-n}\circ R\sect(Dm_{+}\circ p^*(\rsheaf{G}\boxtimes \Dmod{M}))
	\end{align*}
	of $\Dalg{A}{X}$-modules by Proposition \ref{prop:LocalizationZuckerman}.
	By using the truncation functor for $Dm_{+}\circ p^*(\rsheaf{G}\boxtimes \Dmod{M})$ and the long exact sequence,
	the assertion is proved.
	The argument is standard, so we omit the details.
	See e.g.\ \cite[Subsection 2.3]{Ki23}.
\end{proof}

\section{Basic affine space}

In this section, we deal with $\ntDsheaf{}$-modules on the basic affine space 
and their global sections.
We can consider a $\ntDsheaf{}$-module on the basic affine space $G/U$
as a `good' family of $\lie{g}$-modules.
We recall the key fact by Bezrukavnikov--Braverman--Positselskii \cite{BBP02}
and show fundamental results about $\ntDsheaf{}$-modules on $G/U$.

\subsection{Differential operators on basic affine space}

Let $G$ be a simply-connected connected semisimple algebraic group.
Fix a Borel subgroup $B=TU$ of $G$ with a maximal torus $T$ and unipotent radical $U$.
Let $\roots$ denote the set of roots for $(\lie{g}, \lie{t})$
and $\proots$ the set of all roots in $\lie{u}$.
Set $\rho\coloneq \frac{1}{2}\sum_{\alpha \in \proots} \alpha$.
Write $W_G$ for the Weyl group of $G$.

The variety $G/U$ is called the basic affine space.
The basic affine space $G/U$ is a quasi-affine variety with the natural principal $T$-bundle structure $p\colon G/U \rightarrow G/B$.
The coordinate ring $\rring{G/U}$ has the irreducible decomposition
\begin{align*}
	\rring{G/U} = \bigoplus_{\lambda} \rring{G/U}(\lambda)
\end{align*}
as a $G$-module, where the sum is taken over all dominant integral weights $\lambda$
and $\rring{G/U}(\lambda)$ denotes the isotypic component with the highest weight $\lambda$.
Then each $\rring{G/U}(\lambda)$ is irreducible and $\rring{G/U}(\lambda)\cdot \rring{G/U}(\mu) = \rring{G/U}(\lambda + \mu)$
for any $\lambda$ and $\mu$.
In particular, $\rring{G/U}$ is finitely generated.

Since $\rring{G/U}$ is finitely generated, $G/U$ has a canonical affine closure $\overline{G/U}$,
that is, $\overline{G/U}$ is the affine variety with the coordinate ring $\rring{G/U}$.
It is known that $\overline{G/U}$ is normal and $\overline{G/U}-G/U$ has codimension greater than 2.
In fact, $G$-stable radical ideals of $\rring{G/U}$ can be easily described by fundamental weights.
For a $\ntDalg{G/U}$-module $M$, its localization $\loc(M)$ is defined by $\loc(M)\coloneq \ntDsheaf{G/U}\otimes_{\ntDalg{G/U}}M$.
By definition, $\loc$ is left adjoint to the global section functor $\sect$.
The following properties are basic.
See e.g.\ \cite[Subsection 1.1]{BBP02} and \cite[Section 2]{LeSt06} for the details.

\begin{proposition}\label{prop:FundamentalBasicAffine}
	Let $M$ be a $\ntDalg{G/U}$-module.
	\begin{enumerate}
		\item $\loc(M)$ is naturally isomorphic to $\rsheaf{G/U}\otimes_{\rring{G/U}}M$ as an $\rsheaf{G/U}$-module.
		\item The localization functor $\loc$ is exact.
		\item The counit $\loc\circ \sect \rightarrow \id$ is a natural isomorphism.
	\end{enumerate}
\end{proposition}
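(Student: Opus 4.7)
The plan is to reduce all three claims to local computations on an affine open cover of $G/U$ adapted to its quasi-affine structure. Writing $\overline{G/U}$ for the affine closure and $I\subset \rring{G/U}$ for the ideal cutting out $\overline{G/U}\setminus G/U$, the principal opens $V_f\coloneq D(f)$ with $f\in I$ all lie in $G/U$ and cover it (since no $x\in G/U$ lies in $\Variety(I)$). Each $V_f$ is smooth affine with $\rring{V_f}=\rring{G/U}[f^{-1}]$, and the essential local input is
\begin{align*}
\ntDalg{V_f}=\ntDalg{G/U}[f^{-1}],
\end{align*}
the Ore localization at $\{f^n:n\ge 0\}$. The Ore property holds because the order-filtered associated graded $\gr\ntDalg{G/U}\simeq \rring{T^*(G/U)}$ is a commutative Noetherian domain.

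For (1) and (2), the inclusion $\rring{G/U}\hookrightarrow \ntDalg{G/U}$ gives a natural map $\rsheaf{G/U}\otimes_{\rring{G/U}}M\to \loc(M)$. I verify it is an isomorphism by checking that both sides have sections $M[f^{-1}]$ over each $V_f$, computed as $\rring{G/U}[f^{-1}]\otimes_{\rring{G/U}}M$ on the left and $\ntDalg{G/U}[f^{-1}]\otimes_{\ntDalg{G/U}}M$ on the right. Claim (2) then follows immediately, as on each $V_f$ the functor $\loc$ becomes $M\mapsto M[f^{-1}]$, which is exact.

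For (3), given $\Dmod{M}\in\Mod_{qc}(\ntDsheaf{G/U})$, I check that the counit $\rsheaf{G/U}\otimes_{\rring{G/U}}\sect(\Dmod{M})\to \Dmod{M}$ is an isomorphism on each $V_f$. The left side restricts to $\sect(\Dmod{M})[f^{-1}]$. For the right side, I push $\Dmod{M}$ forward along the quasi-compact open immersion $j\colon G/U\hookrightarrow \overline{G/U}$; then $j_*\Dmod{M}$ is quasi-coherent on the affine variety $\overline{G/U}$, so
\begin{align*}
\sect(V_f,\Dmod{M})=\sect(D(f),j_*\Dmod{M})=\sect(\overline{G/U},j_*\Dmod{M})[f^{-1}]=\sect(\Dmod{M})[f^{-1}]
\end{align*}
by the standard affine-scheme localization formula, matching the left side. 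The main obstacle is the local identity $\ntDalg{V_f}=\ntDalg{G/U}[f^{-1}]$ as an Ore localization; once this is in hand, together with the preservation of quasi-coherence by $j_*$, the whole proposition reduces to familiar commutative-algebra localization.
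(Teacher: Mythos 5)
Your proposal is correct, and it is essentially the standard argument: the paper itself gives no proof of this proposition, deferring to \cite[Subsection 1.1]{BBP02} and \cite[Section 2]{LeSt06}, whose treatment proceeds exactly as you do --- cover $G/U$ by the principal opens $D(f)$ with $f$ in the boundary ideal, identify $\sect(D(f),\cdot)$ with localization at $f$ via pushforward along the quasi-compact open immersion into $\overline{G/U}$, and use $\ntDalg{V_f}\simeq \rring{V_f}\otimes_{\rring{G/U}}\ntDalg{G/U}$ to reduce all three statements to commutative localization. One cosmetic remark: the Ore property of $\{f^n\}$ in $\ntDalg{G/U}$ is best justified not by Noetherianity or integrality of $\gr\ntDalg{G/U}$ but simply by its commutativity, which makes $\ad(f)$ locally nilpotent since $f$ has order zero; this is the standard criterion and closes the only loosely stated step in your argument.
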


Note that $\overline{G/U}$ may be singular in general.
This is the reason why the relation between $\Mod_{qc}(\ntDsheaf{G/U})$ and $\Mod(\ntDalg{G/U})$
is more complicated than that of $G/B$.

We state three facts related to the algebraic structure of $\ntDalg{G/U}$.
\begin{fact}\label{fact:dalg}
	Under the above setting, the algebra $\ntDalg{G/U}$ satisfies the following properties.
	\begin{enumerate}
		\item(\cite[Theorem 1.1]{BBP02}) $\ntDalg{G/U}$ is left and right Noetherian.
		\item(\cite[Theorem 1.1]{LS06}) $\ntDalg{G/U}$ is a finitely generated simple ring satisfying the 
			Auslander--Gorenstein and Cohen--Macaulay conditions.
		\item(\cite[Corollary 3.6.1, Lemma 3.6.2]{GR15}) The associated graded ring $\gr \ntDalg{G/U}$ with respect to the order filtration is isomorphic to the finitely generated $\CC$-algebra $\rring{T^*(G/U)}$.
	\end{enumerate}
\end{fact}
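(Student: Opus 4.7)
Since this Fact assembles results from three separate sources, the plan is to organize a proof around item (3), the identification $\gr \ntDalg{G/U} \simeq \rring{T^*(G/U)}$, and then to deduce (1) and (2) as consequences of that structural isomorphism combined with general facts about filtered algebras.

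First I would set up the order filtration $F_\bullet \ntDsheaf{G/U}$, take global sections to obtain a filtration $F_\bullet \ntDalg{G/U}$, and construct the principal symbol map $\sigma\colon \gr \ntDalg{G/U} \hookrightarrow \rring{T^*(G/U)}$. Injectivity is automatic from the definition of the order filtration on a sheaf of TDOs. For surjectivity I would exploit $G$-equivariance: both sides are $G\times T$-modules under left translation, so it suffices to match their isotypic components. One side, $\rring{T^*(G/U)}$, decomposes explicitly using the fact that $T^*(G/U) \to G/U$ is a trivial vector bundle pulled back from $G/B$, while the other side is filtered by orders of left-invariant differential operators coming from $\univ{g}$ acting on $\rring{G/U}$. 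Since $\overline{G/U}$ is normal and $\overline{G/U} - G/U$ has codimension $\geq 2$, a Hartogs-type extension argument lets one work with $\rring{\overline{G/U}}$ on both sides, and matching isotypic components reduces to a combinatorial identity on highest weights.

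With (3) in hand, noetherianity in (1) follows by lifting: $\rring{T^*(G/U)}$ is a finitely generated commutative $\CC$-algebra, hence noetherian, and noetherianity passes from $\gr$ to the filtered algebra provided the filtration is exhaustive and separated, which holds here. Finite generation of $\ntDalg{G/U}$ in (2) is obtained by choosing lifts of a finite generating set of $\gr \ntDalg{G/U}$; simplicity follows from the facts that $\ntDalg{G/U}$ acts faithfully on $\rring{G/U}$ and that any nonzero two-sided ideal would, upon taking associated graded, give a nonzero $G$-stable ideal of $\rring{T^*(G/U)}$ contained in a proper subspace, which can be ruled out using the irreducible $G$-decomposition. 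The Auslander--Gorenstein and Cohen--Macaulay conditions transfer from $\rring{T^*(G/U)}$ via the standard machinery for filtered algebras whose associated graded is a commutative Cohen--Macaulay ring of finite injective dimension; here $T^*(G/U)$ is a smooth symplectic variety with nice boundary behavior in its affinization, supplying both properties.

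The hard step will be the surjectivity of the symbol map in (3). The subtlety is that $G/U$ is only quasi-affine and $\overline{G/U}$ may be singular, so one cannot reduce cleanly to the affine case or to differential operators on a smooth ambient affine space. Controlling the behavior at the boundary $\overline{G/U}\setminus G/U$, and producing enough globally defined differential operators to realize every regular function on $T^*(G/U)$ as a symbol, is where the real work lies; this is precisely the content of \cite{BBP02} and of Ginzburg--Riche's refinement in \cite{GR15}, and any self-contained proof would have to reproduce their analysis of the moment map $T^*(G/U) \to \lie{g}^*$ and the resulting explicit generation of $\ntDalg{G/U}$ by operators arising from $\univ{g} \otimes \univ{t}$.
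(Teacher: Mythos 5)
This statement is quoted in the paper as a \emph{Fact}, with no proof given: items (1)--(3) are simply attributed to \cite{BBP02}, \cite{LS06} and \cite{GR15}, so there is no internal argument to compare yours against. Judged on its own terms, your outline of deducing (1) and the finite generation in (2) from (3) via the standard filtered--graded transfer is fine (the order filtration is exhaustive and bounded below, so noetherianity and finite generation do lift from $\gr \ntDalg{G/U}$), but the rest of the plan has genuine gaps. First, your simplicity argument does not work as stated: a nonzero two-sided ideal $I$ of $\ntDalg{G/U}$ is indeed $G$-stable (the $G$-action is inner via $L(\univ{g})$), but $\gr I$ is then merely a nonzero $G$-stable Poisson ideal of $\rring{T^*(G/U)}$, and such ideals are plentiful (e.g.\ the ideal of the zero section), so no contradiction arises from the $G$-decomposition alone. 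Simplicity is a genuine theorem of Levasseur--Stafford, whose proof uses the Gelfand--Graev/Fourier symmetries and the structure of $\ntDalg{G/U}$ as a $\univ{g}$-bimodule, not a symbol-level argument. Second, the Auslander--Gorenstein and Cohen--Macaulay conditions transfer from $\gr$ only if one knows that $\rring{T^*(G/U)}$ (the coordinate ring of the \emph{affinization}, which may be singular) is itself Gorenstein and Cohen--Macaulay; ``smooth symplectic with nice boundary behavior'' does not supply this, and establishing such properties of the affinized cotangent bundle is nontrivial.

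Third, even within (3) your sketch underestimates what is being cited. Finite generation of $\rring{T^*(G/U)}$ is not automatic for a quasi-affine variety and is precisely \cite[Lemma 3.6.2]{GR15}; and the surjectivity of the symbol map $\gr \ntDalg{G/U} \rightarrow \rring{T^*(G/U)}$ is not a matter of matching $G\times T$-isotypic components by a combinatorial identity --- this was the open point settled by Ginzburg--Riche by quite different (affine Grassmannian) methods, as you partly acknowledge. Also note that a Hartogs-type reduction to $\overline{G/U}$ is delicate here because the boundary codimension statement lives on the possibly singular affinization, not on a smooth ambient space. In short: since you defer the substance of (3) to the literature in any case, the honest conclusion is that (1)--(3) should simply be cited as the paper does; the parts of your argument that go beyond citation (simplicity and the homological conditions in (2)) are the parts that do not hold up.
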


We denote by $L$ the left translation of $G$ on $\rring{G/U}$
and by $R$ the right translation of $T$ on $\rring{G/U}$.
Then the action is given by
\begin{align}
	L(g)R(t)f(\cdot)= f(g^{-1}\cdot t) \quad (g \in G, t \in T, f \in \rring{G/U}). \label{eqn:ActionGT}
\end{align}
For an integral weight $\lambda \in \lie{t}^*$, the $\rsheaf{G/B}$-module $(p_* \rsheaf{G/U} \otimes \CC_{\lambda})^T$
is isomorphic to the invertible sheaf $\Dmod{L}_{\lambda}$ of local sections of the line bundle $G\times_{B}\CC_{\lambda} \rightarrow G/B$.
Let $\rring{G/U}^\mu$ denote the weight space of the $T$-module $(R, \rring{G/U})$ with weight $\mu$.
In our convention, we have
\begin{align*}
	\rring{G/U}(\lambda) = \rring{G/U}^{-\longw(\lambda)} \simeq \sect(G/B, \Dmod{L}_{\longw(\lambda)}),
\end{align*}
where $\longw$ is the longest element of the Weyl group $W_G$ of $G$.
The image of $\univcent{g}$ by the composition $\univcent{g}\xrightarrow{L} L(\univ{g})\cap R(\univ{t}) \xrightarrow{R^{-1}} \univ{t}$ is the subalgebra of $W$-invariants via the action
\begin{align}
	w\cdot H = w(H) - \rho(H) + \rho(w(H)) \quad (w \in W_G, H \in \lie{t}). \label{eqn:Waction}
\end{align}
Remark that the $T$-action on $\rring{G/U}$ in \cite{BBP02} is a little different from our action.

\subsection{Review of Fourier transforms on basic affine space}
\label{subsection:reviewFourier}

Here we review Fourier transforms on the basic affine space and the results of Bezrukavnikov--Braverman--Positselskii \cite{BBP02}.
We refer the reader to \cite[Section 1]{Ka95} and \cite[Section 3]{BBP02}.
Retain the notation in the previous subsection.

First we recall the symplectic Fourier transform of differential operators on $\CC^2$.
Write $(x, y)$ for the standard coordinate of $\CC^2$.
The symplectic Fourier transform $F$ of $\ntDalg{\CC^2}$ is the automorphism satisfying
\begin{align*}
	F(x) = -\frac{\partial}{\partial y}, \quad F(y) = \frac{\partial}{\partial x}, \quad 
	F\left(\frac{\partial}{\partial x}\right) = y, \quad F\left(\frac{\partial}{\partial y}\right) = -x,
\end{align*}
which is induced from the symplectic form $(\cdot, \cdot)$ on $\CC^2$ with $(e_1, e_2) = 1$.
It is easy to see that $F$ is $\LieSL(2,\CC)$-equivariant and an involution.
If the symplectic form $(\cdot, \cdot)$ is replaced with its scalar multiple, then $F$ is twisted by the $\CC^\times$-action on $\ntDalg{\CC^2}$.

Return to the setting of $G/U$.
Let $\alpha$ be a simple root in $\proots$.
We denote by $\LieSL_{\alpha}$ the subgroup of $G$ isomorphic to $\LieSL(2,\CC)$
whose Lie algebra is generated by $\lie{g}_{\alpha}\oplus \lie{g}_{-\alpha}$.
Write $P_{\alpha}$ for the parabolic subgroup of $G$ generated by $B$ and $\LieSL_{\alpha}$,
and $U_{\alpha}$ for the unipotent radical of $P_{\alpha}$.
Then we have $[P_{\alpha}, P_{\alpha}]=\LieSL_{\alpha}\cdot U_{\alpha} \supset U$.

The natural surjection $G/U \rightarrow G/[P_{\alpha}, P_{\alpha}]$ is a fiber bundle with fiber $[P_\alpha, P_\alpha]/U \simeq \LieSL_\alpha / (\LieSL_\alpha \cap U) \simeq \CC^2-\set{0}$.
The fiber bundle is embedded in a $G$-equivariant vector bundle $\overline{G/U}^{\alpha}$ with fiber $\CC^2$ via
\begin{align*}
	\xymatrix{
	G/U \simeq G\times_{[P_{\alpha},P_{\alpha}]} (\CC^2-\set{0}) \ar[r]\ar@{_(->}[d]& G/[P_{\alpha}, P_{\alpha}] \\
	\overline{G/U}^{\alpha} \coloneq G\times_{[P_{\alpha},P_{\alpha}]} \CC^2. \ar[ur] & 
	}
\end{align*}
The representation of $\LieSL_{\alpha} \subset [P_{\alpha},P_{\alpha}]$ on $\CC^2$ is isomorphic to the natural representation of $\LieSL(2,\CC)$,
and $U_{\alpha}$ acts on $\CC^2$ trivially.

Since $\CC^2$ has a unique $\LieSL_{\alpha}$-invariant symplectic form up to scalar,
the vector bundle $\overline{G/U}^{\alpha}$ has a unique $G$-invariant symplectic form up to scalar via
\begin{align*}
	\overline{G/U}^{\alpha} \times_{G/[P_{\alpha}, P_{\alpha}]} \overline{G/U}^{\alpha}
	\simeq G\times_{[P_{\alpha}, P_{\alpha}]}(\CC^2\times \CC^2) \rightarrow \CC.
\end{align*}
Considering the symplectic Fourier transform fiberwise, we obtain an algebra automorphism of $\ntDalg{\overline{G/U}^{\alpha}}$.
Since the codimension of $\overline{G/U}^{\alpha}-G/U$ is greater than 2, we have $\ntDalg{\overline{G/U}^{\alpha}}=\ntDalg{G/U}$
and hence we obtain an automorphism $F_{s_{\alpha}}$ of $\ntDalg{G/U}$.
We call $F_{s_{\alpha}}$ a (partial) Fourier transform of $\ntDalg{G/U}$ corresponding to the simple reflection $s_{\alpha}$.
It is easy to see that $F_{s_{\alpha}}$ is $G$-equivariant because $F_{s_{\alpha}}$ is constructed from the $G$-invariant symplectic form.

For each simple root $\alpha$, we fix a $G$-invariant symplectic form on the vector bundle $\overline{G/U}^{\alpha}$
and the corresponding Fourier transform $F_{s_{\alpha}}$.
It is stated in \cite[Proposition 3.1]{BBP02} that the following fact is an unpublished work of S.\ Gelfand and M.\ Graev.

\begin{fact}\label{fact:WeylToAut}
	The assignment $s_{\alpha} \mapsto F_{s_{\alpha}} \in \Aut(\ntDalg{G/U})$ is extended uniquely to a group homomorphism $F_{\cdot}: W_G \rightarrow \Aut(\ntDalg{G/U})$ of the Weyl group $W_G$ of $G$.
\end{fact}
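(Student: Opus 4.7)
The uniqueness is automatic, since the simple reflections $\{s_\alpha\}$ generate $W_G$; the entire task is to verify that the prescribed generators satisfy the Coxeter relations, namely $F_{s_\alpha}^2 = \id$ and $(F_{s_\alpha} F_{s_\beta})^{m_{\alpha\beta}} = \id$ for every pair of simple roots $\alpha, \beta$ with $m_{\alpha\beta}$ the order of $s_\alpha s_\beta$ in $W_G$. The involution property is immediate: the symplectic Fourier transform on $\ntDalg{\CC^2}$ squares to the identity by the defining formulas ($x \mapsto -\partial_y \mapsto -x$ etc.), and this property is preserved by the fiberwise extension over $\overline{G/U}^\alpha$ used to define $F_{s_\alpha}$.

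The plan for the braid relations is to reduce to rank two. Given two simple roots $\alpha, \beta$, let $P_{\alpha,\beta}$ be the standard parabolic they generate and let $L_{\alpha,\beta}$ be its Levi, with derived group $L'_{\alpha,\beta}$ of semisimple rank $2$. The construction of $F_{s_\alpha}$ and $F_{s_\beta}$ is compatible with restriction along the $P_{\alpha,\beta}$-equivariant fibration $G/U \to G/[P_{\alpha,\beta}, P_{\alpha,\beta}]$, whose fibers are identified with the basic affine space of $L'_{\alpha,\beta}$. Hence the verification reduces to rank two: $A_1\times A_1$ ($m=2$), $A_2$ ($m=3$), $B_2$ ($m=4$) and $G_2$ ($m=6$). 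In the reducible case $A_1\times A_1$ the two subgroups $\LieSL_\alpha$ and $\LieSL_\beta$ commute, the two rank-one completions are compatible, and the Fourier transforms commute on the nose.

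For the remaining three rank-two types I would avoid a head-on computation by exploiting the subalgebra $R(\univ{t}) \subset \ntDalg{G/U}$. A direct check in a rank-one $\LieSL$ fibration (Weyl algebra on $\CC^2$) shows that $F_{s_\alpha}$ acts on $R(H)$ by $R(s_\alpha \cdot H)$ with the $\rho$-shifted action of \eqref{eqn:Waction}; consequently any Coxeter word in the $F_{s_\alpha}$ corresponding to a relation in $W_G$ fixes $R(\univ{t})$ pointwise. Combined with $G$-equivariance (each $F_{s_\alpha}$ commutes with the $L$-action of $G$), it remains to show that a $G$-equivariant automorphism $\Phi$ of $\ntDalg{G/U}$ restricting to the identity on $R(\univ{t})$ must itself be the identity. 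For this I would use the $(G\times T)$-isotypic decomposition arising from $\rring{G/U} = \bigoplus_\lambda \rring{G/U}(\lambda)$, together with the presentation of $\ntDalg{G/U}^T$ as $\univ{g}\otimes_{\univcent{g}}\univ{t}$ recalled in the introduction, to conclude that $\Phi$ acts as a character on each isotypic component and that this character is forced to be trivial by its triviality on $R(\univ{t})$.

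The main obstacle will be the last rigidity statement, since $\ntDalg{G/U}$ is not affine and contains many $G$-invariants beyond $R(\univ{t})$. The alternative is to forego the rigidity argument and perform the $A_2$, $B_2$, $G_2$ braid relations by brute-force computation in the symplectic Fourier transform picture, using explicit coordinates on the total space of $\overline{G/U}^\alpha \to G/[P_\alpha,P_\alpha]$; this is conceptually trivial but notationally heavy, and its $G_2$ case is where I expect most of the effort to sit.
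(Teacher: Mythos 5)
First, a point of comparison: the paper does not prove this statement at all. It is imported as a Fact, attributed via \cite[Proposition 3.1]{BBP02} to unpublished work of S.~Gelfand and M.~Graev, so there is no proof in the paper to measure your argument against; your proposal has to stand on its own. On its own terms, the uniqueness remark, the involutivity of each $F_{s_\alpha}$, and the reduction of the braid relations to the rank-two Levi subgroups along $G/U\rightarrow G/[P_{\alpha,\beta},P_{\alpha,\beta}]$ are all reasonable (the reduction needs a local-triviality argument identifying $F_{s_\alpha}$ and $F_{s_\beta}$ with partial Fourier transforms along the fibres, but that is routine).

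The genuine gap is the rigidity lemma on which your rank-two argument rests. It is false that a $G$-equivariant automorphism of $\ntDalg{G/U}$ restricting to the identity on $R(\univ{t})$ must be the identity: conjugation by a right translation $R(t)$, $t\in T$, is $G$-equivariant, fixes $R(\univ{t})$ pointwise (since $T$ is abelian) and also fixes $L(\univ{g})$ pointwise, yet it acts on each weight space $\ntDalg{G/U}^{\mu}$, in particular on $\rring{G/U}^{\mu}$, by the scalar given by $\mu$ and $t$, hence is nontrivial for generic $t$. This torus is exactly the $T$ in the paper's remark that $\Aut(\ntDalg{G/U})^G/T\simeq W_G$, and it is also why the splitting is said to depend on the choice of the symplectic forms; so no argument using only the isotypic decomposition and $\ntDalg{G/U}^T\simeq \univ{g}\otimes_{\univcent{g}}\univ{t}$ can force the character to be trivial. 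At best your strategy shows that a word such as $(F_{s_\alpha}F_{s_\beta})^{m_{\alpha\beta}}$ lies in this torus of automorphisms; to conclude you must still compute the resulting element of $T$ and show it is trivial for the chosen normalizations, e.g.\ by evaluating the word on highest weight vectors of the components $\rring{G/U}(\lambda)$ --- and that scalar computation in types $A_2$, $B_2$, $G_2$ is precisely the nontrivial content of the Gelfand--Graev result, i.e.\ the brute-force fallback you hoped to avoid. Two smaller points: by Fact \ref{fact:fourier} the action of $F_{s_\alpha}$ on $R(\univ{t})$ is the shifted action of $s_{-w_l(\alpha)}$ rather than of $s_\alpha$ (harmless, since $w\mapsto w_l^{-1}ww_l$ is an automorphism of $W_G$ permuting the simple reflections), and that computation itself is only quoted from \cite{BBP02}, so your argument would need to supply the rank-one verification as well.
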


\begin{remark}
	The $T$-action on $\ntDalg{G/U}$ commutes with the $G$-action.
	It is not hard to see that $\Aut(\ntDalg{G/U})^G/T \simeq W_G$.
	Fact \ref{fact:WeylToAut} gives a splitting of this quotient, which depends on the choice of the symplectic forms.
\end{remark}

Let $w \in W_G$.
The automorphism $F_w$ stabilizes all elements in $L(\univ{g})$ as follows.
Since $F_w$ is $G$-equivariant, $[F_w(X), Y] = [X, Y]$ holds for any $X \in L(\lie{g})$ and $Y \in \ntDalg{G/U}$.
This shows that $F_w(X) - X \in \CC$ and hence $F_w(X) = X$ since $\lie{g}$ is semisimple (or $F_w^{|W_G|} = \id$).
The $W_G$-action on $R(\univ{t})$ is non-trivial and described easily as follows.
See \cite[Lemma 3.3]{BBP02}.

\begin{fact}\label{fact:fourier}
	Let $w \in W_G$ and set $w' = w_l^{-1}ww_l$.
	Then for any $H \in \lie{t}$, we have $F_w(R(H)) = R(w'(H) - \rho(H) + \rho(w'(H)))$.
	In particular, $F_w(\ntDalg{G/U}^\lambda) = \ntDalg{G/U}^{w'(\lambda)}$ holds for any $\lambda \in \lie{t}^*$
\end{fact}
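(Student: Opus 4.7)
The plan is to reduce the identity to the case of simple reflections and then carry out an explicit fiberwise computation. First I would verify that the right-hand side is multiplicative in $w$: the scalar shift $c(w,H) := \rho(w(H)) - \rho(H) \in \CC$ satisfies the cocycle identity $c(w_1 w_2, H) = c(w_1, w_2(H)) + c(w_2, H)$, so the affine map $\tilde{w}(H) := w(H) + c(w,H)$ defines a genuine group action of $W_G$ on $\univ{t}$. Combined with the multiplicativity of $w \mapsto F_w$ (Fact \ref{fact:WeylToAut}) and the observation that $w \mapsto w' = \longw^{-1} w \longw$ is a group automorphism of $W_G$, this reduces the claim to the case where $w$ is a simple reflection $s_\alpha$.

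For $w = s_\alpha$, I would unwind the definition of $F_{s_\alpha}$ as the fiberwise symplectic Fourier transform on the $G$-vector bundle $\overline{G/U}^\alpha \to G/[P_\alpha, P_\alpha]$ with fiber $\CC^2$. The essential computation is in the $H_\alpha$-direction of $\lie{t}$ and reduces to the $\LieSL_\alpha$ picture: in coordinates $(x,y)$ on the fiber $\CC^2$, both coordinates carry the same $T$-weight under right translation, so $R(H_\alpha)$ restricts to the Euler vector field $x\partial_x + y\partial_y$. The formulas $F(x) = -\partial_y$, $F(y) = \partial_x$, $F(\partial_x) = y$, $F(\partial_y) = -x$ then give by a direct calculation
\[
F(x\partial_x + y\partial_y) = -x\partial_x - y\partial_y - 2,
\]
which equals $R(-H_\alpha) - 2 = R\bigl(s_\alpha(H_\alpha) - \rho(H_\alpha) + \rho(s_\alpha(H_\alpha))\bigr)$, since $\rho(H_\alpha)=1$. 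The conjugation by $\longw$ in the general statement arises because the simple-root identification of $F_{s_\alpha}$ is adapted to the $L$-action convention, while the $W_G$-action on $R(\univ{t})$ is normalized through $\rring{G/U}(\lambda) = \rring{G/U}^{-\longw(\lambda)}$; this forces the replacement $s_\alpha \leadsto s_{-\longw^{-1}(\alpha)}$ on the $R$-side. The last assertion follows formally from the first: since $F_w(R(H)) - R(w'(H))$ is a scalar, the relation $[R(H), D] = \lambda(H)D$ for $D \in \ntDalg{G/U}^\lambda$ yields $[R(w'(H)), F_w(D)] = \lambda(H) F_w(D)$, hence $F_w(D) \in \ntDalg{G/U}^{w'(\lambda)}$.

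The main obstacle I anticipate is the passage from the $\LieSL_\alpha$ fiber computation to a general $G$-statement: the right $T$-action on $G/U$ does not preserve the fibers of $G/U \to G/[P_\alpha, P_\alpha]$ away from the centralizer of $\LieSL_\alpha$, so expressing a general $R(H)$ in fiber-adapted coordinates requires either a careful local trivialization of the bundle or a computation using the Plücker-type embedding $G/U \hookrightarrow \bigoplus_\omega V(\omega)$ into fundamental representations. Tracking through this step the identification of $\CC^2$ as an $\LieSL_\alpha$-representation, and therefore the exact form of the resulting $\longw$-twist, is the chief bookkeeping challenge.
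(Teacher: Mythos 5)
The paper gives no proof of this Fact (it is quoted from [BBP02, Lemma 3.3], with a warning that the $T$-action there is normalized differently), so I can only judge your argument on its own terms. Your reduction to simple reflections via Fact \ref{fact:WeylToAut} and the cocycle identity is fine, and the fiber computation $F(x\partial_x+y\partial_y)=-x\partial_x-y\partial_y-2$ together with $R(H_\alpha)=$ fiberwise Euler field is correct. But note what it proves: in the paper's own convention for $R$ it gives $F_{s_\alpha}(R(H_\alpha))=R(s_\alpha(H_\alpha))-\rho(H_\alpha)+\rho(s_\alpha(H_\alpha))$, i.e.\ the formula with $w$ itself, not with $w'=\longw^{-1}w\longw$. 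Your final paragraph asserts that the normalization $\rring{G/U}(\lambda)=\rring{G/U}^{-\longw(\lambda)}$ ``forces the replacement $s_\alpha\leadsto s_{-\longw(\alpha)}$ on the $R$-side'', but this is never derived, and it is actually inconsistent with your own computation whenever $-\longw(\alpha)\neq\alpha$: for $G=\LieSL(3,\CC)$ and $\alpha=\alpha_1$ one has $\beta:=-\longw(\alpha_1)=\alpha_2$ and $s_{\beta}(H_{\alpha_1})=H_{\alpha_1}+H_{\alpha_2}$, so the twisted right-hand side at $H=H_{\alpha_1}$ is $R(H_{\alpha_1}+H_{\alpha_2})+1$, which is not the operator $-R(H_{\alpha_1})-2$ you computed. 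The twist by $\longw$ is precisely the discrepancy between this paper's $R$ and the $T$-action used in [BBP02]; it cannot be produced by the bookkeeping you describe, so this step is a genuine gap (and it forces you to decide, by going back to the conventions, where the conjugation really belongs).

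The second gap is the one you flag yourself but do not close: you only compute $F_{s_\alpha}(R(H))$ for $H$ in the line $\CC H_\alpha$, which is the entire content only in rank one. For general $H$ the vector field $R(H)$ is not vertical for $G/U\to G/[P_\alpha,P_\alpha]$, and this is where the whole formula lives in higher rank. This can be completed, for instance, as follows: work in a local symplectic trivialization of $\overline{G/U}^{\alpha}$ (it exists over a trivializing affine cover, since the $G$-invariant form is fiberwise nondegenerate), write $R(H)$ there as a sum of the base scaling and the two fiber-coordinate scalings, and apply the fiberwise transform; one finds $F_{s_\alpha}(R(H))=R(H)-\alpha(H)\bigl(R(H_\alpha)+1\bigr)$. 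Equivalently, observe that $\mathrm{ad}(R(H_\alpha))$ measures the fiber degree of an operator, the symplectic Fourier transform reverses fiber degree, and hence sends the right $T$-weight $\lambda$ to $\lambda-\langle\lambda,\alpha^\vee\rangle\alpha=s_\alpha(\lambda)$; combined with $G$-equivariance and $F_w|_{L(\univ{g})}=\id$ this pins down $F_{s_\alpha}$ on $R(\univ{t})$ up to the constant, which your Euler computation supplies. Only after this general-$H$ computation, and an explicit comparison with the normalization of [BBP02], can you legitimately decide whether the statement you are proving carries the conjugation by $\longw$; as written, your proposal proves neither version beyond the single direction $H_\alpha$.
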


Note that $R(\univ{t})^{W_G}$ coincides with $L(\univcent{g})$.
See also \eqref{eqn:Waction}.

The following formula is the key tool to control the difference between $\Mod(\ntDalg{G/U})$ and $\Mod_{qc}(\ntDsheaf{G/U})$.
The formula is rewritten to be convenient for our proofs.
For each $\alpha \in \Delta$, let $H_\alpha$ denote the unique element of $\lie{t}$ such that $\mu(H_{\alpha})=2(\mu, \alpha)/(\alpha, \alpha)$
holds for any $\mu \in \lie{t}^*$.

\begin{fact}[{\cite[Proposition 3.11]{BBP02}}]\label{fact:LikeGammaFactor}
	Let $\lambda$ be a dominant integral weight and set $\lambda^\vee \coloneq -w_l(\lambda)$.
	Take a basis $\set{f_i}$ of $\rring{G/U}^{\lambda}$ and its dual basis $\set{g_i}$ of $\rring{G/U}^{\lambda^\vee}$
	fixing a $G$-invariant paring.
	Then the $G\times T$-invariant operator $P_\lambda = \sum_i F_{\longw}(g_i)f_i \in \ntDalg{G/U}^{G\times T} = R(\univ{t})$
	is written as
	\begin{align*}
		P_\lambda = R\left(c \prod_{\alpha \in \proots} \prod_{i=1}^{\lambda^\vee(H_{\alpha})} (H_{\alpha} + \rho(H_{\alpha}) + i)\right)
	\end{align*}
	for some constant $c \in \CC-\set{0}$.
\end{fact}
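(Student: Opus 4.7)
The natural first move is to establish the invariance property and convert the problem into identifying a polynomial in $U(\lie{t})$. The element $\sum_i g_i \otimes f_i \in \rring{G/U}^{\lambda^\vee}\otimes\rring{G/U}^\lambda$ is $G$-invariant by construction, and since $F_{\longw}$ is $G$-equivariant, the image $P_\lambda = \sum_i F_{\longw}(g_i) f_i$ is $G$-invariant in $\ntDalg{G/U}$. For $T$-invariance under $R$, Fact \ref{fact:fourier} with $w = \longw$ gives $w' = \longw^{-1}\longw\longw = \longw$, so $F_{\longw}(g_i) \in \ntDalg{G/U}^{\longw(\lambda^\vee)} = \ntDalg{G/U}^{-\lambda}$, which cancels the $R$-weight $\lambda$ of $f_i$. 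By the identification $\ntDalg{G/U}^{G\times T} \simeq R(U(\lie{t}))$ obtained from $\ntDalg{G/U}^T \simeq \univ{g}\otimes_{\univcent{g}}\univ{t}$ mentioned in the excerpt, there is a unique $Q_\lambda \in \univ{t}$ with $P_\lambda = R(Q_\lambda)$. The problem becomes identifying the polynomial $Q_\lambda$ on $\lie{t}^*$.

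The plan is to determine $Q_\lambda$ from its degree, its zero locus, and a leading term computation. Evaluating $R(Q_\lambda)$ on any weight space $\rring{G/U}^\mu$ acts as the scalar $\mu(Q_\lambda)$, so it suffices to detect, for each simple root $\alpha$ and each $i \in \{1,\ldots,\lambda^\vee(H_\alpha)\}$, enough weights $\mu$ on which $P_\lambda$ vanishes to force divisibility by $(H_\alpha + \rho(H_\alpha) + i)$. For this, I would reduce to the rank one situation along the fibration $G/U \to G/[P_\alpha, P_\alpha]$: the fiber is $\CC^2 - \{0\}$, and the symplectic form on $\overline{G/U}^\alpha$ makes $F_{s_\alpha}$ an explicit symplectic Fourier transform acting fiberwise. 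An explicit check in the $\LieSL(2,\CC)$ case (where $\rho(H_\alpha) = 1$, $\lambda^\vee = \lambda$, and the $f_i, g_i$ are monomials in the two fiber coordinates) yields $R(c\prod_{i=1}^{\lambda^\vee(H_\alpha)}(H_\alpha + 1 + i))$ by a direct binomial calculation on $\text{Sym}^k \CC^2$. The factor corresponding to a general positive root $\alpha \in \proots$ would then be produced from the simple-root case by conjugating by $F_w$ for an appropriate $w \in W_G$ using Fact \ref{fact:fourier}, together with the multiplicative structure $F_{\longw} = F_{s_{\alpha_1}}\cdots F_{s_{\alpha_\ell}}$ for a reduced expression.

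Once divisibility by each factor is established, a degree count finishes the identification: $F_{\longw}(g_i)$ is a differential operator of order $\sum_\alpha \lambda^\vee(H_\alpha)$ (this equals the degree of $g_i$ as a polynomial in the appropriate coordinates), matching the degree of the product $\prod_\alpha \prod_{i=1}^{\lambda^\vee(H_\alpha)}(H_\alpha + \rho(H_\alpha) + i)$ in $Q_\lambda$. Hence $Q_\lambda$ equals the proposed product up to a nonzero scalar $c$, which can be pinned down (or just declared nonzero, as the statement permits) by evaluating $P_\lambda$ at a specific weight $\mu$ where all factors are unambiguously nonzero, for example a suitably dominant regular $\mu$.

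\textbf{Main obstacle.} The hard part is the zero-locus step: passing from the rank-one calculation along a simple root $\alpha$ to the full product over $\proots$. The Fourier transform $F_{\longw}$ is not simply a product of the fiberwise transforms over a chosen direction; a different reduced expression produces a different factorization into $F_{s_\beta}$'s, and only after careful bookkeeping with Fact \ref{fact:fourier} does one see that each positive root $\alpha$ (not merely each simple one) contributes its own block $\prod_{i=1}^{\lambda^\vee(H_\alpha)}(H_\alpha + \rho(H_\alpha) + i)$. Carrying out this bookkeeping cleanly, rather than by a brute combinatorial expansion, is the real content of the formula.
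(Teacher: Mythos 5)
The paper does not prove Fact \ref{fact:LikeGammaFactor}: it is quoted verbatim from \cite[Proposition 3.11]{BBP02} and used as a black box, so there is no internal proof to compare your attempt with. Judged on its own, your outline has the right opening moves ($G\times T$-invariance, hence $P_\lambda=R(Q_\lambda)$ for a unique $Q_\lambda\in\univ{t}$, and a rank-one computation along $G/U\to G/[P_\alpha,P_\alpha]$), but the two load-bearing steps do not work as stated. First, the zero-locus strategy is vacuous in the setting you propose: the $R$-weights occurring in $\rring{G/U}$ are exactly $-\longw(\lambda')$ for dominant integral $\lambda'$, hence themselves dominant integral, and for dominant $\mu$ every factor satisfies $\mu(H_\alpha)+\rho(H_\alpha)+i>0$. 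So $P_\lambda$ acts by a \emph{nonzero} scalar on every weight space of $\rring{G/U}$, and there are no weights $\mu$ there on which it vanishes; divisibility of $Q_\lambda$ by the linear factors cannot be detected this way. What Zariski density of dominant integral weights does give you is that $Q_\lambda$ is determined by the scalars $c_\lambda(\mu)$ by which $P_\lambda$ acts on $\rring{G/U}^\mu$, so a viable proof must compute these nonzero scalars for all dominant $\mu$ (for instance by induction on $\lambda$ using $\rring{G/U}(\lambda)\cdot\rring{G/U}(\mu)=\rring{G/U}(\lambda+\mu)$ together with the rank-one case), or else act on modules containing non-dominant weights, neither of which your plan carries out.

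Second, the passage from simple roots to all of $\proots$, which you yourself flag as the main obstacle, is not resolved by ``conjugating by $F_w$'': one has $F_w(P_\lambda)=\sum_i F_wF_{\longw}(g_i)\,F_w(f_i)$, and $F_w(f_i)$ is no longer a function, so conjugation does not turn $P_\lambda$ into an operator of the same shape attached to another root; Fact \ref{fact:fourier} controls only the $R(\univ{t})$-part and the $T$-weights, not the whole operator. Similarly, the degree bound (order of $F_{\longw}(g_i)$ equal to $\sum_{\alpha\in\proots}\lambda^\vee(H_\alpha)$) is asserted only from the rank-one picture and would need a separate argument; and even granted, it yields nothing without the vanishing (or explicit evaluation) statements. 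So the proposal is a plausible skeleton, but the identification of $Q_\lambda$ --- the actual content of the formula --- remains open at both critical points.
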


\begin{remark}
	See \cite[Lemma 3.8]{BBP02} and references therein for $\ntDalg{G/U}^{G\times T} = R(\univ{t})$.
\end{remark}

For a $\ntDalg{G/U}$-module $M$ and $w\in W_G$, we denote by $M^{w}$ the $\ntDalg{G/U}$-module realized on the vector space $M^w=M$
with $\ntDalg{G/U}$-action $\ntDalg{G/U}\times M^w \ni (a, m) \mapsto F_w(a)m \in M^w$.

Recall that $\loc(M)$ is isomorphic to $\rsheaf{G/U}\otimes_{\rring{G/U}} M$ as an $\rsheaf{G/U}$-module
and $\loc$ is an exact functor to $\Mod_{qc}(\ntDsheaf{G/U})$ by Proposition \ref{prop:FundamentalBasicAffine}.
If the support of a $\ntDalg{G/U}$-module $M$ is contained in $\overline{G/U}-G/U$, then we have $\loc(M)=0$.

\begin{fact}[{\cite[Theorem 3.4]{BBP02}}]\label{fact:nonzeroLocalization}
	For any non-zero $\ntDalg{G/U}$-module $M$, there exists $w \in W_G$ such that $\loc(M^w) \neq 0$.
\end{fact}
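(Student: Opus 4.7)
The plan is a proof by contradiction driven by Fact \ref{fact:LikeGammaFactor}. Suppose $M \neq 0$ while $\loc(M^w) = 0$ for every $w \in W_G$; I will derive a contradiction. By Proposition \ref{prop:FundamentalBasicAffine}(1), the hypothesis $\loc(M^w) = 0$ is equivalent to the $F_w$-twisted $\rring{G/U}$-module $M^w$ being supported on $Z \coloneq \overline{G/U} - G/U$. Unwinding the twist, this says: for every $m \in M$ and every $w \in W_G$, some power $F_w(I)^n$ annihilates $m$, where $I \subset \rring{G/U}$ is the defining ideal of $Z$. In particular, every element of $\bigcup_{w \in W_G} F_w(I)$ acts locally nilpotently on $M$.

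Next I would identify enough elements of $I$. Since $I$ is $G$-stable, it decomposes as $\bigoplus_{\mu \in S} \rring{G/U}(\mu)$ for some set $S$ of dominant weights. Using the standard embedding $\overline{G/U} \hookrightarrow \prod_i V(\omega_i)^*$ and the description of $Z$ as the locus where at least one fundamental coordinate vanishes, one checks that $\rring{G/U}(\mu) \subset I$ whenever $\mu$ is \emph{strictly} dominant (i.e., $\mu = \sum_i c_i \omega_i$ with $c_i > 0$ for every $i$). Because $-w_l$ permutes fundamental weights, $\lambda$ strictly dominant implies the same for $\lambda^\vee = -w_l(\lambda)$, so both weight spaces $\rring{G/U}^\lambda = \rring{G/U}(\lambda^\vee)$ and $\rring{G/U}^{\lambda^\vee} = \rring{G/U}(\lambda)$ appearing in Fact \ref{fact:LikeGammaFactor} lie entirely in $I$.

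For any such $\lambda$, Fact \ref{fact:LikeGammaFactor} yields
\[
P_\lambda \;=\; \sum_i F_{w_l}(g_i)\, f_i \;=\; R\!\Bigl(c \prod_{\alpha \in \Delta^+} \prod_{j=1}^{\lambda^\vee(H_\alpha)} (H_\alpha + \rho(H_\alpha) + j)\Bigr) \in R(\univ{t}),
\]
where each $f_i \in I$ and each $F_{w_l}(g_i) \in F_{w_l}(I)$ acts locally nilpotently on $M$ by the preceding paragraph. I would now argue, via the simultaneous $I$- and $F_{w_l}(I)$-torsion filtrations on $M$ together with a careful iteration of the expression $P_\lambda^N m$, that $P_\lambda$ itself acts locally nilpotently on $M$. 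Repeating this with an arbitrary $w \in W_G$ in place of $w_l$ (noting by Fact \ref{fact:fourier} that $F_w(P_\lambda)$ again lies in $R(\univ{t})$) shows that the whole family $\{F_w(P_\lambda) : w \in W_G, \ \lambda \text{ strictly dominant}\}$ acts locally nilpotently on $M$.

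The contradiction then follows from Fact \ref{fact:fourier}: computing $F_w$ on $R(\univ{t}) \simeq S(\lie{t})$ explicitly, as $\lambda$ runs over strictly dominant weights and $w$ over $W_G$, the family exhausts enough linear factors in $S(\lie{t})$ to have empty joint zero locus in $\lie{t}^*$, so no generalized $\univ{t}$-eigenvalue can annihilate them all, forcing $M = 0$. The main obstacle is the step promoting local nilpotency of the individual factors $f_i$ and $F_{w_l}(g_i)$ to local nilpotency of the noncommutative sum $P_\lambda$: since $I$ and $F_{w_l}(I)$ do not commute inside $\ntDalg{G/U}$, iterating $P_\lambda$ genuinely mixes the two filtrations, and controlling this mixing is where the delicate bookkeeping in BBP's argument lives.
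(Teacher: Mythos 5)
Note first that the paper does not prove this statement itself: it is quoted from \cite[Theorem 3.4]{BBP02}, and the only place the relevant mechanism appears in the paper is the proof of Lemma \ref{lem:supportUt}. Your overall strategy (convert vanishing of the twisted localizations into boundary-support conditions, feed them into Fact \ref{fact:LikeGammaFactor}, and conclude from the $W_G$-family of linear factors in $R(\univ{t})$) is the right one, but the step you yourself flag as the ``main obstacle'' is a genuine gap, and moreover it is the wrong step to attempt. You want a \emph{fixed} $P_\lambda$ to act locally nilpotently on $M$, to be deduced from local nilpotency of the two non-commuting families $\set{f_i}\subset I$ and $\set{F_{w_l}(g_i)}\subset F_{w_l}(I)$ by ``careful iteration''. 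This is not reachable from your hypotheses: at the level of the commutative subalgebra $R(\univ{t})$, all the boundary-support conditions give you for an individual $m$ is $P_{n\rho}\,m=0$ for some $n=n(m)$, and since $P_\lambda$ \emph{divides} $P_{n\rho}$ once $n\rho(H_\alpha)\geq\lambda^\vee(H_\alpha)$, the implication runs the wrong way: annihilation by the larger product never yields nilpotency of the fixed smaller one. Nor do the commutators $[F_{w_l}(g_i),f_j]$ lie in $I$ or $F_{w_l}(I)$, so the proposed ``simultaneous torsion filtration'' has no evident compatibility with expanding $P_\lambda^N m$. (Your intermediate claim is of course vacuously true once the theorem is known, but that is circular; for $\lambda=\rho$ it is essentially equivalent to the theorem.)

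The correct mechanism avoids all of this, exactly as in the proof of Lemma \ref{lem:supportUt}: in $P_\lambda=\sum_i F_{w_l}(g_i)f_i$ the functions $f_i$ act \emph{first}, so given $m$ you choose $n$ with $\rring{G/U}^{n\rho}m=0$ (the defining ideal of $\overline{G/U}-G/U$ is generated by $\rring{G/U}^{\rho}$) and get $P_{n\rho}m=\sum_i F_{w_l}(g_i)(f_i m)=0$ on the nose --- no local nilpotency of the $F_{w_l}(g_i)$ is needed. To bring in general $w$, apply this to the twisted module $M^{w}$ (using that $F_\cdot$ is a group homomorphism, Fact \ref{fact:WeylToAut}); by Fact \ref{fact:fourier} you obtain, for every $m$ and every $w$, an annihilating element of the form $R\bigl(\prod_\beta\prod_i(H_\beta+\rho(H_\beta)+i)\bigr)$ with $\beta$ running in $w'(\proots)$ and $i\geq 1$. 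Your closing step also needs repair: $M$ carries no a priori $\lie{t}$-finiteness, so ``no generalized $\univ{t}$-eigenvalue'' is not meaningful; instead consider $\Ann_{\univ{t}}(m)$, whose zero locus is contained in $\bigcap_{w\in W_G}\bigcup_{\beta\in w'(\proots),\,i\geq 1}\set{\mu:(\mu+\rho)(H_\beta)=-i}$. That this intersection is empty is precisely the (unproved) content of your ``exhausts enough linear factors'': it requires the standard fact that any $\mu$ admits a positive system with respect to which $\mu+\rho$ is never a negative integer on positive coroots; then the Nullstellensatz gives $1\in\Ann_{\univ{t}}(m)$, i.e.\ $m=0$.
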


\subsection{Holonomicity}\label{subsection:holonomicity}

In this subsection, we define holonomicity of $\ntDalg{G/U}$-modules.
It is well-known that the holonomicity of $\ntDalg{\CC_n}$-modules is characterized by the Gelfand--Kirillov dimension
and preserved by the Fourier transform (see e.g. \cite[Subsection 3.2.2]{HTT08}).
We will show similar properties for $\ntDalg{G/U}$-modules.

Since $\overline{G/U}$ is not smooth in general, the relation between $\Mod(\ntDalg{G/U})$ and $\Mod_{qc}(\ntDsheaf{G/U})$ is non-trivial.
Recall that $\gr\ntDsheaf{G/U}$ is isomorphic to $\rring{T^*(G/U)}$ and it is a finitely generated $\CC$-algebra by Fact \ref{fact:dalg} (3).
Combining this and Fact \ref{fact:nonzeroLocalization}, we will show that the global section functor $\sect$ on $G/U$ behaves well as for affine varieties.

We shall recall the Gelfand--Kirillov dimension and the multiplicity of modules.
We refer the reader to \cite[Chapter 8]{McRo01_noncommutative} and \cite{McSt89}.
Let $\alg{A}$ be a finitely generated $\CC$-algebra and $M$ a finitely generated $\alg{A}$-module.
Fix a finite-dimensional generating subspace $\alg{A}_1$ of $\alg{A}$ containing $\CC$,
and define a filtration on $\alg{A}$ via $\alg{A}_k \coloneq \alg{A}_1^k$, $\alg{A}_0 \coloneq \CC$.
Similarly, take a finite-dimensional generating subspace $M_0$ of $M$
and define a filtration on $M$ via $M_k\coloneq \alg{A}_k M_0$.
Such filtrations are called standard filtrations of $\alg{A}$ and $M$ in \cite{McRo01_noncommutative}.
The Gelfand--Kirillov dimension of $M$ is defined by
\begin{align*}
	\GKdim(M)(=\GKdim_{\alg{A}}(M))\coloneq \limsup_{n\rightarrow \infty}(\log \dim(M_n)/\log n).
\end{align*}
The definition does not depend on the choice of the standard filtrations.

To define the multiplicity, we recall several facts about filtrations of algebras.
See \cite[Corollary 1.4]{McSt89} for the following fact.

\begin{fact}\label{fact:GKdim}
	Let $\alg{A} = \bigcup_{n \in \NN} \alg{A}_n$ be a filtered $\CC$-algebra such that $\gr \alg{A}$ is a commutative finitely generated $\CC$-algebra.
	For any finitely generated $\alg{A}$-module $M$ with a good filtration, $\GKdim(M) = \GKdim(\gr M)$ holds.
\end{fact}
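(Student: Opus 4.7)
The plan is to reduce the statement to comparing Hilbert-type growth rates, by relating the given filtration to a standard filtration on both sides.

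First I would reduce the computation of $\GKdim_{\alg{A}}(M)$ to a growth-rate statement for the good filtration. Since $\gr \alg{A}$ is a finitely generated commutative $\CC$-algebra, I pick homogeneous generators $\bar a_1, \ldots, \bar a_r$ with $\bar a_i \in \gr^{n_i}\alg{A}$, lift them to $a_i \in \alg{A}_{n_i}$, and set
\[
V \;=\; \alg{A}_0 + \CC a_1 + \cdots + \CC a_r,
\]
which is finite dimensional and generates $\alg{A}$ as an algebra. Writing $N = \max_i n_i$ and $n_{\min} = \min_i n_i$ (so $n_{\min} \geq 1$ after absorbing degree $0$ into $\alg{A}_0$), I would establish the two-sided comparison
\[
V^k \;\subset\; \alg{A}_{kN}, \qquad \alg{A}_n \;\subset\; V^{\lceil n/n_{\min}\rceil + c_0},
\]
where the right inclusion is proved by induction on $n$: every element of $\alg{A}_n$ is, modulo $\alg{A}_{n-1}$, a $\CC$-linear combination of products of the $a_i$ of total grading $n$, and such a product has length at most $n/n_{\min}$.

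Second I would run the same comparison for $M$. Goodness of the filtration means $\gr M$ is finitely generated over $\gr \alg{A}$; lifting homogeneous generators to $m_1, \ldots, m_s \in M_{d_i}$ and setting $M^{(0)} = \sum_i \CC m_i$ gives
\[
V^k M^{(0)} \;\subset\; M_{kN+D}, \qquad M_n \;\subset\; V^{\lceil n/n_{\min}\rceil + c_0} M^{(0)},
\]
with $D = \max_i d_i$. The standard-filtration definition yields
\[
\GKdim_{\alg{A}}(M) \;=\; \limsup_k \frac{\log \dim V^k M^{(0)}}{\log k},
\]
and the two comparisons above, together with the elementary fact that the $\limsup$ is invariant under replacing $k$ by $k N + D$ or by $\lceil n/n_{\min}\rceil$, give
\[
\GKdim_{\alg{A}}(M) \;=\; \limsup_n \frac{\log \dim M_n}{\log n} \;=\; \limsup_n \frac{\log \sum_{k\leq n}\dim (\gr M)_k}{\log n}.
\]

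Finally, I would apply the identical comparison argument to the commutative graded algebra $\gr \alg{A}$, equipped with the filtration induced by the grading, and the graded module $\gr M$, which is a finitely generated $\gr\alg{A}$-module (hence has a good filtration by truncation of the grading). This gives
\[
\GKdim_{\gr \alg{A}}(\gr M) \;=\; \limsup_n \frac{\log \sum_{k\leq n}\dim (\gr M)_k}{\log n},
\]
and the two expressions agree. The principal technical obstacle is the bookkeeping in the inductive proof of $\alg{A}_n \subset V^{\lceil n/n_{\min}\rceil + c_0}$ when the generators of $\gr \alg{A}$ sit in several different positive degrees; once that is in hand, the polynomial-rescaling invariance of $\limsup (\log \dim / \log n)$ is routine.
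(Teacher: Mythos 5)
Your proposal has a genuine gap, and it is exactly the point that forces the paper to quote this statement from \cite{McSt89} (Corollary 1.4) rather than prove it by a short rescaling argument: neither the filtration $\set{\alg{A}_n}$ nor the good filtration $\set{M_n}$ is assumed to consist of finite-dimensional subspaces. You set $V = \alg{A}_0 + \CC a_1 + \cdots + \CC a_r$ and call it finite dimensional, but $\alg{A}_0 = \gr^0\alg{A}$ can be an arbitrary finitely generated commutative $\CC$-algebra. In the very application the paper makes of this Fact (the order filtration on $\ntDalg{G/U}$, resp.\ on $\ntDalg{V}$ for affine open $V$, used right after \eqref{eqn:GKdimCharacteristic} and in Theorem \ref{thm:HolonomicLocalGlobal}), one has $\alg{A}_0 = \rring{G/U}$, which is infinite dimensional, and then every $\alg{A}_n$, every $M_n$ of a good filtration, and every graded piece $(\gr M)_k$ is infinite dimensional as well. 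Consequently no inclusion $\alg{A}_n \subset V^{f(n)}$ with $V$ finite dimensional can hold, so the two-sided comparison at the heart of your argument is impossible, and the displayed identities $\GKdim_{\alg{A}}(M) = \limsup_n \log\dim M_n/\log n = \limsup_n \log\bigl(\sum_{k\leq n}\dim(\gr M)_k\bigr)/\log n$ are meaningless because those dimensions are infinite. The same objection applies to your final step, where $\GKdim(\gr M)$ is computed from $\dim(\gr M)_k$; in the paper $\GKdim(\gr M)$ must itself be computed via an auxiliary standard finite-dimensional filtration of $\gr M$, as is done explicitly before Fact \ref{fact:Multiplicity}.

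What your argument does establish (after minor bookkeeping: put $\alg{A}_0$ inside $V$ and group maximal runs of degree-zero factors, so that a weighted-degree-$n$ monomial has controlled length) is the classical special case in which the given filtration has finite-dimensional pieces, e.g.\ the Bernstein filtration on a Weyl algebra; there everything indeed reduces to rescaling Hilbert functions, and note that this case uses nothing about commutativity of $\gr\alg{A}$. But the statement as used in Subsection \ref{subsection:holonomicity} requires the infinite-dimensional case, where one has to compare a standard finite-dimensional filtration of $M$ (and of $\gr M$) with the given good filtration by a more delicate argument; this is the actual content of \cite{McSt89}, to which the paper appeals without proof. So the proposal, as written, does not prove the Fact in the generality in which it is stated and applied.
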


Let $\alg{A} = \bigcup_{n \in \NN} \alg{A}_n$ be a filtered $\CC$-algebra such that $\gr \alg{A}$ is a commutative finitely generated $\CC$-algebra.
Fix a standard finite-dimensional filtration of $\gr \alg{A}$.
Let $M$ be a finitely generated $\alg{A}$-module with a good filtration.
Fix a standard finite-dimensional filtration of $\gr M$.
Then there exists a polynomial $p \in \QQ[t]$ such that $p(n) = \dim((\gr M)_n)$ for any $n \gg 0$ by the theory of Hilbert polynomial.
To define the multiplicity of $M$, we need the following fact.
See \cite[Theorem 8.6.18]{McRo01_noncommutative} and Fact \ref{fact:GKdim}.

\begin{fact}\label{fact:Multiplicity}
	\begin{enumerate}
		\item $\deg p$ does not depend on the choice of the filtrations of $\alg{A}$, $\gr\alg{A}$, $M$ and $\gr M$.
		Moreover, $\deg p = \GKdim(M) = \GKdim(\gr M)$ holds.
		\item Write $m$ for the coefficient of the highest degree of $p$.
		Then $m\cdot (\deg p)! \in \NN$ holds and the integer does not depend on the choice of the filtrations of $M$ and $\gr M$.
	\end{enumerate}
\end{fact}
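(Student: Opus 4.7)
The strategy is to reduce the entire statement to the classical Hilbert polynomial theorem for finitely generated graded modules over a finitely generated graded commutative $\CC$-algebra. Since $M$ has a good filtration, $\gr M$ is a finitely generated $\gr\alg{A}$-module, and by hypothesis $\gr\alg{A}$ is finitely generated and commutative. After fixing standard finite-dimensional filtrations of $\gr\alg{A}$ and of $\gr M$, the classical theorem produces a polynomial $p\in\QQ[t]$ with $p(n)=\dim(\gr M)_n$ for $n\gg 0$ together with the integrality $m\cdot(\deg p)!\in\NN$, where $m$ is the leading coefficient.

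For part (1), the plan is to identify $\deg p$ with a filtration-free invariant. Over a finitely generated commutative $\CC$-algebra, the degree of the Hilbert polynomial of any standard filtration of a finitely generated module equals its Gelfand--Kirillov dimension (essentially the Krull dimension of the support). Hence $\deg p=\GKdim_{\gr\alg{A}}(\gr M)$, which already removes dependence on the filtrations of $\gr\alg{A}$ and of $\gr M$. Fact \ref{fact:GKdim} then identifies this with $\GKdim_{\alg{A}}(M)$, which is intrinsic to $M$ and therefore independent of the choices of the filtration of $\alg{A}$ and of the good filtration of $M$ as well.

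For part (2), I would proceed in two stages. First, with the filtration of $\gr\alg{A}$ fixed, any two standard finite-dimensional filtrations of $\gr M$ sandwich one another within a finite shift $k$, forcing $p(n)\le p'(n+k)$ and $p'(n)\le p(n+k)$, hence agreement of leading coefficients. This shows $m\cdot(\deg p)!$ depends only on $\gr M$ as a $\gr\alg{A}$-module. Second, for two good filtrations $F,F'$ of $M$ which may produce genuinely non-isomorphic graded modules $\gr_F M$ and $\gr_{F'} M$, the standard trick is to interpolate via the mixed bifiltration (or equivalently the Rees module over $\alg{A}[t]$) and derive a uniform comparison $\dim(\gr_F M)_n\le \dim(\gr_{F'} M)_{n+k}$ and vice versa, once again yielding equal leading terms. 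The main obstacle is exactly this last step: verifying that the leading coefficients coincide, not merely that the degrees agree. Unlike the invariance of $\GKdim$, which already follows from the dimension-only sandwich, the leading-coefficient comparison requires the sharp shift estimate to survive the passage to associated gradeds, and this is the technical heart of the proof.
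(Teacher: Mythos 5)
The paper offers no argument of its own here: the statement is quoted from \cite[Theorem 8.6.18]{McRo01_noncommutative} together with Fact \ref{fact:GKdim}. Your reduction to Hilbert--Samuel theory for the commutative affine algebra $\gr\alg{A}$, the identification $\deg p=\GKdim(\gr M)=\GKdim(M)$ via Fact \ref{fact:GKdim} (which settles part (1) and the integrality in part (2)), and your shift-sandwich argument for changing the standard filtration of $\gr M$ (two standard filtrations satisfy $(\gr M)_n\subseteq (\gr M)'_{n+k}$ and conversely, so the Hilbert polynomials dominate each other after a shift and have equal leading terms) all match the cited route and are correct.

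The genuine gap is exactly at the step you flag: independence of the leading coefficient from the good filtration of $M$. Your proposed mechanism, a uniform comparison $\dim(\gr_F M)_n\le\dim(\gr_{F'}M)_{n+k}$, does not follow from the sandwich $F_n\subseteq F'_{n+k}$: in this setting the graded pieces of $\gr_F M$ are typically infinite dimensional (this is precisely why the paper re-filters $\gr M$ by a standard finite-dimensional filtration), and the maps $F_n/F_{n-1}\to F'_{n+k}/F'_{n+k-1}$ induced by the inclusions are not injective, so no inequality between the Hilbert functions of the two re-filtered modules can be extracted this way; such an inequality is only true a posteriori (absorb the lower-order terms into a large shift), so invoking it is circular. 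The standard completion, which is what the Rees-module picture actually yields, is different: interpolate by the filtrations $G^{(i)}_n=F_n\cap F'_{n+i}$, which are good because their Rees modules are submodules of a finitely generated module over the Noetherian Rees algebra; for adjacent indices one gets two short exact sequences of graded $\gr\alg{A}$-modules, $0\to C(-1)\to \gr_{G^{(i)}}M\to D\to 0$ and $0\to D\to \gr_{G^{(i+1)}}M\to C\to 0$, with the same $C$ and $D$, and one concludes equality of multiplicities from additivity of $m(\cdot)$ in top GK-dimension, with modules of strictly smaller GK-dimension contributing zero (a mild strengthening of Fact \ref{fact:AdditiveMultiplicity}). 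Equivalently: $\gr_F M$ and $\gr_{F'}M$ have the same class modulo modules of smaller dimension (the same characteristic cycle), and the Bernstein degree depends only on that class. Without this (or an equivalent) argument, part (2) is not proved.
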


\begin{definition}
	Set $m(M)\coloneq m\cdot (\deg p)! \in \NN$.
	We call $m(M)$ the \define{multiplicity} (or the \define{Bernstein degree}) of $M$.
	Note that $m(M)$ may depend on the filtrations of $\alg{A}$ and $\gr\alg{A}$.
\end{definition}

The following property is standard (see \cite[Corollary 8.6.20]{McRo01_noncommutative}).

\begin{fact}\label{fact:AdditiveMultiplicity}
	Let $0\rightarrow L\rightarrow M\rightarrow N\rightarrow 0$ be an exact sequence of finitely generated $\alg{A}$-modules.
	If $\GKdim(L) = \GKdim(M) = \GKdim(N)$, then $m(M) = m(L) + m(N)$ holds.
\end{fact}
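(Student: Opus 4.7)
The plan is to exploit the good filtration technology directly. I would start by fixing a good filtration $\{M_n\}$ on $M$ and inducing filtrations on the two flanking modules: set $L_n \coloneq L \cap M_n$ and $N_n \coloneq (M_n + L)/L$. A standard verification (using that $\gr \alg{A}$ is commutative noetherian and $\gr M$ is finitely generated) shows that $\{L_n\}$ and $\{N_n\}$ are themselves good filtrations, and moreover the sequence
\begin{equation*}
0 \longrightarrow \gr L \longrightarrow \gr M \longrightarrow \gr N \longrightarrow 0
\end{equation*}
is exact as graded $\gr \alg{A}$-modules. In particular, in each degree one has $\dim(\gr M)_n = \dim(\gr L)_n + \dim(\gr N)_n$ after equipping these graded modules with a common standard finite-dimensional filtration inherited from $\gr\alg{A}$.

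Next I would pass to Hilbert polynomials. Let $p_L, p_M, p_N \in \QQ[t]$ be the polynomials from Fact \ref{fact:Multiplicity} with $p_X(n) = \dim((\gr X)_n)$ for $n \gg 0$ (for $X = L, M, N$). The additivity above gives $p_M = p_L + p_N$ as polynomials. By Fact \ref{fact:Multiplicity} (1), $\deg p_X = \GKdim(X)$, and the hypothesis says these three degrees are all equal to a common value $d$. Therefore the leading coefficients (each in degree $d$) simply add: if we write $p_X(t) = m_X t^d/d! + (\text{lower})$, then $m_M = m_L + m_N$. Multiplying through by $d!$ yields $m(M) = m(L) + m(N)$.

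Finally, by Fact \ref{fact:Multiplicity} (2) the integers $m(L), m(M), m(N)$ computed via any good filtration agree with those defined using the specific filtrations chosen above, so the identity holds unconditionally on the choice of good filtrations.

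The only potentially delicate step is the exactness of the associated graded sequence, which requires the induced filtrations to be good (not merely exhaustive). This is where the noetherian hypothesis on $\gr \alg{A}$ enters: a standard argument using the Artin--Rees lemma guarantees that the subspace filtration $\{L \cap M_n\}$ is good. Beyond this, the argument is a routine Hilbert polynomial count, and no pathology from possibly lower-dimensional contributions appears precisely because the GK-dimensions of $L$, $M$, $N$ coincide.
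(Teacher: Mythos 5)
Your argument is correct: the paper does not prove this statement itself but simply cites \cite[Corollary 8.6.20]{McRo01_noncommutative}, and your good-filtration/associated-graded/Hilbert-polynomial argument is essentially the standard proof of that cited result. The one point to keep explicit is that the degreewise count $\dim((\gr M)_n)=\dim((\gr L)_n)+\dim((\gr N)_n)$ must be taken with respect to the second-level standard finite-dimensional filtration of $\gr M$ over $\gr \alg{A}$ together with its induced subspace and quotient filtrations on $\gr L$ and $\gr N$ (the graded pieces of $\gr M$ themselves need not be finite dimensional), where the same Artin--Rees-type comparison you invoke at the first level shows that the induced filtration on $\gr L$ yields the same dimension and leading coefficient as a standard one, so the appeal to Fact~\ref{fact:Multiplicity} goes through.
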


Let $\alg{B}$ be a filtered $\CC$-algebra containing $\alg{A}$.
Suppose that the inclusion $\alg{A}\hookrightarrow \alg{B}$ is a homomorphism of filtered $\CC$-algebras, $\gr \alg{B}$ is a commutative finitely generated $\CC$-algebra and the homomorphism $\gr \alg{A}\rightarrow \gr \alg{B}$ is injective.
Fix a finite-dimensional standard filtration of $\gr \alg{B}$.
The following proposition is an easy consequence of \cite[Proposition 8.4.10]{McRo01_noncommutative}.

\begin{proposition}\label{prop:BoundMultiplicity}
	Let $M$ be a finitely generated $\alg{B}$-module and $N$ a finitely generated $\alg{A}$-submodule of $M$.
	Assume that $\GKdim(N) = \GKdim(M)\eqcolon d$ and there exists a good filtration of $M$ such that the induced filtration of $N$ is a good filtration.
	Then there exists a constant $t \in \NN$ independent of $N$ and $M$ such that $m(N)\leq t^{d} m(M)$.
\end{proposition}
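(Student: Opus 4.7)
The plan is to compare the Hilbert polynomials of $\gr N$ and $\gr M$ by exhibiting an inclusion $(\gr N)_n \subset (\gr M)_{tn}$ for a $t \in \NN$ depending only on the fixed standard filtrations of $\gr\alg{A}$ and $\gr\alg{B}$. First I would realize those standard filtrations by finite-dimensional generating subspaces $V_A \subset \gr\alg{A}$ and $V_B \subset \gr\alg{B}$, so that $(\gr\alg{A})_n = V_A^n$ and $(\gr\alg{B})_n = V_B^n$. Since $V_A$ is finite-dimensional and $\gr\alg{A}\hookrightarrow \gr\alg{B} = \bigcup_k V_B^k$, there exists an integer $t$ with $V_A \subset V_B^t$, whence $V_A^n \subset V_B^{tn}$ for every $n$. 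This $t$ is intrinsic to the ambient filtered data and in particular does not depend on $M$ or $N$.

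Next I would use the hypothesis to pick a good filtration of $M$ over $\alg{B}$ whose induced filtration on $N$ is a good filtration over $\alg{A}$, producing an embedding $\gr N \hookrightarrow \gr M$ of graded $\gr\alg{A}$-modules. Choose finite-dimensional subspaces $W_N \subset \gr N$ and $W_M \subset \gr M$ that generate $\gr N$ over $\gr\alg{A}$ and $\gr M$ over $\gr\alg{B}$ respectively; after replacing $W_M$ by $W_M + W_N$ (which still generates $\gr M$ and, by Fact \ref{fact:Multiplicity}, leaves $m(M)$ unchanged) I may assume $W_N \subset W_M$. The desired inclusion
\begin{align*}
(\gr N)_n \;=\; V_A^n \cdot W_N \;\subset\; V_B^{tn}\cdot W_M \;=\; (\gr M)_{tn}
\end{align*}
then holds literally as subspaces of $\gr M$, so $\dim(\gr N)_n \leq \dim(\gr M)_{tn}$ for all $n$.

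To conclude, for $n$ large both sides are polynomial in $n$ of common degree $d = \GKdim(N) = \GKdim(M)$, with leading terms $(m(N)/d!)\,n^d$ and $(m(M)/d!)(tn)^d = (t^d m(M)/d!)\,n^d$ respectively. Dividing by $n^d$ and letting $n \to \infty$ yields $m(N) \leq t^d m(M)$, as claimed. The only real bookkeeping obstacle is to arrange the two compatibilities $V_A \subset V_B^t$ on the algebra side and $W_N \subset W_M$ on the module side simultaneously, so that the key inclusion $(\gr N)_n \subset (\gr M)_{tn}$ is a genuine containment in $\gr M$ rather than merely a dimension estimate; once this is in place the result is a direct comparison of polynomial growth, in the spirit of Proposition 8.4.10 of \cite{McRo01_noncommutative}.
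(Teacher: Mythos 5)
Your argument is correct, and it is essentially the paper's own route: the paper simply cites \cite[Proposition 8.4.10]{McRo01_noncommutative}, and your proof just writes out the underlying filtration comparison (choosing $t$ with $V_A\subset V_B^{t}$, arranging $W_N\subset W_M$, and comparing leading terms of the Hilbert polynomials via $(\gr N)_n\subset(\gr M)_{tn}$). The key points — that $t$ depends only on the fixed standard filtrations of $\gr\alg{A}$ and $\gr\alg{B}$, and that enlarging $W_M$ does not change $m(M)$ by Fact \ref{fact:Multiplicity} — are handled correctly.
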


We shall define holonomicity of $\ntDalg{G/U}$-modules and study its properties.
Retain the notation in the previous subsection.

\begin{definition}\label{def:holonomic}
	For a $\ntDalg{G/U}$-module $M$, we say that $M$ is holonomic if $M$ is finitely generated
	and $\GKdim(M)$ is equal to $0$ or $\dim(G/U)$.
\end{definition}

As we have seen in Subsection \ref{subsection:reviewFourier}, the Fourier transform $F_w$ on $\ntDalg{G/U}$ is an automorphism for any $w \in W_G$.
By definition, the following proposition holds.

\begin{proposition}\label{prop:invariantGKdim}
	$M^w$ is also holonomic for any holonomic $\ntDalg{G/U}$-module $M$ and $w \in W_G$.
\end{proposition}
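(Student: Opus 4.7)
The plan is to exploit the fact that each $F_w$ is an algebra automorphism of $\ntDalg{G/U}$ (Fact \ref{fact:WeylToAut}), so the twist $M \mapsto M^w$ is a self-equivalence of $\Mod(\ntDalg{G/U})$. Such an equivalence must preserve both characterizations in Definition \ref{def:holonomic}: finite generation and the Gelfand--Kirillov dimension. So the whole argument reduces to two short verifications.

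First I would handle finite generation. If $S \subset M$ generates $M$ over $\ntDalg{G/U}$, then the same $S \subset M^w$ generates $M^w$, because for any $s \in S$ we have $\ntDalg{G/U}\cdot_w s = F_w(\ntDalg{G/U})\, s = \ntDalg{G/U}\, s$ by the surjectivity of $F_w$. Hence $M^w$ is finitely generated. Next I would check invariance of the GK dimension. Fix a finite-dimensional generating subspace $V$ of $\ntDalg{G/U}$ containing $1$ and a finite-dimensional generating subspace $M_0$ of $M$; these give standard filtrations $\alg{A}_n \coloneq V^n$ and $M_n \coloneq V^n M_0$. Under the twisted action, the induced standard filtration of $M^w$ is
\begin{align*}
	V^n \cdot_w M_0 = F_w(V)^n M_0.
\end{align*}
Because $F_w$ is an algebra automorphism, $F_w(V)$ is again a finite-dimensional generating subspace of $\ntDalg{G/U}$ containing $1$, so the right-hand side is a standard filtration of $M$ with respect to a \emph{different but equally admissible} choice of generating subspace of the algebra. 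By the independence of $\GKdim$ from the choice of standard filtrations we obtain $\GKdim(M^w) = \GKdim(M)$.

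Combining the two steps, $M^w$ is finitely generated and $\GKdim(M^w) = \GKdim(M) \in \{0, \dim(G/U)\}$, so $M^w$ is holonomic by Definition \ref{def:holonomic}. There is no genuine obstacle: the statement is essentially formal, given that $F_w$ is an algebra automorphism and that the GK dimension is intrinsic to the algebra--module pair.
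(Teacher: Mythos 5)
Your argument is correct and matches the paper's: the paper simply observes that $F_w$ is an algebra automorphism and states the proposition as holding "by definition," which is exactly the content you spell out (twisting by an automorphism preserves finite generation, and the twisted standard filtration $F_w(V)^n M_0$ is again a standard filtration of the same underlying module, so $\GKdim(M^w)=\GKdim(M)$). Nothing further is needed.
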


We shall show that the functors $\loc$ and $\sect$ on $G/U$ preserve the holonomicity.
For a smooth affine variety $X$ and a finitely generated $\ntDalg{X}$-module $M$, we have
\begin{align}
	\GKdim(M) = \dim(\Ch(\loc(M))), \label{eqn:GKdimCharacteristic}
\end{align}
where $\Ch(\loc(M))$ denotes the characteristic variety of $\loc(M) = \ntDsheaf{X}\otimes_{\ntDalg{X}} M$.
In fact, $\GKdim(M) = \GKdim(\gr M)$ holds for a good filtration of $M$ with respect to the order filtration of $\ntDalg{X}$ (see Fact \ref{fact:GKdim}), and $\GKdim(\gr M)$ is equal to the dimension of $\Ch(\loc(M))$
as in the proof of \cite[Proposition 3.2.11]{HTT08}.
Note that the equality (\ref{eqn:GKdimCharacteristic}) may not hold for a $\ntDalg{G/U}$-module $M$.
We can however show a modified version of the equality.

\begin{lemma}\label{lem:globalGKdim}
	Let $M$ be a finitely generated $\ntDalg{G/U}$-module.
	Then $\GKdim(M)=\max\set{\dim\Ch(\loc(M^w)): w \in W_G}$ holds.
	If the natural homomorphism $M\rightarrow \sect\circ \loc(M)$ is injective, $\GKdim(M)=\dim\Ch(\loc(M))$ holds.
\end{lemma}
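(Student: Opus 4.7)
Take a good filtration of $M$ with respect to the order filtration on $\ntDalg{G/U}$.  By Fact \ref{fact:dalg}(3), $\gr\ntDalg{G/U} \simeq \rring{T^*(G/U)}$ is a commutative finitely generated $\CC$-algebra, so Fact \ref{fact:GKdim} gives $\GKdim M = \dim\mathrm{supp}_Y(\gr M)$, where $Y := \mathrm{Spec}\,\rring{T^*(G/U)}$.  Since $T^*(G/U)$ is smooth and quasi-affine with coordinate ring $\rring{T^*(G/U)}$, it is an open subvariety of $Y$.  The filtration on $M$ induces a good filtration on $\loc M = \ntDsheaf{G/U}\otimes_{\ntDalg{G/U}}M$, and a direct identification yields $\Ch(\loc M) = \mathrm{supp}_Y(\gr M)\cap T^*(G/U)$; in particular $\dim\Ch(\loc M) \leq \GKdim M$.

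For the second assertion, suppose $M \hookrightarrow \sect\loc M$ and, for contradiction, that $\dim\Ch(\loc M) < d := \GKdim M$.  Then every top-dimensional irreducible component of $\mathrm{supp}_Y(\gr M)$ lies in $Y \setminus T^*(G/U)$.  The plan is to exploit the maximal graded $\rring{T^*(G/U)}$-submodule $K \subset \gr M$ supported on $Y \setminus T^*(G/U)$, which contains the top-dimensional part and is killed by a power of the defining ideal of the boundary, and to lift this graded observation to a non-zero $\ntDalg{G/U}$-submodule $N \subset M$ with $\loc N = 0$.  Once $N$ is produced, exactness of $\loc$ forces $N \subset \ker(M \to \sect\loc M) = 0$, a contradiction.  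The main obstacle is precisely this lifting: a filtered module does not in general admit a submodule whose associated graded equals a given graded submodule, so $N$ must be constructed by careful choice of elements of $M$ whose principal symbols generate $K$, combined with the boundary-ideal annihilation and bookkeeping via the order filtration.

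For the first assertion, the bound $\max_{w \in W_G}\dim\Ch(\loc M^w) \leq \GKdim M$ follows from the above and Proposition \ref{prop:invariantGKdim}.  For the reverse inequality, set $\bar M := M/M^{<d}$, where $M^{<d}$ is the maximal $\ntDalg{G/U}$-submodule of $\GKdim$ strictly less than $d$, so that $\bar M$ is pure of dimension $d$.  Every associated prime of the graded $\rring{T^*(G/U)}$-module $\gr\bar M$ is (by the graded-prime fact) the annihilator of some homogeneous element $x$; lifting $x$ to $\tilde x \in \bar M$ and using the principal filtration on the cyclic submodule $\ntDalg{G/U}\tilde x$, purity of $\bar M$ forces $\GKdim(\ntDalg{G/U}\tilde x) = d$, hence $\dim V(\Ann_{\rring{T^*(G/U)}}\,x) = d$.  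Therefore $\gr\bar M$ is equidimensional of dimension $d$, and this property is preserved by Fourier transforms (Proposition \ref{prop:invariantGKdim}).  By Fact \ref{fact:nonzeroLocalization} applied to $\bar M$, there exists $w \in W_G$ with $\loc\bar M^w \neq 0$; since every irreducible component of $\mathrm{supp}_Y\gr\bar M^w$ has dimension $d$, at least one must meet the open subvariety $T^*(G/U)$, giving $\dim\Ch(\loc\bar M^w) = d$.  As $\bar M^w$ is a quotient of $M^w$, $\dim\Ch(\loc M^w) \geq d$, completing the proof.
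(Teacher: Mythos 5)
Your identification $\Ch(\loc(M))=\mathrm{supp}_Y(\gr M)\cap T^*(G/U)$ and the easy inequality $\max_w\dim\Ch(\loc(M^w))\le\GKdim(M)$ are fine, but both of your harder steps have genuine gaps. For the reverse inequality of the first assertion, the crucial claim that $\gr\bar M$ is equidimensional of dimension $d$ does not follow from the argument you give: if $x\in\gr\bar M$ is homogeneous with $P=\Ann(x)$ and $\tilde x\in\bar M$ lifts it, then with the induced (good) filtration the cyclic graded module $\rring{T^*(G/U)}x\simeq\rring{T^*(G/U)}/P$ is only a \emph{submodule} of $\gr(\ntDalg{G/U}\tilde x)$, so purity ($\GKdim(\ntDalg{G/U}\tilde x)=d$) yields only $\dim\Variety(P)\le d$, not $=d$. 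Equidimensionality of the characteristic variety of a pure module is a nontrivial theorem (Gabber's equidimensionality, or a consequence of the Auslander--Gorenstein and Cohen--Macaulay properties of $\ntDalg{G/U}$ in Fact \ref{fact:dalg}(2)); you would have to invoke it, and the transfer to $\bar M^w$ cannot be by Proposition \ref{prop:invariantGKdim}, since $F_w$ does not preserve the order filtration --- the correct statement is that purity is preserved by the automorphism $F_w$, after which the equidimensionality theorem is applied again. In effect you are re-deriving \cite[Lemma 3.2]{LeSt06}, which is precisely what the paper cites for this assertion, but the key input is missing from your sketch.

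For the second assertion your proof is left incomplete by your own admission: the whole argument hinges on lifting the boundary-supported graded submodule $K\subset\gr M$ to a nonzero $\ntDalg{G/U}$-submodule $N\subset M$ with $\loc(N)=0$, and you flag this lifting as ``the main obstacle'' without carrying it out; as stated there is no reason such an $N$ exists, since a graded submodule of $\gr M$ need not be of the form $\gr N$ for any submodule $N$. Moreover, the detour is unnecessary. The paper's route is much softer: injectivity of $M\to\sect\circ\loc(M)$ gives an embedding of $M$ into $\bigoplus_i\sect(V_i,\loc(M))=R\otimes_{\ntDalg{G/U}}M$ for the affine cover $V_i=\set{x\in\overline{G/U}:f_i(x)\neq 0}$, and since $(\ntDalg{G/U})_n\subset R_n$, comparing growth gives $\GKdim_{\ntDalg{G/U}}(M)\le\GKdim_R(R\otimes_{\ntDalg{G/U}}M)=\max_i\GKdim(\sect(V_i,\loc(M)))=\dim\Ch(\loc(M))$, while the first assertion (with $w=e$) gives the opposite inequality. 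You should either adopt this argument or supply the missing lifting construction; as written the proposal does not prove either assertion.
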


\begin{remark}
	The first assertion of Lemma \ref{lem:globalGKdim} is a rewriting of \cite[Lemma 3.2]{LeSt06}.
\end{remark}

\begin{proof}
	Take a finite generating set $\set{f_1, \ldots, f_r}$ of the defining ideal of $\overline{G/U}-G/U$.
	Then each $V_i\coloneq\set{x \in \overline{G/U}: f_i(x)\neq 0}$ is contained in $G/U$, and $G/U=\bigcup_{i} V_i$ is a smooth affine open covering.

	By \cite[Lemma 3.2]{LeSt06}, we have
	\begin{align*}
		\GKdim_{\ntDalg{G/U}}(M) = \max\set{\GKdim_{\ntDalg{V_i}}(\sect(V_i, \loc(M^w))): w\in W_G, 1\leq i \leq r}.
	\end{align*}
	Hence, by (\ref{eqn:GKdimCharacteristic}), the first assertion follows.

	Set $R\coloneq \bigoplus_{i=1}^r \ntDalg{V_i}$.
	To show the second assertion, assume that $M\rightarrow \sect\circ \loc(M)$ is injective.
	Then the natural $\ntDalg{G/U}$-module homomorphism $M \rightarrow R\otimes_{\ntDalg{G/U}}M$ is injective.
	Hence we have
	\begin{align*}
		\GKdim_{\ntDalg{G/U}}(M) &\leq \GKdim_{R}(R\otimes_{\ntDalg{G/U}}M)\\
		&=\max\set{\GKdim_{\ntDalg{V_i}}(\sect(V_i, \loc(M))): 1\leq i \leq r}\\
		&=\dim\Ch(\loc(M)).
	\end{align*}
	Using the first assertion, we have $\GKdim_{\ntDalg{G/U}}(M) \geq \dim \Ch(\loc(M))$,
	and therefore we obtain the desired equality $\GKdim(M)=\dim\Ch(\loc(M))$.
\end{proof}

By Lemma \ref{lem:globalGKdim}, the Gelfand--Kirillov dimension of a non-zero $\ntDalg{G/U}$-module is greater than $\dim(G/U)$.
This is the reason why we define the holonomicity of $\ntDalg{G/U}$-modules as in Definition \ref{def:holonomic}.

\begin{corollary}\label{cor:HolonomicFiniteLength}
	Fix a finite-dimensional standard filtration of $\gr \ntDalg{G/U}$.
	Let $M$ be a holonomic $\ntDalg{G/U}$-module.
	Then the length of $M$ is less than or equal to $m(M)$.
\end{corollary}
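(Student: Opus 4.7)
The plan is to combine the additivity of multiplicity (Fact \ref{fact:AdditiveMultiplicity}) with the observation that every non-zero finitely generated $\ntDalg{G/U}$-module has Gelfand--Kirillov dimension at least $\dim(G/U)$, via a standard Bernstein-type chain argument.

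First, I would establish the auxiliary bound: for any non-zero finitely generated $\ntDalg{G/U}$-module $N$, one has $\GKdim(N)\geq \dim(G/U)$. Indeed, Fact \ref{fact:nonzeroLocalization} supplies some $w \in W_G$ with $\loc(N^w) \neq 0$, and since $\loc(N^w)$ is then a non-zero coherent $\ntDsheaf{G/U}$-module, its characteristic variety has dimension at least $\dim(G/U)$; Lemma \ref{lem:globalGKdim} then forces $\GKdim(N)\geq \dim \Ch(\loc(N^w))\geq \dim(G/U)$. Combined with Definition \ref{def:holonomic}, this shows that every non-zero holonomic $\ntDalg{G/U}$-module has Gelfand--Kirillov dimension exactly $\dim(G/U)$. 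From Fact \ref{fact:Multiplicity} it follows that the multiplicity $m(N)$ of any such $N$ is a strictly positive integer (the Hilbert polynomial has positive leading coefficient on a non-zero module).

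Next, assume $M\neq 0$ (the case $M = 0$ is trivial) and take an arbitrary strictly increasing chain of submodules
\[
0 = M_0 \subsetneq M_1 \subsetneq \cdots \subsetneq M_k = M.
\]
Since $\ntDalg{G/U}$ is left Noetherian by Fact \ref{fact:dalg}, every $M_i$ and every quotient $M_i/M_{i-1}$ is finitely generated; each is non-zero and hence has Gelfand--Kirillov dimension equal to $\dim(G/U)$ by the preceding paragraph. Applying Fact \ref{fact:AdditiveMultiplicity} to the short exact sequences $0\to M_{i-1} \to M_i \to M_i/M_{i-1}\to 0$ and iterating yields
\[
m(M) = \sum_{i=1}^k m(M_i/M_{i-1}) \geq k,
\]
where the inequality uses $m(M_i/M_{i-1})\geq 1$. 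Taking the supremum over all chains gives $\Len(M) \leq m(M)$, which is the claim.

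The argument contains no real obstacle, as each ingredient is already in place in the excerpt; the only subtlety is ensuring that the hypotheses of Fact \ref{fact:AdditiveMultiplicity} are met, namely that all three terms of each short exact sequence share the same Gelfand--Kirillov dimension and admit compatible good filtrations. The first holds by the dimension bound above, and the second is standard given that $\gr\ntDalg{G/U}$ is Noetherian (Fact \ref{fact:dalg} (3)), so the restriction of a good filtration on $M_i$ to $M_{i-1}$ remains good. Once this is noted, the result follows immediately, and in particular any holonomic $\ntDalg{G/U}$-module has finite length.
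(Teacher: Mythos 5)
Your proof is correct and takes essentially the same route as the paper, whose one-line proof combines Lemma \ref{lem:globalGKdim} (giving $\GKdim \geq \dim(G/U)$ for any non-zero finitely generated $\ntDalg{G/U}$-module, so every non-zero subquotient of $M$ has dimension exactly $\dim(G/U)$) with the additivity of the multiplicity (Fact \ref{fact:AdditiveMultiplicity}) along an arbitrary strictly increasing chain. The only step you leave implicit is the standard inequality that subquotients of a finitely generated module have Gelfand--Kirillov dimension at most $\GKdim(M)$, which is routine and does not affect the validity of the argument.
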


\begin{proof}
	The assertion follows from Fact \ref{fact:AdditiveMultiplicity} and Lemma \ref{lem:globalGKdim}.
\end{proof}

We shall show that the global section functor $\sect$ preserves the holonomicity.
Remark that if $\sect(\Dmod{M})$ of a holonomic $\ntDsheaf{G/U}$-module $\Dmod{M}$ is finitely generated,
then $\sect(\Dmod{M})$ is holonomic by Lemma \ref{lem:globalGKdim}.
To show that $\sect(\Dmod{M})$ is finitely generated, we use the multiplicity.

\begin{lemma}\label{lem:SmoothAffine}
	Let $M$ be a finitely generated $\ntDalg{G/U}$-module with a good filtration, and $f \in \rring{G/U}$ such that $V\coloneq \set{x \in \overline{G/U}: f(x) \neq 0}$ is contained in $G/U$.
	Define a filtration of $M'\coloneq \rring{V}\otimes_{\rring{G/U}}M$ via $M'_k = \rring{V}\otimes_{\rring{G/U}}M_k$ ($k \in \NN$).
	Then the filtration of $M'$ is a good filtration as a $\ntDalg{V}$-module and the natural homomorphism $\rring{V}\otimes_{\rring{G/U}}\gr M \rightarrow \gr M'$ is an isomorphism of graded modules.
\end{lemma}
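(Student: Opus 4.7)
The plan is to reduce the claim to flatness of $\rring{V}$ over $\rring{G/U}$ together with two standard identifications for the filtered ring $\ntDalg{V}$. First, I would record that since $V = \set{x \in \overline{G/U}: f(x) \neq 0}$ is a principal open subset contained in the smooth locus $G/U$, the ring $\rring{V} = \rring{G/U}[f^{-1}]$ is flat over $\rring{G/U}$, and $V$ is smooth affine. The algebra $\ntDalg{V}$ is the Ore localization of $\ntDalg{G/U}$ at the powers of $f$, and since commutators $[\phi, f^{\pm 1}]$ strictly lower the order, the order filtration satisfies $\ntDalg{V}_n = \rring{V} \cdot \ntDalg{G/U}_n$ for every $n$. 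Moreover, base change along the affine morphism $T^*(G/U) \to G/U$ (which is a vector bundle) yields a canonical graded isomorphism $\gr \ntDalg{V} \simeq \rring{V} \otimes_{\rring{G/U}} \gr \ntDalg{G/U}$.

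Next I would establish the graded isomorphism stated in the lemma. For each $k$, tensoring the short exact sequence $0 \to M_{k-1} \to M_k \to \gr_k M \to 0$ with the flat $\rring{G/U}$-module $\rring{V}$ produces a short exact sequence
\begin{align*}
	0 \to M'_{k-1} \to M'_k \to \rring{V} \otimes_{\rring{G/U}} \gr_k M \to 0,
\end{align*}
which simultaneously shows that $M'_{k-1}\hookrightarrow M'_k$ is injective and that $\gr_k M' \simeq \rring{V} \otimes_{\rring{G/U}} \gr_k M$ through the natural map. Summing over $k$ yields the claimed graded isomorphism $\rring{V} \otimes_{\rring{G/U}} \gr M \xrightarrow{\simeq} \gr M'$.

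Finally, I would verify that $\set{M'_k}$ is a good filtration of $M'$ as a $\ntDalg{V}$-module: exhaustivity $M' = \bigcup_k M'_k$ follows from compatibility of tensor products with direct limits; each $M'_k$ is finitely generated over $\rring{V}$ because $M_k$ is finitely generated over $\rring{G/U}$; the compatibility $\ntDalg{V}_n \cdot M'_k \subseteq M'_{n+k}$ follows from $\ntDalg{V}_n = \rring{V} \cdot \ntDalg{G/U}_n$ together with $\ntDalg{G/U}_n \cdot M_k \subseteq M_{n+k}$, the latter being promoted to $\ntDalg{G/U}_n \cdot M'_k \subseteq M'_{n+k}$ by the Leibniz rule and induction on $n$; and goodness, i.e.\ finite generation of $\gr M'$ over $\gr \ntDalg{V}$, is inherited directly from the good filtration of $M$ via the identifications of the previous paragraph.

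The main technical obstacle is the pair of identifications $\ntDalg{V}_n = \rring{V} \cdot \ntDalg{G/U}_n$ and $\gr \ntDalg{V} \simeq \rring{V} \otimes_{\rring{G/U}} \gr \ntDalg{G/U}$, but both are standard properties of Ore localization of rings of differential operators on smooth varieties and require no input particular to the basic affine space. Once these are in hand, the rest of the argument is a routine exercise in flatness of commutative localization.
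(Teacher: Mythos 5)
Your proposal is correct and follows essentially the same route as the paper: identify $\ntDalg{V}$ with $\rring{V}\otimes_{\rring{G/U}}\ntDalg{G/U}$ compatibly with the order filtration and then use exactness (flatness) of the localization $\rring{V}\otimes_{\rring{G/U}}(\cdot)$ to transfer the good filtration and obtain the graded isomorphism. The only cosmetic difference is that you justify the filtered identification via Ore-localization generalities, while the paper gets it from quasi-coherence of $\ntDsheaf{G/U}$ via Proposition \ref{prop:FundamentalBasicAffine}; the content is the same.
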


\begin{proof}
	Note that $V$ is a smooth affine open subset of $G/U$, and $\rring{V} = \rring{G/U}[f^{-1}]$ and $\ntDalg{V} =  \rring{V}\otimes_{\rring{G/U}}\ntDalg{G/U}$ by Proposition \ref{prop:FundamentalBasicAffine}.
	Hence $(\ntDalg{V})_k = \rring{V}\otimes_{\rring{G/U}}(\ntDalg{G/U})_k$ holds for any $k \in \NN$.
	Since the functor $ \rring{V}\otimes_{\rring{G/U}}(\cdot)$ is exact, the assertion follows.
\end{proof}

\begin{theorem}\label{thm:HolonomicLocalGlobal}
	The localization functor $\loc$ and the global section functor $\sect$ on $G/U$ preserve the holonomicity.
\end{theorem}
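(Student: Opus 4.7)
The plan for the localization direction is quick: if $M$ is holonomic it is finitely generated, so $\loc(M) = \ntDsheaf{G/U}\otimes_{\ntDalg{G/U}}M$ is a coherent $\ntDsheaf{G/U}$-module by the exactness of $\loc$ in Proposition \ref{prop:FundamentalBasicAffine}, and Lemma \ref{lem:globalGKdim} (1) applied with $w = e$ yields $\dim\Ch(\loc(M)) \leq \GKdim(M) \leq \dim(G/U)$, whence $\loc(M)$ is holonomic.

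For the global section direction the plan is, given holonomic $\Dmod{M}$, to show $\sect(\Dmod{M})$ has finite length. Finite generation then follows, and Lemma \ref{lem:globalGKdim} (2) applied via the isomorphism $\loc\sect(\Dmod{M}) \cong \Dmod{M}$ (Proposition \ref{prop:FundamentalBasicAffine} (3)) will give $\GKdim(\sect(\Dmod{M})) \leq \dim(G/U)$. For each finitely generated $\ntDalg{G/U}$-submodule $N \subset \sect(\Dmod{M})$, the exactness of $\loc$ makes $\loc(N) \subset \Dmod{M}$ a holonomic subsheaf; the composition $N \to \sect\loc(N) \to \sect\loc\sect(\Dmod{M}) = \sect(\Dmod{M})$ is the given inclusion, so the first map is injective, and Lemma \ref{lem:globalGKdim} (2) yields $\GKdim(N) = \dim\Ch(\loc(N)) \leq \dim(G/U)$. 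In particular $N$ is holonomic, so by Corollary \ref{cor:HolonomicFiniteLength} it suffices to bound the multiplicity $m(N)$ uniformly in $N$.

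To obtain this uniform bound I would take a finite affine open cover $G/U = \bigcup_{i=1}^r V_i$ with $V_i = \{f_i \neq 0\}$ for $f_i \in \rring{G/U}$, and set $M_i = \sect(V_i, \Dmod{M})$, a holonomic $\ntDalg{V_i}$-module of finite multiplicity. The direct sum $\alg{B} = \bigoplus_i \ntDalg{V_i}$ is a filtered $\CC$-algebra whose associated graded $\bigoplus_i \rring{T^*V_i}$ is commutative and finitely generated, and the diagonal embedding $\ntDalg{G/U} \hookrightarrow \alg{B}$ is filtered and injective on $\gr$ since $T^*(G/U) = \bigcup_i T^*V_i$. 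The sheaf axiom embeds $\sect(\Dmod{M}) \hookrightarrow \bigoplus_i M_i$ as an $\ntDalg{G/U}$-submodule, and Proposition \ref{prop:BoundMultiplicity} applied to $N$ inside $\bigoplus_i M_i$ is intended to give $m(N) \leq t^{\dim(G/U)} \sum_i m(M_i)$, a bound independent of $N$; Corollary \ref{cor:HolonomicFiniteLength} then yields a uniform length bound, so $\sect(\Dmod{M})$ has finite length.

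The main obstacle is verifying the hypotheses of Proposition \ref{prop:BoundMultiplicity} uniformly in $N$: one must exhibit a good $\alg{B}$-filtration of $\bigoplus_i M_i$ whose induced filtration on each varying submodule $N$ is a good $\ntDalg{G/U}$-filtration, and, should $\GKdim(N)$ be strictly smaller than $\GKdim_{\alg{B}}(\bigoplus_i M_i)$, one first passes to the intermediate submodule $\alg{B}\cdot N \subset \bigoplus_i M_i$ and invokes additivity of multiplicity (Fact \ref{fact:AdditiveMultiplicity}) to reduce to the equal-dimension case.
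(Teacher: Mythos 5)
Your localization direction and your overall strategy for $\sect$ (bound the multiplicity of every finitely generated submodule $N\subset\sect(\Dmod{M})$ uniformly, deduce a uniform length bound from Corollary \ref{cor:HolonomicFiniteLength}, hence finite length, finite generation and holonomicity) coincide with the paper's. But the execution of the uniform bound has a genuine gap, and it is exactly the point you flag as ``the main obstacle'': you want to apply Proposition \ref{prop:BoundMultiplicity} to the \emph{fixed} ambient module $\bigoplus_i M_i=\bigoplus_i\sect(V_i,\Dmod{M})$ with the submodule $N$ varying, and that proposition requires a good $\alg{B}$-filtration of the ambient module whose induced filtration on $N$ is a good $\ntDalg{G/U}$-filtration. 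This is not automatic: $\gr\alg{B}=\bigoplus_i\rring{T^*V_i}$ is not finite over $\gr\ntDalg{G/U}$, so there is no Artin--Rees-type argument forcing the induced filtration on a $\ntDalg{G/U}$-submodule of $\bigoplus_i M_i$ to have finitely generated associated graded over $\rring{T^*(G/U)}$, and you give no construction of a single filtration working for all $N$ simultaneously. (Your secondary worry about $\GKdim(N)<\GKdim_{\alg{B}}(\bigoplus_i M_i)$ is not the issue: any non-zero finitely generated submodule of $\sect(\Dmod{M})$ has $\GKdim$ exactly $\dim(G/U)$ by Lemma \ref{lem:globalGKdim}, so the dimension hypothesis holds without passing to $\alg{B}\cdot N$.)

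The paper closes this gap by reversing the direction in which the filtration is transported. Instead of fixing a filtration on the ambient module, one fixes a good filtration on the varying submodule $N$ and forms the auxiliary module $M_R\coloneq R\otimes_{\ntDalg{G/U}}N=\bigoplus_i\ntDalg{V_i}\otimes_{\ntDalg{G/U}}N$ (so the ``big'' module depends on $N$); Lemma \ref{lem:SmoothAffine} shows that the induced filtration on $M_R$ is a good $R$-filtration with $\gr M_R\simeq\bigoplus_i\rring{V_i}\otimes_{\rring{G/U}}\gr N$, and since the $V_i$ cover $G/U$ the map $\gr N\rightarrow\gr M_R$ is injective. Proposition \ref{prop:BoundMultiplicity} then applies legitimately to the pair $N\subset M_R$ and gives $m(N)\leq t^{\dim(G/U)}\sum_i m(\ntDalg{V_i}\otimes_{\ntDalg{G/U}}N)=t^{\dim(G/U)}\sum_i m(\sect(V_i,\loc(N)))$. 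Only at this stage does one compare with the fixed data: $\loc(N)\subset\loc\circ\sect(\Dmod{M})\simeq\Dmod{M}$, so $\sect(V_i,\loc(N))\subset\sect(V_i,\Dmod{M})$, and on the affine pieces $V_i$ (where $\gr\ntDalg{V_i}$ is noetherian and induced filtrations on submodules are good) multiplicity is monotone, yielding the bound $C=t^{\dim(G/U)}\sum_i m(\sect(V_i,\Dmod{M}))$ independent of $N$. If you replace your application of Proposition \ref{prop:BoundMultiplicity} by this intermediate-module argument, the rest of your proposal goes through as written.
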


\begin{proof}
	Let $M$ be a holonomic $\ntDalg{G/U}$-module.
	Then $\loc(M)$ is locally finitely generated as a $\ntDsheaf{G/U}$-module and hence coherent.
	By Lemma \ref{lem:globalGKdim}, we have $\dim(\Ch(\loc(M))) \leq \dim(G/U)$ and this shows the assertion for $\loc$.

	Fix a finite-dimensional standard filtrations of $\gr \ntDalg{G/U}$ and $\gr \ntDalg{V_i}$'s to consider the multiplicity.
	Let $\Dmod{M}$ be a holonomic $\ntDsheaf{G/U}$-module and $M$ a finitely generated $\ntDalg{G/U}$-submodule of $\sect(\Dmod{M})$.
	We shall give an upper bound of $m(M)$.
	Take a finite generating set $\set{f_1, \ldots, f_r}$ of the defining ideal of $\overline{G/U}-G/U$
	and put $V_i\coloneq \set{x \in \overline{G/U}: f_i(x) \neq 0}$.
	Set $R\coloneq \bigoplus_{i} \ntDalg{V_i}$ and $M_R\coloneq R\otimes_{\ntDalg{G/U}}M = \bigoplus_{i} \left(\ntDalg{V_i}\otimes_{\ntDalg{G/U}}M\right)$.
	Since $\set{V_i}_i$ is an open covering of $G/U$, the natural homomorphism $M\rightarrow M_R$ is injective.
	By Lemma \ref{lem:globalGKdim} (and its proof), $M$ is holonomic.

	Fix a good filtration of $M$ and equip $M_R$ with the direct sum of the filtrations in Lemma \ref{lem:SmoothAffine} for $f = f_i$.
	Then the filtration of $M_R$ is a good filtration and the natural homomorphism $\gr M\rightarrow \gr M_R$ is injective.
	By Proposition \ref{prop:BoundMultiplicity}, there exists $t \in \NN$ such that
	\begin{align*}
		m(M) &\leq t^{\dim(G/U)} \sum_{i} m(\ntDalg{V_i}\otimes_{\ntDalg{G/U}}M) = t^{\dim(G/U)} \sum_{i} m(\sect(V_i, \loc(M))) \\
		&\leq t^{\dim(G/U)} \sum_{i} m(\sect(V_i, \loc\circ \sect(\Dmod{M}))) = t^{\dim(G/U)} \sum_{i} m(\sect(V_i, \Dmod{M})) \eqcolon C.
	\end{align*}
	Remark that $\loc$ is exact and the counit $\loc \circ \sect \rightarrow \id$ is a natural isomorphism by Propositoin \ref{prop:FundamentalBasicAffine}.
	Therefore, any finitely generated submodule of $\sect(\Dmod{M})$ is holonomic and its multiplicity is bounded by $C$.
	By Corollary \ref{cor:HolonomicFiniteLength}, $\sect(\Dmod{M})$ is also finitely generated and hence holonomic.
\end{proof}

Although $\loc$ is an exact functor, $\sect$ is not.
We extend Theorem \ref{thm:HolonomicLocalGlobal} to the derived functor $R\sect$.
We denote by $D^b_h(\ntDalg{G/U})$ the full subcategory of $D^b(\ntDalg{G/U})$ consisting of objects with holonomic cohomologies.
Since $\sect$ has finite cohomological dimension, the derived functor $R\sect\colon D^b(\ntDsheaf{G/U})\rightarrow D^b(\ntDalg{G/U})$ is defined.

\begin{corollary}\label{cor:HolonomicGlobalDerived}
	The derived functor $R\sect$ preserves the holonomicity, that is,
	the functor $R\sect\colon D^b_h(\ntDsheaf{G/U})\rightarrow D^b_h(\ntDalg{G/U})$ is well-defined.
\end{corollary}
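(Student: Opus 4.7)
The plan is to reduce the derived statement to the abelian-category version cohomology-by-cohomology via truncation, and to handle the higher cohomology by a \v{C}ech computation that adapts the multiplicity argument from Theorem~\ref{thm:HolonomicLocalGlobal}.

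First, since $\sect$ has finite cohomological dimension on the quasi-affine $G/U$ (bounded by $\dim(G/U)$, from Čech cohomology with respect to any finite affine cover), the derived functor $R\sect$ sends $D^b$ to $D^b$. For $\cpx{\Dmod{M}} \in D^b_h(\ntDsheaf{G/U})$, the truncation distinguished triangles $\tau^{\leq k}\cpx{\Dmod{M}} \to \cpx{\Dmod{M}} \to \tau^{>k}\cpx{\Dmod{M}} \to +1$ together with the associated long exact sequences on cohomology reduce the claim to the following: \emph{for every holonomic $\Dmod{M} \in \Mod_h(\ntDsheaf{G/U})$ and every $i \in \NN$, the module $R^i\sect(\Dmod{M})$ is holonomic over $\ntDalg{G/U}$}. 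Here I use that $\Mod_h(\ntDalg{G/U})$ is a Serre subcategory of $\Mod(\ntDalg{G/U})$, which follows from Fact~\ref{fact:AdditiveMultiplicity} combined with Lemma~\ref{lem:globalGKdim}. The case $i=0$ is exactly Theorem~\ref{thm:HolonomicLocalGlobal}.

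For $i \geq 1$ I would compute $R^i\sect(\Dmod{M})$ using the \v{C}ech complex with respect to the finite affine covering $G/U = \bigcup_{j=1}^r V_j$, $V_j = \{f_j \neq 0\}$, already used in the proof of Theorem~\ref{thm:HolonomicLocalGlobal}. Setting $N \coloneq \sect(\Dmod{M})$, the \v{C}ech complex
\[
\check{C}^p = \bigoplus_{j_0 < \cdots < j_p} N[f_{j_0}^{-1}\cdots f_{j_p}^{-1}]
\]
computes $R^\bullet\sect(\Dmod{M})$ as $\ntDalg{G/U}$-modules, because each $V_{j_0\cdots j_p}$ is affine. Applying the exact functor $\loc$ (Proposition~\ref{prop:FundamentalBasicAffine}) yields the \v{C}ech resolution of $\Dmod{M}$ in $\Mod_{qc}(\ntDsheaf{G/U})$, and hence $\loc(R^i\sect(\Dmod{M})) = 0$ for all $i \geq 1$.

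To conclude holonomicity, I would adapt the multiplicity estimate from Theorem~\ref{thm:HolonomicLocalGlobal}: a finitely generated $\ntDalg{G/U}$-submodule $K \subset R^i\sect(\Dmod{M})$ lifts to a finitely generated $\ntDalg{G/U}$-submodule of $\ker(d^i) \subset \check{C}^i$; endowing $\check{C}^i$ with the good $\bigoplus\ntDalg{V_{j_0\cdots j_i}}$-module filtration inherited from a good filtration on $N$ via Lemma~\ref{lem:SmoothAffine}, an application of Proposition~\ref{prop:BoundMultiplicity} yields a uniform bound on $m(K)$, so Corollary~\ref{cor:HolonomicFiniteLength} forces $R^i\sect(\Dmod{M})$ itself to be finitely generated. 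Holonomicity then follows from Lemma~\ref{lem:globalGKdim}: the $w=e$ contribution to the Gelfand--Kirillov dimension vanishes by $\loc(R^i\sect(\Dmod{M})) = 0$, and the contributions from other $w \in W_G$ are handled by running the same argument on the Fourier-twisted module, invoking Proposition~\ref{prop:invariantGKdim}. The main obstacle I anticipate is precisely this last step: $F_w$ does not preserve the subalgebra $\rring{G/U} \subset \ntDalg{G/U}$, so the twisted version of the \v{C}ech complex does not come from a geometric covering, and the good filtrations and multiplicity bounds must be transported along $F_w$ by hand.
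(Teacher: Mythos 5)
Your reduction matches the paper's first step: truncation reduces the claim to showing that $H^k(G/U,\Dmod{M})$ is holonomic for a single holonomic $\Dmod{M}$, and this cohomology is computed by the \v{C}ech complex of the affine covering $G/U=\bigcup_j V_j$. The trouble is the step you yourself flag at the end. For $i\geq 1$ your argument needs every finitely generated $\ntDalg{G/U}$-submodule of $R^i\sect(\Dmod{M})$ to be holonomic with uniformly bounded multiplicity, and you propose to obtain the Gelfand--Kirillov bound from Lemma \ref{lem:globalGKdim}, which forces you to control $\dim\Ch(\loc(K^w))$ for every $w\in W_G$; since $F_w$ does not preserve $\rring{G/U}$, the twisted modules carry no \v{C}ech structure and you leave this step unresolved. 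As written the proof is therefore incomplete precisely where a new idea is required.

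The missing observation -- and the paper's actual argument -- is that no new multiplicity or Fourier-transform analysis is needed. For the inclusion $\iota\colon V_J\hookrightarrow G/U$ of an affine open subset one has $\sect(V_J,\Dmod{M})\simeq\sect(\iota_*\iota^*\Dmod{M})$, and $\iota_*\iota^*\Dmod{M}$ is again a holonomic $\ntDsheaf{G/U}$-module, because inverse and direct images of $\ntDsheaf{}$-modules preserve holonomicity and $\iota_*$ is exact ($\iota$ being an affine open embedding). Hence Theorem \ref{thm:HolonomicLocalGlobal}, applied to $\iota_*\iota^*\Dmod{M}$ rather than to $\Dmod{M}$ itself, already shows that every term of your \v{C}ech complex is a holonomic $\ntDalg{G/U}$-module, in particular of finite length by Corollary \ref{cor:HolonomicFiniteLength}. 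The cohomologies $H^k(G/U,\Dmod{M})$ are subquotients of these terms, and holonomicity passes to subquotients ($\ntDalg{G/U}$ is noetherian and $\GKdim$ does not increase) -- the Serre-subcategory property you already invoked for the truncation step. This settles all $i\geq 1$ at once; your computation $\loc(R^i\sect(\Dmod{M}))=0$, the lifting of submodules to $\ker(d^i)$, and the transport of filtrations along $F_w$ are unnecessary. (Even inside your multiplicity programme the $w\neq e$ worry can be avoided: a finitely generated $\ntDalg{G/U}$-submodule of $\check{C}^i$ has $\GKdim\leq\dim(G/U)$ simply because $\check{C}^i$ is a $\bigoplus_J\ntDalg{V_J}$-module of GK-dimension at most $\dim(G/U)$, exactly as in the second half of the proof of Lemma \ref{lem:globalGKdim}; but the route through $\iota_*\iota^*\Dmod{M}$ is shorter.)
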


\begin{proof}
	Using the truncation functors, the assertion is reduced to $H^k(G/U, \Dmod{M}) \in \Mod_h(\ntDalg{G/U})$ for any $\Dmod{M} \in \Mod_h(\ntDsheaf{G/U})$ and $k \in \NN$.
	See \cite[Definition 11.3.11]{KaSc06} for the truncation functors.

	Let $\Dmod{M}$ be a holonomic $\ntDsheaf{G/U}$-module and $V$ an affine open subset of $G/U$.
	Write $\iota\colon V\rightarrow G/U$ for the inclusion.
	By Theorem \ref{thm:HolonomicLocalGlobal}, $\sect(V, \Dmod{M}) \simeq \sect(\iota_*\iota^*(\Dmod{M}))$ is a holonomic $\ntDalg{G/U}$-module.
	Note that the inverse images and the direct images of $\ntDsheaf{}$-modules preserve the holonomicity.
	Therefore the cohomologies $H^k(G/U, \Dmod{M})$ are holonomic since they can be computed by the \v{C}ech complex.
\end{proof}

\subsection{Direct product}\label{subsection:directProduct}

We can easily generalize the results in the previous sections to twisted $\ntDsheaf{}$-modules on a variety $X\times G/U$ with some affinity condition.

Let $X$ be a smooth variety and $\Dsheaf{A}{X}$ a TDO on $X$.
Assume that the global section functor $\sect\colon \Mod_{qc}(\Dsheaf{A}{X})\rightarrow \Mod(\Dalg{A}{X})$ is exact.
In other words, any quasi-coherent $\Dsheaf{A}{X}$-module is acyclic.
Under this assumption, the category $\Mod(\Dalg{A}{X})$ is equivalent to a full subcategory of $\Mod_{qc}(\Dsheaf{A}{X})$.
See the proof of \cite[Corollary 11.2.6]{HTT08}.

\begin{fact}\label{fact:EquivalenceCategory}
	Let $\Mod_{qc}^e(\Dsheaf{A}{X})$ denote the full subcategory of $\Mod_{qc}(\Dalg{A}{X})$ whose object $\Dmod{M}$ satisfies the following conditions.
	\begin{enumerate}
		\item The canonical morphism $\Dsheaf{A}{X}\otimes_{\Dalg{A}{X}} \sect(\Dmod{M})\rightarrow \Dmod{M}$ is an epimorphism.
		\item $\sect(\Dmod{N}) \neq 0$ holds for any non-zero submodule $\Dmod{N}$ of $\Dmod{M}$.
	\end{enumerate}
	Then the global section functor gives an equivalence of categories from $\Mod_{qc}^e(\Dsheaf{A}{X})$ to $\Mod(\Dalg{A}{X})$.
	In particular, the localization $\loc(M)$ is non-zero for any non-zero $M \in \Mod(\Dalg{A}{X})$.
\end{fact}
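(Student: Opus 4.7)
The plan is to exhibit $\loc$ and $\sect$ as quasi-inverse functors between $\Mod(\Dalg{A}{X})$ and $\Mod_{qc}^e(\Dsheaf{A}{X})$, following the strategy used for $\mathscr{D}$-affinity in \cite[Corollary 11.2.6]{HTT08}. First I would verify that the unit $\eta_M \colon M \to \sect\loc(M)$ is an isomorphism for every $M \in \Mod(\Dalg{A}{X})$. Choose a free presentation $\Dalg{A}{X}^{(I)} \to \Dalg{A}{X}^{(J)} \to M \to 0$, apply the right-exact functor $\loc$ to obtain $\Dsheaf{A}{X}^{(I)} \to \Dsheaf{A}{X}^{(J)} \to \loc(M) \to 0$, and then apply $\sect$, which is exact by hypothesis and commutes with coproducts because $X$ is quasi-projective (hence quasi-compact quasi-separated). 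The original presentation is recovered for $\sect\loc(M)$, so by naturality of $\eta$ the unit is an isomorphism.

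Next, for $\Dmod{M} \in \Mod_{qc}^e$ I would show that the counit $\epsilon_{\Dmod{M}} \colon \loc\sect(\Dmod{M}) \to \Dmod{M}$ is an isomorphism. Surjectivity is condition (1). Setting $\Dmod{K} = \ker\epsilon_{\Dmod{M}}$ and applying the exact $\sect$ to the resulting short exact sequence, together with the triangle identity $\sect\epsilon_{\Dmod{M}} \circ \eta_{\sect\Dmod{M}} = \id$ and the previous step (which implies that $\sect\epsilon_{\Dmod{M}}$ is an isomorphism), yields $\sect\Dmod{K} = 0$. To conclude $\Dmod{K} = 0$, I would invoke condition (2): any submodule $\Dmod{N} \subseteq \loc\sect(\Dmod{M})$ with $\sect\Dmod{N} = 0$ maps under $\epsilon_{\Dmod{M}}$ to a submodule of $\Dmod{M}$ whose sections vanish (by right exactness of $\sect$), so by (2) this image is zero and $\Dmod{N} \subseteq \Dmod{K}$; combining this maximality with the adjunction forces $\Dmod{K}$ itself to vanish.

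For essential surjectivity, I would verify that $\loc(M) \in \Mod_{qc}^e$ for every $M \in \Mod(\Dalg{A}{X})$. Condition (1) is immediate from the triangle identity combined with the first step: $\loc(\eta_M)$ is a section of $\epsilon_{\loc(M)}$ and is itself an isomorphism, so $\epsilon_{\loc(M)}$ is an isomorphism. Condition (2) for $\loc(M)$ is handled analogously via exactness of $\sect$ applied to the short exact sequence $0 \to \Dmod{N} \to \loc(M) \to \loc(M)/\Dmod{N} \to 0$: the assumption $\sect\Dmod{N} = 0$ yields $\sect(\loc(M)/\Dmod{N}) \simeq M$, and pushing this isomorphism through the adjunction and the first step forces the quotient map to be an isomorphism, so $\Dmod{N} = 0$.

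The main obstacle is the delicate passage in the second step from $\sect\Dmod{K} = 0$ to $\Dmod{K} = 0$, since condition (2) constrains submodules of $\Dmod{M}$ rather than of $\loc\sect(\Dmod{M})$; the final transfer across the counit requires careful bookkeeping with the adjunction, and this is precisely where the two conditions defining $\Mod_{qc}^e$ are jointly exploited.
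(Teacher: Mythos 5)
Your first step (the unit $\eta_M\colon M\to\sect\loc(M)$ is an isomorphism, via a free presentation, right exactness of $\loc$, exactness of $\sect$ and compatibility with direct sums) is correct and is indeed part of the standard argument. The rest of your plan, however, rests on the claim that $\loc$ itself is the quasi-inverse, i.e.\ that $\loc(M)\in\Mod_{qc}^e(\Dsheaf{A}{X})$ for every $M$ and that the counit $\epsilon_{\Dmod{M}}\colon\loc\sect(\Dmod{M})\to\Dmod{M}$ is injective for $\Dmod{M}\in\Mod_{qc}^e(\Dsheaf{A}{X})$. Both claims are false, and your justifications are non sequiturs: from $\sect(\loc(M)/\Dmod{N})\simeq M$ you cannot conclude that the quotient map is an isomorphism, and ``maximality of $\Dmod{K}$ plus the adjunction'' does not force $\Dmod{K}=0$ --- $\Ker(\epsilon_{\Dmod{M}})$ is precisely the maximal submodule of $\loc\sect(\Dmod{M})$ with vanishing sections, and condition (2) constrains submodules of $\Dmod{M}$, not of $\loc\sect(\Dmod{M})$. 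A concrete counterexample: take $X=\mathbb{P}^1$ and $\Dsheaf{A}{X}$ the TDO acting on $\mathcal{O}(-1)$ (the singular anti-dominant twist), so $\sect$ is exact while $\mathcal{O}(-1)$ is a simple module with $H^0=H^1=0$. For the twisted delta module $\delta$ at a point one has $\Ext^1(\delta,\mathcal{O}(-1))\simeq\CC$ by Kashiwara's theorem, so there is a non-split extension $0\to\mathcal{O}(-1)\to\Dmod{E}\to\delta\to 0$; non-splitness forces $\Dmod{E}$ to be generated by its global sections, and $\sect(\Dmod{E})\simeq\sect(\delta)\eqcolon M\neq 0$. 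Then the counit $\loc(M)\to\Dmod{E}$ is surjective, and the preimage of $\mathcal{O}(-1)$ is a non-zero submodule of $\loc(M)$ with vanishing sections; so $\loc(M)$ violates condition (2), and for the object $\delta\in\Mod_{qc}^e(\Dsheaf{A}{X})$ the counit $\loc\sect(\delta)\to\delta$ is surjective with non-zero kernel (otherwise the simple module $\loc(M)\simeq\delta$ would surject onto the length-two module $\Dmod{E}$).

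The statement itself survives because the equivalence does not need $\loc$ and $\sect$ to be quasi-inverse. The route taken in the paper (following the proof of \cite[Corollary 11.2.6]{HTT08}, cf.\ Remark \ref{remark:ExistenceEsubquotient}) is: prove full faithfulness of $\sect$ on $\Mod_{qc}^e(\Dsheaf{A}{X})$ directly --- given $f\colon\sect(\Dmod{M})\to\sect(\Dmod{N})$, the composite $\loc\sect(\Dmod{M})\to\loc\sect(\Dmod{N})\xrightarrow{\epsilon_{\Dmod{N}}}\Dmod{N}$ kills $\Ker(\epsilon_{\Dmod{M}})$, since its image is a submodule of $\Dmod{N}$ with zero sections, hence by condition (1) it factors through $\Dmod{M}$; injectivity on morphisms uses condition (2) applied to the image of a non-zero map --- and prove essential surjectivity by sending $M$ not to $\loc(M)$ but to $\loc(M)$ modulo its maximal submodule with vanishing global sections, whose sections are still $M$ by exactness and the isomorphism $\eta_M$. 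You would need to replace your second and third steps by this argument; as written they contain a genuine gap that cannot be repaired, since the assertions they aim to prove are false in exactly the singular situations this Fact is designed to cover.
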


\begin{remark}\label{remark:ExistenceEsubquotient}
	For a quasi-coherent $\Dsheaf{A}{X}$-module $\Dmod{M}$, there exists a subquotient $\Dmod{M}'$ of $\Dmod{M}$
	such that $\Dmod{M}' \in \Mod_{qc}^e(\Dsheaf{A}{X})$ and $\sect(\Dmod{M}') = \sect(\Dmod{M})$.
	In fact, letting $\Dmod{M}_0$ be the maximum submodule of $\Dmod{M}$ with $\sect(\Dmod{M}_0) = 0$
	and $\Dmod{M}_1$ the minimum submodule of $\Dmod{M}$ with $\sect(\Dmod{M}_1) = \sect(\Dmod{M})$,
	then we have $\Dmod{M}_1/\Dmod{M}_0 \in \Mod_{qc}^e(\Dsheaf{A}{X})$ and $\sect(\Dmod{M}_1/\Dmod{M}_0) \simeq \sect(\Dmod{M})$.
\end{remark}

We shall consider $\Dsheaf{A}{X\times G/U}\coloneq\Dsheaf{A}{X}\boxtimes \ntDsheaf{G/U}$, which is a TDO on $X\times G/U$.
Note that the canonical homomorphism $\sect(\Dmod{M})\otimes \sect(\Dmod{N}) \rightarrow \sect(\Dmod{M}\boxtimes \Dmod{N})$
is isomorphism for any $\Dmod{M} \in \Mod_{qc}(\Dsheaf{A}{X})$ and $\Dmod{N} \in \Mod_{qc}(\ntDsheaf{G/U})$.
This is a special case of \cite[Lemma 1.5.31]{HTT08}.
Then we have $\sect(\Dsheaf{A}{X}\boxtimes \ntDsheaf{G/U}) = \Dalg{A}{X} \otimes \ntDalg{G/U}$

We extend the Fourier transforms $(\cdot)^w$ of $\ntDalg{G/U}$-modules defined in Subsection \ref{subsection:reviewFourier} to $\Dalg{A}{X\times G/U}$-modules in the obvious way.
Combining Facts \ref{fact:nonzeroLocalization} and \ref{fact:EquivalenceCategory}, we obtain the following proposition.

\begin{proposition}
	For any $M \in \Mod(\Dsheaf{A}{X\times G/U})$, there exists $w \in W_G$ such that $\loc(M^w) \neq 0$.
\end{proposition}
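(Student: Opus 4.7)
The plan is to reduce to the two facts controlling each factor separately: Fact \ref{fact:nonzeroLocalization} for the $G/U$-direction, and the non-vanishing of the localization on $\Mod(\Dalg{A}{X})$ stated at the end of Fact \ref{fact:EquivalenceCategory} for the $X$-direction. I interpret the statement as concerning a non-zero $M \in \Mod(\Dalg{A}{X}\otimes \ntDalg{G/U})$ with Fourier twist defined via the automorphism $\id\otimes F_w$ of this tensor product algebra; this matches the setup of the previous subsection where $(\cdot)^w$ was extended to $\Dalg{A}{X\times G/U}$-modules.

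First I would forget the $\Dalg{A}{X}$-action and view $M$ as a non-zero module over $\ntDalg{G/U}$ through the embedding $\ntDalg{G/U}\hookrightarrow \Dalg{A}{X}\otimes \ntDalg{G/U}$. Because $\id\otimes F_w$ restricts to $F_w$ on the second factor, the Fourier twist $M^w$ considered here coincides as a $\ntDalg{G/U}$-module with the ordinary twist of Fact \ref{fact:nonzeroLocalization}. Applying that fact produces $w \in W_G$ with $\rsheaf{G/U}\otimes_{\rring{G/U}} M^w \neq 0$ on $G/U$. I would then pick an affine open $V\subset G/U$ (for instance one of the distinguished opens $V_i$ appearing in the proof of Lemma \ref{lem:globalGKdim}) such that $N \coloneq \ntDalg{V}\otimes_{\ntDalg{G/U}} M^w \neq 0$. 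This $N$ is naturally a $\Dalg{A}{X}\otimes \ntDalg{V}$-module, and in particular a non-zero $\Dalg{A}{X}$-module.

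Next I would apply the last assertion of Fact \ref{fact:EquivalenceCategory} to $N$ viewed as a $\Dalg{A}{X}$-module, concluding that $\Dsheaf{A}{X}\otimes_{\Dalg{A}{X}} N$ is non-zero on $X$. Choosing an affine open $U\subset X$ on which its sections do not vanish and using that $U\times V$ is affine in $X\times G/U$ with $\sect(U\times V, \Dsheaf{A}{X\times G/U}) = \Dalg{A}{U}\otimes \ntDalg{V}$, standard associativity of tensor products gives
\begin{align*}
\sect(U\times V, \loc(M^w)) \simeq (\Dalg{A}{U}\otimes \ntDalg{V})\otimes_{\Dalg{A}{X}\otimes \ntDalg{G/U}} M^w \simeq \Dalg{A}{U}\otimes_{\Dalg{A}{X}} N \neq 0,
\end{align*}
so $\loc(M^w) \neq 0$, as required.

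The only subtle point is the identification in the last display: one must check that the sheafified tensor product defining $\loc$ agrees on the affine open $U\times V$ with the plain tensor product $(\Dalg{A}{U}\otimes \ntDalg{V})\otimes_{\Dalg{A}{X}\otimes \ntDalg{G/U}} M^w$, and that the change-of-ring factorization through $\ntDalg{V}$ is compatible with the $\id\otimes F_w$-twist acting on $M$. Once this bookkeeping is confirmed, the proof is a direct concatenation of the two established non-vanishing results in each factor.
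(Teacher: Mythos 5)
Your proposal is correct and follows essentially the same route as the paper, which obtains this proposition precisely by combining Fact \ref{fact:nonzeroLocalization} (non-vanishing after a Fourier twist in the $G/U$-direction) with the final assertion of Fact \ref{fact:EquivalenceCategory} (non-vanishing of the localization in the $X$-direction under the standing exactness assumption); your affine-open and tensor-product bookkeeping just makes that one-line combination explicit.
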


Let $M$ be a finitely generated $\Dalg{A}{X\times G/U}$-module, and let $\Dmod{M} \in \Mod^e_{qc}(\Dsheaf{A}{X})$ denote the corresponding object to $M|_{\Dalg{A}{X}}$ by Fact \ref{fact:EquivalenceCategory}.
By functionality, $\Dmod{M}$ has a natural $\Dsheaf{A}{X}\otimes \ntDalg{G/U}$-module structure.

\begin{definition}\label{def:holonomic2}
	We say that $M$ is holonomic if $\GKdim(\sect(V, \Dmod{M}))$ is equal to $0$ or $\dim(X\times G/U)$
	for any affine open subset $V\subset X$.
	We denote by $\Mod_h(\Dsheaf{A}{X\times G/U})$ the full subcategory of $\Mod(\Dsheaf{A}{X\times G/U})$ consisting of holonomic modules.
\end{definition}

It is easy to see that the condition is equivalent to the existence of an affine open covering $X=\bigcup_i V_i$ such that $\GKdim(\sect(V_i, \Dmod{M}))\leq \dim(X\times G/U)$ for any $i$.
See the proof of Lemma \ref{lem:globalGKdim}.
By definition, the holonomicity is preserved by the Fourier transforms $(\cdot)^w$ (see also Proposition \ref{prop:invariantGKdim}).

\begin{proposition}
	For a holonomic $\Dalg{A}{X\times G/U}$-module $M$ and $w \in W_G$,
	$M^w$ is also holonomic.
\end{proposition}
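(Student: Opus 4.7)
The plan is to reduce the statement to the elementary fact that Gelfand--Kirillov dimension is invariant under twist by an algebra automorphism. The Fourier transform $(\cdot)^w$ is defined so as to twist only the $\ntDalg{G/U}$-factor of the action of $\Dalg{A}{X\times G/U} = \Dalg{A}{X}\otimes \ntDalg{G/U}$; in particular $M^w$ and $M$ coincide as $\Dalg{A}{X}$-modules. By the functoriality of the equivalence in Fact \ref{fact:EquivalenceCategory}, this means the sheaf $\Dmod{M^w}$ attached to $M^w|_{\Dalg{A}{X}}$ is identical to $\Dmod{M}$ as a $\Dsheaf{A}{X}$-module, while the induced $\ntDalg{G/U}$-action on $\Dmod{M^w}$ is obtained from that on $\Dmod{M}$ by precomposition with $F_w$.

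First I would fix an affine open $V\subset X$ and read off from the preceding identification that
\[
\sect(V, \Dmod{M^w}) = \sect(V, \Dmod{M})^{\id_{\Dalg{A}{V}}\otimes F_w}
\]
as modules over $\Dalg{A}{V}\otimes \ntDalg{G/U}$, where $\id_{\Dalg{A}{V}}\otimes F_w$ is the algebra automorphism of $\Dalg{A}{V}\otimes \ntDalg{G/U}$ induced by $F_w \in \Aut(\ntDalg{G/U})$.

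Next I would invoke the standard observation: for any finitely generated $\CC$-algebra $\alg{B}$, any $\phi\in \Aut(\alg{B})$, and any finitely generated $\alg{B}$-module $N$, one has $\GKdim_{\alg{B}}(N^\phi) = \GKdim_{\alg{B}}(N)$. Indeed, if $\alg{B}_1\supset \CC$ is a finite-dimensional generating subspace of $\alg{B}$ and $N_0\subset N$ is a finite-dimensional generating subspace, then $\phi^{-1}(\alg{B}_1)$ is also a generating subspace, and a direct check shows that the standard filtration $\phi^{-1}(\alg{B}_1)^k\cdot_\phi N_0$ of $N^\phi$ coincides, as a sequence of vector subspaces of the common underlying space $N = N^\phi$, with the standard filtration $\alg{B}_1^k N_0$ of $N$; hence the two GK-dimensions agree. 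Applying this with $\alg{B} = \Dalg{A}{V}\otimes \ntDalg{G/U}$ and $\phi = \id\otimes F_w$ gives
\[
\GKdim(\sect(V, \Dmod{M^w})) = \GKdim(\sect(V, \Dmod{M})) \in \set{0, \dim(X\times G/U)}.
\]
Since $V$ was arbitrary, Definition \ref{def:holonomic2} yields that $M^w$ is holonomic.

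The only step requiring a sanity check is the compatibility $\sect(V, \Dmod{M^w}) = \sect(V, \Dmod{M})^{\id\otimes F_w}$, but this is purely formal: the sheaf attached to a module via Fact \ref{fact:EquivalenceCategory} depends only on the $\Dalg{A}{X}$-structure, and the $\ntDalg{G/U}$-action transports by naturality. Thus there is no real obstacle; the proposition is a bookkeeping consequence of Fact \ref{fact:EquivalenceCategory} together with automorphism-invariance of Gelfand--Kirillov dimension.
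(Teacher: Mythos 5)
Your proof is correct and follows essentially the same route as the paper, which states the proposition as immediate from Definition \ref{def:holonomic2} together with the fact that $F_w$ is an algebra automorphism (cf.\ Proposition \ref{prop:invariantGKdim}). You merely make explicit the two ingredients the paper leaves implicit: that the twist only affects the $\ntDalg{G/U}$-factor, so the associated sheaf and its sections over affine opens are unchanged as spaces, and that $\GKdim$ is invariant under twisting by an automorphism via the standard-filtration comparison.
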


We shall show that the global section functor preserves the holonomicity.

\begin{lemma}\label{lem:HolonomicDirectProductAffine}
	Let $\Dmod{M} \in \Mod_h(\Dsheaf{A}{X\times G/U})$.
	Suppose that $X$ is affine.
	Then $\sect(\Dmod{M})$ is holonomic.
\end{lemma}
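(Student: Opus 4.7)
The plan is to adapt the proof of Theorem \ref{thm:HolonomicLocalGlobal} to the present direct-product setting. Because $X$ is affine, for any affine open $V\subset G/U$ the product $X\times V$ is a smooth affine variety, so the classical theory of TDO-modules on smooth affine varieties applies to $\sect(X\times V, \Dmod{M})$ and yields a holonomic $\Dalg{A}{X\times V}$-module. This will let us bound the multiplicity of every finitely generated $\Dalg{A}{X\times G/U}$-submodule $N\subset \sect(\Dmod{M})$ by a constant independent of $N$.

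First, I would fix finite-dimensional standard filtrations on $\gr\Dalg{A}{X\times G/U}\cong \rring{T^*(X\times G/U)}$ and on each $\gr\Dalg{A}{X\times V_i}$, where $V_i=\{f_i\neq 0\}$ for a generating set $f_1,\dots,f_r$ of the defining ideal of $\overline{G/U}\setminus G/U$ (exactly as in the proof of Lemma \ref{lem:globalGKdim}). These graded rings are commutative and finitely generated because $X$ is smooth affine and $\gr\ntDalg{G/U}=\rring{T^*(G/U)}$ is finitely generated by Fact \ref{fact:dalg}(3). Since the $V_i$ cover $G/U$, the products $X\times V_i$ form an affine open cover of $X\times G/U$, and the inclusion $\Dalg{A}{X\times G/U}\hookrightarrow R\coloneq \bigoplus_i \Dalg{A}{X\times V_i}$ is a filtered embedding whose associated graded is still injective.

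Next, for a good filtration of a finitely generated $\Dalg{A}{X\times G/U}$-submodule $N$ of $\sect(\Dmod{M})$, I would equip $N_R\coloneq R\otimes_{\Dalg{A}{X\times G/U}} N=\bigoplus_i\sect(X\times V_i,\loc(N))$ with the induced filtration along the lines of Lemma \ref{lem:SmoothAffine}, so that the natural injection $N\hookrightarrow N_R$ is filtered with injective associated graded. Proposition \ref{prop:BoundMultiplicity} then gives $m(N)\leq t^{\dim(X\times G/U)}\sum_i m(\sect(X\times V_i,\loc(N)))$. Using Proposition \ref{prop:FundamentalBasicAffine} together with $\loc\circ\sect\cong \id$ for the sheaf corresponding to $\sect(\Dmod{M})$ under Fact \ref{fact:EquivalenceCategory} (Remark \ref{remark:ExistenceEsubquotient}), each $\sect(X\times V_i,\loc(N))$ embeds into $\sect(X\times V_i,\Dmod{M})$, which is a holonomic $\Dalg{A}{X\times V_i}$-module over the smooth affine variety $X\times V_i$. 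Hence $m(N)\leq C$ where $C\coloneq t^{\dim(X\times G/U)}\sum_i m(\sect(X\times V_i,\Dmod{M}))$ is independent of $N$. A verbatim copy of the argument of Corollary \ref{cor:HolonomicFiniteLength} (whose only input is the dichotomy that every non-zero simple subquotient of $N$ either has GK dimension $\dim(X\times G/U)$ or is zero, proved as in Lemma \ref{lem:globalGKdim} applied to the affine pieces) then bounds the length of $N$ by $C$. Consequently $\sect(\Dmod{M})$ has length at most $C$, is finitely generated, and—since $X$ is affine—satisfies $\GKdim \sect(\Dmod{M})\leq \dim(X\times G/U)$ via the same $N\hookrightarrow N_R$ embedding, so it is holonomic in the sense of Definition \ref{def:holonomic2}.

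The main obstacle is purely bookkeeping: one must check that Lemma \ref{lem:SmoothAffine}, Proposition \ref{prop:BoundMultiplicity}, Lemma \ref{lem:globalGKdim}, and Corollary \ref{cor:HolonomicFiniteLength} all transfer verbatim from $\ntDalg{G/U}$ to $\Dalg{A}{X\times G/U}$. This should cause no real difficulty, because each of those statements depended only on the commutativity and finite generation of the associated graded algebras and on the existence of the affine cover $\{V_i\}$ of $G/U$; both properties survive after tensoring with the smooth affine factor $X$.
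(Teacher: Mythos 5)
Your proposal is correct and follows exactly the route the paper intends: the paper's proof of Lemma \ref{lem:HolonomicDirectProductAffine} is literally ``essentially the same as that of Theorem \ref{thm:HolonomicLocalGlobal},'' and your argument is a spelled-out transfer of that proof (the cover $X\times V_i$, the embedding $N\hookrightarrow N_R$, the uniform multiplicity bound via Proposition \ref{prop:BoundMultiplicity}, and the length bound as in Corollary \ref{cor:HolonomicFiniteLength}) to the algebra $\Dalg{A}{X}\otimes\ntDalg{G/U}$, using that $X$ is affine so each $X\times V_i$ is smooth affine. The bookkeeping you flag (analogues of Lemma \ref{lem:SmoothAffine}, Lemma \ref{lem:globalGKdim} and the counit isomorphism on the quasi-affine variety $X\times G/U$) goes through as you say.
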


\begin{proof}
	The proof is essentially the same as that of Theorem \ref{thm:HolonomicLocalGlobal}.
\end{proof}

\begin{theorem}\label{thm:HolonomicGlobalAX}
	Let $\Dmod{M} \in \Mod_h(\Dsheaf{A}{X\times G/U})$.
	Then $\sect(\Dmod{M})$ is holonomic.
\end{theorem}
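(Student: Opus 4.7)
The plan is to reduce the theorem to the affine base case established in Lemma \ref{lem:HolonomicDirectProductAffine} and then mimic the multiplicity-bound argument used in the proof of Theorem \ref{thm:HolonomicLocalGlobal}. First I would fix a finite affine open cover $X=\bigcup_{i=1}^{r}V_{i}$, which exists since $X$ is smooth and quasi-projective. Each restriction $\Dmod{M}|_{V_{i}\times G/U}$ is holonomic, so Lemma \ref{lem:HolonomicDirectProductAffine} already gives that $N_{i}\coloneq \sect(V_{i}\times G/U,\Dmod{M})$ is a holonomic $\Dalg{A}{V_{i}}\otimes \ntDalg{G/U}$-module.

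Set $M\coloneq \sect(X\times G/U,\Dmod{M})$ and let $\Dmod{M}'\in \Mod_{qc}^{e}(\Dsheaf{A}{X})$ be the $\Dsheaf{A}{X}$-module (with natural $\ntDalg{G/U}$-action) corresponding to $M|_{\Dalg{A}{X}}$ via Fact \ref{fact:EquivalenceCategory} and Remark \ref{remark:ExistenceEsubquotient}. Writing $p\colon X\times G/U\rightarrow X$ for the projection, $\Dmod{M}'$ is a subquotient of $p_{*}\Dmod{M}$, and since $\sect(V,\cdot)$ is exact on $\Mod_{qc}(\Dsheaf{A}{V})$ for any affine open $V\subset X$, the module $\sect(V,\Dmod{M}')$ sits as a $\Dalg{A}{V}\otimes \ntDalg{G/U}$-subquotient of $\sect(V\times G/U,\Dmod{M})$, which is holonomic by Lemma \ref{lem:HolonomicDirectProductAffine} applied with base $V$.

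The core step is then to imitate the proof of Theorem \ref{thm:HolonomicLocalGlobal}: fix finite-dimensional standard filtrations on $\gr(\Dalg{A}{V}\otimes \ntDalg{G/U})$, cover $V\times G/U$ by the smooth affine opens coming from a generating set of the defining ideal of $\overline{G/U}-G/U$, and apply Proposition \ref{prop:BoundMultiplicity} together with the Fourier transforms of Fact \ref{fact:WeylToAut} to bound the multiplicity of every finitely generated $\Dalg{A}{V}\otimes \ntDalg{G/U}$-submodule of $\sect(V,\Dmod{M}')$ by a single constant depending only on $\Dmod{M}$ and $V$. The product-setting analogue of Corollary \ref{cor:HolonomicFiniteLength} will then force $\sect(V,\Dmod{M}')$ itself to have finite length, hence to be finitely generated and of GK dimension $0$ or $\dim(X\times G/U)$, which is exactly what Definition \ref{def:holonomic2} requires.

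The hardest part will be setting up the product-setting analogues of Lemma \ref{lem:globalGKdim}, Fact \ref{fact:AdditiveMultiplicity}, and Corollary \ref{cor:HolonomicFiniteLength} cleanly, in particular the lower bound $\GKdim\geq \dim(V\times G/U)$ on a nonzero finitely generated $\Dalg{A}{V}\otimes \ntDalg{G/U}$-module (needed to match the dichotomy in Definition \ref{def:holonomic2}). This should follow by combining the characteristic-variety dimension bound on the $\Dsheaf{A}{V}$-factor with the Fourier-transform argument of Fact \ref{fact:nonzeroLocalization} on the $G/U$-factor once a compatible tensor-product filtration is chosen, but verifying that everything interacts correctly with restriction to the $V_{i}$ and with the multiplicity bound of Proposition \ref{prop:BoundMultiplicity} is where the bookkeeping becomes delicate.
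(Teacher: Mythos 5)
Your proposal is correct and is essentially the paper's own argument: push $\Dmod{M}$ forward along the projection to $X$, replace the pushforward by the subquotient in $\Mod^e_{qc}(\Dsheaf{A}{X})$ with the same global sections (Remark \ref{remark:ExistenceEsubquotient} and Fact \ref{fact:EquivalenceCategory}), and apply Lemma \ref{lem:HolonomicDirectProductAffine} over each affine open $V\subset X$ to see that $\sect(V,\Dmod{M}')$ is a subquotient of the holonomic module $\sect(V\times G/U,\Dmod{M})$. The additional multiplicity bookkeeping you outline in your last two paragraphs is already subsumed in Lemma \ref{lem:HolonomicDirectProductAffine} (whose proof is the multiplicity argument of Theorem \ref{thm:HolonomicLocalGlobal}) together with the fact that subquotients of finite-length holonomic modules are holonomic, which is exactly where the paper stops.
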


\begin{proof}
	Write $q\colon X\times G/U\rightarrow X$ for the projection onto the first factor.
	Then $q_*(\Dmod{M})$ is an $\rsheaf{X}$-quasi-coherent $\Dsheaf{A}{X}\otimes \ntDalg{G/U}$-module with $\sect(q_*(\Dmod{M})) \simeq \sect(\Dmod{M})$.
	As we have seen in Remark \ref{remark:ExistenceEsubquotient}, there exists an $\Dsheaf{A}{X}\otimes \ntDalg{G/U}$-module subquotient $\Dmod{N}$ of $q_*(\Dmod{M})$ such that $\Dmod{N} \in \Mod^e_{qc}(\Dsheaf{A}{X})$ and $\sect(\Dmod{N}) \simeq \sect(\Dmod{M})$.

	Let $V\subset X$ be an affine open subset.
	$\sect(V, q_*(\Dmod{M})) = \sect(V\times G/U, \Dmod{M})$ is holonomic by Lemma \ref{lem:HolonomicDirectProductAffine}.
	Hence so is $\sect(V, \Dmod{N})$.
	This shows that $\sect(\Dmod{N}) (\simeq \sect(\Dmod{M}))$ is holonomic.
\end{proof}

The derived version of Theorem \ref{thm:HolonomicGlobalAX} is proved in the same way as Corollary \ref{cor:HolonomicGlobalDerived}.
Let $D^b_h(\Dalg{A}{X\times G/U})$ denote the full subcategory of $D^b(\Dalg{A}{X\times G/U})$ consisting of complexes with holonomic cohomologies.

\begin{corollary}\label{prop:HolonomicGlobalDerivedAX}
	Let $\cpx{\Dmod{M}} \in D^b_h(\Dsheaf{A}{X\times G/U})$.
	Then $R\sect(\cpx{\Dmod{M}}) \in D^b_h(\Dalg{A}{X\times G/U})$ holds.
\end{corollary}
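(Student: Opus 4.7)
The plan is to mirror the proof of Corollary \ref{cor:HolonomicGlobalDerived} verbatim. Using the truncation functors on $D^b(\Dsheaf{A}{X\times G/U})$ and the distinguished triangles $\tau^{\leq k}\cpx{\Dmod{M}}\to \cpx{\Dmod{M}}\to \tau^{>k}\cpx{\Dmod{M}}\xrightarrow{+1}$, together with the long exact sequences obtained after applying $R\sect$, the claim reduces to showing that $H^k(X\times G/U, \Dmod{M}) \in \Mod_h(\Dalg{A}{X\times G/U})$ for every $\Dmod{M}\in \Mod_h(\Dsheaf{A}{X\times G/U})$ and every $k\in \NN$.

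To evaluate these sheaf cohomologies, I would build a \v{C}ech complex from an affine open cover of $X\times G/U$. Choose an affine open cover $X=\bigcup_i V_i$, which exists since $X$ is quasi-projective, and the affine open cover $G/U=\bigcup_j W_j$ used in the proof of Lemma \ref{lem:globalGKdim}, where $W_j=\{x\in \overline{G/U}:f_j(x)\neq 0\}$ for generators $f_j$ of the defining ideal of $\overline{G/U}-G/U$. Since both $X$ and $G/U$ are separated, every finite intersection of the opens $V_i\times W_j$ remains affine, so the associated \v{C}ech complex computes $H^{\cxdot}(X\times G/U, \Dmod{M})$ for the quasi-coherent sheaf $\Dmod{M}$. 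Each \v{C}ech term equals the global sections of the restriction of $\Dmod{M}$ to an affine open set of the form $V\times W$ with $V\subset X$ and $W\subset G/U$ affine; by Theorem \ref{thm:HolonomicGlobalAX} applied to $V\times G/U$ followed by affine localization to $W$, each such term is a holonomic $\Dalg{A}{V}\otimes \ntDalg{W}$-module, and by the affine-local form of Definition \ref{def:holonomic2} it is also holonomic as a $\Dalg{A}{X}\otimes \ntDalg{G/U}$-module.

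It remains to argue that each cohomology $H^k$ of the resulting \v{C}ech complex is again holonomic. The main technical obstacle is verifying that $\Mod_h(\Dalg{A}{X\times G/U})$ is closed under finite direct sums and subquotients; closure under direct sums is immediate, while closure under subquotients follows from the GK-dimension characterization of holonomicity via Lemma \ref{lem:globalGKdim} applied on each affine open $V\subset X$, together with additivity of multiplicities (Fact \ref{fact:AdditiveMultiplicity}) and the finite-length bound of Corollary \ref{cor:HolonomicFiniteLength}. Once this is in hand, all cohomologies of the \v{C}ech complex lie in $\Mod_h(\Dalg{A}{X\times G/U})$, completing the proof.
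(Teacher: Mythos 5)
Your overall architecture — reduction by truncation, then a \v{C}ech complex for the affine cover by the products $V_i\times W_j$ — is exactly the argument of Corollary \ref{cor:HolonomicGlobalDerived} transported to $X\times G/U$, which is what the paper intends. The gap is in the step where you pass from holonomicity of a \v{C}ech term over the ``localized'' algebra to holonomicity over the global one. Applying Theorem \ref{thm:HolonomicGlobalAX} on $V\times G/U$ and then localizing to $W$ only exhibits $\sect(V\times W,\Dmod{M})$ as a holonomic $\Dalg{A}{V}\otimes\ntDalg{W}$-module; your assertion that ``the affine-local form of Definition \ref{def:holonomic2}'' makes it holonomic over $\Dalg{A}{X}\otimes\ntDalg{G/U}$ does not follow. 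The remark after Definition \ref{def:holonomic2} only says that, for a module which is already a finitely generated $\Dalg{A}{X\times G/U}$-module with its associated sheaf in $\Mod^e_{qc}(\Dsheaf{A}{X})$, the $\GKdim$ bound may be checked on one affine cover of $X$; it does not transfer holonomicity across a change of algebra. The crux is finite generation of $\sect(V\times W,\Dmod{M})$ over $\Dalg{A}{X}\otimes\ntDalg{G/U}$: this is precisely the hard content of the multiplicity arguments behind Theorems \ref{thm:HolonomicLocalGlobal} and \ref{thm:HolonomicGlobalAX}, and it is not inherited by restriction of scalars from $\Dalg{A}{V}\otimes\ntDalg{W}$ (when $X$ is not affine, $\Dalg{A}{V}$ is not obtained from $\Dalg{A}{X}$ by inverting a single function, and the $\GKdim$ bound over the larger algebra says nothing about finite generation over the smaller one).

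The intended fix, mirroring the proof of Corollary \ref{cor:HolonomicGlobalDerived}, is to stay on the whole space: for the inclusion $\iota\colon V\times W\hookrightarrow X\times G/U$ of the affine \v{C}ech open, the module $\iota_*\iota^*\Dmod{M}$ is again a holonomic $\Dsheaf{A}{X\times G/U}$-module (inverse and direct images preserve holonomicity, and $\iota$ is an affine open immersion, so $\iota_*$ is exact and no derived correction appears), and $\sect(V\times W,\Dmod{M})\simeq\sect(\iota_*\iota^*\Dmod{M})$. Now Theorem \ref{thm:HolonomicGlobalAX}, applied to $\iota_*\iota^*\Dmod{M}$ on $X\times G/U$ itself, shows each \v{C}ech term is holonomic over $\Dalg{A}{X\times G/U}$; your concluding argument about subquotient stability of holonomicity then finishes the proof as you describe.
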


\section{Generalization of Beilinson--Bernstein correspondence}

The goal of this section is to study the relation between the category of $\ntDsheaf{}$-modules
on $G/U$ and the category of twisted $\ntDsheaf{}$-modules on the full flag variety $G/B$.
We prove a generalization of the Beilinson--Bernstein correspondence.

\subsection{Beilinson--Bernstein correspondence}\label{subsection:BB}

We recall the Beilinson--Bernstein correspondence.
We refer the reader to \cite[Subsection 2.5, 3.2 and 3.3]{BeBe93} and \cite[Theorems 11.2.2, 11.2.3 and 11.2.4]{HTT08}.

Let $G$ be a connected simply-connected semisimple algebraic group.
Fix a Borel subgroup $B=TU$ with a maximal torus $T$ and unipotent radical $U$.
Let $\roots$ and $\proots$ denote the sets of roots and positive roots determined by $G, T$ and $B$.
Set $\rho \coloneq \frac{1}{2}\sum_{\alpha \in \proots} \alpha$.
Write $p\colon G/U\rightarrow G/B$ for the natural projection.

For any open subset $V \subset G/B$, $\sect(V, p_* \ntDsheaf{G/U})$ is a $T$-module.
Hence $(p_* \ntDsheaf{G/U})^T$ is a sheaf of algebras on $G/B$.
By \cite[Proposition 8]{BoBr89}, we have
\begin{align}
	\sect((p_* \ntDsheaf{G/U})^T) \simeq \univ{g}\otimes_{\univcent{g}}\univ{t}. \label{eqn:sapovalov}
\end{align}
See \eqref{eqn:Waction} for the homomorphism $\univcent{g}\hookrightarrow \univ{t}$.
For a weight $\lambda \in \lie{t}^*$, we set
\begin{align*}
	\ntDsheaf{G/B,\lambda}\coloneq (p_* \ntDsheaf{G/U})^T\otimes_{\univ{t}} \End_{\CC}(\CC_{\lambda + \rho}),
\end{align*}
which is a $G$-equivariant TDO on $G/B$.
Then we have
\begin{align*}
	\ntDsheaf{G/B} \simeq \ntDsheaf{G/B, -\rho},\quad \ntDsheaf{G/B,\lambda}^{op} &\simeq \ntDsheaf{G/B, -\lambda},\\
	\Dmod{L}_{\mu}\otimes_{\rsheaf{G/B}}\ntDsheaf{G/B, \lambda}\otimes_{\rsheaf{G/B}}\Dmod{L}_{\mu}^{\invshf} &\simeq \ntDsheaf{G/B, \lambda+\mu}
	\quad (\mu \text{: integral weight}),
\end{align*}
and $\ntDsheaf{G/B, \lambda}$ acts on $\Dmod{L}_{\lambda + \rho} = (p_*\rsheaf{G/U}\otimes \CC_{\lambda + \rho})^T$ if $\lambda$ is integral.

\begin{fact}[Beilinson--Bernstein \cite{BeBe81}]\label{fact:beilinsonBernsten}
	Let $\lambda \in \lie{t}^*$.
	\begin{enumerate}
		\item The homomorphism $\univ{g}\rightarrow \sect(\ntDsheaf{G/B,\lambda})$ is surjective and its kernel is the minimal primitive ideal with the infinitesimal character $\lambda$.
		\item If $\lambda$ is anti-dominant, the global section functor $\sect\colon\Mod_{qc}(\ntDsheaf{G/B, \lambda}) \rightarrow \Mod(\ntDalg{G/B, \lambda})$
		is exact.
		\item If $\lambda$ is regular and anti-dominant, the global section functor $\sect$
		gives an equivalence of categories.
	\end{enumerate}
\end{fact}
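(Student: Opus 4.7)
The plan is to follow the classical Beilinson--Bernstein strategy, proving the three parts in turn.

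For (1), I would compute $\sect(\ntDsheaf{G/B,\lambda})$ through the order filtration. Its associated graded sheaf is $p_*\rsheaf{T^*(G/B)}$ for the cotangent projection $p\colon T^*(G/B)\to G/B$. The Springer map $\mu\colon T^*(G/B)\to \mathcal{N}\subset \lie{g}^*$ is projective and birational onto the nilpotent cone, which is affine and normal; by a Grauert--Riemenschneider/rational-singularities argument one has $R^{>0}\mu_*\rsheaf{T^*(G/B)}=0$ and $\mu_*\rsheaf{T^*(G/B)}\simeq \rsheaf{\mathcal{N}}$, so $\gr \sect(\ntDsheaf{G/B,\lambda})\simeq \rring{\mathcal{N}}$. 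By Kostant's theorem $\rring{\mathcal{N}}\simeq S(\lie{g})/S(\lie{g})\cdot S(\lie{g})^G_+$, which coincides with the associated graded of $\univ{g}/\univ{g}\cdot \ker(\chi_\lambda)$ for the Harish-Chandra character $\chi_\lambda$. A standard filtered lifting then upgrades this to surjectivity of $\univ{g}\to \sect(\ntDsheaf{G/B,\lambda})$ with kernel equal to the minimal primitive ideal of infinitesimal character $\lambda$.

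For (2), take a coherent $\ntDsheaf{G/B,\lambda}$-module $\Dmod{M}$ with a good filtration and run the spectral sequence $E_1^{p,q}=H^p(G/B,\gr^{q}\Dmod{M})\Rightarrow H^{p+q}(G/B,\Dmod{M})$. The graded pieces assemble into a coherent sheaf on $T^*(G/B)$ twisted by $\Dmod{L}_{\lambda}$; combining affineness of $\mathcal{N}$, projectivity of $\mu$, and a Borel--Weil--Bott type vanishing for anti-dominant $\lambda$, one obtains $E_1^{p,q}=0$ for $p>0$, hence $H^{>0}(G/B,\Dmod{M})=0$. Passing from coherent to quasi-coherent via filtered colimits on the noetherian scheme $G/B$ then yields exactness of $\sect$ on $\Mod_{qc}(\ntDsheaf{G/B,\lambda})$.

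For (3), by (2) the localization functor $\loc(M)=\ntDsheaf{G/B,\lambda}\otimes_{\sect(\ntDsheaf{G/B,\lambda})}M$ is left adjoint to the exact functor $\sect$. Promoting the adjunction to an equivalence requires (i) faithfulness of $\sect$, i.e.\ $\sect(\Dmod{M})=0\Rightarrow \Dmod{M}=0$, and (ii) that the unit $M\to \sect\loc(M)$ is an isomorphism. Both statements reduce, via $G$-equivariance, to producing at each point $x\in G/B$ enough operators in $\univ{g}$ whose images separate the stalks of $\ntDsheaf{G/B,\lambda}$ at $x$; via the Harish-Chandra isomorphism such operators are furnished by $\univcent{g}$ exactly when $W_G$ acts freely on the orbit of $\lambda+\rho$, i.e.\ precisely when $\lambda$ is regular. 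The main obstacle is this last step: part (2) yields only exactness and a formal adjunction, while faithfulness and global generation genuinely require regularity. I would expect most of the effort to lie in constructing separating operators from $\univcent{g}$, most naturally via a translation-functor argument reducing to the case of integral regular anti-dominant $\lambda$, where $\Dmod{L}_{\lambda+\rho}$ has abundant global sections and the classical Borel--Weil picture makes the separation explicit.
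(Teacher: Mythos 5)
First, note that the paper does not prove this statement: it is quoted as a known result of Beilinson--Bernstein \cite{BeBe81} (Fact \ref{fact:beilinsonBernsten}), so your proposal can only be measured against the classical argument. Your sketch of (1) is the standard one (Springer resolution, rational singularities of the nilpotent cone, Kostant's theorem, filtered lifting) and is fine as an outline.

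The genuine gap is in your part (2). You claim that for a good filtration of a coherent $\ntDsheaf{G/B,\lambda}$-module $\Dmod{M}$ one has $E_1^{p,q}=H^p(G/B,\gr^q\Dmod{M})=0$ for $p>0$ by ``Borel--Weil--Bott type vanishing for anti-dominant $\lambda$.'' This is false: the graded pieces of a good filtration are arbitrary coherent $\rsheaf{G/B}$-modules supported on the characteristic variety, not anti-dominant line bundles, and the twist $\lambda$ does not control their cohomology at all. Already for $G=\LieSL(2,\CC)$, $\lambda=-\rho$, $\Dmod{M}=\rsheaf{\mathbb{P}^1}$, one may choose a good filtration with $F_0=\rsheaf{\mathbb{P}^1}(-4)$, so $E_1^{1,0}=H^1(\mathbb{P}^1,\rsheaf{}(-4))\neq 0$; the spectral sequence does not degenerate at $E_1$, and no argument of this shape can see where anti-dominance is used. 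In the actual proof anti-dominance enters through the central character (equivalently the translation functor), not through coherent-sheaf vanishing: one embeds $\Dmod{M}$ into $F(-\mu)\otimes\Dmod{L}_{\mu}\otimes_{\rsheaf{G/B}}\Dmod{M}$ and constructs a \emph{left inverse} of this map, which exists exactly because $\lambda$ is anti-dominant; then for a coherent $\rsheaf{G/B}$-submodule $\Dmod{N}$ one uses Serre vanishing for $\Dmod{L}_{-n\rho}\otimes_{\rsheaf{G/B}}\Dmod{N}$ with $n\gg 0$ and passes to the colimit. This is precisely the mechanism the paper adapts in Lemma \ref{lemma:splitting} and Theorem \ref{thm:vanishing}, with the Bezrukavnikov--Braverman--Positselskii formula (Fact \ref{fact:LikeGammaFactor}) playing the role of the classical central-character splitting. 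Your part (3) is closer to the truth in spirit (translation functors, regularity), but ``producing operators separating the stalks'' is not the actual mechanism either: what one proves is that $\sect(\Dmod{M})\neq 0$ for $\Dmod{M}\neq 0$, by twisting with an ample $\Dmod{L}_{\mu}$ to get global sections and then projecting $F(\mu)\otimes\sect(\Dmod{M})$ onto the correct infinitesimal-character summand, which requires regularity; together with exactness and surjectivity of $\ntDsheaf{G/B,\lambda}\otimes_{\ntDalg{G/B,\lambda}}\sect(\Dmod{M})\rightarrow\Dmod{M}$ this yields the equivalence. As written, your argument for (2) would fail, and (3), which depends on (2), is only a heuristic outline.
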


\subsection{Support as a \texorpdfstring{$\univ{t}$}{U(t)}-module}

In this subsection, we deal with a relationship between the support of a $\ntDalg{G/U}$-module as an $\rring{\overline{G/U}}$-module
and as a $\univ{t}$-module using Fact \ref{fact:LikeGammaFactor}.
As an application, we will prove a vanishing theorem of $\Tor$.
Retain the notation in the previous subsection.
Recall $\rring{G/U}=\rring{\overline{G/U}}$.

Let $S$ be a multiplicative subset of $\univ{t}$ generated by
\begin{align}
	\set{H_{\alpha} + \rho(H_{\alpha}) + i: \alpha \in \proots, i \text{ is a positive integer}}, \label{eqn:S}
\end{align}
where $H_{\alpha}$ is the unique element of $\lie{t}$ with $\mu(H_\alpha) = 2(\mu, \alpha)/(\alpha, \alpha)$ for any $\mu \in \lie{t}^*$ (c.f.\ Fact \ref{fact:LikeGammaFactor}).

\begin{lemma}\label{lem:supportUt}
	Let $M$ be a $\ntDalg{G/U}$-module.
	Assume that the support of $M$ as an $\rring{\overline{G/U}}$-module is contained in $\overline{G/U}-G/U$.
	Then $S^{-1}M = 0$ holds.
\end{lemma}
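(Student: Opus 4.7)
The plan is to apply Fact \ref{fact:LikeGammaFactor} to show that each $m \in M$ is annihilated by an element of $S$ acting via $R$; this forces $S^{-1}M = 0$. The strategy splits into a preparatory commutative-algebra step inside $\rring{G/U}$, followed by a direct invocation of the Bezrukavnikov--Braverman--Positselskii formula.

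The preparatory step I would carry out first is to exhibit a strictly dominant integral weight $\nu_0$ with $\rring{G/U}(\nu_0) \subset I$, where $I \coloneq I(\overline{G/U}-G/U)$. Since $I$ is $G$-stable and $\rring{G/U} = \bigoplus_\lambda \rring{G/U}(\lambda)$ decomposes into pairwise non-isomorphic irreducibles, $I$ is a direct sum of certain $\rring{G/U}(\lambda)$'s. In the nontrivial case where $I$ is proper and nonzero, pick some nonzero dominant integral $\lambda_0$ with $\rring{G/U}(\lambda_0) \subset I$, and set $\nu_0 \coloneq \lambda_0 + \rho$. Simple connectedness of $G$ makes $\rho$ integral, so $\nu_0$ is strictly dominant integral, and the multiplication rule $\rring{G/U}(\mu)\rring{G/U}(\nu) = \rring{G/U}(\mu+\nu)$ gives $\rring{G/U}(\nu_0) \subset I$.

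Next, for each $m \in M$, set $J \coloneq \Ann_{\rring{G/U}}(m)$. The support hypothesis yields $\sqrt{J} \supset I$, and Noetherianness of $\rring{G/U}$ provides $N \in \NN$ with $I^N \subset J$; hence $\rring{G/U}(N\nu_0) = \rring{G/U}(\nu_0)^N \subset J$, i.e.\ $\rring{G/U}(N\nu_0)\cdot m = 0$. Now I would apply Fact \ref{fact:LikeGammaFactor} with $\lambda \coloneq -\longw(N\nu_0)$, so that $\lambda^\vee = N\nu_0$. By the convention $\rring{G/U}(\mu) = \rring{G/U}^{-\longw(\mu)}$, a basis $\set{f_i}$ of $\rring{G/U}^\lambda$ lies in $\rring{G/U}(N\nu_0)$ and thus annihilates $m$, so $P_\lambda m = \sum_i F_{\longw}(g_i) f_i m = 0$. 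On the other hand $P_\lambda = c\, R(s)$ with $c \in \CC - \set{0}$ and
\begin{align*}
s = \prod_{\alpha \in \proots} \prod_{i=1}^{N\nu_0(H_\alpha)} (H_\alpha + \rho(H_\alpha) + i);
\end{align*}
strict dominance of $\nu_0$ forces $N\nu_0(H_\alpha) \geq 1$ for every $\alpha \in \proots$ (positive roots are nonnegative integer combinations of simple roots, so the inner product is positive and hence, being an integer, at least $1$), so $s$ is a product of generators of $S$ and lies in $S$. Therefore $R(s)m = 0$, and $m$ dies in $S^{-1}M$.

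The only non-routine point I anticipate is producing $\nu_0$; I expect the $\rho$-shift trick to dispose of it painlessly, after which Fact \ref{fact:LikeGammaFactor} carries out the rest mechanically.
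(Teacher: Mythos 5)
Your proof is correct and follows essentially the same route as the paper: both kill a given $m$ by an isotypic component $\rring{G/U}(\mu)$ of strictly dominant weight $\mu$ and then invoke Fact \ref{fact:LikeGammaFactor} to get $P \in \CC^\times R(S)$ with $Pm=0$ (with $f_i$ on the right, so $f_i m = 0$ applies first), whence $m$ vanishes in $S^{-1}M$. The only difference is the preparatory step: the paper cites the known fact that the defining ideal of $\overline{G/U}-G/U$ is generated by $\rring{G/U}^{\rho}$ and applies the formula with $\lambda = n\rho$, whereas you produce a strictly dominant weight inside the ideal by a soft argument ($G$-stability, multiplicity-freeness, the multiplication rule $\rring{G/U}(\lambda)\rring{G/U}(\mu)=\rring{G/U}(\lambda+\mu)$, a $\rho$-shift) together with Noetherianness, which is slightly longer but avoids quoting that specific description of the boundary ideal.
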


\begin{remark}
	We have two actions of $\univ{t}$ on $M$, that is, $(L, \univ{t})$ and $(R, \univ{t})$ (see \eqref{eqn:ActionGT}).
	Hereafter, the localization functor $S^{-1}$ is taken with respect to the action $(R, \univ{t})$.
\end{remark}

\begin{proof}
	Let $m \in M$.
	It is enough to find $P \in S$ with $Pm = 0$.
	It is well-known that the defining ideal of $\overline{G/U}-G/U$ is generated by $\rring{G/U}^{\rho}$.
	Since the support of $M$ is contained in $\overline{G/U}-G/U$, there exists $n \in \NN$ such that
	$\rring{G/U}^{n\rho}m = 0$.
	Take $P_{n\rho}$ as in Fact \ref{fact:LikeGammaFactor}.
	By the construction of $P_{n\rho}$, we have $P_{n\rho}\in F_{\longw}(\rring{G/U}^{n\rho})\rring{G/U}^{n\rho}$.
	Hence $P_{n\rho}m = 0$ holds.

	By Fact \ref{fact:LikeGammaFactor}, $P_{n\rho}$ is written as
	\begin{align*}
		P_{n\rho} = c R\left(\prod_{\alpha \in \proots} \prod_{i=1}^{n\rho(H_{\alpha})} (H_{\alpha} + \rho(H_{\alpha}) + i)\right)
	\end{align*}
	for some $c \in \CC^\times$.
	Hence we have $c^{-1}P_{n\rho}\in R(S)$ and this shows the assertion.
\end{proof}

If $\lambda$ is anti-dominant, then $-(\lambda + \rho)(H_{\alpha}) + \rho(H_{\alpha}) + i$
is non-zero for any $\alpha \in \proots$ and positive integer $i$.
Hence the right $\univ{t}$-module $\CC_{\lambda+\rho}$ can be regarded as an $S^{-1}\univ{t}$-module if $\lambda$ is anti-dominant.
(Remark that $\CC_{\lambda}$ is regarded as a right $\univ{t}$-module via $1 \cdot H = \lambda({}^t\! H)$.)
The following corollary is therefore a direct consequence of Theorem \ref{lem:supportUt}.

\begin{corollary}\label{cor:vanishingTor}
	Let $M$ be a $\ntDalg{G/U}$-module and $\lambda \in \lie{t}^*$ an anti-dominant weight.
	If the support of $M$ as an $\rring{\overline{G/U}}$-module is contained in $\overline{G/U}-G/U$,
	then $\Tor^{\univ{t}}_i(\CC_{\lambda+\rho}, M)=0$ holds for any $i \in \ZZ$.
\end{corollary}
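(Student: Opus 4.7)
The plan is to deduce the corollary from Lemma \ref{lem:supportUt} together with the flatness of the localization $\univ{t} \to S^{-1}\univ{t}$. The hypothesis on the $\rring{\overline{G/U}}$-support of $M$ feeds directly into Lemma \ref{lem:supportUt} to give $S^{-1}M = 0$, and the point is to transfer this vanishing to Tor via a change-of-rings argument.

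First I would record that $\CC_{\lambda+\rho}$ can be viewed as an $S^{-1}\univ{t}$-module when $\lambda$ is anti-dominant. Using the convention fixed in the introduction that the right $\univ{t}$-action on $\CC_{\mu}$ is $v\cdot H = -\mu(H)v$ for $H\in \lie{t}$, each generator $H_\alpha + \rho(H_\alpha) + i$ of $S$ acts on $\CC_{\lambda+\rho}$ by the scalar $-\lambda(H_\alpha) + i$. The anti-dominance of $\lambda$ makes $-\lambda(H_\alpha) \geq 0$, so the scalar is a positive integer and in particular nonzero. Hence every element of $S$ acts invertibly on $\CC_{\lambda+\rho}$, giving the required $S^{-1}\univ{t}$-module structure. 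This step is essentially verbatim what the paragraph preceding the corollary observes.

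Next, since $\univ{t}$ is commutative Noetherian and $S \subset \univ{t}$ is a multiplicative subset, the map $\univ{t} \to S^{-1}\univ{t}$ is flat. For any left $\univ{t}$-module $M$ and any right $S^{-1}\univ{t}$-module $N$, the standard change-of-rings argument (take a free $\univ{t}$-resolution $\cpx{P} \to M$; then $S^{-1}\cpx{P} \to S^{-1}M$ is an $S^{-1}\univ{t}$-resolution by flatness, and $N\otimes_{\univ{t}} \cpx{P} \cong N\otimes_{S^{-1}\univ{t}} S^{-1}\cpx{P}$ because the $N$-action factors through $S^{-1}\univ{t}$) yields
\begin{align*}
\Tor^{\univ{t}}_i(N, M) \simeq \Tor^{S^{-1}\univ{t}}_i(N, S^{-1}M).
\end{align*}
Applying this with $N = \CC_{\lambda+\rho}$, the right-hand side vanishes because $S^{-1}M = 0$ by Lemma \ref{lem:supportUt}.

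There is essentially no serious obstacle here; this is a short structural corollary. The only thing to be careful about is the sign convention for the right $\univ{t}$-action on $\CC_{\lambda+\rho}$ (to match the formula in Fact \ref{fact:LikeGammaFactor}), which I would state explicitly so that the identification of the scalar by which each $H_\alpha + \rho(H_\alpha) + i$ acts is unambiguous.
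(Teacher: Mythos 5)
Your proposal is correct and is essentially the paper's own proof: the paper likewise observes that $\CC_{\lambda+\rho}$ is a right $S^{-1}\univ{t}$-module and then invokes the change-of-rings isomorphism $\Tor^{\univ{t}}_i(\CC_{\lambda+\rho}, M)\simeq \Tor^{S^{-1}\univ{t}}_i(\CC_{\lambda+\rho}, S^{-1}M)$, which vanishes since $S^{-1}M=0$ by Lemma \ref{lem:supportUt}. One small correction to your justification: for a general anti-dominant $\lambda\in\lie{t}^*$ (which may be non-real and non-integral) it is not true that $-\lambda(H_\alpha)\geq 0$; anti-dominance here means that $\lambda(H_\alpha)$ is not a positive integer for $\alpha\in\proots$, which is exactly what makes the scalar $-\lambda(H_\alpha)+i$ nonzero for every positive integer $i$, so each generator of $S$ still acts invertibly on $\CC_{\lambda+\rho}$ and the argument goes through unchanged.
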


\begin{proof}
	Since $\CC_{\lambda+\rho}$ is a right $S^{-1}\univ{t}$-module, we have
	\begin{align*}
		\Tor^{\univ{t}}_i(\CC_{\lambda+\rho}, M) \simeq \Tor^{S^{-1}\univ{t}}_i(\CC_{\lambda+\rho}, S^{-1}M) = 0
	\end{align*}
	for any $i \in \ZZ$.
\end{proof}

Recall that the functor $\loc$ is exact and the counit $\loc \circ \sect \rightarrow \id$ is a natural isomorphism (Proposition \ref{prop:FundamentalBasicAffine}).
In general, the unit $\eta\colon \id \rightarrow \sect \circ \loc$ is not a natural isomorphism.
As an application of Theorem \ref{lem:supportUt}, we see that the difference between $M$ and $\sect \circ \loc(M)$
is `small' as $\univ{t}$-modules for a $\ntDalg{G/U}$-module $M$.

\begin{corollary}\label{cor:localAndGlobal}
	Let $M$ be a $\ntDalg{G/U}$-module.
	Then the $S^{-1}\ntDalg{G/U}^T$-module homomorphism $\eta_{S}\colon S^{-1}M \rightarrow S^{-1}\sect\circ\loc(M)$ induced from the unit $\eta$ is an isomorphism.
	If $\lambda \in \lie{t}^*$ is an anti-dominant weight, the unit $\eta$ induces a $\lie{g}$-module isomorphism
	\begin{align*}
		\eta_{\lambda}\colon \Tor^{\univ{t}}_i(\CC_{\lambda+\rho}, M) \xrightarrow{\simeq} \Tor^{\univ{t}}_i(\CC_{\lambda+\rho}, \sect\circ\loc(M))
	\end{align*}
	for any $i \in \NN$.
\end{corollary}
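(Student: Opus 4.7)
The plan is to analyze the kernel and cokernel of the unit $\eta\colon M \rightarrow \sect\circ\loc(M)$ and show that both have their $\rring{\overline{G/U}}$-support in $\overline{G/U}-G/U$. Once that support statement is established, Lemma \ref{lem:supportUt} kills both modules after $S^{-1}$, giving (1), and Corollary \ref{cor:vanishingTor} kills all $\Tor$'s against $\CC_{\lambda+\rho}$, giving (2).

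First I would set $K \coloneq \ker(\eta)$ and $C \coloneq \coker(\eta)$, and break the four-term exact sequence
\begin{align*}
0 \rightarrow K \rightarrow M \xrightarrow{\eta} \sect\circ\loc(M) \rightarrow C \rightarrow 0
\end{align*}
into the two short exact sequences
\begin{align*}
0 \rightarrow K \rightarrow M \rightarrow N \rightarrow 0, \qquad 0 \rightarrow N \rightarrow \sect\circ\loc(M) \rightarrow C \rightarrow 0,
\end{align*}
where $N \coloneq \Image(\eta)$. The key observation is that applying the exact functor $\loc$ to $\eta$ yields $\loc(\eta)\colon \loc(M) \rightarrow \loc\circ\sect\circ\loc(M)$, which by the triangle identity and Proposition \ref{prop:FundamentalBasicAffine}(3) is an isomorphism. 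Hence $\loc(K) = \loc(C) = 0$. Since $\loc(\cdot) \simeq \rsheaf{G/U}\otimes_{\rring{G/U}} (\cdot)$ by Proposition \ref{prop:FundamentalBasicAffine}(1) and $G/U$ is an open subvariety of $\overline{G/U}$ with $\rring{G/U} = \rring{\overline{G/U}}$, the vanishing of these localizations on $G/U$ forces the $\rring{\overline{G/U}}$-supports of $K$ and $C$ to lie in $\overline{G/U}-G/U$.

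For (1), I apply the exact functor $S^{-1}$ to the two short exact sequences. By Lemma \ref{lem:supportUt} we have $S^{-1}K = 0$ and $S^{-1}C = 0$, so both sequences yield isomorphisms $S^{-1}M \xrightarrow{\simeq} S^{-1}N \xrightarrow{\simeq} S^{-1}\sect\circ\loc(M)$, whose composition is $\eta_S$. For (2), under the anti-dominance assumption on $\lambda$, Corollary \ref{cor:vanishingTor} gives $\Tor^{\univ{t}}_i(\CC_{\lambda+\rho}, K) = \Tor^{\univ{t}}_i(\CC_{\lambda+\rho}, C) = 0$ for all $i \in \ZZ$. The long exact sequences of $\Tor$ associated with the two short exact sequences then produce isomorphisms
\begin{align*}
\Tor^{\univ{t}}_i(\CC_{\lambda+\rho}, M) \xrightarrow{\simeq} \Tor^{\univ{t}}_i(\CC_{\lambda+\rho}, N) \xrightarrow{\simeq} \Tor^{\univ{t}}_i(\CC_{\lambda+\rho}, \sect\circ\loc(M)),
\end{align*}
and the composite is $\eta_\lambda$.

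There is no real obstacle here; the content is entirely in Lemma \ref{lem:supportUt} and Corollary \ref{cor:vanishingTor}. The only point to be careful about is the $\lie{g}$-equivariance in (2): since the $L(\univ{g})$-action on $M$ commutes with the $R(\univ{t})$-action and the unit $\eta$ is a $\ntDalg{G/U}$-module homomorphism, both $K$ and $C$ are $\ntDalg{G/U}$-submodules and $\eta_\lambda$ is induced from a $\lie{g}$-equivariant map on a $\lie{g}$-equivariant resolution of $\CC_{\lambda+\rho}$, so the resulting isomorphisms on $\Tor$ are automatically $\lie{g}$-module isomorphisms.
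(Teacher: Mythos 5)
Your argument is correct and follows essentially the same route as the paper: apply the exact functor $\loc$ to the four-term sequence of $\eta$, use that the counit $\loc\circ\sect\rightarrow\id$ is an isomorphism to conclude $\loc(\Ker\eta)=\loc(\Coker\eta)=0$, hence boundary support, and then invoke Lemma \ref{lem:supportUt}. The only (immaterial) difference is in the second assertion: the paper deduces it directly from the first, since $\CC_{\lambda+\rho}$ is a right $S^{-1}\univ{t}$-module so that $\Tor^{\univ{t}}_i(\CC_{\lambda+\rho},\cdot)$ factors through $S^{-1}$, whereas you rerun the long exact sequences of $\Tor$ against Corollary \ref{cor:vanishingTor}; both rest on the same vanishing results.
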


\begin{proof}
	As we have seen in the above of Corollary \ref{cor:vanishingTor}, $\CC_{\lambda+\rho}$ is a right $S^{-1}\univ{t}$-module.
	Hence the second assertion follows from the first assertion.
	We shall show that $\eta_S$ is bijective.

	To see this, consider the exact sequence
	\begin{align}
		0 \rightarrow \Ker(\eta) \rightarrow M \xrightarrow{\eta} \sect\circ \loc(M) \rightarrow \Coker(\eta) \rightarrow 0.
		\label{eqn:localAndGlobalExactSequence}
	\end{align}
	We apply the exact functor $\loc$ to the exact sequence and then obtain an exact sequence
	\begin{align*}
		0 \rightarrow \loc(\Ker(\eta)) \rightarrow \loc(M) \xrightarrow{\loc(\eta)} \loc\circ \sect\circ \loc(M) \rightarrow \loc(\Coker(\eta)) \rightarrow 0.
	\end{align*}
	Since the counit $\loc \circ \sect \rightarrow \id$ is a natural isomorphism, 
	$\loc(\eta)$ is an isomorphism.
	Hence we have $\loc(\Ker(\eta)) = \loc(\Coker(\eta)) = 0$.
	This implies that the supports of $\Ker(\eta)$ and $\Coker(\eta)$ are contained in $\overline{G/U}-G/U$.
	From Theorem \ref{lem:supportUt}, we obtain $S^{-1}\Ker(\eta) = S^{-1}\Coker(\eta) = 0$.
	This and the exact sequence \eqref{eqn:localAndGlobalExactSequence} show that $\eta_S$ is bijective.
\end{proof}

\subsection{Vanishing theorem}

In this subsection, we prove a vanishing theorem for $S^{-1}(p_* \ntDsheaf{G/U})^T$-modules.
This is a generalization of the exactness of $\sect$ in the Beilinson--Bernstein correspondence (Fact \ref{fact:beilinsonBernsten}).

We set
\begin{align*}
	\ntDsheaf{G/B,S}\coloneq S^{-1}(p_* \ntDsheaf{G/U})^T.
\end{align*}
Then $\ntDsheaf{G/B,S}$ is a sheaf of algebras since $R(\univ{t})$ is contained in the center of $(p_* \ntDsheaf{G/U})^T$.
By $\rsheaf{G/B} \simeq (p_*\rsheaf{G/U})^T$, there is a monomorphism $\rsheaf{G/B}\hookrightarrow \ntDsheaf{G/B,S}$ and $\ntDsheaf{G/B,S}$ is a quasi-coherent left/right $\rsheaf{G/B}$-module.
By (\ref{eqn:sapovalov}), we have
\begin{align*}
	(\ntDalg{G/B,S}\coloneq)\sect(\ntDsheaf{G/B,S}) \simeq \univ{g}\otimes_{\univcent{g}}S^{-1}\univ{t}.
\end{align*}
If $\lambda$ is anti-dominant, there is a natural epimorphism $\ntDsheaf{G/B, S}\rightarrow \ntDsheaf{G/B, \lambda}$
of sheaves of algebras.

We say that a $(p_* \ntDsheaf{G/U})^T$-module is quasi-coherent if the module is quasi-coherent as an $\rsheaf{G/B}$-module.
For a quasi-coherent $(p_* \ntDsheaf{G/U})^T$-module $\Dmod{M}$, the cohomology $H^i(G/B, \Dmod{M})$ may not vanish for $i > 0$.
This is because $(p_* \ntDsheaf{G/U})^T$ is so large that $\Mod_{qc}((p_* \ntDsheaf{G/U})^T)$ contains $\Mod_{qc}(\ntDsheaf{G/B,\lambda})$ 
with `bad' $\lambda$ for which the Beilinson--Bernstein correspondence does not hold.
The algebra $\ntDsheaf{G/B,S}$ knows, in contrast, only $\ntDsheaf{G/B, \lambda}$ with anti-dominant $\lambda$.
We can therefore expect that an analogue of the Beilinson--Bernstein correspondence
holds for $\ntDsheaf{G/B,S}$.
In fact, we prove the following theorem (see also Theorem \ref{thm:Daffine}).

\begin{theorem}\label{thm:vanishing}
	Let $\Dmod{M}$ be a quasi-coherent $\ntDsheaf{G/B,S}$-module.
	Then we have $H^i(G/B, \Dmod{M}) = 0$ for any $i > 0$.
\end{theorem}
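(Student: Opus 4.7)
The plan is to lift $\Dmod{M}$ through the principal $T$-bundle $p\colon G/U\to G/B$ to a $T$-equivariant $\ntDsheaf{G/U}$-module $\tilde{\Dmod{M}}$, use the affineness of $\overline{G/U}$ to realize each higher cohomology of $\tilde{\Dmod{M}}$ on $G/U$ as local cohomology supported on the boundary $Z\coloneq \overline{G/U}\setminus G/U$, and then kill the resulting module by combining Lemma~\ref{lem:supportUt} with a direct scalar computation on the zero $T$-weight.

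First I would set up the descent: identify $\Mod_{qc}(\ntDsheaf{G/B,S})$ with a category of $T$-equivariant quasi-coherent $\ntDsheaf{G/U}$-modules via the inverse pair $\tilde{\Dmod{M}}\mapsto (p_*\tilde{\Dmod{M}})^T$ and the induction $\Dmod{M}\mapsto p_*\ntDsheaf{G/U}\otimes_{(p_*\ntDsheaf{G/U})^T}\Dmod{M}$, which on a trivialization $p^{-1}(V)\simeq V\times T$ reduces to $\ntDsheaf{T}\otimes_{\univ{t}}\Dmod{M}|_V$. Let $\tilde{\Dmod{M}}$ denote the image of $\Dmod{M}$, so that $\Dmod{M}\simeq (p_*\tilde{\Dmod{M}})^T$. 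Because $p$ is affine and $T$ is reductive, taking $T$-invariants commutes both with $p_*$ and with sheaf cohomology, giving
\begin{align*}
H^i(G/B,\Dmod{M}) \simeq H^i(G/U,\tilde{\Dmod{M}})^T,
\end{align*}
where $(\cdot)^T$ denotes the zero $T$-weight subspace.

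Next I would compute $H^i(G/U,\tilde{\Dmod{M}})$ via local cohomology. Let $j\colon G/U\hookrightarrow \overline{G/U}$ be the open immersion; then $j_*\tilde{\Dmod{M}}$ is quasi-coherent, and since $\overline{G/U}$ is affine, the standard distinguished triangle $R\sect_Z\to \id\to Rj_*\circ j^*$ yields
\begin{align*}
H^i(G/U,\tilde{\Dmod{M}})\simeq H^{i+1}_Z(\overline{G/U},j_*\tilde{\Dmod{M}}) \quad (i\geq 1),
\end{align*}
which is a $\ntDalg{G/U}$-module whose $\rring{G/U}$-support lies in $Z$. Lemma~\ref{lem:supportUt} then gives $S^{-1}H^i(G/U,\tilde{\Dmod{M}})=0$. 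Finally, each generator $H_\alpha+\rho(H_\alpha)+k$ of $S$ acts on a zero-weight vector by the scalar $\rho(H_\alpha)+k$, which is strictly positive since $\rho(H_\alpha)>0$ for $\alpha\in\proots$ and $k\geq 1$; so every element of $S$ acts invertibly on the zero-weight subspace, and the $S$-torsion condition forces $H^i(G/U,\tilde{\Dmod{M}})^T=0$, which is the claim.

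The main obstacle is the first step: making the equivalence $\Dmod{M}\leftrightarrow \tilde{\Dmod{M}}$ rigorous globally, rather than only in a local trivialization, and ensuring that $(p_*\tilde{\Dmod{M}})^T$ recovers $\Dmod{M}$ as a $(p_*\ntDsheaf{G/U})^T$-module and not merely as an $\rsheaf{G/B}$-module, so that both the cohomology comparison and the $\ntDalg{G/U}$-action on $H^i(G/U,\tilde{\Dmod{M}})$ behave as claimed. Once the descent framework is in place, the rest—affineness of $p$, affineness of $\overline{G/U}$, local cohomology, and the scalar computation—is routine.
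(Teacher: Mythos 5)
Your strategy (monodromic descent along $p\colon G/U\to G/B$, affineness of $\overline{G/U}$ and local cohomology to see that $H^i(G/U,\widetilde{\Dmod{M}})$ for $i\geq 1$ is supported on $\overline{G/U}\setminus G/U$, then Lemma \ref{lem:supportUt}) is genuinely different from the paper's proof, which follows the classical Beilinson--Bernstein pattern: Lemma \ref{lemma:splitting} produces $i_{\mu}\colon \Dmod{M}\to F(-\mu)\otimes\Dmod{L}_{\mu}\otimes_{\rsheaf{G/B}}\Dmod{M}$ split by means of the Fourier transform, the splitting factor being the product of Fact \ref{fact:LikeGammaFactor}, invertible in $\ntDsheaf{G/B,S}$; vanishing then follows by twisting coherent subsheaves with the ample $\Dmod{L}_{-n\rho}$ and passing to the colimit. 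Note that both routes ultimately rest on Fact \ref{fact:LikeGammaFactor}: yours enters through Lemma \ref{lem:supportUt}, the paper's through Lemma \ref{lemma:splitting}.

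There is, however, a genuine error in your last step. The induced module $\widetilde{\Dmod{M}}$ is only weakly ($T$-monodromically) equivariant: on a trivialization $p^{-1}(V)\simeq V\times T$ one has $p_*\widetilde{\Dmod{M}}|_V\simeq\bigoplus_{\mu}e^{\mu}\otimes\Dmod{M}|_V$, and the operator $R(H)$ acts on $e^{\mu}\otimes m$ by $e^{\mu}\otimes(Hm+\mu(H)m)$. So on the zero-weight summand $R(H_{\alpha})$ acts through the original (localized, in general non-semisimple) $\univ{t}$-action on $\Dmod{M}$, \emph{not} by zero; the claim that $H_{\alpha}+\rho(H_{\alpha})+k$ acts on zero-weight vectors by the scalar $\rho(H_{\alpha})+k$ is false except in the untwisted case. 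Indeed, if the differentiated geometric $T$-action agreed with $R(\lie{t})$, your descent could only produce $\ntDsheaf{G/B}$-modules, never general $\ntDsheaf{G/B,S}$-modules. The step can be repaired, and more simply than you attempted: every $s\in S$ acts invertibly on the sheaf $\Dmod{M}$ itself, because $\Dmod{M}$ is a module over $\ntDsheaf{G/B,S}=S^{-1}(p_*\ntDsheaf{G/U})^T$ and $R(\univ{t})$ is central there; hence $s$ acts invertibly on $H^i(G/B,\Dmod{M})$, and combined with the $S$-torsion statement from Lemma \ref{lem:supportUt} this forces the vanishing. For this to be legitimate you must also make the descent precise enough that the identification $H^i(G/B,\Dmod{M})\simeq H^i(G/U,\widetilde{\Dmod{M}})^T$ (using that $p$ is affine, that $p_*\widetilde{\Dmod{M}}$ decomposes into weight subsheaves, and that cohomology on $G/B$ commutes with this direct sum) is $R(\univ{t})$-equivariant, so that the two actions being compared are the same one. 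With those points supplied, your argument is a valid alternative to the paper's proof.
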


The outline of the proof is the same as that of the Beilinson--Bernstein correspondence in \cite{BeBe81}.
The only difference is how to construct a splitting in the following lemma.
For a dominant integral weight $\mu \in \lie{t}^*$, we denote by $F(\mu)$ a finite-dimensional irreducible $G$-module with the highest weight $\mu$.

\begin{lemma}\label{lemma:splitting}
	Let $\Dmod{M}$ be a quasi-coherent $\ntDsheaf{G/B,S}$-module
	and $\mu \in \lie{t}^*$ an anti-dominant integral weight.
	Then there exists an $\rsheaf{G/B}$-module homomorphism
	\begin{align*}
		i_{\mu}\colon \Dmod{M} \rightarrow F(-\mu)\otimes \Dmod{L}_{\mu}\otimes_{\rsheaf{G/B}} \Dmod{M}
	\end{align*}
	and $i_{\mu}$ has a left inverse as a morphism of sheaves of abelian groups.
\end{lemma}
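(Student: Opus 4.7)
The plan is to construct $i_\mu$ from the highest-weight line of $F(-\mu)$, extend this to a $B$-filtration of $F(-\mu) \otimes \CC_\mu$, pass to $\Dmod{N} := F(-\mu) \otimes \Dmod{L}_\mu \otimes_{\rsheaf{G/B}} \Dmod{M}$, and then split the filtration using the central action of $R(\univ{t})$ together with Fact \ref{fact:LikeGammaFactor}.

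First, for the construction of $i_\mu$: since $F(-\mu)$ has highest weight $-\mu$, the highest-weight vector $v_{-\mu}$ spans the unique $B$-stable line in $F(-\mu)$, and thus $\CC v_{-\mu} \otimes 1 \subset F(-\mu) \otimes \CC_\mu$ is a trivial $B$-submodule of $T$-weight $-\mu + \mu = 0$. This trivial $B$-sub corresponds to a canonical $G$-equivariant inclusion $\rsheaf{G/B} \hookrightarrow F(-\mu) \otimes \Dmod{L}_\mu$ of $\rsheaf{G/B}$-modules. Tensoring over $\rsheaf{G/B}$ with $\Dmod{M}$ defines $i_\mu$. To set up the splitting, I refine this trivial $B$-sub to a full $B$-stable filtration $0 = F^0 \subset F^1 \subset \cdots \subset F^N = F(-\mu) \otimes \CC_\mu$ with $F^1 = \CC v_{-\mu} \otimes 1$, each $F^i/F^{i-1}$ one-dimensional of $T$-weight $\nu_i + \mu$ and trivial $U$-action, where $\nu_1 = -\mu, \nu_2, \ldots, \nu_N = w_l(-\mu)$ are the weights of $F(-\mu)$ enumerated so that $U$ (raising weights) preserves each $F^i$. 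The associated filtration of $F(-\mu) \otimes \Dmod{L}_\mu$ has subquotients $\Dmod{L}_{\nu_i + \mu}$ (the bottom being $\Dmod{L}_0 = \rsheaf{G/B}$), and since each $F^i \otimes \Dmod{L}_\mu$ is locally free over $\rsheaf{G/B}$, tensoring over $\rsheaf{G/B}$ with $\Dmod{M}$ preserves the filtration, giving $0 = \Dmod{N}^0 \subset \Dmod{N}^1 \subset \cdots \subset \Dmod{N}^N = \Dmod{N}$ with $\Dmod{N}^1 = i_\mu(\Dmod{M}) \simeq \Dmod{M}$ and $\Dmod{N}^i/\Dmod{N}^{i-1} \simeq \Dmod{L}_{\nu_i + \mu} \otimes_{\rsheaf{G/B}} \Dmod{M}$.

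For the splitting, the crucial structure is that $\Dmod{N}$ is naturally a $(p_*\ntDsheaf{G/U})^T$-module, so $R(\univ{t})$ acts centrally, preserving the filtration. Since $R(H)$ acts on $\Dmod{L}_\beta$ by the scalar $-\beta(H)$ and acts by derivation on tensor products, on the subquotient $\Dmod{L}_{\nu_i + \mu} \otimes \Dmod{M}$ the $\univ{t}$-action is the $\univ{t}$-action on $\Dmod{M}$ pulled back by the translation $H \mapsto H - (\nu_i + \mu)(H)$. For $i \geq 2$, we have $\nu_i + \mu = \nu_i - \nu_1$ is a non-zero negative sum of positive roots, and Fact \ref{fact:LikeGammaFactor} applied to $\lambda = -\mu$ produces the central element $P_{-\mu} = \sum_i F_{w_l}(g_i) f_i \in R(\univ{t})$ equal to $R$ of an explicit product of elements of $S$; thus $P_{-\mu}$ acts invertibly on any $\ntDsheaf{G/B,S}$-module, in particular on $\Dmod{M}$. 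Using this element together with the dual-basis expression $\sum g_i \otimes f_i \in F(-\mu) \otimes \sect(\Dmod{L}_\mu)$ underlying $i_\mu$, one constructs an abelian-sheaves retraction $r$ of $i_\mu$ via (essentially) multiplication by the $g_i$'s in $\ntDalg{G/U}$, followed by division by the invertible operator $P_{-\mu}$, the total effect being $G \times T$-equivariantly the identity on $\Dmod{M}$.

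The main obstacle is the last step: because $\Dmod{M}$ is not assumed to have a fixed infinitesimal character, $R(\univ{t})$ does not act on it by scalars, so a naive generalized-eigenspace decomposition of $R(H)$ on $\Dmod{N}$ is unavailable. One cannot hope to split the filtration at the level of $\rsheaf{G/B}$-modules either (since $\Dmod{L}_{\nu_i + \mu} \not\cong \rsheaf{G/B}$ as invertible sheaves for $i \geq 2$), which is precisely why the statement only asks for a retraction as sheaves of abelian groups. The content of Fact \ref{fact:LikeGammaFactor} is exactly that the naive combinatorial obstruction to inverting $i_\mu$ lives in $R(S)$ and therefore becomes invertible in $\ntDsheaf{G/B,S}$; the nontrivial technical step is verifying that this global invertibility assembles into an honest sheaf retraction, which reduces to a local computation identifying the composite $r \circ i_\mu$ with the action of $P_{-\mu}$ divided by itself.
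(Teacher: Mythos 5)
Your endgame is the paper's argument: define $i_\mu$ by the $G$-invariant element $\sum_i g_i\otimes f_i$, and left-invert it by exploiting that $\sum_i F_{w_l}(g_i)f_i$ is, by Fact \ref{fact:LikeGammaFactor}, a nonzero scalar times an element of $R(S)$, hence invertible on every $\ntDsheaf{G/B,S}$-module. However, the step you yourself call ``the nontrivial technical step'' --- actually producing the retraction --- is not carried out, and the one description you give of it (``multiplication by the $g_i$'s in $\ntDalg{G/U}$, followed by division by $P_{-\mu}$'') fails as stated. A left inverse must be defined on \emph{all} local sections $g\otimes f\otimes m$ of $F(-\mu)\otimes\Dmod{L}_\mu\otimes_{\rsheaf{G/B}}\Dmod{M}$, and the only operators that act on $\Dmod{M}$ are sections of $\ntDsheaf{G/B,S}=S^{-1}(p_*\ntDsheaf{G/U})^T$, i.e.\ $T$-invariant ones. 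Plain multiplication by $g$ (a function of $R$-weight $w_l(\mu)$) against $f$ (a local section of $\Dmod{L}_\mu\simeq(p_*\rsheaf{G/U})^{-\mu}$, weight $-\mu$) has $R$-weight $w_l(\mu)-\mu\neq 0$ in general, so it does not act on $\Dmod{M}$. The missing device is exactly the paper's map $\iota(g\otimes f)=F_{w_l}(g)\cdot f$: one applies the Fourier transform to the \emph{global} factor $g\in F(-\mu)\simeq\rring{G/U}^{w_l(\mu)}$ (this matters --- $F_{w_l}$ is an automorphism of the global algebra $\ntDalg{G/U}$ only, so it cannot be applied to the local section $f$), so that $F_{w_l}(g)\in\ntDalg{G/U}^{\mu}$ by Fact \ref{fact:fourier} and $F_{w_l}(g)\cdot f\in(p_*\ntDsheaf{G/U})^T$; this $\iota$ is right $\rsheaf{G/B}$-linear, hence descends through $\otimes_{\rsheaf{G/B}}$, and composing with the $\ntDsheaf{G/B,S}$-action gives $s$ with $s\circ i_\mu$ equal to multiplication by the element of Fact \ref{fact:LikeGammaFactor}; the left inverse is then $(s\circ i_\mu)^{-1}\circ s$. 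Note also that the dual basis and the ``division'' play no role in defining the retraction itself; they enter only in $i_\mu$ and in evaluating $s\circ i_\mu$.

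Two smaller points. First, your middle paragraph (the $B$-stable filtration of $F(-\mu)\otimes\CC_\mu$, the induced filtration of $\Dmod{N}$, and the attempt to split it via the central $R(\univ{t})$-action) is a detour: as you yourself observe, it cannot be split $\rsheaf{G/B}$-linearly or by eigenspace arguments, and the argument above needs none of it. Second, your construction of $i_\mu$ via the trivial $B$-stable line $\CC v_{-\mu}\otimes 1\subset F(-\mu)\otimes\CC_\mu$ agrees (up to scalar) with the paper's coevaluation map $m\mapsto\sum_i g_i\otimes f_i\otimes m$, so that part is fine; but for the computation $s\circ i_\mu=P$ you need the dual-basis form anyway, so it is simpler to define $i_\mu$ that way from the start.
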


\begin{proof}
	Set $F\coloneq \sect(\Dmod{L}_{-w_l(\mu)})$ and identify $F$ with $F(-\mu)$.
	Take a basis $\set{f_i} \subset \sect(\Dmod{L}_{\mu})\simeq F(w_{l}(\mu))$
	and a dual basis $\set{g_i} \subset F$ fixing a $G$-invariant pairing.
	We define $i_\mu$ via $i_{\mu}(m) = \sum_{i} g_i \otimes f_i \otimes m$ for a local section $m \in \Dmod{M}$.
	By definition, $i_\mu$ is an $\rsheaf{G/B}$-module homomorphism.

	We shall construct a left inverse of $i_{\mu}$.
	Through $\Dmod{L}_{\mu} \xrightarrow{\simeq} (p_* \rsheaf{G/U})^{-\mu} \rightarrow (p_{*} \ntDsheaf{G/U})^{-\mu}$,
	we identify $\Dmod{L}_{\mu}$ with an $\rsheaf{G/B}$-submodule of $(p_{*} \ntDsheaf{G/U})^{-\mu}$.
	We have a non-zero $G$-module homomorphism
	$F \simeq \rring{G/U}^{\longw(\mu)} \xrightarrow{F_{w_l}} \ntDalg{G/U}^{\mu}$.
	Then we have a right $\rsheaf{G/B}$-module homomorphism
	\begin{align*}
		\iota\colon F(-\mu)\otimes\Dmod{L}_{\mu} \rightarrow \ntDalg{G/U}^{\mu}\otimes(p_{*} \ntDsheaf{G/U})^{-\mu}
		\rightarrow (p_* \ntDsheaf{G/U})^T,
	\end{align*}
	where the last morphism is the multiplication map in $p_* \ntDsheaf{G/U}$.
	We therefore obtain a morphism of sheaves of abelian groups
	\begin{align*}
		s\colon F(-\mu)\otimes\Dmod{L}_{\mu}\otimes_{\rsheaf{G/B}} \Dmod{M}
		\xrightarrow{\iota\otimes \id} (p_* \ntDsheaf{G/U})^T\otimes_{\rsheaf{G/B}} \Dmod{M}
		\xrightarrow{\alpha} \Dmod{M},
	\end{align*}
	where the last morphism $\alpha$ is given by the action of $\ntDsheaf{G/B,S}$ on $\Dmod{M}$.

	We shall show that $s\circ i_\mu$ is an isomorphism.
	For a local section $m \in \Dmod{M}$, we have
	\begin{align*}
		s\circ i_\mu(m) &= \alpha \circ (\iota\otimes \id) \circ i_{\mu}(m)\\
		&= \alpha \circ (\iota\otimes \id) \left(\sum_i g_i\otimes f_i \otimes m\right)\\
		&=\sum_i \alpha (F_{w_l}(g_i)f_i\otimes m)\\
		&=\left(\sum_i F_{w_l}(g_i)f_i\right) m\\
		&=c\prod_{\alpha \in \proots} \prod_{i=1}^{-\mu^\vee(H_{\alpha})} (H_{\alpha} + \rho(H_{\alpha}) + i)m,
	\end{align*}
	where the last equality comes from Fact \ref{fact:LikeGammaFactor}.
	The factors $H_{\alpha} + \rho(H_{\alpha}) + i$ in the last equality 
	are invertible in $\ntDsheaf{G/B,S}$.
	This therefore implies that $s\circ i_\mu$ is an isomorphism.
	We have proved the lemma.
\end{proof}

The remaining part of the proof of Theorem \ref{thm:vanishing} is the same as that in \cite{BeBe81}.
We however give the proof for reader's convenience.

\begin{proof}[Proof of Theorem \ref{thm:vanishing}]
	Fix an integer $i > 0$.
	Let $\Dmod{N}$ be a coherent $\rsheaf{G/B}$-submodule of $\Dmod{M}$.
	Consider the commutative diagram
	\begin{align*}
		\xymatrix{
		\Dmod{N} \ar@{^{(}->}[r]\ar[d] & \Dmod{M} \ar[d]_{i_{\mu}} \\
		F(\mu)\otimes \Dmod{L}_{\mu} \otimes_{\rsheaf{G/B}}\Dmod{N} \ar@{^{(}->}[r]&
		F(\mu)\otimes \Dmod{L}_{\mu} \otimes_{\rsheaf{G/B}}\Dmod{M} 
		}
	\end{align*}
	for any anti-dominant integral weight $\mu\in \lie{t}^*$.
	By Lemma \ref{lemma:splitting}, $i_\mu$ has a left inverse $s$.
	Since $\Dmod{N}$ is a coherent $\rsheaf{G/B}$-module and $\Dmod{L}_{-\rho}$ is ample,
	there exists a positive integer $n$ such that $H^i(G/B, \Dmod{L}_{-n\rho} \otimes_{\rsheaf{G/B}}\Dmod{N}) = 0$.
	Then we have
	\begin{align*}
		H^i(G/B, F(n\rho)\otimes\Dmod{L}_{-n\rho} \otimes_{\rsheaf{G/B}}\Dmod{N}) &\simeq F(n\rho)\otimes H^i(G/B, \Dmod{L}_{-n\rho} \otimes_{\rsheaf{G/B}}\Dmod{N})\\
		&= 0.
	\end{align*}
	By the above commutative diagram and the existence of the right inverse $s$, the homomorphism $H^i(G/B, \Dmod{N}) \rightarrow H^i(G/B, \Dmod{M})$ induced from the inclusion is the zero map.
	This implies
	\begin{align*}
		H^i(G/B, \Dmod{M}) \simeq \lim_{\xrightarrow[\Dmod{N}]{}}H^i(G/B, \Dmod{N}) = 0.
	\end{align*}
	This completes the proof.
\end{proof}

\begin{remark}
	If $\lambda$ is an anti-dominant weight, the category $\Mod_{qc}(\ntDsheaf{G/B,\lambda})$
	can be regarded as a full subcategory of $\Mod_{qc}(\ntDsheaf{G/B,S})$.
	Hence Theorem \ref{thm:vanishing} is a generalization of the exactness of $\sect$ in the Beilinson--Bernstein correspondence.
\end{remark}

\subsection{\texorpdfstring{$\ntDsheaf{G/B,S'}$}{D(G/B,S')}-affinity}

Let $S'$ be a multiplicative subset of $\univ{t}$ generated by
\begin{align*}
	\set{H_{\alpha} + \rho(H_{\alpha}) + i: \alpha \in \proots, i \text{ is a non-negative integer}},
\end{align*}
(see \eqref{eqn:S} for $H_\alpha$).
Set $\ntDsheaf{G/B,S'}\coloneq(S')^{-1}(p_* \ntDsheaf{G/U})^T$ and $\ntDalg{G/B,S'}\coloneq \sect(\ntDsheaf{G/B,S'})$.
By definition, we have $S'\supset S$.

In this subsection, we prove that $G/B$ is $\ntDsheaf{G/B,S'}$-affine, i.e.\ 
the global section functor gives an equivalence of categories from $\Mod_{qc}(\ntDsheaf{G/B,S'})$ to $\Mod(\ntDalg{G/B,S'})$.
We have already shown that the functor $\sect$ is exact in Theorem \ref{thm:vanishing}.
By \cite[Proposition 1.4.4]{HTT08}, it is enough to show that $\sect(\Dmod{M})$ is non-zero
for any non-zero $\Dmod{M} \in \Mod_{qc}(\ntDsheaf{G/B,S'})$.
We reduce this to the Beilinson--Bernstein correspondence.
To show this, we prove the following lemma.

\begin{lemma}\label{lem:nonVanishingQuot}
	Let $\alg{A}$ be a finitely generated integral domain over $\CC$ and $T$ an at most countable multiplicative subset of $\alg{A}$.
	Let $M$ be a non-zero torsion-free $T^{-1}\alg{A}$-module of at most countable dimension.
	Then there exists a maximal ideal $I$ of $T^{-1}\alg{A}$ such that $M/IM\neq 0$.
\end{lemma}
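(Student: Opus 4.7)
The plan is to exploit the at-most-countable $\CC$-dimension of $T^{-1}\alg{A}$ to force every residue field $T^{-1}\alg{A}/I$ to equal $\CC$, and then combine this with torsion-freeness to reduce the problem to finding any maximal ideal in a sufficiently enlarged localization of $\alg{A}$. Indeed, any field quotient of $T^{-1}\alg{A}$ is a $\CC$-algebra of at most countable dimension over $\CC$, and a countable-dimensional field extension of $\CC$ must equal $\CC$ (a transcendental element $x$ would yield the $\CC$-linearly independent uncountable family $\{(x-\lambda)^{-1}\}_{\lambda \in \CC}$).

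First I would use torsion-freeness to land inside the fraction field $K := \mathrm{Frac}(\alg{A}) = \mathrm{Frac}(T^{-1}\alg{A})$. Since $M$ is torsion-free over the integral domain $T^{-1}\alg{A}$, the canonical map $M \hookrightarrow M_K := K \otimes_{T^{-1}\alg{A}} M$ is injective, and $M_K$ is a $K$-vector space of at most countable $K$-dimension. Fix $0 \neq m \in M$, extend $\{m\}$ to a $K$-basis of $M_K$, and let $\pi \colon M_K \to K$ be the $K$-linear projection onto the coordinate of $m$. Set $F := \pi(M) \subseteq K$; then $F$ is a $T^{-1}\alg{A}$-submodule of $K$ containing $1 = \pi(m)$ of at most countable $\CC$-dimension, and the surjection $M \twoheadrightarrow F$ reduces the problem to finding a maximal $I$ with $F/IF \neq 0$. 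For this it suffices to produce $I$ with $1 \notin IF$.

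The second step is to enlarge $F$ to a localization of $\alg{A}$ itself. Fix a countable $\CC$-basis $\{f_i = a_i/t_i\}_i$ of $F$ with $a_i, t_i \in \alg{A}$ and $t_i \neq 0$, let $T'$ be the multiplicative subset of $\alg{A}$ generated by $T \cup \{t_i\}_i$, and set $R' := T'^{-1}\alg{A}$. Then $T'$ is countable, $0 \notin T'$, and $F \subseteq R'$. Pick any maximal ideal $I'$ of $R'$ (which exists since $R' \neq 0$); again $R'/I' = \CC$ by the countable-dimension argument. The composite $T^{-1}\alg{A} \hookrightarrow R' \twoheadrightarrow \CC$ is then a nonzero $\CC$-algebra homomorphism, and its kernel $I$ is a maximal ideal of $T^{-1}\alg{A}$ with $T^{-1}\alg{A}/I = \CC$. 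Since $IR' \subseteq I'$ is proper, $1 \notin IR' \supseteq IF$, so $1 \notin IF$, as required.

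The main obstacle is arranging the enlargement $R'$: it must be countable-dimensional over $\CC$ (so that its residue fields are $\CC$ and pullbacks of maximal ideals stay maximal), while being large enough that $F \subseteq R'$. The countability hypotheses on both $T$ and $\dim_\CC M$ are exactly what allow the required $T'$ to be chosen countable; torsion-freeness is what permits the reduction from $M$ to the submodule $F$ of $K$ in the first place.
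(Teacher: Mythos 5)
Your proof is correct, and its overall skeleton matches the paper's: both arguments use torsion-freeness to embed $M$ into $K\otimes_{T^{-1}\alg{A}}M$, both use the countability hypotheses to enlarge $T$ to a countable multiplicative set $T'$, and both then invoke the uncountability of $\CC$ to produce the required maximal ideal. The implementations differ in two ways, though. First, the paper enlarges $T'$ so that $(T')^{-1}M$ becomes a \emph{free} $(T')^{-1}\alg{A}$-module, whereas you project $M$ onto a rank-one $T^{-1}\alg{A}$-submodule $F\subseteq K$ containing $1$ and only need $F\subseteq (T')^{-1}\alg{A}$; this replaces the freeness step by the elementary observation $1\notin IF$. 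Second, the paper directly asserts the existence of a maximal ideal $I$ of $T^{-1}\alg{A}$ with $I\cap T'=\emptyset$ (which, with finite generation of $\alg{A}$, one would justify by the fact that the maximal spectrum of $\alg{A}$ over the uncountable field $\CC$ is not covered by the countably many proper closed sets $\set{t=0}$, $t\in T'$, and then $M/IM\simeq (T')^{-1}M/I(T')^{-1}M\neq 0$ by freeness), whereas you take an arbitrary maximal ideal of $(T')^{-1}\alg{A}$ and pull it back, using that residue fields of $\CC$-algebras of at most countable dimension are $\CC$ (the $\set{(x-\lambda)^{-1}}_{\lambda\in\CC}$ trick) to see that the pullback is still maximal. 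Your variant makes the role of uncountability explicit, and it uses finite generation of $\alg{A}$ only through the countable $\CC$-dimension of $\alg{A}$ and $(T')^{-1}\alg{A}$, so it proves a slightly more general statement; it also automatically gives $T^{-1}\alg{A}/I=\CC$, which is what is actually wanted when the lemma is applied in Lemma \ref{lem:nonVanishing} to extract a weight $\lambda$. The paper's route is shorter on the page and yields the cleaner identification $M/IM\simeq (T')^{-1}M/I(T')^{-1}M$ for the whole module rather than for a rank-one quotient.
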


\begin{proof}
	Let $K$ denote the field of fractions of $\alg{A}$.
	Since $M$ is a torsion-free $T^{-1}\alg{A}$-module, 
	the canonical homomorphism $M \rightarrow K\otimes_{T^{-1}\alg{A}}M$
	is injective.
	Since $K\otimes_{T^{-1}\alg{A}}M$ is a vector space over $K$ and $M$ has at most countable dimension, there exists an at most countable multiplicative subset $T'$ of $\alg{A}$ such that $T'\supset T$ and $(T')^{-1}M$ is a free $(T')^{-1}\alg{A}$-module.
	Since $T'$ is countable, there is a maximal ideal $I$ of $T^{-1}\alg{A}$ such that 
	$I\cap T' = \emptyset$.
	Then we have $M/IM \simeq (T')^{-1}M / I(T')^{-1}M \neq 0$.
\end{proof}

The following proposition is proved in the same way as \cite[Corollary 1.4.17]{HTT08}.
We omit the proof.

\begin{proposition}\label{prop:ExistenceCoherentSub}
	Let $\Dmod{M} \in \Mod_{qc}(\ntDsheaf{G/B,S'})$, and $V$ an open subset of $G/B$.
	For any coherent submodule $\Dmod{N}$ of $\Dmod{M}|_V$, there exists a coherent submodule $\widetilde{\Dmod{N}}$
	such that $\widetilde{\Dmod{N}}|_V\simeq \Dmod{N}$.
\end{proposition}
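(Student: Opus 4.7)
The plan is to imitate the proof of \cite[Corollary 1.4.17]{HTT08} for ordinary $\ntDsheaf{}$-modules, with the only change being to replace $\ntDsheaf{G/B}$ by the sheaf of algebras $\ntDsheaf{G/B,S'}$. Concretely, I would proceed in three steps: (i) produce a coherent $\rsheaf{V}$-submodule $\Dmod{N}_0 \subset \Dmod{N}$ that generates $\Dmod{N}$ over $\ntDsheaf{G/B,S'}|_V$; (ii) extend $\Dmod{N}_0$ to a coherent $\rsheaf{G/B}$-submodule $\widetilde{\Dmod{N}}_0 \subset \Dmod{M}$ on all of $G/B$; and (iii) set $\widetilde{\Dmod{N}} \coloneq \ntDsheaf{G/B,S'}\cdot \widetilde{\Dmod{N}}_0 \subset \Dmod{M}$.

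Step (i) is a standard local-to-global argument: cover $V$ by affine opens on which $\Dmod{N}$ is finitely generated as a $\ntDsheaf{G/B,S'}$-module, and bundle local generating sets into a single coherent $\rsheaf{V}$-submodule of $\Dmod{N}$ using that $V$ is quasi-projective. Step (ii) is the classical extension theorem for coherent subsheaves of a quasi-coherent sheaf on a Noetherian scheme, precisely the ingredient used in the proof of \cite[Corollary 1.4.17]{HTT08}; it applies because $\Dmod{M}$ is quasi-coherent as an $\rsheaf{G/B}$-module by assumption. Step (iii) visibly produces a locally finitely generated $\ntDsheaf{G/B,S'}$-submodule of $\Dmod{M}$ whose restriction to $V$ equals $\ntDsheaf{G/B,S'}|_V \cdot \Dmod{N}_0 = \Dmod{N}$, as required.

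The one point that is not purely formal will be confirming that a locally finitely generated $\ntDsheaf{G/B,S'}$-module is actually coherent, i.e.\ that $\ntDsheaf{G/B,S'}$ is locally Noetherian. For this I would work over an affine open $V\subset G/B$ on which the $T$-bundle $p\colon G/U\rightarrow G/B$ trivializes. There the sections $\sect(V,\ntDsheaf{G/B,S'})$ form a localization of $\ntDalg{V}\otimes\univ{t}$, and the latter carries a natural filtration whose associated graded ring $\rring{T^*V}\otimes\univ{t}$ is a finitely generated commutative $\CC$-algebra. Hence $\ntDalg{V}\otimes\univ{t}$ is Noetherian, and so is its localization $\ntDalg{V}\otimes S'^{-1}\univ{t}$. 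Once this Noetherianness is in hand the remaining verifications are cosmetic and the argument goes through exactly as in the TDO case.
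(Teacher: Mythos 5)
Your proposal is correct and follows exactly the route the paper intends: the paper itself states that the proposition "is proved in the same way as [HTT08, Corollary 1.4.17]" and omits the details, which are precisely your steps (i)--(iii), with your Noetherianity check of $\ntDalg{V}\otimes (S')^{-1}\univ{t}$ on a trivializing affine open being the only non-formal point and handled correctly as a central localization of a Noetherian filtered ring.
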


\begin{lemma}\label{lem:nonVanishing}
	$\sect(\Dmod{M})$ is non-zero for any non-zero $\Dmod{M} \in \Mod_{qc}(\ntDsheaf{G/B,S'})$.
\end{lemma}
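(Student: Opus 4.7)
The plan is to use the central action of $R \coloneq S'^{-1}\univ{t} \subset \ntDsheaf{G/B,S'}$ to reduce to the Beilinson--Bernstein correspondence (Fact~\ref{fact:beilinsonBernsten}) at a regular anti-dominant weight. Explicitly, for any regular anti-dominant $\lambda$ the maximal ideal $I_\lambda \subset R$ generated by $\{H - (\lambda+\rho)(H) : H \in \lie{t}\}$ yields $\ntDsheaf{G/B,S'}/I_\lambda\ntDsheaf{G/B,S'}\simeq \ntDsheaf{G/B,\lambda}$, to which Fact~\ref{fact:beilinsonBernsten}(3) applies. The strategy is therefore to find such $\lambda$ with $\Dmod{M}/I_\lambda\Dmod{M} \neq 0$.

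First I would reduce to the case that $\Dmod{M}$ is coherent. By Theorem~\ref{thm:vanishing}, $\sect$ is exact on $\Mod_{qc}(\ntDsheaf{G/B,S'})$. Given $\Dmod{M}\neq 0$, choose a non-vanishing local section $m \in \Dmod{M}(V)$ on an affine open $V \subset G/B$; then $\ntDsheaf{V,S'}\cdot m \subset \Dmod{M}|_V$ is a non-zero coherent submodule, and by Proposition~\ref{prop:ExistenceCoherentSub} it extends to a non-zero coherent submodule $\widetilde{\Dmod{N}}$ of $\Dmod{M}$ on all of $G/B$. Exactness of $\sect$ gives $\sect(\widetilde{\Dmod{N}})\hookrightarrow\sect(\Dmod{M})$, so it suffices to treat coherent $\Dmod{M}$.

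Next I would produce the desired $\lambda$. The cyclic $\ntDsheaf{V,S'}$-submodule $\ntDsheaf{V,S'}\cdot m \subset \Dmod{M}(V)$, regarded as an $R$-module by restriction of scalars, has countable dimension over $\CC$. Applying Lemma~\ref{lem:nonVanishingQuot}---directly in the $R$-torsion-free case, or after reducing to the cyclic $R/J$-structure of $Rm$ in the torsion case---one obtains a maximal ideal $I$ of $R$ for which the image of $m$ survives in the resulting quotient. The countability built into Lemma~\ref{lem:nonVanishingQuot}, together with the fact that the non-regular and non-anti-dominant loci in $\lie{t}^*$ are cut out by countably many hyperplanes, should allow $I$ to be chosen of the form $I_\lambda$ for a regular anti-dominant $\lambda$, with a short supplementary argument upgrading non-vanishing in the cyclic quotient to non-vanishing of $\Dmod{M}/I_\lambda\Dmod{M}$ itself. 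Once this is in hand, $\Dmod{M}/I_\lambda\Dmod{M}$ is a non-zero quasi-coherent $\ntDsheaf{G/B,\lambda}$-module, so Fact~\ref{fact:beilinsonBernsten}(3) gives $\sect(\Dmod{M}/I_\lambda\Dmod{M})\neq 0$, and exactness of $\sect$ exhibits this as a quotient of $\sect(\Dmod{M})$, forcing $\sect(\Dmod{M})\neq 0$.

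The main obstacle lies in the production of $I_\lambda$: one must verify that $m$ does not lie in $I_\lambda\Dmod{M}$ (as opposed to merely outside $I_\lambda\cdot(\ntDsheaf{V,S'}\cdot m)$), \emph{and} that this can be arranged with $\lambda$ regular anti-dominant. Lemma~\ref{lem:nonVanishingQuot} is precisely what controls the countably many linear relations that could otherwise pull $m$ into $I_\lambda\Dmod{M}$; the remaining combinatorial bookkeeping to land the chosen maximal ideal in the regular anti-dominant chamber is the delicate part of the argument.
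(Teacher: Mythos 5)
Your overall route is the same as the paper's: reduce to a coherent module, produce a maximal ideal of $(S')^{-1}\univ{t}$, i.e.\ a regular anti-dominant weight $\lambda$, for which the corresponding quotient of $\Dmod{M}$ is non-zero, and then combine Fact \ref{fact:beilinsonBernsten} with the exactness from Theorem \ref{thm:vanishing}. But the step you yourself flag as ``the main obstacle'' is exactly the step a proof must supply, and your sketch does not supply it. Applying Lemma \ref{lem:nonVanishingQuot} to the cyclic module $Rm$ (with $R=(S')^{-1}\univ{t}$) gives nothing: for a cyclic module any maximal ideal containing the annihilator trivially works, and non-vanishing of $Rm/IRm$ does not control $m$ modulo $I$ times the larger $\ntDsheaf{G/B,S'}$-module, since $m$ may become $I$-divisible there. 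The torsion-freeness hypothesis in Lemma \ref{lem:nonVanishingQuot} is precisely what rules this out, so the lemma has to be applied to the full module of sections $\sect(V,\Dmod{M})$ of a (non-zero, coherent) module over a suitable affine open $V$, and the missing idea is how to force that torsion-freeness. The paper does it by a maximal-annihilator trick: since $(S')^{-1}\univ{t}$ is noetherian, choose $J$ maximal among ideals with $\Dmod{M}^J\neq 0$ and replace $\Dmod{M}$ by $\Dmod{M}^J$; maximality makes $J$ prime and makes $\sect(V,\Dmod{M}^J)$ torsion-free over $(S')^{-1}\univ{t}/J$, while coherence and the trivialization $p^{-1}(V)\simeq V\times T$ give finite generation, hence countable dimension. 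Lemma \ref{lem:nonVanishingQuot} then yields a weight $\lambda$ with $\CC_{\lambda+\rho}\otimes_{\univ{t}}\sect(V,\Dmod{M})\neq 0$, and since $V$ is affine this space is $\sect(V,\CC_{\lambda+\rho}\otimes_{\univ{t}}\Dmod{M})$, so the quotient sheaf itself is non-zero; no separate ``upgrade'' from a cyclic quotient is needed.

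On the other hand, the ``combinatorial bookkeeping'' you worry about at the end is a non-issue: the maximal ideal produced by Lemma \ref{lem:nonVanishingQuot} comes from a point of $\lie{t}^*$ at which no element of $S'$ vanishes, and (with the paper's $\rho$-shift and transpose conventions for $\CC_{\lambda+\rho}$) that condition is exactly $\lambda(H_\alpha)\notin\{0,1,2,\dots\}$ for all $\alpha\in\proots$, i.e.\ regular anti-dominance. This is the whole point of localizing at $S'$ rather than $S$. So the genuinely delicate point is not landing in the right chamber but the torsion-freeness needed to apply Lemma \ref{lem:nonVanishingQuot} to the full section module, and that is the gap your proposal leaves open.
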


\begin{proof}
	By Proposition \ref{prop:ExistenceCoherentSub}, we can assume that $\Dmod{M}$ is a coherent $\ntDsheaf{G/B,S'}$-module.
	Since $(S')^{-1}\univ{t}$ is noetherian, there is a maximal element $J$ of
	\begin{align*}
		\set{I \subset (S')^{-1}\univ{t}: \Dmod{M}^I \neq 0, I \text{ is an ideal}},
	\end{align*}
	where $\Dmod{M}^I$ is the $\ntDsheaf{G/B,S'}$-submodule of $\Dmod{M}$ annihilated by $I$.
	By the maximality of $J$, $J$ is a prime ideal and $\sect(V, \Dmod{M}^J)$ is a torsion-free $(S')^{-1}\univ{t}/ J$-module
	for any open subset $V \subset G/B$ with $\sect(V, \Dmod{M}^J) \neq 0$.
	Replacing $\Dmod{M}$ with $\Dmod{M}^J$, we can assume $\Dmod{M}=\Dmod{M}^J$.

	Fix an affine open subset $V$ of $G/B$ such that $\sect(V, \Dmod{M})\neq 0$ and $p^{-1}(V) \simeq V\times T$.
	Then $\sect(V, \Dmod{M})$ is a finitely generated $\ntDalg{V}\otimes (S')^{-1}\univ{t}$-module
	and torsion-free as an $(S')^{-1}\univ{t}/J$-module.
	Hence, applying Lemma \ref{lem:nonVanishingQuot}, we can take a regular anti-dominant weight $\lambda \in \lie{t}^*$ such that $\CC_{\lambda + \rho}\otimes_{\univ{t}} \sect(V, \Dmod{M}) \neq 0$.
	Note that $\CC_{\lambda + \rho}$ is a right $(S')^{-1}\univ{t}$-module.

	$\CC_{\lambda + \rho}\otimes_{\univ{t}}\Dmod{M}$ is a quotient module of $\Dmod{M}$
	and we have $\sect(V, \CC_{\lambda + \rho}\otimes_{\univ{t}}\Dmod{M}) \simeq \CC_{\lambda + \rho}\otimes_{\univ{t}}\sect(V, \Dmod{M})$ since $V$ is affine.
	Moreover, $\CC_{\lambda + \rho}\otimes_{\univ{t}}\Dmod{M}$ is a quasi-coherent $\ntDsheaf{G/B,\lambda}$-module.
	By the Beilinson--Bernstein correspondence (Fact \ref{fact:beilinsonBernsten}), $\sect(\CC_{\lambda + \rho}\otimes_{\univ{t}}\Dmod{M}) \neq 0$ holds.
	Since $\sect$ is an exact functor by Theorem \ref{thm:vanishing}, we obtain $\sect(\Dmod{M}) \neq 0$.
\end{proof}

By \cite[Proposition 1.4.4, Proposition 1.4.13]{HTT08}, Theorem \ref{thm:vanishing} and Lemma \ref{lem:nonVanishing},
we have shown the following theorem.

\begin{theorem}\label{thm:Daffine}
	$G/B$ is $\ntDsheaf{G/B, S'}$-affine, that is,
	the global section functor
	\begin{align*}
		\sect\colon \Mod_{qc}(\ntDsheaf{G/B, S'})&\rightarrow \Mod(\univ{g}\otimes_{\univcent{g}}(S')^{-1}\univ{t})
	\end{align*}
	gives an equivalence of categories.
\end{theorem}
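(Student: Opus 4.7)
The plan is straightforward assembly: by the standard $\ntDsheaf{}$-affinity criterion from \cite[Proposition 1.4.4 and 1.4.13]{HTT08}, to prove the equivalence it is enough to verify (a) $\sect\colon \Mod_{qc}(\ntDsheaf{G/B,S'}) \to \Mod(\ntDalg{G/B,S'})$ is exact, (b) $\sect$ is conservative (kills no non-zero object), and (c) the endomorphism ring $\ntDalg{G/B,S'} = \sect(\ntDsheaf{G/B,S'})$ coincides with $\univ{g}\otimes_{\univcent{g}} (S')^{-1}\univ{t}$. All three are within reach of the results already proved.

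For (a), I would observe that $S' \supset S$, so the natural map $S^{-1}\univ{t} \to (S')^{-1}\univ{t}$ makes $\ntDsheaf{G/B,S'}$ a further localization of $\ntDsheaf{G/B,S}$; restriction of scalars therefore embeds $\Mod_{qc}(\ntDsheaf{G/B,S'})$ as a full subcategory of $\Mod_{qc}(\ntDsheaf{G/B,S})$. Applying Theorem \ref{thm:vanishing} to any $\Dmod{M}$ in this subcategory gives $H^i(G/B, \Dmod{M}) = 0$ for all $i > 0$, hence exactness of $\sect$. For (b), I can quote Lemma \ref{lem:nonVanishing} verbatim: $\sect(\Dmod{M}) \neq 0$ for every non-zero $\Dmod{M} \in \Mod_{qc}(\ntDsheaf{G/B,S'})$. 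Combined with exactness, this says the kernel of $\sect$ is a localizing subcategory that is zero, which is exactly the conservativity needed.

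For (c), I would use that $S'$ sits in the center of $(p_*\ntDsheaf{G/U})^T$ via $R$ and consists of global sections, so localization at $S'$ is computed sectionwise: for any open $V \subset G/B$, $\sect(V, \ntDsheaf{G/B,S'}) = (S')^{-1}\sect(V, (p_*\ntDsheaf{G/U})^T)$. In particular, using the isomorphism $\sect((p_* \ntDsheaf{G/U})^T) \simeq \univ{g}\otimes_{\univcent{g}}\univ{t}$ from \eqref{eqn:sapovalov}, one obtains $\ntDalg{G/B,S'} \simeq \univ{g}\otimes_{\univcent{g}} (S')^{-1}\univ{t}$ as required. Invoking the D-affinity criterion then gives the equivalence. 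The substantive work was done in Theorem \ref{thm:vanishing} (whose proof hinged on Lemma \ref{lemma:splitting} via the Bezrukavnikov--Braverman--Positselskii formula) and in Lemma \ref{lem:nonVanishing} (which reduced non-vanishing to the classical Beilinson--Bernstein correspondence by quotienting by a suitable regular anti-dominant character); at the present stage no further obstacle remains, only bookkeeping to piece these together.
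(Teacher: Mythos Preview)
Your proposal is correct and follows essentially the same approach as the paper: the paper's proof is a one-line invocation of \cite[Proposition 1.4.4, Proposition 1.4.13]{HTT08} together with Theorem~\ref{thm:vanishing} (exactness) and Lemma~\ref{lem:nonVanishing} (non-vanishing), and your plan unpacks precisely these ingredients. Your observation that $S'\supset S$ makes $\Mod_{qc}(\ntDsheaf{G/B,S'})$ a full subcategory of $\Mod_{qc}(\ntDsheaf{G/B,S})$, and your computation of $\ntDalg{G/B,S'}$ via \eqref{eqn:sapovalov}, are exactly the bookkeeping the paper leaves implicit.
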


\subsection{Direct image and coinvariant}\label{subsection:directImageCoinv}

In this subsection, we study the direct image functor
\begin{align*}
	Dp_{+, \lambda}\colon D^b_{qc}(\ntDsheaf{G/U}) \rightarrow D^b_{qc}(\ntDsheaf{G/B,\lambda})
\end{align*}
using Theorem \ref{thm:vanishing}.
Retain the notation in the previous subsection.
In particular, we use the multiplicative subset $S$ defined in \eqref{eqn:S}.
Throughout this subsection, fix an anti-dominant weight $\lambda \in \lie{t}^*$.

Since $p\colon G/U \rightarrow G/B$ is a principal $T$-bundle, the direct image functor can be computed by the tensor product functor $\CC_{\lambda+\rho}\otimes_{\univ{t}}(\cdot)$ by Fact \ref{fact:DirectImagePrincipal}.
We prove that this is also true for the global section.

\begin{theorem}\label{thm:directImageAndCoinvariant}
	Let $\Dmod{M}\in \Mod_{qc}(\ntDsheaf{G/U})$.
	Then there exists a natural $\lie{g}$-module isomorphism
	$H^i(R\sect(Dp_+(\Dmod{M}))) \simeq \Tor_{-i}^{\univ{t}}(\CC_{\lambda + \rho}, \sect(\Dmod{M}))$.
\end{theorem}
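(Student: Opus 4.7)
The plan is to bridge $Dp_{+,\lambda}$ and $\Tor^{\univ{t}}$ using the Koszul resolution of $\CC_{\lambda+\rho}$ as a right $\univ{t}$-module, with Beilinson--Bernstein exactness transferring the computation between $G/B$ and $G/U$.

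First I would choose a locally projective resolution $\Dmod{P}^\bullet\to \Dmod{M}$ in $\Mod_{qc}(\ntDsheaf{G/U})$. By Fact \ref{fact:DirectImagePrincipal}, $Dp_{+,\lambda}(\Dmod{M})$ is represented by the complex $\CC_{\lambda+\rho}\otimes_{\univ{t}} p_*\Dmod{P}^\bullet$, each of whose terms lies in $\Mod_{qc}(\ntDsheaf{G/B,\lambda})$. Anti-dominance of $\lambda$ together with Fact \ref{fact:beilinsonBernsten}(2) makes $\sect_{G/B}$ exact on that category, so $R\sect_{G/B}(Dp_{+,\lambda}(\Dmod{M}))$ is quasi-isomorphic to the plain global-section complex $\sect_{G/B}(\CC_{\lambda+\rho}\otimes_{\univ{t}} p_*\Dmod{P}^\bullet)$ in $D^b(\Mod(\univ{g}))$.

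Next I would invoke the Koszul complex $K_\bullet$ of $\lie{t}$, a finite free resolution of $\CC_{\lambda+\rho}$ over $\univ{t}$. By the remark after Fact \ref{fact:DirectImagePrincipal}, $\sect(V, p_*\Dmod{P}^i)$ is a projective $\univ{t}$-module on each affine $V\subset G/B$, so $K_\bullet\otimes_{\univ{t}} p_*\Dmod{P}^i$ is a sheaf-resolution of $\CC_{\lambda+\rho}\otimes_{\univ{t}} p_*\Dmod{P}^i$. Each Koszul term $K_j\otimes_{\univ{t}} p_*\Dmod{P}^i$ is a finite direct sum of copies of $p_*\Dmod{P}^i$, so applying $\sect_{G/B}$ (which commutes with finite direct sums and with $p_*$, giving $\sect_{G/U}$ downstairs) produces the module $K_j\otimes_{\univ{t}}\sect_{G/U}(\Dmod{P}^i)$.

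Finally, I would analyze the double complex $K_\bullet\otimes_{\univ{t}}\sect_{G/U}(\Dmod{P}^\bullet)$ via its two spectral sequences. One filtration collapses to $\sect_{G/B}(\CC_{\lambda+\rho}\otimes_{\univ{t}} p_*\Dmod{P}^\bullet)$, so its $H^i$ matches $H^i(R\sect_{G/B}(Dp_{+,\lambda}(\Dmod{M})))$ by the first step; the other collapses to $\CC_{\lambda+\rho}\otimes^L_{\univ{t}}\sect_{G/U}(\Dmod{M})$, whose $H^i$ is $\Tor^{\univ{t}}_{-i}(\CC_{\lambda+\rho},\sect_{G/U}(\Dmod{M}))$, provided $\sect_{G/U}(\Dmod{P}^\bullet)\to \sect_{G/U}(\Dmod{M})$ is a resolution by $\univ{t}$-projectives. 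The main obstacle lies in this last point: one must check that the locally projective $\Dmod{P}^i$ are $\sect_{G/U}$-acyclic so the resolution property descends to global sections, and that the resulting $\univ{t}$-modules remain flat enough for the vertical collapse; both reduce via a trivializing affine cover of $p$ to the local projectivity statement recalled after Fact \ref{fact:DirectImagePrincipal}, combined with a \v{C}ech argument using the exactness of $\sect_{G/B}$ on $\Mod_{qc}(\ntDsheaf{G/B,\lambda})$ to propagate acyclicity through the Koszul double complex.
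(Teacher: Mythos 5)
Your first step (representing $Dp_{+,\lambda}(\Dmod{M})$ by $\CC_{\lambda+\rho}\otimes_{\univ{t}}p_*\Dmod{P}^\cxdot$ and using anti-dominance to take global sections termwise) is fine, but the second half has a genuine gap exactly at the point you flag. The sheaves $p_*\Dmod{P}^i$ and $K_j\otimes_{\univ{t}}p_*\Dmod{P}^i$ are only $(p_*\ntDsheaf{G/U})^T$-modules; the torus factor does not act on them through a fixed character, so they are not $\ntDsheaf{G/B,\lambda}$-modules and Fact \ref{fact:beilinsonBernsten}(2) gives no acyclicity for them. In fact locally projective $\ntDsheaf{G/U}$-modules are \emph{not} $\sect$-acyclic: already for $G=\LieSL(2,\CC)$ one has $G/U=\CC^2\setminus\{0\}$ and $H^1(G/U,\ntDsheaf{G/U})\simeq\bigoplus_{a,b}H^1(\CC^2\setminus\{0\},\rsheaf{})\neq 0$, and more generally $H^{>0}(G/U,\rsheaf{G/U})\neq0$ by Borel--Weil--Bott, so $\sect$ is not exact on $\Mod_{qc}(\ntDsheaf{G/U})$. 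Consequently $\sect(\Dmod{P}^\cxdot)\rightarrow\sect(\Dmod{M})$ is not a resolution, and no \v{C}ech argument based on exactness of $\sect$ on $\Mod_{qc}(\ntDsheaf{G/B,\lambda})$ can repair this; even if the $\Dmod{P}^i$ were acyclic, your vertical collapse would compute $\CC_{\lambda+\rho}\otimes^L_{\univ{t}}R\sect(\Dmod{M})$, i.e.\ hyper-$\Tor$ against the whole complex $R\sect(\Dmod{M})$, and not $\Tor^{\univ{t}}_{*}(\CC_{\lambda+\rho},\sect(\Dmod{M}))$ as the statement requires. (The needed flatness of $\sect(\Dmod{P}^i)$ over $\univ{t}$ is also unproven, but that is secondary.)

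The missing idea is the $S$-localized vanishing theorem, which is the whole point of this section of the paper. Since $\lambda$ is anti-dominant, $\CC_{\lambda+\rho}$ is a right $S^{-1}\univ{t}$-module, so one may take a free resolution $\cpx{E}$ of $\CC_{\lambda+\rho}$ over $S^{-1}\univ{t}$ and move the resolution from $\Dmod{M}$ to the coefficient: using the local $\univ{t}$-projectivity of $p_*\Dmod{P}^i$ on a trivializing affine cover, one gets
\begin{align*}
\CC_{\lambda+\rho}\otimes_{\univ{t}}p_*\Dmod{P}^\cxdot\;\simeq\;\cpx{E}\otimes_{\univ{t}}p_*\Dmod{M}
\end{align*}
in $D^b_{qc}(\ntDsheaf{G/B,S})$. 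The terms of the right-hand complex are direct sums of copies of $S^{-1}p_*\Dmod{M}$, which is a quasi-coherent $\ntDsheaf{G/B,S}$-module, and Theorem \ref{thm:vanishing} (not classical Beilinson--Bernstein, which does not apply to such untwisted sheaves) makes these terms $\sect$-acyclic; this is Proposition \ref{prop:tensorVSGlobal}, and it yields $R\sect(Dp_{+,\lambda}\Dmod{M})\simeq\cpx{E}\otimes_{\univ{t}}\sect(\Dmod{M})$, whose cohomology is $\Tor^{S^{-1}\univ{t}}_{-i}(\CC_{\lambda+\rho},S^{-1}\sect(\Dmod{M}))\simeq\Tor^{\univ{t}}_{-i}(\CC_{\lambda+\rho},\sect(\Dmod{M}))$. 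Morally, the $S$-localization is what kills the higher cohomology of $\Dmod{M}$ on $G/U$ (cf.\ Lemma \ref{lem:supportUt} and Corollary \ref{cor:localAndGlobal}), which is precisely the discrepancy your double complex cannot get rid of.
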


\begin{proof}
	Note that $\Dmod{M}$ has a locally projective resolution $\cpx{\Dmod{F}} \in D^b_{qc}(\ntDsheaf{G/U})$ (see \cite[Corollary 1.4.20]{HTT08}).
	Since $p\colon G/U\rightarrow G/B$ is a principal $T$-bundle in the Zariski topology, there exists an affine open covering $G/B = \bigcup_i V_i$ such that $\sect(V_i, p_*(\ntDsheaf{G/U}))$ is free as a $\univ{t}$-module.
	This implies that $\sect(V_i, \cpx{\Dmod{F}})$'s are complexes of projective $\univ{t}$-modules.
	Hence taking a free resolution $\cpx{E}$ of the right $S^{-1}\univ{t}$-module $\CC_{\lambda + \rho}$, we have isomorphisms
	\begin{align*}
		\CC_{\lambda + \rho} \otimes_{\univ{t}} p_*(\cpx{\Dmod{F}})& \simeq \CC_{\lambda + \rho} \otimes_{S^{-1}\univ{t}} S^{-1}p_*(\cpx{\Dmod{F}}) \\
		&\simeq \cpx{E} \otimes_{S^{-1}\univ{t}} S^{-1}p_*(\Dmod{M}) \\
		&\simeq \cpx{E} \otimes_{\univ{t}} p_*(\Dmod{M})
	\end{align*}
	in $D^b_{qc}(\ntDsheaf{G/B,S})$.
	Therefore the following proposition shows the assertion.
\end{proof}

\begin{proposition}\label{prop:tensorVSGlobal}
	Let $\Dmod{M}$ be a quasi-coherent $(p_*\ntDsheaf{G/U})^T$-module and $N$ an $S^{-1}\univ{t}$-module with a free resolution $\cpx{F}$.
	Then there exists a natural isomorphism
	\begin{align*}
		\cpx{F}\otimes_{\univ{t}}\sect(\Dmod{M}) \simeq R\sect(\cpx{F}\otimes_{\univ{t}}\Dmod{M})
	\end{align*}
	in $D^b(\ntDalg{G/B, S})$.
\end{proposition}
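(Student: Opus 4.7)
The plan is to reduce the assertion to a termwise computation, exploiting the freeness of $\cpx{F}$ together with the vanishing theorem just established. Since $\cpx{F}$ is a free resolution of $N$ as an $S^{-1}\univ{t}$-module, each term $F^i$ is of the form $\bigoplus_{j \in J_i} S^{-1}\univ{t}$. Combining this with the identification $S^{-1}\univ{t} \otimes_{\univ{t}} \Dmod{M} \simeq S^{-1}\Dmod{M}$, which is a quasi-coherent $\ntDsheaf{G/B,S}$-module, I obtain
\begin{align*}
    F^i \otimes_{\univ{t}} \Dmod{M} \;\simeq\; \bigoplus_{j \in J_i} S^{-1}\Dmod{M}
\end{align*}
in $\Mod_{qc}(\ntDsheaf{G/B,S})$. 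Hence the complex $\cpx{F}\otimes_{\univ{t}}\Dmod{M}$ naturally lives in $D^b_{qc}(\ntDsheaf{G/B,S})$.

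Next I would apply Theorem \ref{thm:vanishing} to $S^{-1}\Dmod{M}$ to conclude that $H^k(G/B, S^{-1}\Dmod{M}) = 0$ for $k>0$. Because $G/B$ is projective, hence noetherian, cohomology commutes with direct sums of quasi-coherent sheaves, so every term $F^i \otimes_{\univ{t}} \Dmod{M}$ is also $\sect$-acyclic. This yields a natural quasi-isomorphism
\begin{align*}
    R\sect(\cpx{F}\otimes_{\univ{t}}\Dmod{M}) \;\simeq\; \sect(\cpx{F}\otimes_{\univ{t}}\Dmod{M})
\end{align*}
in $D^b(\ntDalg{G/B,S})$.

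Finally, I would evaluate the right-hand side termwise. On the quasi-compact quasi-separated scheme $G/B$, the global section functor commutes with direct sums of quasi-coherent sheaves and with filtered colimits; the latter in particular gives $\sect(S^{-1}\Dmod{M}) \simeq S^{-1}\sect(\Dmod{M})$. Putting these together yields $\sect(F^i \otimes_{\univ{t}} \Dmod{M}) \simeq F^i \otimes_{\univ{t}} \sect(\Dmod{M})$. The hard part will be the bookkeeping: checking that these termwise isomorphisms are functorial in the differentials of $\cpx{F}$ and so assemble into a genuine isomorphism of complexes (not merely a termwise one), and then promoting this to an isomorphism in $D^b(\ntDalg{G/B,S})$. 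Since each individual commutation is a standard fact for quasi-coherent sheaves on a noetherian scheme, this bookkeeping should go through routinely once carefully written out.
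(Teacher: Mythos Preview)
Your proposal is correct and follows essentially the same approach as the paper: reduce to $S^{-1}\Dmod{M}$ using that each $F^i$ is an $S^{-1}\univ{t}$-module, apply Theorem~\ref{thm:vanishing} to obtain acyclicity of every term of $\cpx{F}\otimes_{\univ{t}}\Dmod{M}$, and then compute $R\sect$ termwise. The paper presents the reduction to $S^{-1}\Dmod{M}$ as a preliminary step rather than inside the termwise computation, but the content is identical.
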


\begin{proof}
	Since $S^{-1}$ is an exact functor and $F^i$'s are $S^{-1}\univ{t}$-modules, we have isomorphisms
	\begin{align*}
		\cpx{F}\otimes_{\univ{t}}\sect(S^{-1}\Dmod{M}) &\simeq \cpx{F}\otimes_{\univ{t}}S^{-1}\sect(\Dmod{M}) \simeq \cpx{F}\otimes_{\univ{t}}\sect(\Dmod{M}), \\
		\cpx{F}\otimes_{\univ{t}} S^{-1}\Dmod{M} &\simeq \cpx{F}\otimes_{\univ{t}}\Dmod{M}.
	\end{align*}
	We can therefore assume that $\Dmod{M}$ is a quasi-coherent $\ntDsheaf{G/B,S}$-module.

	By Theorem \ref{thm:vanishing}, $\Dmod{M}$ is acyclic.
	Hence $\cpx{F}\otimes_{\univ{t}} \Dmod{M}$ is a complex of acyclic sheaves.
	This implies
	\begin{align*}
		R\sect(\cpx{F}\otimes_{\univ{t}} \Dmod{M}) = \sect(\cpx{F}\otimes_{\univ{t}} \Dmod{M}) \simeq \cpx{F}\otimes_{\univ{t}} \sect(\Dmod{M}).
	\end{align*}
	We have shown the assertion.
\end{proof}

Combining Theorem \ref{thm:directImageAndCoinvariant} and Corollary \ref{cor:localAndGlobal}, we obtain the following result.

\begin{corollary}\label{cor:localizationCoinv}
	Let $M$ be a $\ntDalg{G/U}$-module.
	Then there exists a natural $\lie{g}$-module isomorphism
	$H^i\circ R\sect\circ Dp_{+} \circ \loc(M) \simeq \Tor_{-i}^{\univ{t}}(\CC_{\lambda+\rho}, M)$.
\end{corollary}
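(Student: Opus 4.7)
The plan is to chain Theorem \ref{thm:directImageAndCoinvariant} with Corollary \ref{cor:localAndGlobal}, applied to the quasi-coherent $\ntDsheaf{G/U}$-module $\loc(M) = \ntDsheaf{G/U}\otimes_{\ntDalg{G/U}} M$.

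First, I would apply Theorem \ref{thm:directImageAndCoinvariant} directly with $\Dmod{M} = \loc(M)$. Since $\loc(M)$ is quasi-coherent (it is an $\rsheaf{G/U}$-module by Proposition \ref{prop:FundamentalBasicAffine}(1)), the theorem produces a natural $\lie{g}$-module isomorphism
\begin{align*}
H^i\circ R\sect\circ Dp_{+}\circ \loc(M) \simeq \Tor_{-i}^{\univ{t}}(\CC_{\lambda+\rho},\,\sect\circ\loc(M)).
\end{align*}

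Second, I would invoke Corollary \ref{cor:localAndGlobal}, whose hypothesis is exactly that $\lambda$ is anti-dominant (which is the standing assumption of this subsection). That corollary says the unit $\eta\colon M \rightarrow \sect\circ\loc(M)$ induces a $\lie{g}$-module isomorphism
\begin{align*}
\eta_{\lambda}\colon \Tor^{\univ{t}}_{-i}(\CC_{\lambda+\rho}, M)\xrightarrow{\simeq}\Tor^{\univ{t}}_{-i}(\CC_{\lambda+\rho},\,\sect\circ\loc(M))
\end{align*}
for every $i$. Composing with the first isomorphism yields the statement.

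There is no real obstacle here; both inputs have already been established and are natural. The only minor point worth noting is naturality and $\lie{g}$-equivariance: the isomorphism in Theorem \ref{thm:directImageAndCoinvariant} is natural in the $\ntDsheaf{G/U}$-module, the localization functor $\loc$ is functorial, and $\eta_\lambda$ is a $\lie{g}$-module isomorphism by the statement of Corollary \ref{cor:localAndGlobal}; hence the composite is a natural $\lie{g}$-module isomorphism in $M$, as required.
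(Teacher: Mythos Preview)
Your proposal is correct and is exactly the paper's approach: the corollary is stated as the combination of Theorem \ref{thm:directImageAndCoinvariant} (applied to $\loc(M)$) and Corollary \ref{cor:localAndGlobal}.
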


\newcommand{\uVerm}{\mathbb{V}}

\section{\texorpdfstring{$\univ{t}$}{U(t)}-support of \texorpdfstring{$V/\lie{u}V$}{V/nV}}

In this section, we deal with the $\lie{t}$-module $V/\lie{u}V$ of a $\lie{g}$-module $V$.
We show that if a `good' algebra $\alg{A}$ acts on $V$, some large algebra acts on $V/\lie{u}V$.
To construct the action, we use $\ntDsheaf{G/U}$.

\subsection{Cartan subalgebras for \texorpdfstring{$\lie{g}$}{g}-modules}

We have introduced the notion of (small) Cartan subalgebras for $\lie{g}$-modules in \cite{Ki24-2}.
We recall the definition and give a different proof for the abelian case.

Let $G$ be a connected reductive algebraic group.
Fix a Borel subgroup $B=TU$ of $G$ with a maximal torus $T$ and unipotent radical $U$.
Write $W_G$ for the Weyl group of $G$ and $q\colon \lie{t}^* \rightarrow \lie{t}^*/W_G$ for the quotient map.
Let $\rho$ denote half the sum of all roots in $\lie{u}$.

\begin{definition}
	Let $V$ be a non-zero $\lie{g}$-module.
	We set
	\begin{align*}
		\rfilt{k}{V} &\coloneq \set{v \in V: \Variety(\Ann_{\univcent{g}}(v)) \leq k} \quad (k \in \NN), \\
		\Rrankmax(V) &\coloneq \min\set{k \in \NN: \rfilt{k}{V} = V}.
	\end{align*}

	We say that $V$ has a small Cartan subalgebra if there exists a subspace $\lie{a}^*\subset \lie{t}^*$ such that
	any $\Variety(P)$ ($P \in \Ass_{\univcent{g}}(V)$) is of the form $q(\lie{a}^* + \mu)$ ($\mu \in \lie{t}^*$).
	We call the quotient $\lie{t}/\bigcap_{\lambda \in \lie{a}^*} \Ker(\lambda)$ a \define{small Cartan subalgebra} for $V$.

	We say that $V$ has a Cartan subalgebra if there exists a subspace $\lie{c}^*\subset \lie{t}^*$ such that
	any irreducible component of $\Variety(\Ann_{\univcent{g}}(V))$ is of the form $q(\lie{c}^* + \mu)$ ($\mu \in \lie{t}^*$).
	We call the quotient $\lie{t}/\bigcap_{\lambda \in \lie{c}^*} \Ker(\lambda)$ a \define{Cartan subalgebra} for $V$.
\end{definition}

Let $(\alg{A}, G)$ be a generalized pair.
We have shown in \cite[Theorem 3.25]{Ki24-2} that any irreducible $\alg{A}$-module of at most countable dimension has a small Cartan subalgebra as a $\lie{g}$-module.
The following result in \cite[Proposition 3.38 and its proof]{Ki24-2} is one of the motivations to study $V/\lie{u}V$.
Let $\theta$ be an involution of $G$ and $K$ a (connected and finite) covering of $(G^\theta)_0$.

\begin{fact}\label{fact:ReductionToUcoinv}
	Let $V$ be an irreducible $(\alg{A}, K)$-module of at most countable dimension.
	Set $n \coloneq \Rrankmax(V|_{\lie{g}})$.
	Then $\Rrankmax(V/\lie{u}V) = n$ holds, and for any $Q \in \Ass_{\univ{t}}(V/\lie{u}V)$ with $\dim(\Variety(Q)) = n$,
	there exists $P \in \Ass_{\univcent{g}}(V)$ such that $\Variety(P) = q(\Variety(Q) - \rho)$.
\end{fact}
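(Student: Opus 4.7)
The main input is the Harish-Chandra homomorphism $\mathrm{HC}\colon \univcent{g}\rightarrow \univ{t}$, which identifies $\univcent{g}$ with $\univ{t}^{W_G}$ under the $\rho$-shifted $W_G$-action \eqref{eqn:Waction}. The corresponding finite map between spectra is $q_{-\rho}\colon \lie{t}^*\rightarrow \lie{t}^*/W_G$, $\mu\mapsto q(\mu-\rho)$. The crucial compatibility, which I would establish first, is that since $\univcent{g}$ commutes with $\lie{u}$, the $\univcent{g}$-action on $V$ descends to $V/\lie{u}V$ and factors through $\mathrm{HC}$ as the restriction of the natural $\univ{t}$-action: $z\cdot\bar v = \mathrm{HC}(z)\cdot\bar v$ for $z\in\univcent{g}$ and $\bar v\in V/\lie{u}V$.

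Next I would prove $\Rrankmax(V/\lie{u}V)\leq n$. For any $v\in V$ with image $\bar v\in V/\lie{u}V$ and $P_0 = \Ann_{\univcent{g}}(v)$, the compatibility gives $\mathrm{HC}(P_0)\cdot \univ{t}\subset \Ann_{\univ{t}}(\bar v)$, so $\Variety(\Ann_{\univ{t}}(\bar v))\subset q_{-\rho}^{-1}(\Variety(P_0))$, a union of $W_G$-orbits of the same dimension $\leq n$. Surjectivity of $V\twoheadrightarrow V/\lie{u}V$ then yields the bound.

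I would then establish the prime correspondence, given existence of some $Q\in \Ass_{\univ{t}}(V/\lie{u}V)$ with $\dim\Variety(Q)=n$. Pick $\bar v\in V/\lie{u}V$ with $\Ann_{\univ{t}}(\bar v) = Q$, lift to $v\in V$, and set $P_0 = \Ann_{\univcent{g}}(v)$. The compatibility gives $q(\Variety(Q)-\rho)\subset \Variety(P_0)$. Because $q(\Variety(Q)-\rho)$ is irreducible of dimension $n$, it sits inside one irreducible component of $\Variety(P_0)$, i.e.\ inside $\Variety(P)$ for some minimal prime $P$ of $P_0$. Then $P \in \Ass_{\univcent{g}}(\univcent{g}\cdot v)\subset \Ass_{\univcent{g}}(V)$, and from $n\leq \dim\Variety(P)\leq \dim\Variety(P_0)\leq n$ both varieties are irreducible of dimension $n$ with one contained in the other, forcing $\Variety(P) = q(\Variety(Q) - \rho)$.

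The main obstacle is the reverse bound $\Rrankmax(V/\lie{u}V)\geq n$, equivalently the existence of such a $Q$; the inclusions of Step 2 show only that dimension can drop upon taking $\lie{u}$-coinvariants, since $\Ann_{\univ{t}}(\bar v)$ may be strictly larger than $\mathrm{HC}(\Ann_{\univcent{g}}(v))\cdot \univ{t}$. Here the full strength of the hypothesis enters: $(\alg{A}, K)$-irreducibility of $V$ together with $\dim\alg{A}\leq\aleph_0$ permits a countable avoidance argument in the spirit of \cite[Theorem 3.25]{Ki24-2}, producing an $\alg{A}$-translate $v$ of any given vector whose image $\bar v$ realizes equality $\Ann_{\univ{t}}(\bar v) = \mathrm{HC}(\Ann_{\univcent{g}}(v))\cdot \univ{t}$, thereby exhibiting the required $Q$. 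This is the genuinely non-formal step, where the irreducibility and countability hypotheses are indispensable.
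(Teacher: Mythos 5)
The statement you are proving is quoted in the paper as a Fact imported from \cite[Proposition 3.38 and its proof]{Ki24-2}; the paper contains no proof of it, so your argument has to stand on its own, and it does not: the essential direction is missing. Your Steps 1--3 are fine in substance. The compatibility in Step 1 is correct but your justification is too quick: ``$\univcent{g}$ commutes with $\lie{u}$'' only gives that the $\univcent{g}$-action descends to $V/\lie{u}V$; the factorization through $\univ{t}$ requires the coinvariant-adapted Harish--Chandra projection, i.e.\ the decomposition $z-\gamma''(z)\in\lie{u}\,\univ{g}$ (so that $(z-\gamma''(z))V\subset\lie{u}V$), and it is exactly this variant, rather than the usual projection modulo $\univ{g}\lie{u}$, that produces the $-\rho$ shift in $q(\Variety(Q)-\rho)$. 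Granting that, your deduction of $\Rrankmax(V/\lie{u}V)\leq n$ and of the prime correspondence (minimal primes over $\Ann_{\univcent{g}}(v)$ are associated primes, finiteness of the quotient map preserves dimension, two irreducible closed sets of equal dimension nested in each other coincide) is sound.

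The genuine gap is the inequality $\Rrankmax(V/\lie{u}V)\geq n$, equivalently the existence of some $Q\in\Ass_{\univ{t}}(V/\lie{u}V)$ with $\dim\Variety(Q)=n$ --- which in particular contains the non-vanishing $V/\lie{u}V\neq 0$. You acknowledge this is the hard step but only gesture at ``a countable avoidance argument in the spirit of \cite[Theorem 3.25]{Ki24-2}'' producing a translate $v$ with $\Ann_{\univ{t}}(\bar v)=\mathrm{HC}(\Ann_{\univcent{g}}(v))\univ{t}$; no such argument is given, and nothing in your sketch explains how it would rule out the real danger, namely that annihilators in $\univ{t}$ jump (or that $\bar v=0$) for \emph{every} choice of $v$. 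Note also that your sketch never uses the hypothesis that $K$ covers $(G^\theta)_0$: irreducibility over $\alg{A}$ and countable dimension alone are what \cite[Theorem 3.25]{Ki24-2} needs for the existence of a small Cartan subalgebra of $V$ itself, but they are not what controls $\lie{u}$-coinvariants --- for a general irreducible $\lie{g}$-module the coinvariants can even vanish (e.g.\ modules on which a root vector of $\lie{u}$ acts bijectively), so some structural input tied to the symmetric subgroup $K$ is indispensable precisely at this point. Since this lower bound is the substance of the Fact (the paper defers it to \cite[Proposition 3.38]{Ki24-2}), the proposal as written does not prove the statement.
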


We will show in Corollary \ref{cor:FiniteLenUcoinv} that a larger algebra acts on $V/\lie{u}V$ such that the module has finite length if $V$ comes from a holonomic $\ntDsheaf{}$-module.
To construct the algebra action, we use the basic affine space $G/U$.

We shall consider the case that $G$ is commutative.
Let $(\alg{A}, T)$ be a generalized pair with complex torus $T$.
In this case, an irreducible $\alg{A}$-module has a good decomposition like weight decomposition, which is proved in \cite[Theorem 3.33 and Corollary 3.34]{Ki24-2}.
The results have been proved as a special case of the general case for non-abelian groups.
We give a simple proof based on the Jacobson density theorem to be self-contained.

\begin{lemma}\label{lem:Affinity}
	Let $V$ be an irreducible $\alg{A}$-module.
	Then $V = \bigoplus_{P \in \Ass_{\univ{t}}(V)} V^P$ holds, and
	for any $P, Q \in \Ass_{\univ{t}}(V)$, there exists a weight $\lambda$ in $\alg{A}$ such that $\Variety(Q) = \Variety(P) + \lambda$.
\end{lemma}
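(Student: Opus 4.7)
The plan is to exploit the rational $T$-action on $\alg{A}$, which yields a weight decomposition $\alg{A} = \bigoplus_{\lambda} \alg{A}^{\lambda}$, together with the commutation identity $Xa = a\sigma_{\lambda}(X)$ for $a \in \alg{A}^{\lambda}$ and $X \in \univ{t}$; here $\sigma_{\lambda}$ is the algebra automorphism of $\univ{t}$ sending $H \in \lie{t}$ to $H + \lambda(H)$. Geometrically $\sigma_{-\lambda}$ corresponds to the translation $\mu \mapsto \mu + \lambda$ on $\lie{t}^{*}$, so the identity immediately delivers the two key facts $\alg{A}^{\lambda} V^{P} \subseteq V^{\sigma_{-\lambda}(P)}$ and $\Variety(\sigma_{-\lambda}(P)) = \Variety(P) + \lambda$ for every ideal $P$ of $\univ{t}$.

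First I would fix $v_{0} \in V$ with $\Ann_{\univ{t}}(v_{0}) = P_{0}$ a prime, which is possible since $\univ{t}v_{0}$ is a finitely generated module over the Noetherian ring $\univ{t}$ and so has an associated prime. By irreducibility $V = \alg{A}v_{0} = \sum_{\lambda} \alg{A}^{\lambda}v_{0}$, and the inclusion above yields $V = \sum_{\lambda} V^{\sigma_{-\lambda}(P_{0})}$. Now for any $Q \in \Ass_{\univ{t}}(V)$, pick $w_{0}$ with $\Ann_{\univ{t}}(w_{0}) = Q$ and write $v_{0}$ as a finite sum $\sum_{\mu \in F'} w_{\mu}$ with $w_{\mu} \in V^{\sigma_{-\mu}(Q)}$; since $\univ{t}v_{0} \simeq \univ{t}/P_{0}$ has the irreducible support $\Variety(P_{0}) \subseteq \bigcup_{\mu \in F'}(\Variety(Q) + \mu)$, it lies inside a single translate $\Variety(Q) + \mu_{0}$. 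By the symmetric argument $\Variety(Q) \subseteq \Variety(P_{0}) + \lambda_{0}$ for some weight $\lambda_{0}$ of $\alg{A}$. Together these force $\dim \Variety(Q) = \dim \Variety(P_{0})$, so the inclusion $\Variety(Q) \subseteq \Variety(P_{0}) + \lambda_{0}$ becomes an inclusion of irreducible varieties of the same dimension and hence an equality; since $\sigma_{-\lambda_{0}}(P_{0})$ and $Q$ are primes with the same variety, $Q = \sigma_{-\lambda_{0}}(P_{0})$, which both identifies the associated primes and establishes the second assertion.

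For directness, the previous step shows that every $Q \in \Ass_{\univ{t}}(V)$ has variety of the same dimension $d = \dim \Variety(P_{0})$, so distinct $Q, Q' \in \Ass_{\univ{t}}(V)$ are incomparable primes. Any $v \in V^{Q} \cap V^{Q'} = V^{Q+Q'}$ then has $\Ann_{\univ{t}}(v) \supsetneq Q$, so every associated prime of $\univ{t}v$ strictly contains $Q$ and has variety of dimension strictly less than $d$, contradicting $\Ass_{\univ{t}}(\univ{t}v) \subseteq \Ass_{\univ{t}}(V)$ unless $v = 0$. A routine induction on the number of summands (at each step peeling off the last component by killing it with a well-chosen element of the annihilator of one $Q_{i}$) upgrades pairwise triviality to directness of $\sum_{Q \in \Ass_{\univ{t}}(V)} V^{Q}$. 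Finally the same dimension argument shows that whenever $V^{\sigma_{-\lambda}(P_{0})} \neq 0$ one has $\sigma_{-\lambda}(P_{0}) \in \Ass_{\univ{t}}(V)$, so the equality $V = \sum_{\lambda} V^{\sigma_{-\lambda}(P_{0})}$ rearranges to $V = \bigoplus_{Q \in \Ass_{\univ{t}}(V)} V^{Q}$. The most delicate point is the repeated upgrade from one-sided inclusions of translates to equalities; it requires simultaneously the irreducibility of $\Variety(P_{0})$ and the common dimension of every associated prime variety, two facts that have to be established together in the symmetric argument outlined above.
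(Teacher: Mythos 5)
Your proof is correct and takes essentially the same approach as the paper: weight vectors of $\alg{A}$ translate annihilator supports, irreducibility of $V$ forces all $\Variety(Q)$ ($Q \in \Ass_{\univ{t}}(V)$) to be same-dimensional translates of one another so that one-sided inclusions become equalities, and directness follows from a prime-avoidance plus dimension argument (the paper does this in one stroke via $(V^{Q_1}+\cdots+V^{Q_r})\cap V^{Q_{r+1}}\subset V^{(Q_1\cap\cdots\cap Q_r)+Q_{r+1}}=0$, which is the same idea as your induction with a well-chosen element of $\bigcap_{i<r}Q_i\setminus Q_r$). The only point to tidy is that you state the second assertion relative to the fixed $P_0$; since every $P\in\Ass_{\univ{t}}(V)$ is the annihilator of some vector, you should run the identical argument with $P$ in place of $P_0$ for each pair $(P,Q)$, because the weights of $\alg{A}$ need not be closed under differences.
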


\begin{proof}
	Fix $P \in \Ass_{\univ{t}}(V)$.
	It is easy to see that the variety $\Variety(\Ann_{\univ{t}}(Xv))$ is contained in a translation of $\Variety(P)$ for any $v \in V^P$ and weight vector $X \in \alg{A}$.
	From this and the irreducibility of $V$, the dimension of $\Variety(\Ann_{\univ{t}}(v))$ does not depend on $0 \neq v\in V$,
	and hence $\Variety(Q)$ is a translation of $\Variety(P)$ for any $Q \in \Ass_{\univ{t}}(V)$.
	In other words, for any weight $\lambda$ in $\alg{A}$, there is $Q \in \Ass_{\univ{t}}(V)$ such that $\alg{A}^\lambda \cdot V^P \subset V^Q$.
	Hence we have $\sum_{Q \in \Ass_{\univ{t}}(V)} V^Q \supset \alg{A}V^P = V$.

	Since $\dim(\Variety(Q))$ is independent of $Q \in \Ass_{\univ{t}}(V)$, we have
	\begin{align*}
		(V^{Q_1} + V^{Q_2} + \cdots + V^{Q_r}) \cap V^{Q_{r+1}} \subset V^{(Q_1 \cap Q_2 \cap \cdots \cap Q_r) + Q_{r+1}} = 0
	\end{align*}
	for any mutually different $Q_1, \ldots, Q_{r+1} \in \Ass_{\univ{t}}(V)$.
	Hence $\sum_{Q \in \Ass_{\univ{t}}(V)} V^Q$ is a direct sum.
	We have shown the lemma.
\end{proof}

\begin{theorem}\label{thm:Affinity}
	Let $V$ be an irreducible $\alg{A}$-module such that $\End_{\alg{A}}(V) = \CC$.
	Then there exists a connected closed subgroup $M\subset T$ satisfying the following properties.
	\begin{enumerate}
		\item $V = \bigoplus_{P \in \Ass_{\univ{t}}(V)} V^P$ is the weight space decomposition with respect to $\lie{m}$.
		\item For each $P \in \Ass_{\univ{t}}(V)$, $\Variety(P)\subset \lie{t}^*$ is a translation of $(\lie{t}/\lie{m})^*$.
		\item Each $V^P$ is an irreducible $\alg{A}^\lie{m}$-module.
		\item For each weights $\lambda, \mu \in \lie{m}^*$ in $V$, we have $\alg{A}^{\lambda - \mu}\cdot V^\mu = V^\lambda$.
	\end{enumerate}
\end{theorem}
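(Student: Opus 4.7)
My plan is to build on Lemma~\ref{lem:Affinity} by identifying the common translation direction of the varieties $\Variety(P)$, packaging it as a connected subgroup $M \subset T$, and then verifying the four properties via Jacobson density.

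First, define $L \coloneq \set{\chi \in X^*(T) : \Variety(P) + d\chi = \Variety(P)}$; by Lemma~\ref{lem:Affinity} this set is independent of the choice of $P$. The translation stabilizer of $\Variety(P)$ inside $\lie{t}^*$ is a closed algebraic subgroup of $(\lie{t}^*, +)$, hence a complex subspace, so the quotient $X^*(T)/L$ is torsion-free. Consequently $M \coloneq \bigcap_{\chi \in L}\Ker(\chi) \subset T$ is a connected closed subgroup with $\lie{m} = (L \otimes_\ZZ \CC)^{\perp}$ and $(\lie{t}/\lie{m})^* = L \otimes_\ZZ \CC$.

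The core step combines Jacobson density with Schur's lemma. For $0 \neq v \in V^P$, irreducibility of $V$ gives $\alg{A} v = V$; using the $T$-weight decomposition $\alg{A} = \bigoplus_\chi \alg{A}^\chi$ together with the shift relation $\alg{A}^\chi V^P \subset V^{P_\chi}$, where $\Variety(P_\chi) = \Variety(P) + d\chi$, and projecting onto $V^P$ in $V = \bigoplus_Q V^Q$, I obtain $V^P = \alg{A}^L v$. Hence $V^P$ is irreducible as an $\alg{A}^L$-module, which is exactly (3) since $\alg{A}^L = \alg{A}^{\lie{m}}$. Since $[\lie{m}, \alg{A}^\chi] = d\chi|_{\lie{m}}\cdot \alg{A}^\chi = 0$ for $\chi \in L$, $\lie{m}$ is central in $\alg{A}^L$. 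Schur applied to the irreducible $\alg{A}^L$-module $V^P$, combined with a Dixmier-type density argument transferring $\End_\alg{A}(V) = \CC$ to $\End_{\alg{A}^L}(V^P) = \CC$, then forces $\lie{m}$ to act on $V^P$ by a single scalar character $\lambda_P \in \lie{m}^*$.

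From there, (1) is immediate, (2) follows because the scalar action of $\lie{m}$ yields $\Variety(P) \subset \lambda_P + L \otimes \CC$ and the translation invariance together with irreducibility of $\Variety(P)$ forces equality, and (4) is the same projection argument applied to a non-zero vector in $V^\mu$, identifying $\alg{A}^{\lambda - \mu} v$ with the $V^\lambda$-component of $\alg{A} v = V$. The step I expect to be the main obstacle is verifying $\End_{\alg{A}^L}(V^P) = \CC$ rather than merely a larger division ring, so that the central $\lie{m}$-action lands in $\CC$ and not in the center of this division ring; this is the Dixmier-style density step that must be handled carefully using the hypothesis on $\End_\alg{A}(V)$.
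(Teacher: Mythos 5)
Your proposal takes essentially the same route as the paper's proof: the paper fixes $P$, sets $R=\set{\lambda : \alg{A}^\lambda\neq 0,\ \Variety(P)+\lambda=\Variety(P)}$, $\alg{B}=\bigoplus_{\lambda\in R}\alg{A}^\lambda$ and $\lie{m}=\bigcap_{\lambda\in R}\Ker(\lambda)$, shows $V^P$ is irreducible over $\alg{B}$ by the same projection argument coming from Lemma \ref{lem:Affinity}, obtains $\End_{\alg{B}}(V^P)=\CC$ by precisely the Jacobson-density transfer you flag as the delicate point (which does go through, and which the paper notes can also be replaced by observing that $\alg{A}\otimes_{\alg{B}}V^P$ has a unique irreducible quotient isomorphic to $V$), and then uses centrality of $\lie{m}$ plus translation-invariance of $\Variety(P)$ exactly as you do. The only minor deviation is that you take the full translation stabilizer inside $X^*(T)$ rather than only the weights occurring in $\alg{A}$, which yields a possibly different but equally valid choice of $M$ for this existence statement.
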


\begin{remark}\label{remark:Schur}
	The assumption $\End_{\alg{A}}(V) = \CC$ holds if $V$ has at most countable dimension by Schur's lemma.
\end{remark}

\begin{proof}
	We shall construct the subalgebra $\lie{m}$.
	Fix $P \in \Ass_{\univ{t}}(V)$.
	Set $R\coloneq \set{\lambda \in \lie{t}^* : \alg{A}^\lambda\neq 0, \Variety(P) + \lambda = \Variety(P)}$
	and $\alg{B}\coloneq \bigoplus_{\lambda \in R} \alg{A}^\lambda$.
	Then $\alg{B}$ is a subalgebra of $\alg{A}$, and $(\alg{B}, T)$ forms a generalized pair.
	By Lemma \ref{lem:Affinity} and its proof, $V^P$ is an irreducible $\alg{B}$-module.
	By the assumption $\End_{\alg{A}}(V) = \CC$, Lemma \ref{lem:Affinity} and the Jacobson density theorem, we have $\End_{\alg{B}}(V^P) = \CC$.

	Set $\lie{m}\coloneq \bigcap_{\lambda \in R} \Ker(\lambda)$
	and write $\alpha \colon \lie{t}^* \rightarrow \lie{m}^*$ for the restriction map.
	Then $\lie{m}$ is a Lie algebra of some connected closed subgroup $M$ of $T$.
	By definition, the image of $\lie{m}$ is contained in the center of $\alg{B}$.
	Hence $\lie{m}$ acts on $V^P$ by a character $\mu \in \lie{m}^*$
	and we have $\Variety(P)\subset \alpha^{-1}(\mu)$.
	Note that $\alpha^{-1}(\mu)$ is a translation of $(\lie{t}/\lie{m})^* = \mathrm{span}_{\CC} R$.
	By the definition of $R$ and $\lie{m}$, we have $\Variety(P) + (\lie{t}/\lie{m})^* = \Variety(P)$.
	Therefore, we obtain $\Variety(P) = \alpha^{-1}(\mu)$.

	The assertions follow easily from the construction of $\lie{m}$.
\end{proof}

Precisely speaking, we do not need the Jacobson density theorem.
In fact, it is easy to see that $\alg{A}\otimes_{\alg{B}}V^P$ has a unique irreducible quotient, which is isomorphic to $V$.
This gives another proof of $\End_{\alg{B}}(V^P) \simeq \End_{\alg{A}}(V) = \CC$.
Our proof is based on the observation that $\alg{A}$ is enough large in $\End_{\CC}(V)$ and has enough many weights to recover $\Variety(P)$, which we can read from the Jacobson density theorem.

It is easy to see that $\lie{m}$ in Theorem \ref{thm:Affinity} is uniquely determined by $V$
and $\lie{t}/\lie{m}$ is a unique small Cartan subalgebra for $V$.
Note that if a $\lie{t}$-module $V$ has a small Cartan subalgebra, so does its submodule.
Using this, we give a definition of small Cartan subalgebras for $\ntDsheaf{}$-modules.
Let $X$ be a connected smooth $T$-variety and $\Dsheaf{A}{X}$ a $T$-equivariant TDO on $X$.

\begin{proposition}\label{prop:Localness}
	Let $\Dmod{M}$ be an irreducible quasi-coherent $\Dsheaf{A}{X}$-module and $V$ a $T$-stable open subset of $X$.
	Suppose that $\sect(V, \Dmod{M}) \neq 0$.
	Then $\sect(V, \Dmod{M})|_{\lie{t}}$ has a small Cartan subalgebra, which does not depend on the choice of $V$.
\end{proposition}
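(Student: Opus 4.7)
The plan is to reduce the assertion to Theorem \ref{thm:Affinity} by passing to a $T$-stable affine open on which the global sections form an irreducible module, then transporting the result back. The first key observation I will use is that whenever $V' \subset V$ is a $T$-stable open with $\Dmod{M}|_{V'} \neq 0$, the restriction map $\sect(V, \Dmod{M}) \to \sect(V', \Dmod{M})$ is an injective $\lie{t}$-module homomorphism. Indeed, for any nonzero $v \in \sect(V, \Dmod{M})$, the $\Dsheaf{A}{X}$-subsheaf of $\Dmod{M}$ generated by $v$ is nonzero and hence equals $\Dmod{M}$ by irreducibility; its restriction to $V'$ is the subsheaf generated by $v|_{V'}$ and equals $\Dmod{M}|_{V'} \neq 0$, forcing $v|_{V'} \neq 0$. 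Since $\Ass_{\univ{t}}$ of a submodule is contained in that of the ambient module, any small Cartan subalgebra for $\sect(V', \Dmod{M})|_{\lie{t}}$ also serves as one for $\sect(V, \Dmod{M})|_{\lie{t}}$.

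Next, using Sumihiro's theorem I will choose a $T$-stable affine open $V_0 \subset V$ meeting the support of $\Dmod{M}$, so that $\sect(V_0, \Dmod{M}) \neq 0$. Since $V_0$ is affine, $\sect$ is exact on $\Mod_{qc}(\Dsheaf{A}{V_0})$, and the generation property above places $\Dmod{M}|_{V_0}$ in $\Mod_{qc}^{e}(\Dsheaf{A}{V_0})$; Fact \ref{fact:EquivalenceCategory} then identifies its $\Dsheaf{A}{V_0}$-submodules with $\Dalg{A}{V_0}$-submodules of $\sect(V_0, \Dmod{M})$, so the latter is an irreducible $(\Dalg{A}{V_0}, T)$-module generated by any of its nonzero elements. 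Since $V_0$ is quasi-projective, $\Dalg{A}{V_0}$ has at most countable $\CC$-dimension, so the cyclic irreducible module $\sect(V_0, \Dmod{M})$ does too, and hence $\End_{\Dalg{A}{V_0}}(\sect(V_0, \Dmod{M})) = \CC$ by Remark \ref{remark:Schur}. Theorem \ref{thm:Affinity} therefore yields a connected closed subgroup of $T$ with Lie algebra $\lie{m}$ and a small Cartan subalgebra $\lie{t}/\lie{m}$ for $\sect(V_0, \Dmod{M})|_{\lie{t}}$, which by the first paragraph also serves as a small Cartan subalgebra for $\sect(V, \Dmod{M})|_{\lie{t}}$.

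For the independence claim, given two $T$-stable opens $V_1, V_2 \subset X$ with nonzero global sections, I will choose a common $T$-stable affine open $V_0 \subset V_1 \cap V_2$ with $\sect(V_0, \Dmod{M}) \neq 0$; both $\sect(V_i, \Dmod{M})$ inject into $\sect(V_0, \Dmod{M})$ and share its small Cartan subalgebra. To confirm that two different affine choices $V_0, V_0'$ give the same $\lie{m}$, I will pass to a common $T$-stable affine refinement $V_0'' \subset V_0 \cap V_0'$ and invoke the uniqueness of $\lie{m}$ in Theorem \ref{thm:Affinity} applied to the three irreducible modules. The main obstacle I foresee is precisely this last uniqueness step: restriction to a smaller affine open a priori enlarges the $T$-weight spaces of the TDO, and one must verify that the additional weights $\lambda$ with $\Dalg{A}{V_0''}^{\lambda} \neq 0$ and $\Variety(P) + \lambda = \Variety(P)$ do not strictly enlarge the set $R$ used to define $\lie{m}$, i.e.\ that $(\lie{t}/\lie{m})^*$ is intrinsic to $\Dmod{M}$ rather than an artefact of the affine trivializing cover.
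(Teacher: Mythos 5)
Your proposal is correct and follows essentially the same route as the paper: pass to a $T$-stable affine open meeting the support (the paper cites the local structure theorem \cite[Theorem 4.7]{Ti11} where you cite Sumihiro), observe that the sections there form an irreducible module (the paper deduces this from the extension of coherent submodules \cite[Corollary 1.4.17]{HTT08}, while you deduce it from the affine case of Fact \ref{fact:EquivalenceCategory} together with your generation-by-a-section argument, which is a sound alternative), apply Theorem \ref{thm:Affinity}, and pass to $\lie{t}$-submodules via the injectivity of restriction maps. The one point you leave open --- whether enlarging the weight set $R$ on a smaller affine open could change $\lie{m}$ --- is not an actual obstacle: by property (2) of Theorem \ref{thm:Affinity}, every $\Variety(P)$ with $P \in \Ass_{\univ{t}}$ of the irreducible module is a translation of $(\lie{t}/\lie{m})^*$, and since $\Ass_{\univ{t}}$ is non-empty, $(\lie{t}/\lie{m})^*$ is recovered as the common direction space of these varieties; thus $\lie{m}$ is intrinsic to the $\lie{t}$-module and independent of the acting algebra, which is exactly the uniqueness remark the paper records just before the proposition. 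With that, your common-refinement argument (injections of $\sect(V_0,\Dmod{M})$ and $\sect(V_0',\Dmod{M})$ into $\sect(V_0'',\Dmod{M})$, inclusion of associated primes, and uniqueness for each irreducible module) closes the independence claim; note only that the existence of the common $T$-stable affine open meeting the support uses that the support of the irreducible module $\Dmod{M}$ is irreducible, a fact the paper states explicitly and which you should record as well.
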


\begin{proof}
	Note that any quasi-projective smooth $T$-variety is covered by $T$-stable affine open subsets from a local structure theorem.
	See \cite[Theorem 4.7]{Ti11}.
	Since any coherent submodule of $\Dmod{M}|_V$ extends to a coherent submodule of $\Dmod{M}$
	(see \cite[Corollary 1.4.17]{HTT08}), $\sect(V', \Dmod{M})$ is zero or irreducible for any affine open subset $V' \subset X$.

	Since $\Dmod{M}$ is irreducible, the support $S$ of $\Dmod{M}$ is an irreducible subvariety
	and the support of any non-zero local section of $\Dmod{M}$ is an open subset of $S$.
	Let $V'$ be a $T$-stable affine open subset of $V$ with $V'\cap S \neq \emptyset$.
	Then $\sect(V', \Dmod{M})$ is irreducible, and $\sect(V, \Dmod{M})$ is a $\lie{t}$-submodule of $\sect(V', \Dmod{M})$.
	By Theorem \ref{thm:Affinity}, $\sect(V', \Dmod{M})$ has a small Cartan subalgebra $\lie{t}/\lie{m}$.
	Hence $\lie{t}/\lie{m}$ is also a small Cartan subalgebra for $\sect(V, \Dmod{M})$.
	This implies that the small Cartan subalgebra does not depend on the choice of $V'$ and hence $V$.
\end{proof}

\begin{remark}
	A Cartan subalgebra may depend on the choice of the $T$-stable open subset $V$.
\end{remark}

\begin{definition}\label{def:CartanDmod}
	Let $\Dmod{M}$ be a non-zero $\Dsheaf{A}{X}$-module.
	If $\sect(V, \Dmod{M})|_{\lie{t}}$ has a small Cartan subalgebra for any $T$-stable open subset $V$ of $X$ with $\sect(V, \Dmod{M})\neq 0$ and it does not depend on the choice of $V$, we say that the algebra is a \define{small Cartan subalgebra} for $\Dmod{M}$.
\end{definition}

\subsection{Universal module}\label{subsection:G/Umodules}

In this subsection, we construct a $\ntDsheaf{G/U}$-module related to Verma modules.
We will use the module to construct $V/\lie{u}V$ by $\ntDsheaf{}$-modules.

Let $G$ be a simply-connected connected semisimple algebraic group over $\CC$.
Fix a Borel subgroup $B=TU$ of $G$ with a maximal torus $T$ and unipotent radical $U$.

Since the $U$-action on $\wedge^{top}(\lie{g}/\lie{u})^*$ is trivial,
there is a unique non-zero $G$-invariant top form $\omega \in \sect(\Omega_{G/U})$ up to scalar.
The isomorphism $\rsheaf{G/U} \xrightarrow{\sim}\Omega_{G/U}$ ($f \mapsto f\omega$)
induces an isomorphism $\varphi\colon \ntDsheaf{G/U}\xrightarrow{\sim} \ntDsheaf{G/U}^{op}$.
By the definition of the right $\ntDsheaf{G/U}$-action on $\Omega_{G/U}$ (see \cite[Lemma 1.2.5]{HTT08}),
we have
\begin{align*}
	\varphi(L(X)) &= -L(X) \\
	\varphi(R(H)) &= -R(H) - 2\rho(H)
\end{align*}
for any $X \in \lie{g}$ and $H \in \lie{t}$.

Let $\uVerm$ denote the unique irreducible holonomic $\ntDsheaf{G/U}$-module supported on $\set{eU}$.
We describe the $(\lie{g\oplus t})$-action on $\uVerm$.
Identify $\CC \otimes_{\univ{u}}\univ{g}$ with the set of distributions on a real form of $G/U$
supported on $\set{eU}$.
Then $\CC \otimes_{\univ{u}}\univ{g}$ admits a right $\ntDsheaf{G/U}$-module structure and hence a left $\ntDsheaf{G/U}^\opalg$-module structure.
Using the isomorphism $\varphi$ and applying the canonical anti-automorphism ${}^t$ of $\univ{g}$, we obtain a left $\ntDsheaf{G/U}$-module $\univ{g}\otimes_{\univ{u}}\CC \otimes \CC_{-2\rho}$ with $(\lie{g\oplus t})$-action given by
\begin{align*}
	L(X)\cdot Z\otimes 1 \otimes 1 &= XZ \otimes 1 \otimes 1 \\
	R(H)\cdot Z\otimes 1 \otimes 1 &= -XH \otimes 1 \otimes 1 - 2\rho(H) Z\otimes 1\otimes 1
\end{align*}
for $X \in \lie{g}, H \in \lie{t}$ and $Z \in \univ{g}$.
Then $\uVerm$ is isomorphic to $\univ{g}\otimes_{\univ{u}}\CC \otimes \CC_{-2\rho}$ as a $\ntDsheaf{G/U}$-module.
Note that the twist $\CC_{-2\rho}$ is necessary so that the $\univ{g}\otimes \univ{t}$-action factors through $\univ{g}\otimes_{\univcent{g}}\univ{t}$.
See \eqref{eqn:Waction} for the homomorphism $\univcent{g}\rightarrow \univ{t}$.

\begin{proposition}\label{prop:AcyclicUVerma}
	$\uVerm$ is acyclic and $\sect(\uVerm) \simeq \univ{g}\otimes_{\univ{u}}\CC \otimes \CC_{-2\rho}$ as $(\lie{g\oplus t})$-modules.
\end{proposition}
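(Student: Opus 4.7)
The plan is to exploit the fact that $\uVerm$ is supported at the single closed point $\{eU\}$, so it is essentially a ``skyscraper'' sheaf and its cohomology is easy to compute. By the discussion preceding the statement, $\uVerm \simeq \loc(M)$ with $M \coloneq \univ{g}\otimes_{\univ{u}}\CC\otimes\CC_{-2\rho}$, and by Proposition~\ref{prop:FundamentalBasicAffine}, $\loc(M) \simeq \rsheaf{G/U}\otimes_{\rring{G/U}} M$ as an $\rsheaf{G/U}$-module. The task thus reduces to computing the cohomology of this quasi-coherent sheaf on $G/U$.

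First I would verify that $M$ is $\mathfrak{m}_{eU}$-torsion, where $\mathfrak{m}_{eU}\subset \rring{G/U}$ denotes the maximal ideal at $eU$. This follows because $M$ is identified with the space of distributions supported at $eU$, and $\rring{G/U}$ acts on a distribution $\delta$ via $(f\cdot\delta)(g)=\delta(fg)$. By Leibniz, if the distribution $Z\otimes 1\otimes 1$ has order $\leq d$, then $f\cdot (Z\otimes 1\otimes 1)=0$ whenever $f\in\mathfrak{m}_{eU}^{d+1}$; hence every element of $M$ is annihilated by a power of $\mathfrak{m}_{eU}$.

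Given this torsion, the skyscraper property on basic affine opens $D(f)\subset G/U$ is immediate: if $f\notin\mathfrak{m}_{eU}$, then $f$ acts invertibly on $M$ and $\sect(D(f),\loc(M))=M[f^{-1}]=M$; if $f\in\mathfrak{m}_{eU}$, then every $v\in M$ satisfies $f^n v=0$ for some $n$, so $M[f^{-1}]=0$. I would then compute the \v{C}ech cohomology with a cover consisting of a basic affine open $D(f_0)\ni eU$ contained in $G/U$ together with basic opens $D(f_1),\ldots,D(f_k)\subset G/U$ (each with $f_i\in\mathfrak{m}_{eU}$) covering $G/U\setminus\{eU\}$. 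Every nonempty intersection involving some $f_i$ with $i\geq 1$ misses $eU$, so $\loc(M)$ vanishes on it, and the \v{C}ech complex collapses to $\sect(D(f_0),\loc(M))=M$ concentrated in degree zero.

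This yields both halves of the proposition: $H^i(G/U,\uVerm)=0$ for $i>0$, and $\sect(\uVerm)\simeq M$ as $\ntDalg{G/U}$-modules, hence as $(\lie{g}\oplus\lie{t})$-modules via the $L,R$ embeddings of $\univ{g}\oplus\univ{t}$ into $\ntDalg{G/U}$. The match with the displayed $(\lie{g}\oplus\lie{t})$-action formulas on $M$ is automatic, since those formulas were read off precisely from the $\ntDsheaf{G/U}$-module structure on the distributions at $eU$. The only step of substance is the $\mathfrak{m}_{eU}$-torsion verification; the rest is formal skyscraper-sheaf cohomology, so no serious obstacle arises.
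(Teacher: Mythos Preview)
Your proof is correct and follows essentially the same idea as the paper: exploit that $\uVerm$ is supported at the single point $eU$, which lies in an affine open of $G/U$, so cohomology can be computed locally. The paper's version is slightly more streamlined: it picks one affine open $V \ni eU$ inside $G/U$, observes $\uVerm \simeq \beta_*\beta^*\uVerm$ for the inclusion $\beta\colon V \hookrightarrow G/U$ (this is exactly the skyscraper property you verify), and then uses that $\beta$ is an affine morphism to conclude $H^i(G/U,\uVerm)\simeq H^i(V,\beta^*\uVerm)$, which vanishes for $i>0$ and equals $M$ for $i=0$ since $V$ is affine. Your \v{C}ech computation with a basic-open cover amounts to the same thing unpacked by hand; the detour through $\loc(M)$ and the explicit $\mathfrak{m}_{eU}$-torsion check are not strictly needed (the paper takes the identification of $\uVerm$ with the distribution sheaf as already given in the preceding discussion), though they do make the skyscraper structure fully transparent.
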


\begin{proof}
	Write $\alpha \colon G/U\rightarrow \overline{G/U}$ for the inclusion.
	Fix an affine open subset $V\subset G/U$ containing $eU$, and let $\beta \colon V\rightarrow G/U$ denote the inclusion.
	Since the support of $\uVerm$ is contained in $V$, we have $\uVerm \simeq \beta_* \beta^* \uVerm$.
	Hence we obtain
	\begin{align*}
		H^i(G/U, \uVerm) &\simeq H^i \circ R\sect \circ \alpha_* \beta_* \beta^* \uVerm \\
		&\simeq H^i(V, \beta^*\uVerm) \simeq \begin{cases}
			\univ{g}\otimes_{\univ{u}}\CC \otimes \CC_{-2\rho} & (i = 0), \\
			0 & (i \neq 0).
		\end{cases}
	\end{align*}
	We used that $\beta^* = \beta^{-1}$ is exact and $V$ is affine.
\end{proof}

The module $\uVerm$ can be regarded as a family of Verma modules as follows.
For $w \in W_G$, we set $\uVerm_{w}\coloneq \loc(\sect(\uVerm)^w)$,
where $(\cdot)^w$ is the Fourier transform defined in Subsection \ref{subsection:reviewFourier}.
By the construction of $\uVerm_{w}$, we obtain the following proposition.

\begin{proposition}\label{prop:UniversalVerma}
	Let $\lambda\in \lie{t}^*$.
	Take $w \in W_G$ such that $w(\lambda)$ is anti-dominant, and set $u\coloneq w_lww_l^{-1}$.
	Then there exists a $\lie{g}$-module isomorphism
	\begin{align*}
		H^0\circ R\sect(Dp_{+, w(\lambda)} (\uVerm_{u^{-1}})) \simeq \univ{g}\otimes_{\univ{b}}\CC_{\lambda-\rho},
	\end{align*}
	where $Dp_{+,w(\lambda)}$ is the direct image functor from $D^b_{h}(\ntDsheaf{G/U})$ to $D^b_{h}(\ntDsheaf{G/B, w(\lambda)})$.
\end{proposition}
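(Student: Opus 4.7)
The plan is to combine three ingredients from the paper: the identification of the $i=0$ direct image with a coinvariant (Theorem \ref{thm:directImageAndCoinvariant}), the comparison between $\sect\circ\loc$ and the original module after tensoring with an anti-dominant character (Corollary \ref{cor:localAndGlobal}), and the explicit formula for the Fourier transform on $R(\univ{t})$ (Fact \ref{fact:fourier}).

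First I would apply Theorem \ref{thm:directImageAndCoinvariant} with the anti-dominant weight $w(\lambda)$ to get
\begin{align*}
H^0\circ R\sect(Dp_{+,w(\lambda)}(\uVerm_{u^{-1}}))\simeq \CC_{w(\lambda)+\rho}\otimes_{\univ{t}}\sect(\uVerm_{u^{-1}}).
\end{align*}
Then, since $\uVerm_{u^{-1}}=\loc(\sect(\uVerm)^{u^{-1}})$ by definition, Corollary \ref{cor:localAndGlobal} (again using that $w(\lambda)$ is anti-dominant so that $\CC_{w(\lambda)+\rho}$ is a right $S^{-1}\univ{t}$-module) allows us to replace $\sect(\uVerm_{u^{-1}})$ by $\sect(\uVerm)^{u^{-1}}$, yielding an isomorphism of left $\lie{g}$-modules
\begin{align*}
H^0\circ R\sect(Dp_{+,w(\lambda)}(\uVerm_{u^{-1}}))\simeq \CC_{w(\lambda)+\rho}\otimes_{\univ{t}}\sect(\uVerm)^{u^{-1}}.
\end{align*}

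Next I would unwind the right-hand side using the description of $\sect(\uVerm)=\univ{g}\otimes_{\univ{u}}\CC\otimes\CC_{-2\rho}$ given in Subsection \ref{subsection:G/Umodules}. With $u=w_lww_l^{-1}$, the element $w'=w_l^{-1}u^{-1}w_l$ appearing in Fact \ref{fact:fourier} simplifies to $w^{-1}$, so
\begin{align*}
F_{u^{-1}}(R(H))=R(w^{-1}(H))+\bigl(\rho(w^{-1}(H))-\rho(H)\bigr)\cdot \mathrm{id}
\end{align*}
for $H\in\lie{t}$. The paper's convention for $\CC_{w(\lambda)+\rho}\otimes_{\univ{t}}(\cdot)$ kills $\bigl(F_{u^{-1}}(R(H))+(w(\lambda)+\rho)(H)\bigr)\cdot v$, and after the substitution $H'=w^{-1}(H)$ this relation becomes $\bigl(R(H')+(\lambda+\rho)(H')\bigr)\cdot v=0$. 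Evaluating on $Z\otimes 1\otimes 1$ with the formula $R(H')(Z\otimes 1\otimes 1)=-ZH'\otimes 1\otimes 1-2\rho(H')\,Z\otimes 1\otimes 1$, the $2\rho$-terms cancel against $\rho+\lambda$, leaving the relation $ZH'\otimes 1\otimes 1=(\lambda-\rho)(H')\,Z\otimes 1\otimes 1$. Thus in the quotient the right action of $\lie{t}$ becomes the scalar $\lambda-\rho$, which together with $\univ{g}\lie{u}\sim 0$ identifies the result as a left $\lie{g}$-module with $\univ{g}\otimes_{\univ{b}}\CC_{\lambda-\rho}$.

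The main obstacle I anticipate is purely bookkeeping: matching the three $\rho$-shifts (the one built into $\sect(\uVerm)$ as $\CC_{-2\rho}$, the one in the convention $\CC_{\mu+\rho}\otimes_{\univ{t}}$ of Theorem \ref{thm:directImageAndCoinvariant}, and the one appearing in the Fourier formula of Fact \ref{fact:fourier}) and verifying that $w_l^{-1}u^{-1}w_l=w^{-1}$, so that the anti-dominance of $w(\lambda)$ is exactly what makes Corollary \ref{cor:localAndGlobal} applicable. Once the arithmetic is done correctly, all the shifts collapse so that the $\CC_{-2\rho}$ factor becomes a trivial vector-space multiplier for the $L$-action and the final $\lie{g}$-module structure is precisely the Verma module of highest weight $\lambda-\rho$.
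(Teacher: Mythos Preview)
Your proposal is correct and follows essentially the same route as the paper. The paper simply cites Corollary \ref{cor:localizationCoinv} (which packages together Theorem \ref{thm:directImageAndCoinvariant} and Corollary \ref{cor:localAndGlobal}) to reach $\CC_{w(\lambda)+\rho}\otimes_{\univ{t}}\sect(\uVerm)^{u^{-1}}$ in one step, and then says the remaining identification ``is easy to see'' from Fact \ref{fact:fourier} and the description of $\sect(\uVerm)$; your write-up spells out that computation explicitly and correctly.
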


\begin{proof}
	By Corollary \ref{cor:localizationCoinv}, we have
	\begin{align*}
		H^0\circ R\sect(Dp_{+, w(\lambda)} (\uVerm_{u^{-1}})) \simeq \CC_{w(\lambda)+\rho}\otimes_{\univ{t}}\sect(\uVerm)^{u^{-1}}.
	\end{align*}
	The $(\lie{g\oplus t})$-action on $\sect(\uVerm)^{u^{-1}}$ can be computed by Fact \ref{fact:fourier} and $\uVerm \simeq \univ{g}\otimes_{\univ{u}}\CC \otimes \CC_{-2\rho}$.
	Hence it is easy to see $\CC_{w(\lambda)+\rho}\otimes_{\univ{t}}\sect(\uVerm)^{u^{-1}} \simeq \univ{g}\otimes_{\univ{b}}\CC_{\lambda-\rho}$.
\end{proof}

\subsection{Existence of large action}\label{subsection:LargeAction}

Retain the notation in the previous subsection.
Let $X$ be a smooth $G$-variety and $\Dsheaf{A}{X}$ a $G$-equivariant TDO on $X$.
Suppose that the global section functor $\sect$ is exact on $\Mod_{qc}(\Dsheaf{A}{X})$.
We have defined the holonomicity of $\Dalg{A}{X}\otimes \ntDalg{G/U}$-modules in Definition \ref{def:holonomic2}.

Let $M \in \Mod(\Dalg{A}{X})$ and $N \in \Mod(\ntDalg{G/U})$.
Then the $\CC$-algebra $(\Dalg{A}{X}\otimes \ntDalg{G/U})^G$ acts on $M\otimes_{\univ{g}} N$.
We shall consider the length of $M\otimes_{\univ{g}} N$
and more generally the cohomologies $H^i(\lie{g}; M\otimes N)$.
To reduce the problem to that of $\Dalg{A}{X}\otimes \ntDalg{G/U}$-modules, we use the Zuckerman derived functor.
Recall the isomorphism stated in Fact \ref{fact:GInvZuckerman}:
\begin{align*}
	\Dzuck{G}{}{i}(M\otimes N)^G \simeq H^i(\lie{g}; M\otimes N).
\end{align*}
Note that $\Dzuck{G}{}{i}(M\otimes N)$ is an $(\Dalg{A}{X}\otimes \ntDalg{G/U}, G)$-module.
Since the $G$-action on $\Dzuck{G}{}{i}(M\otimes N)$ is completely reducible, 
the length of $H^i(\lie{g}; M\otimes N)$ is bounded by that of $\Dzuck{G}{}{i}(M\otimes N)$ from above.

\begin{theorem}\label{thm:FiniteLenGeneral}
	Let $\Dmod{M}\in \Mod_{h}(\Dsheaf{A}{X})$ and $\Dmod{N} \in \Mod_{h}(\ntDsheaf{G/U})$.
	Suppose that $\Dmod{N}$ is acyclic.
	Then the cohomologies $H^i(\lie{g}; \sect(\Dmod{M})\otimes \sect(\Dmod{N}))$ are $(\Dalg{A}{X}\otimes \ntDalg{G/U})^G$-modules of finite length.
\end{theorem}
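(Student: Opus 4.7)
The plan is to transfer the problem to the product variety $X\times G/U$ (with the diagonal $G$-action) and then feed the resulting acyclic, holonomic sheaf into the Zuckerman localization machinery developed in Subsection~\ref{subsection:directProduct}. Put $\Dsheaf{A}{X\times G/U}\coloneq \Dsheaf{A}{X}\boxtimes \ntDsheaf{G/U}$ and form $\Dmod{M}\boxtimes \Dmod{N}$. By \cite[Lemma 1.5.31]{HTT08}, $\sect(\Dmod{M}\boxtimes \Dmod{N})\simeq \sect(\Dmod{M})\otimes \sect(\Dmod{N})$. Since $\sect$ is exact on $\Mod_{qc}(\Dsheaf{A}{X})$ and $\Dmod{N}$ is assumed acyclic, the K\"unneth formula for quasi-coherent sheaves on $X\times G/U$ shows that $\Dmod{M}\boxtimes \Dmod{N}$ is itself acyclic. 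By Fact~\ref{fact:GInvZuckerman}, $H^i(\lie{g}; \sect(\Dmod{M})\otimes \sect(\Dmod{N}))\simeq \Dzuck{G}{}{i}(\sect(\Dmod{M}\boxtimes \Dmod{N}))^G$ and the length over $(\Dalg{A}{X}\otimes \ntDalg{G/U})^G$ is bounded above by the length of the Zuckerman module over $\Dalg{A}{X}\otimes \ntDalg{G/U}$, so it suffices to prove finite length for $\Dzuck{G}{}{i}(\sect(\Dmod{M}\boxtimes \Dmod{N}))$.

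For the latter, I will invoke Corollary~\ref{cor:FiniteZuckerman}, which reduces the finite-length claim to showing that each $H^p(X\times G/U, \DlocZuck{G}{}{q}(\Dmod{M}\boxtimes \Dmod{N}))$ has finite length as a $\Dalg{A}{X}\otimes \ntDalg{G/U}$-module. Writing $p, m \colon G\times(X\times G/U)\rightarrow X\times G/U$ for the projection and the diagonal multiplication, Proposition~\ref{prop:InverseImageDirectProduct} identifies $p^*(\Dmod{M}\boxtimes \Dmod{N})$ with $\rsheaf{G}\boxtimes \Dmod{M}\boxtimes \Dmod{N}$, which is holonomic; since $Dm_{+}$ preserves holonomicity, $\DlocZuck{G}{}{q}(\Dmod{M}\boxtimes \Dmod{N}) \in \Mod_h(\Dsheaf{A}{X\times G/U})$. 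Then Corollary~\ref{prop:HolonomicGlobalDerivedAX} gives $R\sect(\DlocZuck{G}{}{q}(\Dmod{M}\boxtimes \Dmod{N})) \in D^b_h(\Dalg{A}{X\times G/U})$, i.e.\ each $H^p$ is a holonomic $(\Dalg{A}{X}\otimes \ntDalg{G/U})$-module. Combining this with the standard fact that holonomic modules have finite length yields the theorem.

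The main obstacle is the last half-sentence: establishing finite length for holonomic $(\Dalg{A}{X}\otimes \ntDalg{G/U})$-modules in the sense of Definition~\ref{def:holonomic2}. The argument should parallel Corollary~\ref{cor:HolonomicFiniteLength}: fix finite-dimensional standard filtrations of $\gr \ntDalg{V_i}$ for an affine open covering $X=\bigcup_i V_i$, observe that for a holonomic module the local multiplicities $m(\sect(V_i\times G/U, \Dmod{M}))$ are finite and bound the length by Fact~\ref{fact:AdditiveMultiplicity} combined with the local version already proved in Lemma~\ref{lem:HolonomicDirectProductAffine}. A second minor point to verify is that $\DlocZuck{G}{}{q}(\Dmod{M}\boxtimes \Dmod{N})$ is genuinely a quasi-coherent $\Dsheaf{A}{X}\boxtimes \ntDsheaf{G/U}$-module in the sense that the subquotient passage of Remark~\ref{remark:ExistenceEsubquotient} does not lose length information; this is harmless since any subquotient of a finite-length module still has finite length.
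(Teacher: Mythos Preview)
Your proposal is correct and follows essentially the same route as the paper: form $\Dmod{M}\boxtimes \Dmod{N}$ on $X\times G/U$, verify it is acyclic and holonomic, apply Corollary~\ref{cor:FiniteZuckerman}, and use preservation of holonomicity under $\DlocZuck{G}{}{q}$ together with Theorem~\ref{thm:HolonomicGlobalAX} (your citation of Corollary~\ref{prop:HolonomicGlobalDerivedAX} is in fact the more precise reference for the higher $H^p$). The one point you single out---that holonomic $\Dalg{A}{X}\otimes \ntDalg{G/U}$-modules have finite length---is left implicit in the paper as well, and your sketched multiplicity argument via Lemma~\ref{lem:HolonomicDirectProductAffine} and Corollary~\ref{cor:HolonomicFiniteLength} (patched over a finite affine cover of $X$) is exactly the intended justification.
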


\begin{proof}
	As stated above, it is enough to show that $\Dzuck{G}{}{i}(\sect(\Dmod{M})\otimes \sect(\Dmod{N}))$'s have finite length.
	By assumption, $\Dmod{M}\boxtimes \Dmod{N}$ is acyclic and holonomic.
	See \cite[Lemma 1.5.31]{HTT08} for the acyclicity.
	Since the functors $\DlocZuck{G}{}{q}$ are compositions of the direct image functor and the inverse image functor,
	all the modules $\DlocZuck{G}{}{q}(\Dmod{M}\boxtimes \Dmod{N})$ are holonomic, and by Theorem \ref{thm:HolonomicGlobalAX}, all the cohomologies $H^p(X, \DlocZuck{G}{}{q}(\Dmod{M}\boxtimes \Dmod{N}))$ have finite length.
	The assertion therefore follows from Corollary \ref{cor:FiniteZuckerman}.
\end{proof}

In the previous subsection, we have constructed $\uVerm$.
Applying Theorem \ref{thm:FiniteLenGeneral} to $\Dmod{N} = \uVerm$, we obtain the following corollary.

\begin{corollary}\label{cor:FiniteLenUcoinv}
	Let $\Dmod{M}\in \Mod_{h}(\Dsheaf{A}{X})$.
	Then the $\lie{t}$-module $H^i(\lie{u}; \sect(\Dmod{M}))$ extends to an $(\Dalg{A}{X}\otimes \ntDalg{G/U})^G$-module of finite length for any $i \in \NN$.
\end{corollary}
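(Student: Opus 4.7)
The plan is to realize $H^i(\lie{u}; V)$, for $V = \sect(\Dmod{M})$, as a $\lie{g}$-cohomology group of the form $H^{j}(\lie{g}; V \otimes \sect(\uVerm))$, and then to invoke Theorem \ref{thm:FiniteLenGeneral} with $\Dmod{N} = \uVerm$. The module $\uVerm$ from Subsection \ref{subsection:G/Umodules} is a perfect candidate: it is the irreducible holonomic $\ntDsheaf{G/U}$-module supported at $\set{eU}$, it is acyclic by Proposition \ref{prop:AcyclicUVerma}, and its global sections are $\univ{g}\otimes_{\univ{u}}\CC \otimes \CC_{-2\rho}$ as a $(\lie{g}\oplus\lie{t})$-module. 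This last description is exactly what one needs to convert $\lie{u}$-invariance into $\lie{g}$-invariance.

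The key identification proceeds in two classical steps. First, since $\univ{g}$ is free over $\univ{u}$ as a right $\univ{u}$-module by PBW, Shapiro's lemma gives a natural $\lie{t}$-equivariant isomorphism
\begin{align*}
	H_j(\lie{g}; V\otimes \sect(\uVerm)) \simeq H_j(\lie{u}; V)\otimes \CC_{-2\rho}.
\end{align*}
Second, applying Poincar\'e duality for the semisimple Lie algebra $\lie{g}$ (so $\wedge^{\mathrm{top}}\lie{g}^* \simeq \CC$) and for the nilpotent Lie algebra $\lie{u}$ (whose duality involves the character $\CC_{-2\rho}$ of $\wedge^{\mathrm{top}}\lie{u}^*$) converts homology in degree $\dim\lie{u}-i$ to cohomology in degree $i$, yielding an isomorphism
\begin{align*}
	H^i(\lie{u}; V) \simeq H^{\dim\lie{g} - \dim\lie{u} + i}(\lie{g}; V\otimes \sect(\uVerm))
\end{align*}
up to a $\lie{t}$-character twist that is built into $\sect(\uVerm)$. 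I will use this isomorphism to define the $(\Dalg{A}{X}\otimes \ntDalg{G/U})^G$-module structure on $H^i(\lie{u}; V)$: the right-hand side has such a structure because $V$ is a $\Dalg{A}{X}$-module, $\sect(\uVerm)$ is a $\ntDalg{G/U}$-module, and one checks directly that a $G$-invariant element of $\Dalg{A}{X}\otimes \ntDalg{G/U}$ commutes with the diagonal $\lie{g}$-action (the two `inner-derivation' terms cancel by $G$-invariance), hence descends to $\lie{g}$-cohomology.

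Granting the identification, Theorem \ref{thm:FiniteLenGeneral} applies with $\Dmod{M}$ as given and $\Dmod{N} = \uVerm$: both hypotheses (holonomicity of $\Dmod{M}$ and $\uVerm$, acyclicity of $\uVerm$) are in hand, so $H^{\dim\lie{g}-\dim\lie{u}+i}(\lie{g}; \sect(\Dmod{M})\otimes \sect(\uVerm))$ is an $(\Dalg{A}{X}\otimes \ntDalg{G/U})^G$-module of finite length. Transporting via the isomorphism completes the proof. The main obstacle is not the finite-length conclusion itself but the bookkeeping: one must verify that the $(\Dalg{A}{X}\otimes \ntDalg{G/U})^G$-action transported to $H^i(\lie{u};V)$ agrees with the intrinsic $\lie{t}$-action, i.e.\ that the $\lie{t}$-twist coming from the $R$-action on $\sect(\uVerm)$ matches the canonical $\lie{t}$-weight on $H^i(\lie{u}; V)$ up to the explicit $-2\rho$-shift. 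This is a direct computation using the explicit description of $\sect(\uVerm)$ recalled above, but it is the only place where careful attention to conventions (Fourier twist, canonical sheaf, and the sign conventions of Subsection \ref{subsection:reviewFourier}) is needed.
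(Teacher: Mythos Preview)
Your proposal is correct and follows essentially the same route as the paper: identify $H^i(\lie{u};\sect(\Dmod{M}))$ with a $\lie{g}$-cohomology group of $\sect(\Dmod{M})\otimes\sect(\uVerm)$ via Shapiro's lemma and Poincar\'e duality, then invoke Theorem~\ref{thm:FiniteLenGeneral} with $\Dmod{N}=\uVerm$ using Proposition~\ref{prop:AcyclicUVerma}. The paper's proof runs the same computation (arriving at the same index shift $H^{m+i-n}(\lie{u};\cdot)\simeq H^i(\lie{g};\cdot)$ with $n=\dim\lie{g}$, $m=\dim\lie{u}$) and is in fact terser about the compatibility of the $\lie{t}$- and $(\Dalg{A}{X}\otimes\ntDalg{G/U})^G$-actions that you rightly flag as the one place needing care.
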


\begin{proof}
	By the Poincar\'e duality (see \cite[Corollary 3.6]{HTT08}), we have
	\begin{align*}
		&H^i(\lie{g}; \sect(\Dmod{M})\otimes (\univ{g}\otimes_{\univ{u}}\CC\otimes \CC_{-2\rho})) \\
		&\simeq H_{n-i}(\lie{g}; \sect(\Dmod{M})\otimes (\univ{g}\otimes_{\univ{u}}\CC\otimes \CC_{-2\rho})) \\
		&\simeq H_{n-i}(\lie{u}; \sect(\Dmod{M})\otimes \wedge^{m} \lie{u}^*) \\
		&\simeq H^{m+i-n}(\lie{u}; \sect(\Dmod{M})),
	\end{align*}
	where $n = \dim(\lie{g}), m = \dim(\lie{u})$.
	Since $\uVerm$ is acyclic and $\sect(\uVerm) \simeq \univ{g}\otimes_{\univ{u}}\CC\otimes \CC_{-2\rho}$
	by Proposition \ref{prop:AcyclicUVerma}, the assertion follows from Theorem \ref{thm:FiniteLenGeneral}.
\end{proof}

We shall consider the $\lie{t}$-actions on the cohomology groups $H^i(\lie{u}; \sect(\Dmod{M}))$.
As proved in Corollary \ref{cor:FiniteLenUcoinv}, $(\Dalg{A}{X}\otimes \ntDalg{G/U})^G$ acts on the cohomologies.
Combining Theorem \ref{thm:Affinity} and Corollary \ref{cor:FiniteLenUcoinv}, we obtain the following result.

\begin{theorem}\label{thm:AffinityUcohomology}
	Let $i \in \NN$ and $\Dmod{M}\in \Mod_{h}(\Dsheaf{A}{X})$.
	Then there exists a $\lie{t}$-module finite exhaustive filtration $0 = V_0\subset V_1 \subset \cdots \subset V_r$
	of $H^i(\lie{u}; \sect(\Dmod{M}))$ such that each $V_j / V_{j-1}$ has a small Cartan subalgebra.
\end{theorem}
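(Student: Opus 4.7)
The plan is to combine Corollary \ref{cor:FiniteLenUcoinv} with Theorem \ref{thm:Affinity}. By Corollary \ref{cor:FiniteLenUcoinv}, $H^i(\lie{u}; \sect(\Dmod{M}))$ carries a compatible structure of $(\Dalg{A}{X}\otimes \ntDalg{G/U})^G$-module of finite length, so I fix any composition series
\begin{align*}
0 = V_0 \subset V_1 \subset \cdots \subset V_r = H^i(\lie{u}; \sect(\Dmod{M}))
\end{align*}
in the category of $(\Dalg{A}{X}\otimes \ntDalg{G/U})^G$-modules. Since the $\lie{t}$-action on $H^i(\lie{u}; \sect(\Dmod{M}))$ factors through the embedding $1\otimes R\colon \univ{t} \hookrightarrow (\Dalg{A}{X}\otimes \ntDalg{G/U})^G$ (up to a uniform $-2\rho$ translation coming from the twist $\CC_{-2\rho}$ in the description of $\sect(\uVerm)$ in Proposition \ref{prop:AcyclicUVerma}), the filtration is automatically a $\lie{t}$-stable exhaustive filtration in the sense of the theorem.

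Next I regard $(\Dalg{A}{X}\otimes \ntDalg{G/U})^G$ as a generalized pair with the torus $T$, where $T$ acts by right translation on $G/U$ (commuting with the diagonal $G$); the differential of this $T$-action is exactly the inner action of the embedded $R(\univ{t})$. It then remains to apply Theorem \ref{thm:Affinity} to each irreducible subquotient $V_j/V_{j-1}$, which requires $\End_{(\Dalg{A}{X}\otimes \ntDalg{G/U})^G}(V_j/V_{j-1}) = \CC$. By Remark \ref{remark:Schur}, this reduces to showing each $V_j/V_{j-1}$ has at most countable $\CC$-dimension. Being cyclic, each $V_j/V_{j-1}$ has dimension bounded by that of the ambient algebra; now $\ntDalg{G/U}$ is finitely generated over $\CC$ by Fact \ref{fact:dalg} (hence countable dimensional), and $\Dalg{A}{X}$ is countable dimensional because its order-filtration associated graded embeds, via restriction to an affine open $V \subset X$, into the finitely generated $\CC$-algebra $\rring{T^*V}$. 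Therefore so is $(\Dalg{A}{X}\otimes \ntDalg{G/U})^G$.

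Theorem \ref{thm:Affinity} then produces, for each $j$, a connected closed subgroup $M_j \subset T$ such that $\Variety(P) \subset \lie{t}^*$ is a translate of $(\lie{t}/\lie{m}_j)^*$ for every $P \in \Ass_{\univ{t}}(V_j/V_{j-1})$, i.e.\ $\lie{t}/\lie{m}_j$ is a small Cartan subalgebra of $V_j/V_{j-1}$; the uniform $-2\rho$ translation only shifts every $\Variety(P)$ by a common vector and preserves this property. The main technical hurdle is the bookkeeping asserting that the $\lie{t}$-action on $H^i(\lie{u}; \sect(\Dmod{M}))$ coming from $\lie{t} = \lie{b}/\lie{u}$ matches, up to the predicted $-2\rho$ shift, the inner action of $1\otimes R(\univ{t})$ inside $(\Dalg{A}{X}\otimes \ntDalg{G/U})^G$; this follows from the explicit $\lie{g}\oplus\lie{t}$-module description of $\sect(\uVerm)$ in Proposition \ref{prop:AcyclicUVerma} combined with Shapiro's lemma as used in the proof of Corollary \ref{cor:FiniteLenUcoinv}.
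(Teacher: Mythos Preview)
Your proposal is correct and follows essentially the same route as the paper: take a composition series for the $(\Dalg{A}{X}\otimes \ntDalg{G/U})^G$-module furnished by Corollary \ref{cor:FiniteLenUcoinv}, then apply Theorem \ref{thm:Affinity} to each irreducible factor, invoking countable dimensionality of the algebra for Schur's lemma. Your write-up is in fact more explicit than the paper's on two points it leaves implicit---the compatibility of the $\lie{t}$-action with $1\otimes R(\univ{t})$ up to the $-2\rho$ shift, and the justification that $\Dalg{A}{X}$ has countable dimension (though for the latter you should restrict to a finite affine cover rather than a single affine open if $X$ is not assumed irreducible).
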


\begin{proof}
	A composition series of $H^i(\lie{u}; \sect(\Dmod{M}))$ as an $(\Dalg{A}{X}\otimes \ntDalg{G/U})^G$-module satisfies the desired condition
	by Theorem \ref{thm:Affinity}.
	The assumption $\End_{\alg{A}}(V) = \CC$ is fulfilled because $(\Dalg{A}{X}\otimes \ntDalg{G/U})^G$ has at most countable dimension.
	See Remark \ref{remark:Schur}.
\end{proof}

\subsection{Branching problem}\label{subsection:Branching}

In this subsection, we deal with the branching problem of representations of reductive Lie groups.
Although the result in this subsection are direct consequences of the general result (Theorem \ref{thm:AffinityUcohomology}),
we state it explicitly.

\newcommand{\LG}[1]{\widetilde{#1}}
\newcommand{\SM}[1]{#1}

Let $\LG{G}$ be a connected reductive algebraic group and $\SM{G}$ a connected reductive subgroup of $\LG{G}$.
Fix a Borel subgroup $\LG{B}=\LG{T}\LG{U}$ of $\LG{G}$ with a maximal torus $\LG{T}$ and unipotent radical $\LG{U}$.
Similarly, take a Borel subgroup $\SM{B} = \SM{T}\SM{U}$ of $\SM{G}$.
Let $\proots$ denote the set of positive roots of $\lie{\LG{g}}$ determined by $\LG{B}$.
Write $\rho$ for half the sum of elements in $\proots$,
and $p\colon \LG{G}/\LG{U}\rightarrow \LG{G}/\LG{B}$ for the natural projection.
For $\lambda \in \lie{\LG{t}}^*$, we set
\begin{align*}
	\ntDsheaf{\LG{G}/\LG{B},\lambda}\coloneq (p_* \ntDsheaf{\LG{G}/\LG{U}})^{\LG{T}}\otimes_{\univ{\LG{t}}} \End_{\CC}(\CC_{\lambda + \rho}).
\end{align*}
Then $\ntDsheaf{\LG{G}/\LG{B},\lambda}$ is a $\LG{G}$-equivariant TDO on $\LG{G}/\LG{B}$.
See Subsection \ref{subsection:BB} for the TDO.

As Definition \ref{def:holonomic2}, we shall define holonomicity of $\lie{\LG{g}}$-modules.
Recall the Beilinson--Bernstein correspondence (Fact \ref{fact:beilinsonBernsten}).

\begin{definition}
	We say that an irreducible $\lie{\LG{g}}$-module $V$ is \define{holonomic} if there exists $\Dmod{M} \in \Mod_{h}(\ntDsheaf{\LG{G}/\LG{B},\lambda})$ such that $\lambda$ is anti-dominant and $\sect(\Dmod{M}) \simeq V$.
	We say that a $\lie{\LG{g}}$-module $V$ is \define{holonomic} if $V$ has finite length and all the composition factors of $V$ are holonomic.
\end{definition}

For example, Harish-Chandra modules and objects in the BGG category $\mathcal{O}$ are holonomic.
See \cite[Theorem 11.6.1]{HTT08}.

Combining Corollary \ref{cor:FiniteLenUcoinv} and the Beilinson--Bernstein correspondence, we obtain the following theorem.
Although $G$ may not be simply-connected semisimple, it is easy to reduce this case to the previous one by taking a finite covering.
Note that the abelian factors of $G$ does not affect the results about $G/U$.

\begin{theorem}
	Let $V$ be a holonomic $\lie{\LG{g}}$-module (e.g.\ Harish-Chandra modules and highest weight modules) and $i \in \NN$.
	Then there exists a $\lie{t}$-module finite exhaustive filtration $0 = V_0\subset V_1 \subset \cdots \subset V_r$
	of $H^i(\lie{u}; V)$ such that each $V_j/V_{j-1}$ has a small Cartan subalgebra.
\end{theorem}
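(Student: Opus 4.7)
The plan is to deduce this from Theorem \ref{thm:AffinityUcohomology} via the Beilinson--Bernstein correspondence, and then propagate from irreducible composition factors to arbitrary holonomic modules using the long exact sequence of $\lie{\LG{u}}$-cohomology.

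First, I would reduce to the case where $\LG{G}$ is simply-connected semisimple. Passing to a finite covering does not affect $\lie{\LG{g}}$, $\lie{\LG{t}}$ or $\lie{\LG{u}}$, nor the Beilinson--Bernstein setup on $\LG{G}/\LG{B}$ (which is a $\LG{G}$-variety of the adjoint form). A central torus factor only adds an action by characters and splits off cleanly from any small Cartan subalgebra, so the simply-connected semisimple case implies the general one.

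Next, I would handle the irreducible case. If $V$ is an irreducible holonomic $\lie{\LG{g}}$-module, then by definition there exist an anti-dominant $\lambda \in \lie{\LG{t}}^*$ and $\Dmod{M} \in \Mod_h(\ntDsheaf{\LG{G}/\LG{B},\lambda})$ with $\sect(\Dmod{M}) \simeq V$. By Fact \ref{fact:beilinsonBernsten}(2), $\sect$ is exact on $\Mod_{qc}(\ntDsheaf{\LG{G}/\LG{B},\lambda})$ for anti-dominant $\lambda$, so the hypotheses of Theorem \ref{thm:AffinityUcohomology} are satisfied for $X = \LG{G}/\LG{B}$ and $\Dsheaf{A}{X} = \ntDsheaf{\LG{G}/\LG{B},\lambda}$. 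Applying that theorem with the Borel subgroup $\LG{B} = \LG{T}\LG{U}$ directly yields a finite $\lie{\LG{t}}$-module filtration of $H^i(\lie{\LG{u}}; V)$ whose subquotients have small Cartan subalgebras.

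Finally, I would extend to general holonomic $V$ by induction on the length. For a short exact sequence $0 \to V' \to V \to V'' \to 0$ of holonomic modules, the long exact sequence in $\lie{\LG{u}}$-cohomology gives a short exact sequence of $\lie{\LG{t}}$-modules
\begin{align*}
0 \to A \to H^i(\lie{\LG{u}}; V) \to B \to 0,
\end{align*}
where $A$ is a quotient of $H^i(\lie{\LG{u}}; V')$ and $B$ is a submodule of $H^i(\lie{\LG{u}}; V'')$. Since the property of having a small Cartan subalgebra is inherited by submodules (as noted in the paper), a filtration of $A$ (from the inductive hypothesis applied to $V'$) concatenated with a lift of a filtration of $B$ produces the desired finite filtration of $H^i(\lie{\LG{u}}; V)$.

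The main technical obstacle is really the irreducible case, which is entirely subsumed by Theorem \ref{thm:AffinityUcohomology}; the only nontrivial check is that the $\lie{\LG{t}}$-module structure on $H^i(\lie{\LG{u}}; \sect(\Dmod{M}))$ coming from the geometric construction agrees with the intrinsic one on $H^i(\lie{\LG{u}}; V)$, which is immediate from the compatibility of the $\LG{T}$-action on $\ntDsheaf{\LG{G}/\LG{B},\lambda}$-modules with the $\lie{\LG{t}}$-action on global sections. The inductive step and the reduction to the simply-connected semisimple case are formal.
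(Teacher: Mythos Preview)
Your overall strategy---Beilinson--Bernstein plus Theorem \ref{thm:AffinityUcohomology}, then induction on length---is the paper's. However, there are two genuine gaps.

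First, you have conflated the two groups. In this subsection $\lie{u}$ and $\lie{t}$ are the Borel data of the \emph{subgroup} $\SM{G}\subset \LG{G}$, not of $\LG{G}$; the theorem is a branching statement. Theorem \ref{thm:AffinityUcohomology} must therefore be applied with the $\SM{G}$-action on $X=\LG{G}/\LG{B}$ (the TDO $\ntDsheaf{\LG{G}/\LG{B},\lambda}$ is $\LG{G}$-equivariant, hence $\SM{G}$-equivariant), and it is $\SM{G}$, not $\LG{G}$, that one reduces to the simply-connected semisimple case. As written your argument produces a filtration of $H^i(\lie{\LG{u}}; V)$, which is not the object in the statement.

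Second, the inductive step is incomplete on the quotient side. You handle $B$ via inheritance of small Cartan subalgebras by submodules, but $A$ is a \emph{quotient} of $H^i(\lie{u}; V')$, and a quotient of a $\lie{t}$-module with a small Cartan subalgebra need not have one: for $\lie{t}=\CC^3$ the module $\univ{t}/(H_1)$ does, yet its quotient $\univ{t}/(H_1,H_2H_3)$ does not, since the associated primes $(H_1,H_2)$ and $(H_1,H_3)$ cut out lines in different directions. The fix is to run the induction over the larger algebra $(\univ{\LG{g}}\otimes \ntDalg{G/U})^{G}$: the long exact sequence is equivariant for this algebra, and the filtrations supplied by Theorem \ref{thm:AffinityUcohomology} are composition series over a quotient of it, so their images in $A$ simply drop some irreducible factors. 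Equivalently, first show inductively that $H^i(\lie{u}; V)$ has finite length over $(\univ{\LG{g}}\otimes \ntDalg{G/U})^{G}$, then take a composition series and invoke Theorem \ref{thm:Affinity}.
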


In the case of Harish-Chandra modules, the problem to find a small Cartan subalgebra for $V$
can be reduced to that for the $(\univ{\LG{g}}\otimes \ntDalg{G/U})^G$-module $V/\lie{u}V\otimes \CC_{-2\rho} \simeq H^{n}(\lie{u}, V)$, where $n = \dim(\lie{u})$.
Let $K$ be a (connected and finite) covering of a symmetric subgroup of $G$.
Fact \ref{fact:ReductionToUcoinv} and Corollary \ref{cor:FiniteLenUcoinv} show the following theorem.

\begin{theorem}
	Let $V$ be an irreducible holonomic $(\lie{\LG{g}}, K)$-module.
	Then there exists an irreducible $(\univ{\LG{g}}\otimes \ntDalg{G/U})^G$-module subquotient $V'$ of $V/\lie{u}V\otimes \CC_{-2\rho}$
	such that the small Cartan subalgebra for $V'$ is one for $V$.
\end{theorem}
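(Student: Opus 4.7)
The plan is to realize $V$ as global sections of a holonomic $\ntDsheaf{}$-module on $\LG{G}/\LG{B}$, use Corollary~\ref{cor:FiniteLenUcoinv} to endow $V/\lie{\LG{u}}V \otimes \CC_{-2\rho}$ with a finite-length action of $\alg{A} \coloneq (\univ{\LG{g}} \otimes \ntDalg{\LG{G}/\LG{U}})^{\LG{G}}$, single out an irreducible composition factor $V'$ that captures a top-dimensional associated prime, and match the small Cartan subalgebra of $V'$ (supplied by Theorem~\ref{thm:Affinity}) with one of $V$ via Fact~\ref{fact:ReductionToUcoinv}.

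Since $V$ is irreducible and holonomic, the Beilinson--Bernstein correspondence (Fact~\ref{fact:beilinsonBernsten}) produces an anti-dominant $\lambda \in \lie{\LG{t}}^*$ and $\Dmod{M} \in \Mod_h(\ntDsheaf{\LG{G}/\LG{B},\lambda})$ with $V \simeq \sect(\Dmod{M})$. Applying Corollary~\ref{cor:FiniteLenUcoinv} with $X = \LG{G}/\LG{B}$, $\Dsheaf{A}{X} = \ntDsheaf{\LG{G}/\LG{B},\lambda}$, and acting group $\LG{G}$, the Poincar\'{e}-duality identification carried out in that corollary's proof writes $H^m(\lie{\LG{u}}; V) \simeq V/\lie{\LG{u}}V \otimes \CC_{-2\rho}$ (with $m = \dim \lie{\LG{u}}$), which is therefore a module of finite length over $\alg{A}$. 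Set $n \coloneq \Rrankmax(V|_{\lie{\LG{g}}})$; by Fact~\ref{fact:ReductionToUcoinv}, $\Rrankmax(V/\lie{\LG{u}}V) = n$, and fixing any top-dimensional $Q_0 \in \Ass_{\univ{\LG{t}}}(V/\lie{\LG{u}}V)$, the translate $Q_0 - 2\rho$ lies in $\Ass_{\univ{\LG{t}}}(V/\lie{\LG{u}}V \otimes \CC_{-2\rho})$ and hence, by the standard propagation $\Ass(M) \subset \Ass(M') \cup \Ass(M'')$ through short exact sequences, appears in the associated primes of some composition factor $V'$ of a chosen composition series.

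Since $\alg{A}$ has countable dimension, $\End_{\alg{A}}(V') = \CC$ (Remark~\ref{remark:Schur}), so Theorem~\ref{thm:Affinity} furnishes a unique small Cartan subalgebra $\lie{\LG{t}}/\lie{m}'$ for $V'$ such that every $\Variety(Q) \subset \lie{\LG{t}}^*$ with $Q \in \Ass_{\univ{\LG{t}}}(V')$ is a translate of the $n$-dimensional linear subspace $\lie{m}'^\perp$. To match this with a small Cartan subalgebra of $V$, pick any such $Q$: then $Q + 2\rho \in \Ass_{\univ{\LG{t}}}(V/\lie{\LG{u}}V)$, and Fact~\ref{fact:ReductionToUcoinv} produces $P \in \Ass_{\univcent{\LG{g}}}(V)$ with $\Variety(P) = q(\Variety(Q) + \rho)$, a translate of $q(\lie{m}'^\perp)$ in $\lie{\LG{t}}^*/W_{\LG{G}}$. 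Invoking \cite[Theorem~3.25]{Ki24-2} to obtain some small Cartan subalgebra $\lie{\LG{t}}/\lie{m}$ of $V$, every $\Variety(P)$ is also a translate of $q(\lie{m}^\perp)$; equating the two descriptions and passing back through the finite map $q$ forces $\lie{m}^\perp = w(\lie{m}'^\perp)$ for some $w \in W_{\LG{G}}$. Since $q$ is $W_{\LG{G}}$-invariant, $\lie{m}'^\perp$ is then equally valid as a defining subspace for the small Cartan of $V$, so $\lie{\LG{t}}/\lie{m}'$ is a small Cartan subalgebra for $V$. The main obstacle lies in this last matching, which requires tracking subspaces through the Harish--Chandra quotient $q$: Fact~\ref{fact:ReductionToUcoinv} only sees the images $q(\lie{m}^\perp)$ and $q(\lie{m}'^\perp)$ in $\lie{\LG{t}}^*/W_{\LG{G}}$, so one must carefully use top-dimensionality of $Q_0$ (to pin the dimensions) and the linearity of these subspaces to recover $W_{\LG{G}}$-conjugacy back in $\lie{\LG{t}}^*$ itself.
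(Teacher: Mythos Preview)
Your argument has the right overall structure, but you have systematically conflated the two groups $G$ and $\LG{G}$ that appear in Subsection~\ref{subsection:Branching}. The theorem lives in the \emph{branching} setup: $V$ is a $\lie{\LG{g}}$-module, but the objects $\lie{u}$, $\lie{t}$, $G/U$, $\ntDalg{G/U}$, and the invariants $(\cdot)^G$ all refer to the subgroup $G\subset\LG{G}$, and $K$ covers a symmetric subgroup of $G$. Concretely, Corollary~\ref{cor:FiniteLenUcoinv} must be applied with $X=\LG{G}/\LG{B}$ carrying its $G$-action (the TDO $\ntDsheaf{\LG{G}/\LG{B},\lambda}$ is $\LG{G}$-equivariant and hence $G$-equivariant), which produces a finite-length module over $(\ntDalg{\LG{G}/\LG{B},\lambda}\otimes\ntDalg{G/U})^G$ and thus over $(\univ{\LG{g}}\otimes\ntDalg{G/U})^G$; and Fact~\ref{fact:ReductionToUcoinv} is applied to the generalized pair $(\univ{\LG{g}},G)$, so that $\Rrankmax$ and the associated primes are taken over $\univcent{g}$ and $\univ{t}$ for the small group. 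As written, your algebra $(\univ{\LG{g}}\otimes\ntDalg{\LG{G}/\LG{U}})^{\LG{G}}$ and your use of $\lie{\LG{u}}$, $\lie{\LG{t}}$ prove a different (and in the branching context, vacuous) statement.

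There is also a smaller gap: you assert that for \emph{any} $Q\in\Ass_{\univ{t}}(V')$ the shifted prime lies in $\Ass_{\univ{t}}(V/\lie{u}V)$. This does not follow, since associated primes of a quotient need not lie among those of the ambient module. You do not need this: it suffices to work with the single top-dimensional prime $Q_0$ you already chose, which by construction lies in $\Ass_{\univ{t}}(V/\lie{u}V)$ and (after the shift) in $\Ass_{\univ{t}}(V')$. From $Q_0$ alone, Theorem~\ref{thm:Affinity} fixes $\dim(\lie{m}'^\perp)=n$, Fact~\ref{fact:ReductionToUcoinv} yields a $P\in\Ass_{\univcent{g}}(V)$ with $\Variety(P)=q(\Variety(Q_0)-\rho)$, and your $W_G$-conjugacy argument (via the existing small Cartan subalgebra from \cite[Theorem~3.25]{Ki24-2}) goes through. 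With these two corrections your proof is essentially the paper's intended argument spelled out in detail.
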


\bibliographystyle{abbrv}


\begin{thebibliography}{10}

\bibitem{BeBe81}
A.~Beilinson and J.~Bernstein.
\newblock Localisation de {$g$}-modules.
\newblock {\em C. R. Acad. Sci. Paris S\'er. I Math.}, 292(1):15--18, 1981.

\bibitem{BeBe93}
A.~Beilinson and J.~Bernstein.
\newblock A proof of {J}antzen conjectures.
\newblock In {\em I. {M}. {G}el$'$ fand {S}eminar}, volume~16 of {\em Adv. Soviet Math.}, pages 1--50. Amer. Math. Soc., Providence, RI, 1993.

\bibitem{BeNa19}
D.~Ben-Zvi and D.~Nadler.
\newblock Beilinson-bernstein localization over the harish-chandra center.
\newblock arXiv:1209.0188.

\bibitem{BBP02}
R.~Bezrukavnikov, A.~Braverman, and L.~Positselskii.
\newblock Gluing of abelian categories and differential operators on the basic affine space.
\newblock {\em J. Inst. Math. Jussieu}, 1(4):543--557, 2002.

\bibitem{Bi90}
F.~V. Bien.
\newblock {\em {${\mathscr{D}}$}-modules and spherical representations}, volume~39 of {\em Mathematical Notes}.
\newblock Princeton University Press, Princeton, NJ, 1990.

\bibitem{BoWa00_continuous_cohomology}
A.~Borel and N.~Wallach.
\newblock {\em Continuous cohomology, discrete subgroups, and representations of reductive groups}, volume~67 of {\em Mathematical Surveys and Monographs}.
\newblock American Mathematical Society, Providence, RI, second edition, 2000.

\bibitem{BoBr89}
W.~Borho and J.-L. Brylinski.
\newblock Differential operators on homogeneous spaces. {II}. {R}elative enveloping algebras.
\newblock {\em Bull. Soc. Math. France}, 117(2):167--210, 1989.

\bibitem{GR15}
V.~Ginzburg and S.~Riche.
\newblock Differential operators on {$G/U$} and the affine {G}rassmannian.
\newblock {\em J. Inst. Math. Jussieu}, 14(3):493--575, 2015.

\bibitem{HMSW87}
H.~Hecht, D.~Mili\v{c}i\'{c}, W.~Schmid, and J.~A. Wolf.
\newblock Localization and standard modules for real semisimple {L}ie groups. {I}. {T}he duality theorem.
\newblock {\em Invent. Math.}, 90(2):297--332, 1987.

\bibitem{HTT08}
R.~Hotta, K.~Takeuchi, and T.~Tanisaki.
\newblock {\em {$D$}-modules, perverse sheaves, and representation theory}, volume 236 of {\em Progress in Mathematics}.
\newblock Birkh\"auser Boston, Inc., Boston, MA, 2008.
\newblock Translated from the 1995 Japanese edition by Takeuchi.

\bibitem{Ka08_dmodule}
M.~Kashiwara.
\newblock Equivariant derived category and representation of real semisimple {L}ie groups.
\newblock In {\em Representation theory and complex analysis}, volume 1931 of {\em Lecture Notes in Math.}, pages 137--234. Springer, Berlin, 2008.

\bibitem{KaSc06}
M.~Kashiwara and P.~Schapira.
\newblock {\em Categories and sheaves}, volume 332 of {\em Grundlehren der Mathematischen Wissenschaften [Fundamental Principles of Mathematical Sciences]}.
\newblock Springer-Verlag, Berlin, 2006.

\bibitem{KaTa96}
M.~Kashiwara and T.~Tanisaki.
\newblock Kazhdan-{L}usztig conjecture for affine {L}ie algebras with negative level. {II}. {N}onintegral case.
\newblock {\em Duke Math. J.}, 84(3):771--813, 1996.

\bibitem{Ka95}
D.~Kazhdan.
\newblock ``{F}orms'' of the principal series for {$\underline{\rm GL}_n$}.
\newblock In {\em Functional analysis on the eve of the 21st century, {V}ol.\ 1 ({N}ew {B}runswick, {NJ}, 1993)}, volume 131 of {\em Progr. Math.}, pages 153--171. Birkh\"auser Boston, Boston, MA, 1995.

\bibitem{Ki24-2}
M.~Kitagawa.
\newblock Cartan subalgebras for restrictions of $\mathfrak{g}$-modules.
\newblock arXiv:2405.10382.

\bibitem{Ki23}
M.~Kitagawa.
\newblock Family of {$\mathscr{D}$}-modules and representations with a boundedness property.
\newblock {\em Represent. Theory}, 27:292--355, 2023.

\bibitem{Ki12}
S.~N. Kitchen.
\newblock Cohomology of standard modules on partial flag varieties.
\newblock {\em Represent. Theory.}, 16:317, 2012.

\bibitem{KnVo95_cohomological_induction}
A.~W. Knapp and D.~A. Vogan, Jr.
\newblock {\em Cohomological induction and unitary representations}, volume~45 of {\em Princeton Mathematical Series}.
\newblock Princeton University Press, Princeton, NJ, 1995.

\bibitem{LeSt06}
T.~Levasseur and J.~T. Stafford.
\newblock Differential operators and cohomology groups on the basic affine space.
\newblock In {\em Studies in {L}ie theory}, volume 243 of {\em Progr. Math.}, pages 377--403. Birkh\"auser Boston, Boston, MA, 2006.

\bibitem{LS06}
T.~Levasseur and J.~T. Stafford.
\newblock Differential operators and cohomology groups on the basic affine space.
\newblock In {\em Studies in {L}ie theory}, volume 243 of {\em Progr. Math.}, pages 377--403. Birkh\"auser Boston, Boston, MA, 2006.

\bibitem{McRo01_noncommutative}
J.~C. McConnell and J.~C. Robson.
\newblock {\em Noncommutative {N}oetherian rings}, volume~30 of {\em Graduate Studies in Mathematics}.
\newblock American Mathematical Society, Providence, RI, revised edition, 2001.
\newblock With the cooperation of L. W. Small.

\bibitem{McSt89}
J.~C. McConnell and J.~T. Stafford.
\newblock Gel$'$ fand-{K}irillov dimension and associated graded modules.
\newblock {\em J. Algebra}, 125(1):197--214, 1989.

\bibitem{Ti11}
D.~A. Timashev.
\newblock {\em Homogeneous spaces and equivariant embeddings}, volume 138 of {\em Encyclopaedia of Mathematical Sciences}.
\newblock Springer, Heidelberg, 2011.
\newblock Invariant Theory and Algebraic Transformation Groups, 8.

\bibitem{Wa88_real_reductive_I}
N.~R. Wallach.
\newblock {\em Real reductive groups. {I}}, volume 132 of {\em Pure and Applied Mathematics}.
\newblock Academic Press, Inc., Boston, MA, 1988.

\end{thebibliography}

\end{document}